\newcommand{\ie}{i.e.}
\newcommand{\eg}{e.g.}
\newcommand{\st}{\text{ s.t. }}
\newcommand{\clN}{\overline{\mathcal{N}}^2}
\newcommand{\clNw}{\overline{\mathcal{N}}^{-\infty}}
\newcommand{\norm}[1]{\left\lVert#1\right\rVert}
\newcommand{\as}{\mathrm{a}}
\newcommand{\cs}{\mathrm{c}}
\newcommand{\T}{\top}
\newcommand{\R}{\mathbb{R}}
\newcommand{\CP}{\mathrm{CP}}
\newcommand{\MCP}{\mathrm{MCP}}
\newcommand{\mcp}{^{\mathfrak{m}}}
\newcommand{\pr}[2]{\left\langle #1, #2 \right\rangle}
\def\<#1>{\mathinner{\langle#1\rangle}}
\newcommand{\SLC}{\operatorname{SLC}}
\newcommand{\Primal}{\mathcal{P}}
\newcommand{\Primalp}{\mathcal{P}_{++}}
\newcommand{\Dual}{\mathcal{D}}
\newcommand{\Dualp}{\mathcal{D}_{++}}
\newcommand{\eps}{\varepsilon}
\newcommand{\proj}{\Pi}
\newcommand{\poly}{\operatorname{poly}}
\renewcommand{\cal}{\mathcal}
\newcommand{\mnbp}{{\cal N}^{\mathfrak{mp}}}
\newcommand{\mnbd}{{\cal N}^{\mathfrak{md}}}
\newcommand{\MP}{\mathbf{M}^{\rm P}}
\newcommand{\MD}{\mathbf{M}^{\rm D}}
\newcommand{\MPalg}{\wt{\mathbf{M}}^{\rm P}}
\newcommand{\MDalg}{\wt{\mathbf{M}}^{\rm D}}
\newcommand{\Cit}{C_{\rm it}}
\newcommand{\Dit}{D_{\rm it}}
\newcommand{\im}{\operatorname{im}}
\newcommand{\diag}{\operatorname{diag}}
\renewcommand{\rho}{\varrho}
\newcommand{\1}{\mathbf 1}
\newcommand{\0}{\mathbf 0}
\newcommand{\wt}{\widetilde}
\newcommand{\wh}{\widehat}
\newcommand{\polarized}{\gamma}
\newcommand{\mdot}{\bullet}
\newcommand{\nx}{\xerr}
\newcommand{\ns}{\serr}
\newcommand{\ndx}{\Delta \nx}
\newcommand{\nds}{\Delta \ns}
\newcommand{\nW}{\widehat{W}}
\newcommand{\scale}{\Xi}
\newcommand{\src}{X}
\newcommand{\targ}{Y}
\newcommand{\Nalg}{\wt{N}}
\newcommand{\Balg}{\wt{B}}
\newcommand{\lift}{\ell}
\newcommand{\error}{\hat{\xi}}
\newcommand{\xerr}{\hat x}
\newcommand{\Xerr}{\xerr}
\newcommand{\Werr}{\hat W}
\newcommand{\Wperr}{{\hat W}^\perp}
\newcommand{\serr}{\hat s}
\newcommand{\Serr}{\serr}
\newcommand{\slls}{\mathrm{\ell}}
\newcommand{\thresh}{\tau}
\newcommand{\cnt}[2]{{\cal C}_{\sigma}\left(#1,#2\right)}
\newcommand{\xy}{y}
\newcommand{\adj}{\operatorname{ad}}
\newcommand{\cp}{^{\mathrm{cp}}}
\newcommand{\sccp}{^{\mathrm{cp}(f)}}
\newcommand{\pz}{z}
\newcommand{\px}{x}
\newcommand{\ps}{s}
\newcommand{\bx}{x'} \newcommand{\bs}{s'} 
\newcommand{\mz}{z\mcp}
\newcommand{\mx}{x\mcp}
\newcommand{\ms}{s\mcp}
\newcommand{\gap}{\overline{\mu}}
\newcommand{\rank}{\mathrm{rank}}
\newcommand{\SVDA}{\rho}
\newcommand{\maxexpidx}{\zeta}
\newcommand{\genexpidx}{\bar{\zeta}}
\newcommand{\stepparam}{\nu}
\newcommand{\alg}{\wt}
\newcommand{\Valg}{\alg{V}}
\newcommand{\Ualg}{\alg{U}}
\newcommand{\supp}{\operatorname{supp}}
\newcommand{\idealdirection}{ideal direction\xspace}
\newcommand{\ideal}{\mathrm{ideal}}
\newcommand{\prj}{\mathrm{proj}}
\newcommand\restr[2]{{\left.\kern-\nulldelimiterspace #1 \vphantom{\big|} \right|_{#2} }}
\newcommand\restro[3]{{\left.\kern-\nulldelimiterspace #1 \vphantom{\big|} \right|^{#3}_{#2} }}
\newcommand{\lspan}{\operatorname{span}}
\let\original@algocf@latexcaption\algocf@latexcaption
\long\def\algocf@latexcaption#1[#2]{\@ifundefined{NR@gettitle}{\def\@currentlabelname{#2}}{\NR@gettitle{#2}}\original@algocf@latexcaption{#1}[{#2}]}
\newlength{\leftstackrelawd}
\newlength{\leftstackrelbwd}
\def\leftstackrel#1#2{\settowidth{\leftstackrelawd}{${{}^{#1}}$}\settowidth{\leftstackrelbwd}{$#2$}\addtolength{\leftstackrelawd}{-\leftstackrelbwd}\leavevmode\ifthenelse{\lengthtest{\leftstackrelawd>0pt}}{\kern-.5\leftstackrelawd}{}\mathrel{\mathop{#2}\limits^{#1}}}
\newtheorem{theorem}{Theorem}[section]
\newtheorem{lemma}[theorem]{Lemma}
\newtheorem{proposition}[theorem]{Proposition}
\newtheorem{corollary}[theorem]{Corollary}
\theoremstyle{definition}
\newtheorem{definition}[theorem]{Definition}
\newtheorem{remark}[theorem]{Remark}
\providecommand*{\cupdot}{\mathbin{\mathpalette\@cupdot{}}}
\newcommand*{\@cupdot}[2]{\ooalign{$\m@th#1\cup$\cr
    \hidewidth$\m@th#1\cdot$\hidewidth
  }}
\DeclareMathOperator*{\argmax}{arg\,max}
\DeclareMathOperator*{\argmin}{arg\,min}
\newcommand{\Pcal}{\mathcal{P}}
\newcounter{mycomment}
\newlist{mycases}{enumerate}{1}
\setlist[mycases,1]{wide=0pt, widest=99,leftmargin=\parindent,label=\textbf{Case \Roman*.}, ref=Case \Roman*}
\newlist{myfirstcases}{enumerate}{1}
\setlist[myfirstcases,1]{wide=0pt, widest=99,leftmargin=\parindent,label=\textbf{Case I.\arabic*.}, ref=Case I.\arabic*}
\newcommand\myshade{100}
\colorlet{mylinkcolor}{NavyBlue}
\colorlet{mycitecolor}{YellowOrange}
\colorlet{myurlcolor}{Aquamarine}
\newcommand\blfootnote[1]{\begingroup
  \renewcommand\thefootnote{}\footnote{#1}\addtocounter{footnote}{-1}\endgroup
}
\title{Interior point methods are not worse than Simplex \blfootnote{This project has received funding from the European Research Council (ERC) under the European Union's Horizon 2020 research and innovation programme: DD from grant agreement no.  805241-QIP,  BN and LAV from grant agreements no. 757481-ScaleOpt. We thank the 2021 Hausdorff Research Institute for Mathematics Program Discrete Optimization during which part of this work was developed.}}
\date{}
\author{
Xavier Allamigeon\thanks{Inria, CMAP, CNRS, Ecole Polytechnique, IP Paris, France}
\and
Daniel Dadush\thanks{Centrum Wiskunde \& Informatica, Amsterdam, The Netherlands}
\and
Georg Loho\thanks{University of Twente, The Netherlands}
\and
Bento Natura\thanks{Georgia Institute of Technology, USA} 
\and
L{\'{a}}szl{\'{o}} A. V{\'{e}}gh\thanks{London School of Economics and Political Science, UK} 
}
\begin{document}

\maketitle

\begin{abstract}

We develop a new `subspace layered least squares' interior point method (IPM) for solving linear programs. Applied to an $n$-variable linear program in standard form, the iteration complexity of our IPM is up to an $O(n^{1.5} \log n)$ factor upper bounded by the \emph{straight line complexity} (SLC) of the linear program. This term refers to the minimum number of segments of any
piecewise linear curve that traverses the \emph{wide neighborhood} of the
central path, a lower bound on the iteration complexity of
any IPM that follows a piecewise linear trajectory along a path induced by a self-concordant barrier. In particular, our algorithm matches the number of iterations of any such IPM up to the same factor $O(n^{1.5}\log n)$.

As our second contribution, we show that the SLC of any linear program is
upper bounded by $2^{n(1 + o(1))}$, which implies that our IPM's iteration complexity
is at most exponential. This is in contrast to existing iteration complexity bounds that depend on either bit-complexity or
condition measures;  these can be unbounded in the problem dimension.
We achieve our upper bound by showing that the central path is
well-approximated by a combinatorial proxy we call the \emph{max central path},
which consists of $2n$ shadow vertex simplex paths. Our upper bound
complements the lower bounds of Allamigeon, Benchimol, Gaubert, and Joswig
(SIAGA 2018), and Allamigeon, Gaubert, and Vandame (STOC 2022), who
constructed linear programs with exponential SLC. 

Finally, we show that each iteration of our IPM can be implemented in strongly polynomial time. Along the way, we develop a deterministic algorithm that approximates the singular value decomposition of a matrix in strongly polynomial time to high accuracy, which may be of independent interest.

\end{abstract}
 \newpage
\tableofcontents

\section{Introduction}\label{sec:intro}

In this paper, we develop a new interior point method for finding exact solutions to  linear programming. Our method is `universal' in the sense that it matches the iteration complexity of any other interior point method up to a small polynomial factor. 
Our analysis also reveals connections between  interior point methods and the simplex method.We consider linear programming (LP) in the following primal-dual form:
\begin{equation}
\label{LP_primal_dual} \tag{LP}
\begin{aligned}
\min \; & \pr{c}{x} \\
\mA x & = b  \\
x & \geq \0 \, ,
\end{aligned}
\qquad \qquad 
\begin{aligned}
\max \; & \pr{b}{y}\\
\mA^\top y + s & = c \\
s & \geq \0 \, ,
\end{aligned}
\end{equation}
where $\mA \in \R^{m \times n}$, $b \in \R^m$ and $c \in \R^n$, and $\mathrm{rk}(\mA)=m \leq n$. We let
 \[
 \Primal \coloneqq  \{x \in \R^n \colon  \mA x=b, x \geq \0\}\, , \quad
 \Dual \coloneqq  \{s \in \R^n \colon \exists y \st \mA^\top y+ s=c, s\geq \0\}\, 
 \]
 denote the primal and dual feasible regions and furthermore 
 \[
 \Primalp \coloneqq \{x \in \Primal:\, x > \0\}\, , \quad
 \Dualp \coloneqq \{s \in \Dual:\, s > \0\}\, 
 \]
 denote the primal and dual strictly feasbible regions.
 We summarize the notation used in the paper in Section~\ref{sec:prelims}.Throughout, we assume that \eqref{LP_primal_dual} is feasible and bounded; consequently, $\Primal,\Dual\neq\emptyset$.  We let $v^\star$ denote the optimum value of~\eqref{LP_primal_dual}. 
 Our focus is on LP algorithms that find exact primal and dual optimal solutions.

 The \emph{simplex method} traverses a path formed by vertices and edges of $\Primal$ according to a certain \emph{pivot rule}.
 Albeit efficient in practice, there is no polynomial-time variant known, and there are
exponential worst case examples for several pivot rules. The first such construction was given by 
 Klee and Minty~\cite{KleeMinty} for Dantzig's pivot rule.   

Breakthrough developments in the seventies and eighties led to the
 first poly\-nomial-time algorithms for linear programming: the \emph{ellipsoid method} by Kha\-chiyan \cite{Khachiyan79}, and \emph{interior point methods} introduced by Karmarkar \cite{Karmarkar84}. These algorithm run in time $\mathrm{poly}(n,L)$, where $L$ denotes the
encoding-length $L$ of the rational input $(\mA,b,c)$ of \eqref{LP_primal_dual}.

While the simplex method may require exponentially many iterations, it is never worse: for any non-cycling pivot rule, the  number of pivot steps can be bounded by the number of bases, at most $\binom{n}{m}< 2^n$.  
Whereas the bound  $\mathrm{poly}(n,L)$ is typically much better, the encoding length $L$
may be arbitrarily large. To the extent of our knowledge, no variant of the ellipsoid or interior point methods have been shown to admit a bound $f(n)$ on the number of iterations for any function $f \colon \mathbb{N}\to\mathbb{N}$ prior to our work. 

Even though LPs with exponential encoding length do not frequently appear in practice, there are  examples when the binary encoding is exponential yet one could efficiently implement arithmetic operations using a different encoding, see Megiddo \cite{Megiddo82binary}. The net present value problem in project scheduling is a particular example of a
 natural optimization problem that can be reformulated as an LP of exponential encoding length, see Grinold \cite{G72}.
From a theoretical perspective, finding an interior point method with an absolute bound $f(n)$ on the number of iterations connects to the fundamental open question on finding a strongly polynomial algorithm for linear programming. In such an algorithm, the  number of arithmetic operations is bounded as $\text{poly}(n)$, and the algorithm uses polynomial space. This question takes its roots in the development of the simplex method, and appears in Smale's list of open problems for the $21^\mathrm{st}$ century~\cite{Smale98}. 

\paragraph{Interior point methods and the central path}
Whereas the simplex method moves on the boundary of the feasible region $\Primal$,  interior point methods (IPM) reach an optimal solution by iterating through the strict interior of $\Primal$.
    Path-following interior point methods are driven to an optimal point by following a smooth trajectory called the \emph{central path}. In the most standard setting~\cite{Renegar1988}, the latter is defined as the parametric curve $\mu \in (0, \infty) \mapsto z\cp(\mu) \coloneqq (x\cp(\mu), s\cp(\mu))$, where $x\cp(\mu)$ and $(y\cp(\mu), s\cp(\mu))$ are the unique solutions to the system
\begin{equation}
\begin{aligned}\label{eq:CP-equations}
\mA x\cp(\mu) & = b \, , \enspace x\cp(\mu) > \0 \\
\mA^\top y\cp(\mu) + s\cp(\mu) & = c \, , \enspace s\cp(\mu) > \0 \\
x\cp(\mu)_i s\cp(\mu)_i & = \mu \, \quad \text{for all} \enspace i \in [n] \, .\\
\end{aligned}
\end{equation}
This system arises from the optimality conditions of convex problems obtained by penalizing the original linear programs with the logarithmic barrier, \ie, respectively adding terms of the form $-\mu \sum_{i = 1}^n \log x_i$ and $\mu \sum_{i = 1}^n \log s_i$ to the objective functions of the primal and dual~\eqref{LP_primal_dual}. The weight of the penalty is given by the parameter $\mu > 0$.
When $\mu \searrow 0$, the central path $z\cp(\mu)$ converges to  a pair of optimal solutions $(x^\star, s^\star)$ of~\eqref{LP_primal_dual}, which can be easily deduced from the fact that the duality gap of $z\cp(\mu)$ is given by $\pr{c}{x\cp(\mu)} - \pr{b}{y\cp (\mu)}= \pr{x\cp(\mu)}{s\cp(\mu)} = n \mu$. 
Accordingly, we define the quantity $\gap(z) \coloneqq \pr{x}{s}/n$ for any feasible point $z=(x,s)\in \Primal\times \Dual$, which we refer to as the \emph{normalized duality gap} of $z$.

Interior point methods iteratively compute approximations of the points on the central path associated with successive values of $\mu$ that decrease geometrically; at most $O(\sqrt n \log (\mu / \mu'))$ iterations are needed to decrease the normalized duality gap from $\mu$ to $\mu'$. 
The iterations  follow an improvement direction, \eg, a Newton step, while remaining in a certain neighborhood of the central path, and can be implemented in polynomial time. The classical analysis yields a bound $O(\sqrt{n} L)$ on the number of iterations  for solving \eqref{LP_primal_dual} for a rational input $(\mA,b,c)$ of total encoding length $L$. There have been significant improvements in recent years both for general LP as well as for special classes, see Section~\ref{sec:related_work}.

A running time bound dependent on $L$ requires a rational input; in contrast, the simplex method can be implemented in $2^{n}\mathrm{poly}(n)$ even in the \emph{real model of computation}.
Whereas standard IPMs use bit-complexity arguments to terminate, they have also been extended to the real model of computation, e.g., by Vavasis and Ye~\cite{VavasisYeRealNumberData}. The running time of such algorithms is parametrized by \emph{condition numbers} that capture geometric properties of the input. In a remarkable paper, Vavasis and Ye \cite{Vavasis1996} introduced a \emph{layered least squares (LLS)} interior point method that runs in $O(n^{3.5} \log (\bar\chi_{\mA}+n))$ iterations, where $\bar\chi_{\mA}$ is the Dikin--Stuart--Todd condition number associated with the kernel of $\mA$ (but independent of $b$ and $c$). As a consequence, they also derive a structural characterization of the central path: there are at most $\binom{n}{2}$ `short and curved' segments, possibly separated by `long and straight' segments.  
The LLS directions are refined Newton steps that can traverse the latter segments.

Lan, Monteiro and Tsuchiya \cite{LMT09} gave a scaling invariant \emph{trust region IPM} taking $O(n^{3.5} \log (\bar\chi^*_{\mA}+n))$ iterations. Here, $\bar\chi^*_{\mA}$ is the minimum value of $\bar\chi_{\mA}$ that can be achieved by any column rescaling. However, computing the step directions in this algorithm has a weakly polynomial dependence on $b$ and $c$. In recent work, Dadush, Huiberts, Natura, and V\'egh \cite{DHNV20} gave a scaling invariant LLS algorithm with iteration bound $O(n^{2.5}\log(n) \log (\bar\chi^*_{\mA}+n))$, where the step directions can be computed by solving linear systems. We discuss the literature on these IPMs in more detail in Section~\ref{sec:related_work}.

\paragraph{Lower bounds on interior point methods}
LLS methods provide strongly polynomial LP algorithms whenever $\bar\chi^*_{\mA}\in 2^{\mathrm{poly}(n)}$; this is always the case if the
encoding-length of $\mA$ is polynomially bounded. One may wonder if some variant of IPM could be strongly polynomial for all LPs. 
A negative answer to this question was given in recent work by
Allamigeon, Benchimol, Gaubert, and Joswig: they  used tropical geometry to build pathological linear programs on which the number of iterations of IPM has to be exponential (in $m,n$)~\cite{ABGJ18,ABGJ21}. Their construction shows that, when the entries of $\mA$, $b$, and $c$ are of very different orders of magnitude, the central path can be significantly deformed to the boundary of the feasible set. Allamigeon, Gaubert and Vandame later extended this result to the broad class of path-following IPMs using
 any self-concordant barrier function~\cite{AGV22}; concurrently, Zong, Lee, and Yue \cite{Zong2023} obtained impossibility for short step methods. The paper \cite{AGV22}  exhibits a counterexample where the feasible set is an $n$-dimensional combinatorial cube and the shape of the central path is analogous to the simplex paths on pathological instances of LP for the simplex method, akin to the  Klee--Minty cube~\cite{KleeMinty}.

\subsection{Contributions} 

\paragraph{A `near-optimal' interior point method}
The papers \cite{ABGJ18,ABGJ21} implicitly rely on the following lower bound: the trajectory of an IPM performing $T$ iterations in the wide neighborhood of the central path  defines a piecewise linear curve. Hence, the minimum number of pieces of \emph{any} piecewise linear curve in the same neighborhood provides a lower bound on the number of iterations.
We introduce a new interior point method based on \emph{subspace layered least squared steps} (see~\nameref{alg:subspace_ipm}) and show that the number of iterations of our IPM can be upper bounded in such terms. 

The algorithm  navigates through the  $\ell_2$-neighborhood of the central path:
\begin{equation} \label{eq:l2_neighborhood}
 \cal N^2(\beta) \coloneqq \left\{z = (x,s) \in \Primal_{++} \times \Dual_{++} : \norm{\frac{xs}{\gap(z)} - \1} \leq \beta \right\}\, , \qquad (0 < \beta< 1/4)\, ,
\end{equation}
where $xs\in\R^n$ denotes the coordinate-wise  product and $\1\in\R^n$ is the $n$-dimensional all-ones vector.

We also define the \emph{wide neighborhood} as follows:
\begin{equation} \label{eq:wide_neighborhood}
\cal N^{-\infty}(\theta) \coloneqq \left\{z = (x,s) \in \Primal_{++} \times \Dual_{++}:\, x s \geq (1-\theta) \gap(z) \1\right\} \, , \qquad (0 < \theta < 1)\, .
\end{equation}
We let $\clN(\beta)\coloneqq \operatorname{cl}(\cal N^2(\beta))$  and $\clNw(\beta)\coloneqq \operatorname{cl}(\cal N^{-\infty}(\beta))$ denote 
the closures of these neighborhoods. These also include points $z=(x,s)\in  \Primal_{+} \times \Dual_{+}$ with $\gap(z)=0$, i.e., optimal solutions. Our algorithm will terminate with an optimal solution in $\clN(\beta)$. We show the following iteration bound. 

\begin{theorem}\label{th:main_upper_bound-curve}
Let $\beta\in (0,1/6]$, $\theta\in(0,1)$ and $\mu_0> \mu_1\ge 0$.
  Let $\Gamma: (\mu_1,\mu_0) \to \clNw(\theta)$
  be any piecewise linear curve
satisfying $\gap\left(\Gamma(\mu)\right) = \mu$, $\forall \mu \in (\mu_1,\mu_0)$ with $T$ linear segments. 

Starting from any point $z^0 \in \cal N^2(\beta)$ such that $\gap(z^0) \leq \mu_0$, the algorithm~\nameref{alg:subspace_ipm} finds a solution $z^1\in \clN(\beta)$ with $\gap(z^1) \le \mu_1$ within 
\begin{equation*}
  O\left(\frac{n^{1.5}}{\beta} \log \Big(\frac{n}{\beta(1-\theta)}\Big) \mathinner T\right)
\end{equation*} 
iterations. \end{theorem}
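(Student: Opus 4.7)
The overall idea is to charge the iteration count against the linear segments of $\Gamma$: I will argue that each of the $T$ segments is ``processed'' by at most $O(n^{1.5}\log(n/(1-\theta)))$ iterations, after which the algorithm's iterate $z \in \clN(\beta)$ has gap at most that of the far endpoint of that segment. Summing over the segments then gives the claimed bound.

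First I reduce to the one-segment case. Decompose $\Gamma$ into linear pieces $\Gamma_1,\dots,\Gamma_T$ joining points of gaps $\mu_0 \ge \mu^0 > \mu^1 > \cdots > \mu^T \ge \mu_1$, and prove by induction on $t$: starting from any $z \in \clN(\beta)$ with $\gap(z) \le \mu^{t-1}$, the algorithm reaches some $z' \in \clN(\beta)$ with $\gap(z') \le \mu^t$ in $O(n^{1.5}\log(n/(1-\theta)))$ iterations, using \emph{only} the hypothesis that $\Gamma_t \subseteq \clNw(\theta)$; the algorithm itself never sees $\Gamma$.

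Next I set up the per-segment argument. The algorithm, as its name suggests, computes at each iteration a Newton-like direction by solving a least-squares problem restricted to a subspace parameterised by a partition $(B,N)$ of $[n]$ (a layered-least-squares-type step). My plan is: (a) show that over a single linear segment $\Gamma_t$ lying inside $\clNw(\theta)$, one can split the segment into at most $n$ sub-pieces on each of which a single $(B,N)$ is ``correct'' in the sense that the corresponding subspace direction at a nearby iterate well-approximates the segment direction; (b) prove that on each sub-piece the algorithm's enumeration of candidate partitions discovers such a $(B,N)$ and takes the corresponding long move, recovering centrality in $\clN(\beta)$ within $O(\sqrt n)$ short Newton steps (exploiting the slack afforded by $\beta \le 1/6$). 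The count is then at most $n$ sub-pieces, times $O(\sqrt n)$ recentering steps per long move, times $O(\log(n/(1-\theta)))$ long moves per sub-piece; the last factor comes from the worst-case gap ratio a single sub-piece of $\clNw(\theta)$ can span before the combinatorial type must change. The bound on the number of combinatorial types follows from a counting argument on coordinates $i \in [n]$ crossing between the ``$x_i$ dominates $s_i$'' and ``$s_i$ dominates $x_i$'' regimes along $\Gamma_t$.

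The main obstacle is the universality claim at the heart of (b): showing that the subspace direction picked by the algorithm \emph{from the current iterate alone} is, on each sub-piece, at least as good as the ``ideal'' direction inherited from $\Gamma_t$. This requires quantifying how well a layered-least-squares step from the current iterate tracks a linear chord of $\clNw(\theta)$, and, in particular, converting $\theta$-wide-neighbourhood containment into a lower bound on the feasible step length that survives projection back into $\clN(\beta)$. The $\log(n/(1-\theta))$ factor is precisely the price of this wide-to-narrow neighbourhood conversion. Assembling the three estimates---enumeration reaches the correct partition, the $\ell_2$-residual of the subspace step is controllable, and the wide-neighbourhood geometry yields a quantitative step length---is where the technical bulk of the argument lies.
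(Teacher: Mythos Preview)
Your reduction to a per-segment bound is correct and matches the paper's strategy. The gap is in the per-segment argument, where you miss two ideas that carry essentially all the weight.

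First, the paper does \emph{not} split a single linear segment $\Gamma_t$ into up to $n$ sub-pieces with different partitions $(B,N)$. Instead it proves a \emph{polarization} lemma (\Cref{lem:wide-polar}, \Cref{thm:wide-polar}): any straight chord $[z^{(0)},z^{(1)}]\subseteq\clNw(\theta)$ forces the corresponding central-path segment $\CP[\gap(z^{(1)}),\gap(z^{(0)})]$ to be $\gamma$-polarized with $\gamma=\frac{(1-\theta)^2}{16n^3}$ for a \emph{single fixed} partition $B\cup N=[n]$, meaning $x_i\cp(\mu)\ge\gamma x_i\cp(\mu_0)$ on $B$ and $s_i\cp(\mu)\ge\gamma s_i\cp(\mu_0)$ on $N$ throughout. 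So your ``coordinates crossing between regimes along $\Gamma_t$'' counting argument is not what is happening: on a single linear segment, no such crossings occur on the central path, and this is precisely the content of polarization. Your sentence ``the last factor comes from the worst-case gap ratio a single sub-piece can span before the combinatorial type must change'' is therefore unfounded---the gap ratio along one polarized segment can be \emph{arbitrarily large}, and nothing in the wide-neighbourhood hypothesis bounds it.

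Second, the $n$ in $n^{1.5}$ has a different origin. Given a $\gamma$-polarized segment with fixed $(B,N)$, the paper tracks the singular values of the lifting map $\liftopt_z=\ell_N^{\hat x^{-1}W}$. The potential is $\zeta(z)$, the number of singular values above a threshold $\thresh/n^{1.5}$. The main dichotomy (\Cref{thm:progress_for_any_layers}) says that after each iteration either the iterate is within $\poly(n/\gamma)$ of the end of the segment, or $\sigma_{\zeta(z)}(\liftopt_{z^+})$ becomes $O(\poly(n/\gamma))$; together with the stability estimate that singular values scale roughly linearly with $\mu$ (\Cref{lem:stability_singular_values}), this forces $\zeta(\cdot)$ to drop by at least one every $O(\sqrt n\log(n/\gamma))$ iterations. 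Since $\zeta\le n$, this yields $O(n^{1.5}\log(n/\gamma))$ per segment (\Cref{thm:polar-ipm}). The analysis also uses the ``ideal direction'' $z\cp(\mu_1)-z$ to show that when the subspace step fails to make long progress, a new cheap-lift dimension must appear.

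In short: you need (i) linearity in $\clNw(\theta)\Rightarrow$ polarization with one fixed $(B,N)$, and (ii) a singular-value potential on the lifting map as the progress measure. Without these, step (b) of your plan has no mechanism to guarantee that a ``long move'' actually traverses a nontrivial fraction of a segment whose gap ratio is unbounded.
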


At a high level, our strategy is to show that any `somewhat straight'
segment of the central path, corresponding to a single straight segment in
the wide neighborhood $\cal N^{-\infty}(\theta)$, can be decomposed into at
most $n$ short segments of length $\poly(n/(1-\theta))$ (as measured by the
ratio of the start and end parameter), where consecutive short segments are
possibly separated by `long and straight' segments. To traverse the long and
straight segments we develop a novel \emph{subspace} LLS step, which
generalizes prior LLS steps from coordinate subspaces to general ones.   
Before describing this in more detail, we present a stronger form of Theorem~\ref{th:main_upper_bound-curve}, and two applications.

\paragraph{The max central path and the straight-line complexity}
We next formulate a slight\-ly stronger form of \Cref{th:main_upper_bound-curve}.
The piecewise linear curve $\Gamma$ in the statement above lives in a $2n$-dimensional space; our next statement argues in terms of $2n$ separate objects in $2$-dimensional space.

Recall that  $v^\star$ denotes the optimum value of~\eqref{LP_primal_dual}. The \emph{max central path} is defined as the parametric curve $g \mapsto \mz(g) \coloneqq (\mx(g), \ms(g)) \in \R^{2n}_{+}$, where $\mx_i(g)$ and $\ms_i(g)$ are the optimal values of the following parametric LPs, respectively:
\begin{equation}\label{eq:max_cp}
\begin{aligned}
\max \; & x_i \\
\mA x & = b \, , \; x \geq \0\\
\pr{c}{x} & \leq v^\star + g \, ,
\end{aligned}
\qquad \qquad 
\begin{aligned}
\max \; & s_i \\
\mA^\top y + s & = c \, , \; s \geq \0\\
\pr{b}{y} & \geq v^\star - g \, .
\end{aligned}
\end{equation}
As we show in Section~\ref{sec:max-cp}, the maps $\mx_i(g)$ and $\ms_i(g)$ are piecewise linear concave, and the number of pieces can be related to the complexity of the simplex method with the shadow vertex rule.

The {max central path} can be seen as a combinatorial proxy to the central path.
In Section~\ref{sec:max-cp}, we show the following relationship. 
The upper bounds are immediate by noting that the duality gap for $(x\cp(\mu),s\cp(\mu))$ is $n\mu$.
\begin{restatable}{siamlemma}{maxvscentralpath}\label{lem:cp-max-cp}
For every $\mu>0$ and the central path point $z\cp(\mu)=(x\cp(\mu),s\cp(\mu))$, 
\[
\frac{\mz(n\mu)}{2n}\le z\cp(\mu)\le \mz(n\mu)\, .
\]
\end{restatable}

\begin{figure}[htb!]
  \begin{center}
\begin{tikzpicture}[scale=1]
\draw[gray!30,very thin] (-0.5,-0.5) grid (6.5,6.5);
\draw[gray!50,->,>=stealth'] (-0.5,0) -- (6.5,0) node[color=black!80,right] {$x_1$};
\draw[gray!50,->,>=stealth'] (0,-0.5) -- (0,6.5) node[color=black!80,above] {$x_2$};
\clip (-0.5,-0.5) rectangle (6.5,6.5);

\fill[gray!30, opacity=0.6] (0,0) -- (5,1) -- (6,3) -- (2,5.5) -- cycle;
\draw[black,very thick] (0,0) -- (5,1) -- (6,3) -- (2,5.5) -- cycle;

\coordinate (p1) at (6,5.5);
\coordinate (p2) at (5.5,5.5);
\coordinate (p3) at (5,4.4);
\draw[red!90!black,ultra thick] (p1) -- (p2);
\draw[orange,ultra thick] (p2) -- (p3);
\draw[yellow!90!black, ultra thick] (p3) -- (0,0);

\draw[very thick,dashed,red!90!black] (-.5,9.5) -- (9.5,-.5);
\fill[red!90!black] (p1) circle (3pt);
\draw[very thick,dashed,orange] (-.5,8) -- (8,-.5);
\fill[orange] (p2) circle (3pt);
\draw[very thick,dashed,yellow!90!black] (-.5,6.5) -- (6.5,-.5);
\fill[yellow!90!black] (p3) circle (3pt);

\fill[black] (0,0) circle (3pt);

\end{tikzpicture}
\end{center}
   \caption{The projection of the max central path on the primal coordinates $(x_1,x_2)$ for the cost function $x_1 + x_2$. Dashed lines correspond to level sets at breakpoints.
  Note that the max central path does not lie in the feasible region $\Primal$.}
\label{fig:max-cp-figure}
\end{figure}
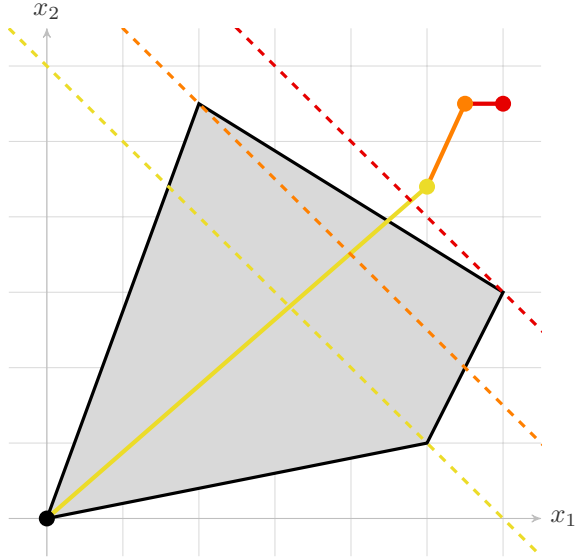

For each $i\in [n]$ and $\theta\in [0,1)$, we define the  \emph{primal and dual multiplicative neighborhoods of the max central path} as
\[
\begin{aligned}
\mnbp_{i}(\theta) &\coloneqq \{ (g,z) \in \R^2_+ \colon (1-\theta) \mx_i(g) \leq z \leq \mx_i(g) \} \, ,\\
\mnbd_{i}(\theta) &\coloneqq \{ (g,z) \in \R^2_+ \colon (1-\theta) \ms_i(g) \leq z \leq \ms_i(g) \}\, .
\end{aligned}
\]
\begin{definition}[Straight-line complexity]
\label{def:slc}
For $i\in [n]$, $\theta\in [0,1)$, and $0\le\underline g\le \overline g$, we define the \emph{primal straight-line complexity w.r.t.~coordinate $i$} as the minimum number of linear segments of any piecewise linear curve traversing the neighborhood $\mnbp_{i}(\theta)$ between parameter values $\overline g$ and $\underline g$, that is,
\begin{equation}\label{eq:slc-def}
\SLC^{\mathrm{p}}_{i,\theta}(\underline g, \overline g) \coloneqq 
\min 
\begin{multlined}[t]
\big\{ p \geq 1 : \exists (g_k, z_k)_{ k \in [p+1]} \in \R^2_+ \, , \enspace \overline g = g_1, \enspace \underline g = g_{p+1} \, , \\
\forall k \in [p] \, , \enspace [(g_k, z_k), (g_{k+1}, z_{k+1})] \subset {\mnbp_{i}}(\theta) \big\} \, .
\end{multlined}
\end{equation}
The \emph{dual straight-line complexity w.r.t.~coordinate $i$} is defined analogously for the dual neighborhood 
$\mnbd_{i}(\theta)$.
\end{definition}

We note that Lemma~\ref{lemma:slc2} shows that one would get an equivalent definition by allowing only breakpoints of the form $(g_k,z_k)=(g_k,\mx_i(g_k))$, and requiring $g_1 > g_2 > \dots > g_{p+1}$; this is simple consequence of the concavity of $g\mapsto \mx_i(g)$ shown in Lemma~\ref{lem:simplex-main}.

The stronger form of \Cref{th:main_upper_bound-curve} is as follows:
\begin{theorem}\label{th:main_upper_bound-slc}
Let $\beta\in (0,1/6]$, $\theta \in [0,1)$ and $\mu_0> \mu_1\ge 0$.
Given a starting point $z^0 \in \cal N^2(\beta)$ such that $\gap(z^0) \leq \mu_0$, the algorithm~\nameref{alg:subspace_ipm} finds
a solution $z^1\in \clN(\beta)$ with   $\gap(z^1) \leq \mu_1$ in 
\[
O\left(\frac{\sqrt{n}}{\beta}\log\left(\frac{n}{\beta(1-\theta)}\right)\min\left\{\sum_{i=1}^n \SLC^{\mathrm{p}}_{\theta,i}(n\mu_1,n\mu_0),\sum_{i=1}^n \SLC^{\mathrm{d}}_{\theta,i}(n\mu_1,n\mu_0)\right\}\right)
\]many iterations. If $\mu_1 = 0$, letting $z^1 = (x^1,s^1) \in \Primal \times
\Dual$, the supports $B :=\supp(x^1)$ and $N:=\supp(s^1)$ form a partition of $[n]$  and the algorithm additionally outputs $(v^1,w^1) \in \R^{2n}$
satisfying 
\begin{enumerate}
\item $v^1 \in \im(A^\top), v^1_B > 0$ and $Aw^1 = 0$, $w^1_N > 0$.
\item $\norm{(x^1_B v^1_B,s^1_N w^1_N)-\1_n} \leq \beta$.
\end{enumerate}
The algorithm can be implemented in the real RAM model, moreover, each  iteration runs in strongly polynomial time in the Turing model. \end{theorem}

Some remarks are in order. The computational models and the meaning of strong polynomiality in this context are explained in Section~\ref{sec:comp-models}. The condition for $\mu_1=0$ means that the final output is near the analytic centers of the primal and dual optimal faces, along with a certificate of this fact. This is discussed in Section~\ref{sec:cent-limit}. With respect to the iteration bound, the minimum
of the primal and dual straight line complexities is just to make the
statement symmetric; however, it can be shown that the two terms in the
minimum are within a constant factor of each other (see \Cref{sec:decompose}). 

From the above statement, Theorem~\ref{th:main_upper_bound-curve} follows
directly; the proof is in Section~\ref{sec:running-time-analysis} but we already give the intuition: 
according to Lemma~\ref{lem:cp-max-cp},  if  $\Gamma:
(\mu_1,\mu_0) \to \clNw(\theta)$, $\theta \in (0,1)$ is a piecewise linear
curve satisfying $\gap\left(\Gamma(\mu)\right) = \mu$, $\forall \mu \in
(\mu_1,\mu_0)$ with $T$ linear segments, then for each $i\in [n]$, the
projection of $\Gamma$ to $x_i$ gives a piecewise linear curve in
${\mnbp_{i}}(1-(1-\theta)/(2n))$ for the interval  $[n\mu_1,n\mu_0]$, and
analogously for the $s_i$'s.

\paragraph{An exponential upper bound on the number of iterations}
The number of piecewise linear segments of the curves $\mx_i(g)$ and $\ms_i(g)$ yield
trivial upper bounds on the straight-line complexities in Theorem~\ref{th:main_upper_bound-slc}.

These can be naturally interpreted in the context of the 
\emph{shadow vertex simplex rule}.
 Originally dubbed `parametric simplex' by Gass and
Saaty~\cite{jour/nrlq/GS55}, this is one of the most extensively analyzed simplex
rules from a theoretical perspective. The shadow vertex rule was used in Borgwardt's average case analysis \cite{Borgwardt2012} and in Spielman and Teng's smoothed analysis \cite{Spielman2004}. The interested reader may refer to the recent survey for a detailed exposition by Dadush and Huiberts~\cite{bwcachapter}.

Given a pointed polyhedron $\Primal \subseteq \R^n$ and two objectives $c^{(1)},c^{(2)} \in
\R^n$, the shadow vertex rule consists in iterating over the vertices of $\Primal$ successively maximizing the objectives $(1-\lambda) c^{(1)} + \lambda c^{(2)}$ as $\lambda$ goes from $0$ to $1$. Under non-degeneracy assumptions, the vertices of the path correspond to those vertices of the two-dimensional projection $\{ (\pr{c^{(1)}}{x}, \pr{c^{(2)}}{x}) \colon x \in \Primal \}$ that maximize some open interval of objectives $(1-\lambda) e^1 + \lambda e^2$, $\lambda \in [0,1]$ (where $e^1$ and $e^2$ are the standard basis for $\R^2$).
We denote by $S_{\Primal}(c^{(1)}, c^{(2)})$ the number of vertices of the projection of the simplex path in this two-dimensional projection; this corresponds to the number of non-degenerate pivots.

Recall that $(x^\star, s^\star)$ is the optimal solution of~\eqref{LP_primal_dual} at the central path limit point. In Section~\ref{sec:max-cp}, we show that
\begin{restatable}{siamlemma}{descriptionpiecesmaxcentral}\label{lem:simplex-main}
 The following hold:
\begin{enumerate}[label=(\roman*)]
\item $\forall i \in [n]$, $g \mapsto \mx_i(g)$ is a piecewise linear concave
non-dec\-reasing function with $S_\Primal(-s^{\star},e^{i})$ pieces. That is,
$\SLC^{\mathrm{p}}_{0,i}(0,\infty)= S_\Primal(-s^\star,e^{i})$.
\item $\forall i \in [n]$, $g \mapsto \ms_i(g)$ is a piecewise linear concave non-decreasing function with
$S_\Dual(-x^{\star},e^{i})$ pieces.
That is,
$\SLC^{\mathrm{d}}_{0,i}(0, \infty)= S_\Dual(-x^\star,e^{i})$.
\end{enumerate}
\end{restatable}

 As a consequence, we obtain the following bound:
\begin{theorem}\label{th:coro-max-central-path}
Given a starting point $z^0 \in \cal N^2(1/6)$, algorithm~\nameref{alg:subspace_ipm} finds an optimal solution of~\eqref{LP_primal_dual} in a number of iterations bounded by
\[
O\left(\sqrt{n} \log(n) \mathinner{}
\min \left\{ \sum_{i=1}^{n} S_\Primal(-s^\star,e^i), \sum_{i=1}^n S_\Dual(-x^\star,e^i)\right\}\right) \le O\left(\binom{n}{m} n^{1.5} \log(n)\right)\, .
\]
\end{theorem}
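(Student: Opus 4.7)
The plan is to derive Theorem~\ref{th:coro-max-central-path} as a direct composition of Theorem~\ref{th:main_upper_bound-slc} and Lemma~\ref{lem:simplex-main}, supplemented only by two elementary monotonicity observations about the straight-line complexity and by the classical vertex bound for basic polyhedra.

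First I would invoke Theorem~\ref{th:main_upper_bound-slc} with $\mu_0 \coloneqq \gap(z^0)$, $\mu_1 \coloneqq 0$, and a fixed constant $\theta = 1/2$, so that the factor $\log(n/(1-\theta))$ collapses to $O(\log n)$. This yields an iteration bound of $O\bigl(\sqrt{n}\,\log(n) \cdot \min\bigl\{\textstyle\sum_{i=1}^n \SLC^{\mathrm{p}}_{\theta,i}(0, n\mu_0),\ \sum_{i=1}^n \SLC^{\mathrm{d}}_{\theta,i}(0, n\mu_0)\bigr\}\bigr)$ for producing a point $z^1 \in \clN(\beta)$ with $\gap(z^1) = 0$, i.e.\ an exact optimal solution of~\eqref{LP_primal_dual}.

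The next step would bound each $\SLC^{\mathrm{p}}_{\theta,i}(0, n\mu_0)$ by $S_\Primal(-s^\star, e^i)$, and symmetrically for the dual. Two monotonicity properties, both immediate from the definitions, do the work: (i) for $\theta' \ge \theta$, the containment $\mnbp_i(\theta) \subseteq \mnbp_i(\theta')$ ensures that any admissible piecewise linear curve at parameter $\theta$ remains admissible at $\theta'$, so $\SLC^{\mathrm{p}}_{\theta', i}(\cdot, \cdot) \le \SLC^{\mathrm{p}}_{\theta, i}(\cdot, \cdot)$; and (ii) restricting an admissible piecewise linear curve on $(0, \infty)$ to any sub-interval never increases its number of pieces. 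Combining these with Lemma~\ref{lem:simplex-main}(i) gives $\SLC^{\mathrm{p}}_{\theta,i}(0, n\mu_0) \le \SLC^{\mathrm{p}}_{0,i}(0, \infty) = S_\Primal(-s^\star, e^i)$; the dual case is identical via Lemma~\ref{lem:simplex-main}(ii). Substituting these bounds into the iteration count from the previous step produces the finer bound of the theorem.

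Finally, for the coarser $\binom{n}{m}$ estimate, I would observe that $S_\Primal(c^{(1)}, c^{(2)})$ counts vertices of the two-dimensional projection $\{(\pr{c^{(1)}}{x},\pr{c^{(2)}}{x}) : x \in \Primal\}$, which is upper bounded by the number of vertices of $\Primal$ itself, in turn bounded by the number of bases $\binom{n}{m}$ of the system $\mA x=b$, $x\geq \0$; the same bound applies to $\Dual$. Summing over $i \in [n]$ yields $\sum_i S_\Primal(-s^\star, e^i) \le n\binom{n}{m}$, and combining with the finer bound gives the claimed $O(n^{1.5} \log(n) \binom{n}{m})$. The whole argument is essentially routine bookkeeping on top of the preceding results; the only point meriting explicit verification is the $\theta$-monotonicity of SLC, which however falls out immediately from the neighborhood containment, so no real obstacle is anticipated.
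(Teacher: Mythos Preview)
Your proposal is correct and takes essentially the same approach as the paper. The paper in fact does not spell out a separate proof of this theorem: it simply remarks that the number of linear pieces of $\mx_i(g)$ and $\ms_i(g)$ trivially upper bounds the straight-line complexities in Theorem~\ref{th:main_upper_bound-slc}, states Lemma~\ref{lem:simplex-main}, and then presents Theorem~\ref{th:coro-max-central-path} ``as a consequence''. Your write-up just makes explicit the routine steps (choosing $\theta$ constant, the monotonicity of $\SLC$ in $\theta$ and in the interval, and the vertex count $\le\binom{n}{m}$) that the paper leaves to the reader.
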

 Theorem~\ref{th:coro-max-central-path} thus complements the results of \cite{ABGJ18,ABGJ21} by giving a singly exponential upper bound. We note that the max central path also plays an important if implicit role in the papers~\cite{ABGJ18,ABGJ21,AGV22}, as it can be directly related to the tropical central path by the log-limit, see discussion in Section~\ref{sec:related_work}.

 Theorem~\ref{th:coro-max-central-path} assumes that a feasible starting point $z^0\in\cal N^2(\beta)$ is given. This assumption can be removed \eg~by using the standard homogeneous self-dual embedding \cite[Section 5.3.1]{Ye-book}. Then, the bounds in the theorem will refer to the shadow vertex paths and the number of vertices in the self-dual program.
\paragraph{Matching the complexity of any path-following method}
The second implication 
of Theorems~\ref{th:main_upper_bound-curve} and~\ref{th:main_upper_bound-slc} shows
that the number of iterations of~\nameref{alg:subspace_ipm} lower bounds, up to a factor $n^{1.5} \log n$, the running time of essentially \emph{any} interior point method.

Let $f$ be a self-concordant barrier over the polyhedron~$\Primal$ with complexity value $\vartheta_f$; we introduce these concepts in \Cref{sec:self-concordant}. This defines a corresponding central path, with  $x\sccp(g)$ denoting the unique point of the central path with gap $g > 0$. 
 Recall that $v^\star$ denotes the optimum value of \eqref{LP_primal_dual}.
 We define the \emph{wide neighborhood w.r.t. the barrier $f$ for a parameter $\theta\in(0,1)$} as
\begin{equation}
\cal N\sccp(\theta) \coloneqq \big\{ x \in \Pcal : x \geq (1-\theta) x\sccp(g) \enspace\text{where}\enspace \pr{c}{x} = v^\star + g \big\} \, . \label{eq:self_concordant_neighborhood}
\end{equation}
Note that the $\ell_2$-neighborhood
$\cal N^2(\theta)$ and the wide neighborhood $\cal N^{-\infty}(\theta)$ are defined in the primal-dual space $\R^{2n}$, whereas $\cal N\sccp(\theta)\subseteq \R^{n}$ is in the primal space. Projecting  the neighborhoods $\cal N^{-\infty}(\theta)$ to primal variables can be shown to be equivalent to the neighborhoods of the form $\cal N\sccp(\theta)$ for the logarithmic barrier $f=-\sum_{i\in[n]} \log (x_i)$. We refer to \Cref{sec:self-concordant} 
for a discussion on the generality of the neighborhoods $\cal N\sccp(\theta)$.

\begin{theorem}\label{th:coro-lower-bound}
Let $f$ be a self-concordant barrier over the polyhedron~$\Primal$ with complexity value $\vartheta_f$. Let $\beta\in(0,1/6]$,  
$\theta\in (0,1)$, and $g_0> g_1\ge 0$.
Assume an interior point method proceeds through $T$ straight-line steps from $x^0$ to $x^1$ inside the wide neighborhood $\cal N\sccp(\theta)$ with $g_0={\pr{c}{x^0} - v^\star}$ and $g_1={\pr{c}{x^1} - v^\star}$.

Given any $z^0\in \cal N^2(\beta)$ with  $n\gap(z^0)\le g_0$,
let $T'$ be the number of iterations of ~\nameref{alg:subspace_ipm}  to reach the first iterate $z^1\in\clN(\beta)$ with $n\gap(z^1)\le g_1$. Then,
\[
T'=O\left(T n^{1.5}  \log \left(\frac{n \vartheta_f}{1-\theta}\right)\right)\, .
\]
\end{theorem}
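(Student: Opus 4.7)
The plan is to reduce Theorem~\ref{th:coro-lower-bound} to Theorem~\ref{th:main_upper_bound-slc} by showing that the $T$-segment piecewise linear IPM trajectory inside $\cal N\sccp(\theta)$ projects, coordinate by coordinate, to piecewise linear curves of at most $T$ segments inside the primal max-central path neighborhoods $\mnbp_i(\theta')$ for a suitable $\theta'$. Let $\Gamma^{\mathrm{IPM}}$ denote the piecewise linear trajectory of the assumed IPM from $x^0$ to $x^1$ inside $\cal N\sccp(\theta)$. Fix $i\in[n]$ and consider the projection $\Gamma^{\mathrm{IPM}}_i \colon x \mapsto (\langle c, x\rangle - v^\star,\, x_i)$. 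Since both $\langle c, x\rangle$ and $x_i$ are affine in $x$, every straight-line segment of $\Gamma^{\mathrm{IPM}}$ is mapped to a line segment in the $(g,z)$-plane, producing a piecewise linear curve with at most $T$ pieces from $(g_0, x^0_i)$ to $(g_1, x^1_i)$.

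The central step is to show that every point $(g,z) = \Gamma^{\mathrm{IPM}}_i(x)$ lies in $\mnbp_i(\theta')$ for $\theta'$ satisfying $1-\theta' = \Omega((1-\theta)/\vartheta_f)$. The upper bound $z = x_i \leq \mx_i(g)$ is immediate from feasibility of $x$, since $\mx_i(g)$ is by definition the maximum of $x_i$ over $\Primal_g \coloneqq \Primal \cap \{x \colon \langle c, x\rangle \leq v^\star + g\}$. For the lower bound, the assumption $x \in \cal N\sccp(\theta)$ gives $z \geq (1-\theta)\, x\sccp(g)_i$, so it suffices to establish the following \emph{bridge lemma}: there is an absolute constant $C$ such that for all $i \in [n]$ and all $g > 0$,
\[
x\sccp(g)_i \;\geq\; \frac{\mx_i(g)}{C\vartheta_f} \, .
\]
Granting this and setting $1 - \theta' \coloneqq (1-\theta)/(C\vartheta_f)$, the lower bound $z \geq (1-\theta')\mx_i(g)$ follows. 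The bridge lemma is a consequence of the centering property of $\vartheta_f$-self-concordant barriers: by a Nesterov--Nemirovski-style symmetry estimate, for every $y \in \Primal_g$ the point $x\sccp(g) + (y - x\sccp(g))/(C\vartheta_f)$ lies again in $\Primal_g$, and choosing $y$ to maximize $x_i$ over $\Primal_g$ together with the sign constraint $y_i \geq 0$ gives the required coordinate-wise bound.

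With the bridge lemma in hand, each of the $n$ coordinate projections $\Gamma^{\mathrm{IPM}}_i$ witnesses $\SLC^{\mathrm{p}}_{i, \theta'}(g_1, g_0) \leq T$, hence $\sum_{i=1}^n \SLC^{\mathrm{p}}_{i, \theta'}(g_1, g_0) \leq nT$. Setting $\mu_j \coloneqq g_j/n$, the hypothesis $n\gap(z^0) \leq g_0$ gives $\gap(z^0) \leq \mu_0$, so Theorem~\ref{th:main_upper_bound-slc} applies and bounds the number of iterations of \nameref{alg:subspace_ipm} by
\[
O\!\left(\sqrt{n}\,\log\!\left(\frac{n}{1-\theta'}\right) \cdot nT\right) \;=\; O\!\left(n^{1.5}\, T\, \log\!\left(\frac{n\vartheta_f}{1-\theta}\right)\right),
\]
which matches the claimed bound. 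The main obstacle is the bridge lemma itself: while the centering of analytic centers of bounded convex bodies is classical, the central path point $x\sccp(g)$ lies on the boundary $\{x : \langle c, x\rangle = v^\star + g\}$ of $\Primal_g$ rather than in its interior, so the symmetry estimate must be invoked indirectly---most naturally via the augmented $(\vartheta_f+1)$-self-concordant barrier $\bar f(x) \coloneqq f(x) - \log(v^\star + g - \langle c, x\rangle)$ on the interior of $\Primal_g$, whose minimizer is close to $x\sccp(g)$ and is a genuine analytic center to which the symmetry bound directly applies.
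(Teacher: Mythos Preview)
Your reduction is exactly the paper's: project the IPM trajectory coordinatewise to the $(g,x_i)$-plane, show each projection lies in $\mnbp_i(\theta')$ with $1-\theta'=\Theta((1-\theta)/\vartheta_f)$, conclude $\sum_i \SLC^{\mathrm{p}}_{i,\theta'}(g_1,g_0)\le nT$, and invoke Theorem~\ref{th:main_upper_bound-slc}. Your bridge lemma is the paper's Proposition~\ref{prop:mcp-sccp}.

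The only difference is in how that lemma is proved, and the paper's route sidesteps the obstacle you flag. Rather than passing to the augmented barrier and its analytic center, the paper uses the first-order condition $\mu\nabla f(x\sccp(g))+c\in L^\perp$ together with \cite[Theorem~2.3.4]{Renegar01}, which directly gives $\Primal_g\subset \cal B_{x\sccp(g)}(x\sccp(g);4\vartheta_f+1)$ for the central-path point itself; a coordinatewise estimate (\cite[Lemma~4]{AGV22}) on Dikin balls inside the nonnegative orthant then yields $\mx(g)<2(2\vartheta_f+1)\,x\sccp(g)$. So no auxiliary center and no closeness argument are needed.
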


Thus---up to a polynomial factor---our algorithm matches the running time of any IPM for any self-concordant barrier function staying in the extremely wide neighborhood $\cal N\sccp\left(1-1/2^{\mathrm{poly}(n)}\right)$. In particular, we obtain  polynomial-time bounds in case the bit-complexity or a condition number such as $\bar\chi^*_\mA$ is bounded.

\paragraph{Comparison to the Trust Region IPM}
\nameref{alg:subspace_ipm} also has an interesting relation to the Trust Region IPM algorithm by Lan, Monteiro, and Tsuchiya \cite{LMT09}. The trust region steps are obtained as optimal solutions to primal and dual quadratic programs (see~\eqref{eq:trust-region} below). These programs in essence capture the longest possible step achievable at the current point (up to a certain factor). However, it is currently not known how to solve these programs to sufficient accuracy in strongly polynomial time (though this can be done in weakly polynomial time). Lan, Monteiro, and Tsuchiya show in \cite{LMT09} that the number of iterations of the trust region algorithm can be bounded as $O(n^{3.5} \log (\bar\chi^*_{\mA}+n))$, by adapting the analysis of the LLS methods \cite{Monteiro2003,Vavasis1996}. 

The step directions used by our algorithm are feasible solutions to
\eqref{eq:trust-region} for a suitable parameter. This implies that the steps
of the Trust Region algorithm are always at least as long as the steps in our
algorithm; as a consequence, the iteration bounds of our
algorithm are also applicable to the Trust Region algorithm. Whereas any
individual step of our algorithm could be arbitrarily worse than the one
using the trust region step, \Cref{th:coro-lower-bound} implies that overall
we may only take $O(n^{1.5} \log {n})$ more iterations. We emphasize that
\cite{LMT09} only provides a $\bar\chi^*_{\mA}$ dependent iteration bound,
and we do not see a way to obtain even an $f(n)$ bound for their algorithm
without using the majority of the analysis of \nameref{alg:subspace_ipm}.

\subsection{Techniques}
\label{sec:techniques}

We now explain the key ideas of the algorithm \nameref{alg:subspace_ipm} and the analysis.

\subsubsection{Polarization of the Central Path}

The first key idea behind the proof of Theorems~\ref{th:main_upper_bound-curve} and \ref{th:main_upper_bound-slc} is the following: every linear segment
in the wide neighborhood gives rise to a \emph{polarized segment} of the
central path.
A  segment of the central path $\CP[\mu_1,\mu_0] \coloneqq  \{z\cp(\mu): \mu
\in [\mu_1,\mu_0]\}$, $0 \leq \mu_1 < \mu_0$,
is polarized, if it admits a partition $B \cup N =
[n]$ such that the primal variables in $B$ are barely changing while those in
$N$ are scaling down linearly with the parameter $\mu$ (vice versa for the
dual variables). More precisely, $\forall \mu \in [\mu_1,\mu_0]$, we require
\begin{align}
\polarized x_i\cp(\mu_0)  &\leq x_i\cp(\mu) \leq n x_i\cp(\mu_0)\, ,\quad \forall i \in B\, , \nonumber \\
\frac{\mu}{n \mu_0} x_i\cp(\mu_0) &\leq x_i\cp(\mu) \leq \frac{\mu}{\polarized \mu_0} x_i\cp(\mu_0)\, ,\quad  \forall i \in N\, , \label{eq:polar-req}
\end{align}
where $\polarized \in (0,1]$ is a \emph{polarization parameter} (see \Cref{def:polarized} and \Cref{cor:polarization}). By definition of the central path, the same relation holds for dual variables
$s\cp(\mu)$, $\mu \in [\mu_1,\mu_0]$, with the roles of $N$ and $B$ swapped. We note that the upper bounds on $x_i\cp(\mu)$ for $i\in B$ and the lower bounds on $x_i\cp(\mu)$ for $i\in N$ hold  by the near-monotonicity property  of the central path (see Lemma~\ref{lem: central_path_bounded_l1_norm}); the important parts of the definition are the other two bounds.

For simplicity of notation, let us restrict to line segments between two
points on the central path. To relate polarization to the wide neighborhood,
we show that if the line segment $[z\cp(\mu_1),z\cp(\mu_0)]$ between central path
points is contained in the wide neighborhod $\cal N^{-\infty}(\theta)$, then
the corresponding segment of the central path is polarized with $\polarized =
\frac{(1-\theta)^2}{16n^3}$ with respect to some partition $B \cup N = [n]$
(see Lemma~\ref{lem:wide-polar} for the general statement). 

One should read this last statement as saying that a segment of the central path is
`approximately linear' if and only if it is polarized (in fact, one can show that segment is
$1$-polarized if and only if it is linear).
The link between polarization and linearity
is surprisingly elementary; it follows from the analysis of
the inequalities of the wide neighborhood~\eqref{eq:wide_neighborhood}: 
\begin{multline*}
\big((1-\alpha) x_i\cp(\mu_0) + \alpha x_i\cp(\mu_1)\big)~\big((1-\alpha) s_i\cp(\mu_0) + \alpha
s_i\cp(\mu_1)\big) \\ \geq (1-\theta)((1-\alpha)\mu_0 + \alpha \mu_1)\, ,
 \,  \forall \alpha
\in [0,1]\, ,\, i \in [n]\, ,
\end{multline*}
where we recall that $z\cp(\mu_0) = (x\cp(\mu_0),s\cp(\mu_0)), z\cp(\mu_1) =
(x\cp(\mu_1),s\cp(\mu_1))$. For example, if $\theta = 0$, it is not hard to check
that for each $i \in [n]$, one must have either $x_i\cp(\mu_0) = x_i\cp(\mu_1)$ and
$s_i\cp(\mu_1) = \frac{\mu_1}{\mu_0} s_i\cp(\mu_0)$ (i.e., $i \in B$) or $x_i\cp(\mu_0) = 
\frac{\mu_1}{\mu_0} x_i\cp(\mu_1)$ and $s_i\cp(\mu_1) = s_i\cp(\mu_0)$ (i.e., $i \in N$).    

Given the above, the main task in proving \Cref{th:main_upper_bound-curve}, namely
traversing linear segments in the wide-neighborhood, can be reduced to
traversing $\polarized$-polarized segments of the central path. The main guarantee of
our algorithm~\nameref{alg:subspace_ipm} is in fact that it can
traverse any $\polarized$-polarized segment of the path in
$O(n^{1.5}\log(n/\polarized))$ iterations (see Theorem~\ref{thm:polar-ipm})

To derive the stronger bound in \Cref{th:main_upper_bound-slc}, a key step is
to use the max central path to guide the decomposition of the central path into
polarized segments. We will show that one can decompose the central path into
polarized segments where the polarization partitions do not change ``too
quickly'' from segment to segment. Specifically, the sum of partition changes
will be bounded by the sum of straight line complexities of either the primal
or dual. The formal statement is given below: 

\begin{restatable}{theorem}{deltaslc}\label{thm:delta-slc}
Let $\theta \in [0,1)$ and $\mu_0 > \mu_1\ge 0$. The segment $\CP[\mu_1,\mu_0]$ can be decomposed into a sequence of $\frac{1-\theta}{4n}$-polarized segments with partitions $(B^{(k)}, N^{(k)})$, $k\in [T]$, such that
\[
\max \left\{ \sum_{k=1}^T | N^{(k)} \Delta N^{(k-1)} |, T \right\} \leq 2\min\left\{ \sum_{i=1}^n \SLC^{\mathrm{p}}_{\theta, i}(n\mu_1, n\mu_0),  \sum_{i=1}^n \SLC^{\mathrm{d}}_{\theta, i}(n\mu_1, n\mu_0)\right\} 
\]
where  $N^{(0)}\coloneqq\emptyset$. 
\end{restatable}

For the sake of symmetry, we state the upper bound above terms of the minimum
of either primal or dual straight line complexities. However, it can be shown
that both are equivalent up to a constant factor (see \Cref{lem:dual-slc}).

We note that polarization plays an important if implicit role in prior
layered least squares analyses~\cite{DHNV20,MonteiroT05,Vavasis1996}. In
particular, the `long and straight' segments in these works are all
polarized. What was unclear in these works, however, is whether polarization
\emph{by itself} was sufficient to make a segment easy to traverse. Indeed,
these works all crucially rely upon numerical condition numbers of the
instance which can be effectively unbounded in the present context. Beyond
the LLS context, we are further unaware of central path analyses exploiting
the tight connection between approximate linearity and polarization, and we
hope this will encourage future study. 

As is clear from the definition, polarization provides us extremely useful
`long-range' control over the evolution of variables on a segment. Note
that $\polarized$-polarization is mostly interesting when the segment itself
is \emph{long}, namely, when $\mu_0/\mu_1 \gg 1/\polarized$. We now explain how
to leverage this control to traverse any $\polarized$-polarized segment using
\emph{subspace LLS steps}. 

\subsubsection{Traversing a Polarized Segment}

Let $\CP[\mu_1,\mu_0]$, $0 \leq \mu_1 < \mu_0$,  be a $\polarized$-polarized
segment with partition $B \cup N = [n]$. 

For simplicity of presentation, let us assume that given any iterate $(x,s)$
in the narrow neighborhood $\cal N^2(1/6)$ used in our algorithm,  we can jump to the exact central path point $z\cp(\mu) \in \CP =
\cal N^2(0)$ with $\mu = \gap(x,s)$ for free. Let us further assume
that the algorithm knows the partition $B,N$ (we discuss how to effectively
compute it at the end) and that we are given the starting point $z^{(0)} \coloneqq 
z\cp(\mu_0)$.    

Our abstract algorithm will thus compute iterates $z^{(0)},z^{(1)},\dots$ on
the central path $\CP$ with $\gap(z^{(0)}) > \gap(z^{(1)}) > \dots$. To move
from $z^{(t)}$ to $z^{(t+1)}$, we first compute a \emph{movement direction} 
\begin{align*}
\Delta z^{(t)} = (\Delta x^{(t)},\Delta s^{(t)}) \in \ker(\mA) \times \im(\mA^\T) \eqqcolon W \times W^\perp,
\end{align*}
together with a step-length $\alpha^{(t)} \in [0,1]$,  chosen such that  $z^{(t)} + \alpha \Delta z^{(t)} \in \clN(1/6)$, $0 \leq
\alpha \leq \alpha^{(t)}$. Lastly, assume we can jump for free to $z^{(t+1)} \in \CP$
satisfying $\gap(z^{(t+1)}) = \gap(z^{(t)} + \alpha^{(t)} \Delta z^{(t)})$.

Given this setup, our goal is to compute movement directions and step-lengths, such that after
$k = O(n^{1.5} \log(n/\polarized))$ iterations, we have $\gap(z^{(k)}) \leq
\mu_1$, i.e., that we have traversed the segment.
We would like to emphasize that our algorithm will in fact compute the
movement direction $\Delta z^{(t)}$ using only \emph{local information} at
$z^{(t)}$, without any explicit knowledge of the polarized segment.

A natural movement direction is \emph{affine scaling} used in predictor-corrector methods, see Section~\ref{sec:aff-lls}. This direction guarantees multiplicative $1-\Omega(1/\sqrt{n})$
decrease in normalized gap per step.  Hence, if 
$\mu_0/\mu_1 \leq
\poly(n,1/\polarized)$, then simply using $\sqrt{n}
\log(\mu_0/\mu_1)$ affine scaling iterations is sufficient for our purposes. 

Thus, we may assume that $\mu_0/\mu_1 \gg
\poly(n,1/\polarized)$. In this case, we will show that the affine scaling direction $(\Delta x^\as,\Delta s^\as)$ at the current iterate $(x^{(t)},s^{(t)})$ reveals the correct partition $B \cup N =
[n]$ whenever a sufficiently long step exists . This is because the standard affine scaling step
itself exhibits a polarized behaviour: we can simply select $B$ as the set of coordinates $i$ where $|\Delta x^\as_i /x^{(t)}_i|<|\Delta s^\as_i/s^{(t)}_i|$, i.e., the relative primal movement is smaller than the relative dual movement (see \Cref{def:associated_partition}).

\paragraph{Trust Region Programs and Subspace LLS}

The trust region programs introduced by Lan, Monteiro, Tsuchiya~\cite{LMT09} provide a good starting point for defining our movement direction  $\Delta z^{(t)} = (\Delta x^{(t)},$ $\Delta s^{(t)}) \in W \times W^\perp$ from an iterate $z^{(t)}=(x^{(t)},s^{(t)}) \in \CP[\mu_1,\mu_0]$ and a
given a partition $[n]=B \cup N$: 
\begin{equation}\label{eq:trust-region}
\begin{aligned}
\min_{\Delta x \in W} &\left\{\norm{(x^{(t)}_N+\Delta x_N)/x^{(t)}_N}: \norm{\Delta x_B/x^{(t)}_B} \leq \nu\right\}\, ,  \\
\min_{\Delta s \in W^\perp} &\left\{\norm{(s^{(t)}_B+\Delta s_B)/s^{(t)}_B}: \norm{\Delta s_N/s^{(t)}_N} \leq \nu\right\}\, ,
\end{aligned}
\end{equation}
where $\nu = O(\beta)$ is sufficient for the induced step to stay inside the $\cal N^2(\beta)$ neighborhood. We use the notation 
$\Delta x/x^{(t)} \coloneqq  (\Delta x_1/x^{(t)}_1, \dots, \Delta x_n/x^{(t)}_n)$ and
similarly for $\Delta s/s^{(i)}$. The norms
$\norm{x/x^{(t)}}$ and $\norm{s/s^{(t)}}$ 
 are the so-called primal and dual
\emph{local norms} at $x^{(t)}$ and $s^{(t)}$.\footnote{Recall the assumption that $(x^{(t)},s^{(t)})$ is on the central path.} By definition, the optimal
primal trust region direction $\Delta x^{*}$ achieves a maximal
multiplicative decrease on the coordinates in $N$ while `barely moving' the
coordinates in $B$ as measured in the local norm. The optimal dual
direction $\Delta s^{*}$ achieves the same on the dual side with the role of
$N$ and $B$ swapped. 

Note that these directions mesh well with polarization of the segment
$\CP[\mu_1,\mu_0]$. In particular, they reflect the idea that the coordinates
of $x\cp(\mu)$ in $N$ should be linearly scaling down while those in $B$ are
staying mostly fixed, and vice versa for $s\cp(\mu)$.  As shown in~\cite{LMT09} (see also \Cref{prop:trust-to-step-size}), moving in any
direction
$\Delta z^{(t)} = (\Delta x^{(t)}, \Delta
s^{(t)})$ corresponding to feasible solutions to~\eqref{eq:trust-region}, 
the normalized gap can be reduced as
\begin{equation}\label{eq:trust-region-progress}
\gap(z^{(t+1)})/ \gap(z^{(t)}) = O\left(\norm{(x^{(t)}_N+\Delta x^{(t)}_N)/x^{(t)}_N} + \norm{(s^{(t)}_B+\Delta s^{(t)}_B)/s^{(t)}_B}\right).  
\end{equation}
That is, we can achieve a drop that corresponds to the sum of primal and dual objective values.

In many ways, the trust region direction can be seen as the `optimal'
movement direction. However, \cite{LMT09}  solves the quadratic convex
programs in \eqref{eq:trust-region} in weakly polynomial time with dependence
on the vectors $b$ and $c$ in \eqref{LP_primal_dual}. It is not known whether
a strongly polynomial algorithm (with dependence only on $n$) exists.
Further, the analysis in \cite{LMT09} relies on combinatorial progress
measures adapted from the LLS analyses, which are to coarse to directly
measure progress on a polarized segment (in these analyses, combinatorial
progress is only guaranteed every $\Omega(\sqrt{n} \log(n+\bar{\chi}^*_\mA))$
iterations).   

\medskip

Instead of optimally solving \eqref{eq:trust-region}, we introduce what we
call \emph{subspace LLS steps} that yield `good enough' approximate solutions
to make rapid progress along a polarized segment. We restrict the set of primal and dual directions $\Delta x \in V^{(t)}$ and $\Delta
s \in U^{(t)}$ for subspaces $V^{(t)}\subseteq W$ and
$U^{(t)}\subseteq W^\perp$ satisfying:
\[
\begin{aligned}
\forall\ \Delta x \in V^{(t)}, \norm{\Delta x_B/x_B^{(t)}} &\leq \thresh \norm{\Delta x_N/x_N^{(t)}}\, ,\\
\forall\ \Delta s \in U^{(t)}, \norm{\Delta s_N/s_N^{(t)}} &\leq \thresh \norm{\Delta s_B/s_B^{(t)}}\, ,
\end{aligned}
\]
where we set $\thresh = O(\nu/\sqrt{n})$. We call any such subspaces
$V^{(t)}$ and $U^{(t)}$ \emph{cheap lift subspaces} with lifting cost
$\thresh$. Note that for any partial vector $\Delta x_N \in \pi_N(V^{(t)})
\subseteq \pi_N(W)$, the subspace $V^{(t)}$ provides a way to ``lift''
$\Delta x_N$ to a full vector $(\Delta x_B, \Delta x_N) \in W$ at a ``cost''
of $\norm{\Delta x_B/x_B} \leq \tau \norm{\Delta x_N/x_N}$. Similarly, $U^{(t)}$
provides a way to lift partial vectors in $\pi_B(U^{(t)}) \subseteq
\pi_B(W^\perp)$ cheaply into $W^\perp$. We formally define the associated
`lifting operator' in Section~\ref{sec:lifting_maps}. With a cheap lift
subspace at hand, it is not hard to show that any optimal solution to the
subspace constrained trust-region program automatically satisfies the norm
constraints in program~\eqref{eq:trust-region}. Therefore, by restricting
ourselves to search directions $\Delta x \in V^{(t)}$ and $\Delta s \in
U^{(t)}$ as above, we can solve simple unconstrained minimum-norm point
problems in the local norms while still guaranteeing that the computed
search directions are feasible for the trust-region program.

There is a lot of flexibility to choose these subspaces. A canonical choice
is a lifting of the subspace spanned by the singular vectors of the `lifting operator'
(see Definition~\ref{def:lifting-map}) whose corresponding singular values
are at most $\thresh$. While singular values and their corresponding singular
subspaces cannot be computed exactly, they can be closely approximated in
strongly polynomial time (see \Cref{sec:sing-val}). Furthermore, as we show
in \Cref{sec:slls-alg}, even very coarse approximations of the singular values and subspaces will suffice.

\paragraph{Analyzing Subspace LLS} 
At each iteration, our algorithm computes the affine scaling steps and the
subspace LLS steps as above, and uses the one that enables more progress
along the central path. For simplicity of exposition, we use the canonical
cheap lift subspaces $U^{(t)}, V^{(t)}$ to compute the subspace LLS
direction $\Delta z^{(t)}$ as above. 

Let us now explain the key idea in showing that subspace LLS steps can reach
the end of the current $\polarized$-polarized segment $\CP[\mu_1,\mu_0]$ in
$O(n^{1.5} \log(n/\polarized))$ iterations.  Let $k = \Omega(\sqrt{n}
\log(n/\polarized))$.  Given any iterate $z^{(t)} \in \CP[\mu_1,\mu_0]$, if
$\gap(z^{(t+k)})>\mu_1$---i.e., we have not reached the end of the
segment---then we show that  both $\dim(U^{(t+k)}) > \dim(U^{(t)})$ and
$\dim(V^{(t+k)}) > \dim(V^{(t)})$. The overall bound follows since this can
occur at most $n$ times. 

To get this result, we analyze the evolution of what we call the
`\idealdirection' at $z^{(t)}$, which we define to be $z\cp(\mu_1)-z^{(t)}$,
i.e., the difference between the current iterate and the end of the segment.
A crucial observation is that if $z\cp(\mu_1)-z^{(t)}$ were a feasible
solution to~\eqref{eq:trust-region}, then following this direction would get
to within a $\poly(n/\polarized)$ factor for the end of the segment in one
step (though we do not know how to compute it). Furthermore, the
\idealdirection is never far from being feasible, in particular, it is
feasible if the bound of $\nu$ is replaced by $O(n)$. Subspace LLS steps will
allow us to leverage the \idealdirection via the following dichotomy. Given
an iterate $z^{(t)}$, either the \idealdirection $z\cp(\mu_1)-z^{(t)}$ is
mostly ``aligned'' with the LLS subspaces $U^{(t)} \times V^{(t)}$, in which
case the LLS step brings us close to the end of the segment, or if not, it
brings us close to the time where the cheap lift subspaces increase in dimension. In the latter case, we crucially use
the polarization property to analyze the evolution of the singular values of
lifting operators. In both cases, the notion of close will mean that decreasing
the gap by an additional $\poly(n/\polarized)$ will be sufficient enter a new
part of the segment. In particular, $O(\sqrt{n}\log(n/\polarized))$ additional iterations will suffice to make the desired progress. 

This concludes our overview of the proof of \Cref{th:main_upper_bound-curve};
the detailed argument is presented in \Cref{sec:running-time-analysis}. In
\Cref{sec:amortized}, we present an amortized analysis that yields the
stronger bound in \Cref{th:main_upper_bound-slc} in terms of the
straight-line complexities. In particular, given a piecewise linear curve in
the central path neighborhood where the subsequent pieces are polarized with
partitions $(B^{(1)},N^{(1)})$, $(B^{(2)},N^{(2)}),\ldots,(B^{(T)},N^{(T)})$,
we show that the number of iterations can be bounded by 
\begin{equation}
  O\Big(\sqrt{n} \log(n/\polarized)\sum_{t=1}^{T} (|N^{(t)}\Delta N^{(t-1)}|+1)\Big), \label{eq:amortized-iteration-bound}
\end{equation}
with the convention that $N^{(0)} =
\emptyset$ (see \Cref{thm:amortized_main}). This can be better than the
previous bound $O(n^{1.5} \log(n/\polarized)T)$ if the number of indices
changing between polarizing partitions in subsequent polarized segments is
small compared to $n$. The proof of \Cref{th:main_upper_bound-slc} now
follows by combining~\eqref{eq:amortized-iteration-bound} together with
\Cref{thm:delta-slc}.

\subsection{Related Work}\label{sec:related_work}

Interior points methods have been a tremendously active and fruitful research area since the seminal works of Karmarkar~\cite{Karmarkar84} and
Renegar~\cite{Renegar1988} in the 80's. Remarkable advances have been made both in speed as well as applicability of IPMs.
We first briefly review works that---unlike the present paper---aim for $\eps$-approximate solutions. A key ingredient has been the use of different, self-concordant barrier functions. Like the logarithmic barrier, every such function gives rise to a notion of central path. 
In the general setting, the iteration complexity to get an $\eps$-approximation of the optimal value is bounded by $O(\vartheta_f^{1/2} \log \eps^{-1})$ for the complexity parameter $\vartheta_f$. General bounds on self-concordant barriers were given by Nesterov and Nemirovski \cite{Nesterov1994}, improved recently by Lee and Yue~\cite{LeeYue18} and Chewi~\cite{Chewi:2023}. 
Specific barrier functions include  Vaidya's volumetric barrier~\cite{Vaidya1989}, the entropic barrier by Bubeck and Eldan~\cite{BE14}, and the weighted log-barrier by Lee and Sidford~\cite{LS14, LS19}. 

Recent improvements make use of efficient data structures to amortize the cost of the iterative updates, and 
work with approximate computations, see Cohen, Lee and
Song~\cite{CLS19}, van den Brand~\cite{vdb20}, and van den Brand, Lee,  Sidford, and  Song~\cite{vdb20-tall-dense}.
For special classes of LP such as network flow and matching problems, even faster algorithms have been obtained using, among other techniques, fast
Laplacian solvers~\cite{ST04}, see e.g.~\cite{Axiotis2022,Daitch2008,Gao2022,Madry2013,Brand2020,Brand2021}, culminating in the very recent almost-linear time minimum-cost flow algorithm~\cite{Chen2022maximum}.

\medskip

Layered least squares IPMs, initiated by Vavasis and Ye \cite{Vavasis1996} find exact optimal solutions and their running time bound is independent of $b$ and $c$. Improved LLS algorithms were given by Megiddo, Mizuno, and Tsuchiya  \cite{MMT98} and Monteiro and Tsuchiya  \cite{Monteiro2003,MonteiroT05}. 
As discussed previously, scaling invariant algorithms with a $\bar\chi^*_{\mA}$ dependence are the Trust Region algorithm by
Lan, Monteiro, and Tsuchiya \cite{LMT09}, and the LLS algorithm \cite{DHNV20} that relies on approximating circuit imbalances. 

\medskip

There is an interesting connection between IPMs and differential geometry.
Sonnevend, Stoer, and Zhao~\cite{Sonnevend1991} introduced a 
primal-dual curvature concept for the central path, and related the curvature integral to the iteration complexity of IPMs.
Monteiro and
Tsuchiya~\cite{Monteiro2008} showed that a curvature integral is bounded by
$O(n^{3.5} \log(\bar{\chi}^*_{\mA}+n))$. This has been extended to SDP and symmetric cone programming~\cite{kakihara2014}, and it was also studied in the context of information geometry \cite{kakihara2013}.

\medskip

Relating central paths and simplex paths has been mainly been explored in the context of building LPs with pathological properties. On top of the construction of~\cite{AGV22} that we already discussed, Deza, Nematollahi and Terlaky~\cite{DezaNT} built a Klee--Minty cube with exponentially many redundant inequalities where the central path is distorted into the neighborhood of a simplex path that visits all $2^n$ vertices. 

\medskip

The max central path studied in this paper is related to the tropical central path in~\cite{ABGJ18,ABGJ21,AGV22}. The latter arises when studying parametric families of LPs where the input $(\mA, b, c)$ depends on a parameter~$t > 1$. The tropical central path is defined as the log-limit, \ie, the limit as $t \to \infty$ of the image under the map $z \mapsto \log_t z = \frac{\log z}{\log t}$, of the central path of these LPs. In~\cite{ABGJ18,ABGJ21,AGV22}, it was shown that the tropical central path corresponds to the greatest point (entrywise) of the log-limit of the feasible sets of~\eqref{eq:max_cp}. This turns out to be precisely the log-limit of the max central path.

\medskip

As stated earlier in the introduction, there is no known polynomial time
variant of the simplex method which traverses the edges of a base polyhedron.
Deviating from this model, Kelner and Spielman~\cite{kelner2006randomized}
gave a weakly-polynomial time LP algorithm which uses the simplex method on
random perturbations of a polyhedron. Specifically, their algorithm
determines unboundedness for $\cal P \coloneqq \{x \in \R^n: \mA x \leq \1\}$, which is strongly polynomially
equivalent to general LP. They apply a shadow vertex simplex method on $\cal P$
with randomly perturbed right hand sides as part of a subroutine which either
computes a point of large norm -- used to make $\cal P$ ``round'' -- or computes
a suitable certificate of boundedness.  While based on the shadow vertex
simplex method, their algorithm is inherently weakly polynomial and does not
admit a running time that only depends on $n$ and $m$.    

\subsection{Organization of the Paper}

In \Cref{sec:prelims}, we introduce our notation, the basic tools in linear
algebra we require (\Cref{sec:lin-alg-prelim}), as well as the important
properties of the central path and its neighborhoods
(\Cref{sec:ipm-prelims}). We also discuss the affine scaling steps used in
predictor-correct methods (\Cref{sec:aff-lls}). \Cref{sec:polar} deals with the polarized segments of the
central path and their connection with linear segments in the wide
neighborhood. \Cref{sec:max-cp} studies the
max central path and shows how to use it to decompose the central into polarized segments, proving \Cref{thm:delta-slc}. \Cref{sec:trust-region}, proves important 
properties of trust-region steps, and in particular, how to identify optimal
trust-region partitions. \Cref{sec:lifting-slls} develops the theory of
subspace layered least squares directions and cheap lift subspaces, and also gives
algorithms for computing them. From the theory side, the fundamental concept
of a lifting operator is introduced in \Cref{sec:lifting_maps}, and the relationship between cheap
lift subspaces and approximate singular subspaces of the lifting operator is
given in \Cref{sec:cl}. The algorithm~\nameref{alg:cheap_lift}, used to compute these subspaces, is presented analyzed in \Cref{sec:comp-cl}. The algorithm~\nameref{alg:subspace_ipm} is introduced in
\Cref{sec:slls-alg}, where the analysis for polarized segments is presented
in \Cref{sec:running-time-analysis}, proving
Theorem~\ref{th:main_upper_bound-curve}. Section~\ref{sec:amortized} presents
an amortized analysis, leading to the proof of
Theorem~\ref{th:main_upper_bound-slc}. In \Cref{sec:sing-val}, we present a
deterministic strongly polynomial algorithm for computing approximate
singular value decompositions, which is needed to compute the cheap lift
subspaces used by the IPM. Finally, \Cref{sec:self-concordant} discusses
interior point methods with self-concordant barrier functions, and proves
Theorem~\ref{th:coro-lower-bound}.
 Omitted proofs are deferred to the Appendix.
 
\section{Preliminaries} \label{sec:prelims}

\subsection{Notation} We let $\R_{++}$ denote the set of positive reals, $\R_{+}$ the set of
nonnegative reals, and $\mathbb{N} = \{1,2,\dots\}$ denote the natural numbers. For $n \in \mathbb{N}$, we let $[n]\coloneqq\{1,2,\ldots,n\}$. $B,N \subseteq [n]$ form a partition of $[n]$
if $B \cup N = [n]$ and $B \cap N = \emptyset$. We say that $(B,N)$ is a
\emph{non-trivial} partition of $[n]$ if additionally $B \neq \emptyset$ and
$N \neq \emptyset$, and is a \emph{trivial} partition otherwise.  For $a \in \R$, $\lceil a \rceil \in \mathbb{Z}$ is the
smallest integer greater than or equal to $a$, and $\lfloor a \rfloor \in \mathbb{Z}$ is the largest integer less than or equal to $a$. For $a \in \R_{++}$, we let $\log(a)$ denote the natural logarithm of $a$. For $a \in \R$ and $S \subseteq \R$, we define $1[a \in S] = 1$ if $a \in S$ and $0$ otherwise.  Throughout, we consider inequalities of vectors coordinate-wise. Let $e^i\in \R^n$ denote the
$i$th standard basis vector, and $\1_n,\0_n \in \R^n$ denote the vector of
all ones and all zeros respectively. For $x \in \R^n$, we let $\supp(x) \coloneqq \{i \in [n]: x_i \neq 0\}$ denote the support of $x$. For two points $x,y\in\R^n$, we let
$[x,y]\coloneqq\{\lambda x+(1-\lambda)y:\, 0\le\lambda\le1\}$ denote the
line-segment connecting $x$ and $y$. For two sets $S, T$ we let $S \Delta T
\coloneqq (S \setminus T) \cup (T \setminus S)$ be the symmetric difference
between $S$ and $T$. If $S,T \subseteq \R^n$, we define their Minkowski sum
$S+T \coloneqq \{s+t: s \in S, t \in T\}$. For a vector $t \in \R^n$, we let
$S+t \coloneqq S + \{t\}$ for notational simplicity. For a function $f \colon S
\rightarrow \R$ and $T \subseteq S$, we let $\argmin_{x \in T} f(x) \coloneqq
\{y \in T: f(y) = \min_{x \in T} f(x)\}$ denote the set of minimizers of $f$
with respect to $T$. By convention, $\argmin_{x \in T} f(x) = \emptyset$ if
the minimum value of $f$ inside $T$ is not attained. 

The standard inner product between two vectors is denoted by $\pr{x}{y}=x^\top
y$, for $x,y \in \R^n$, and the Euclidean norm by $\norm{x} \coloneqq
\sqrt{\sum_{i=1}^n x_i^2}$. We further let $\norm{x}_\infty \coloneqq \max_{i
\in [n]} |x_i|$ denote the $\ell_\infty$ norm, and $\norm{x}_1 \coloneq \sum_{i=1}^n |x_i|$ denote the $\ell_1$ norm. 

For a vector $x\in \R^n$, we let $\diag(x)\in \R^{n\times n}$ denote the
diagonal matrix with $x$ on the diagonal. For $x,y\in \R^n$, we use the
notation $xy\in \R^n$ for the coordinate-wise (Hadamard) product $xy=\diag(x)y=(x_iy_i)_{i\in
[n]}$. For $\xi \in \R^n_{++}$ and a linear subspace $W
\subseteq \R^n$, we use the notation $\xi W \coloneqq \{\xi w: w\in W\}$. For
$p\in \mathbb{Q}$ and $x \in \R^n$, we also use the notation $x^{p}\in\R^n$
to denote the vector $(x_i^{p})_{i\in [n]}$, where we will always ensure that
the corresponding coordinates are well-defined. Similarly, for $x \in \R^n$
and $y \in \R^n_{++}$, we let $x/y \in\R^n$ denote the vector
$(x_i/y_i)_{i\in [n]}$. For vectors $x^{(1)},\dots,x^{(k)} \subseteq \R^n$,
we define their linear span as $\lspan(x^{(1)},\dots,x^{(k)}) \coloneqq
\{\sum_{i=1}^k \lambda_i x^{(i)}: \lambda \in \R^k\}$. For non-empty set $I
\subseteq [n]$, we define $\pi_I \colon \R^n \rightarrow \R^I$ to be the
coordinate projection onto $I$, that is $\pi_I(x) \coloneqq x_I$, $\forall x \in \R^n$. By convention, we define $\R^\emptyset \coloneqq \{0\}$ to be the
trivial subspace, and we let $\pi_\emptyset(x) \coloneqq x_{\emptyset}
\coloneqq 0$, $\forall x \in \R^n$.  We also define $\R^n_I \coloneqq  \{x
\in \R^n: \supp(x) \subseteq I\}$ to be the set of vectors in $\R^n$ with
support contained in $I$. 

For a matrix $\mM \in \R^{m \times n}$, we let $\mM^\T \in \R^{n \times m}$
denote the matrix transpose satisfying $(\mM^\T)_{ji} \coloneqq \mM_{ij}$,
for $i \in [m], j \in [n]$. For subsets $S \subseteq [m]$, $T \subseteq [n]$,
we define $\mM_{S,T}$ to be the matrix with input space $\R^T$ and output
space $\R^S$ induced by the columns in $T$ and rows in $S$ of $\mM$.  We
further use the notation $\mM_T \coloneqq \mM_{\mdot,T} \coloneqq
\mM_{[m],T}$ to index the corresponding columns of $\mM$, and
$\mM_{S,\mdot} \coloneqq \mM_{S,[n]}$. By convention, we let $\mM_{S,\emptyset} 
\coloneqq [\0_S]$, $\mM_{\emptyset,T} \coloneqq [\0_{T}]^\T$ and
$\mM_{\emptyset,\emptyset} \coloneqq [0]$ (recall that $\R^\emptyset=\{0\}$).

\paragraph{Subspace Formulation of Linear Programming}
 It will be more convenient for our algorithm and analysis to represent
\eqref{LP_primal_dual} in an equivalent subspace language.  Throughout the
paper, we let $W=\ker(\mA) \subseteq \R^n$ denote the kernel of $\mA$ and
$W^\perp = \im(\mA^\T)$ denote the image of the transpose of $\mA$ (see
\Cref{sec:lin-alg-prelim} for formal definitions). Using this notation,
\eqref{LP_primal_dual} can be written in the form

\begin{align} \label{LP-subspace}
\begin{aligned}
\min \; & \pr{c}{x} \\
x &\in W + d \\
x &\geq \0_n,
\end{aligned}
\quad\quad
\begin{aligned}
\max \; &  \pr{d}{c-s} \\
s  &\in W^\perp+c \\
s &\geq \0_n,
\end{aligned}
\end{align}
where $d \in \R^n$ is any solution $\mA d = b$. A natural choice of $d$ is the minimum norm
solution, namely, $d = \argmin\{\norm{x}: \mA x=b\}$. 

Note that $s \in W^\perp+c$ is equivalent to $\exists y \in \R^m$ such that
$\mA^\T y + c = s$. Hence, the 
 original variable $y$ is implicit.
The feasible regions can be written as
 \[
 \Primal = \{x \in \R^n: x \in W+d, x \geq \0\}\, , \quad
 \Dual = \{s \in \R^n: s \in W^\perp+c, s \geq \0\}\, .
 \]

 \subsection{Models of Computation}\label{sec:comp-models}
As is standard in the interior point literature, our algorithms use the \emph{real RAM model of computation}. In this model, the input is given by $K$ real numbers. The algorithm may perform a sequence of basic arithmetic operations ($+$, $-$, $\times$, $/$), and comparisons on real numbers.
The algorithm is \emph{polynomial in the real RAM model} if the total number of such operations is polynomially bounded in $K$. In the case of \eqref{LP_primal_dual}, the input numbers are the entries of $(\mA,b,c)$ and $K=n\times m+n+m$.
We note that while square roots appear in the paper, they are only used in the analysis and not in the actual algorithm.

Consider now a problem in the Turing model where the input is given by $K$ integers.
An algorithm is \emph{strongly polynomial in the Turing model} if it performs $\mathrm{poly}(K)$ arithmetic operations and comparisons as in the real RAM model, and additionally, the algorithm is in PSPACE: the bit-complexity of all numbers during the computations remains bounded polynomially in the input bit-complexity.

The algorithm in \Cref{th:main_upper_bound-slc} is polynomial in the real RAM model as long as the straight-line complexity of the underlying linear program is polynomial. Moreover, every single iteration is strongly polynomial in the Turing model in the following sense. The input of an iteration is given by the current iterate $z=(x,s)$ and the matrix $\mA$. If $(x,s)$ is a rational vector and $\mA$ is a rational matrix, then all computations during a single iteration are polynomially bounded in the bit-length of $(\mA,x,s)$. 

However, even if the total number of iterations is strongly polynomial, this does not suffice for the entire algorithm to be strongly polynomial. This is because the input of each iterate is the output of the previous one, and the bit complexity of the current iterate can increase polynomially in every iteration. Obtaining stronger guarantees in the Turing model requires additional rounding that goes beyond the scope of this paper.

\paragraph{Strong Polynomial Linear Algebra} Throughout the paper, we
will strongly make use of the fact that many basic linear algebraic operations on
matrices, such as computing matrix inverses, powers, products and determinants can be performed in strongly polynomial time. The foundational
result from this perspective is that of Edmonds~\cite{edmonds1967systems},
who showed that Gaussian elimination can be implemented in strongly
polynomial time, where special care is required to maintain reduced
representations of rational numbers without performing greatest common
divisor computations (as these are not strongly polynomial). Further results by
Strassen~\cite{strassen1973vermeidung} and then
Berkowitz~\cite{berkowitz1984computing} showed that one can compute the
determinant of an integer matrix in strongly polynomial time without
division, which in particular allows one to implement Gaussian elimination on
a matrix without the need to maintain reduced representations (the entries of
each iterate can be expressed directly as as a ratio of integer matrix
determinants). For an overview of strongly polynomial linear algebra, we
refer the interested reader to \cite[Section 1.4]{gls}. 

\subsection{Linear Algebra Preliminaries}
\label{sec:lin-alg-prelim}

In this section, we review fundamental concepts in linear algebra from an
operator theoretic perspective, including the notion of adjoint and
pseudoinverse operators, orthogonal projections, and the singular value
decomposition. Throughout the exposition, we restrict to operators between
linear subspaces of $\R^n$ where we fix the standard inner product. The
concepts developed here will be needed for the definition of lifting operators
and their duality properties (covered in \Cref{sec:lifting_maps}), for the
computation of pseudoinverses, and to make the relationship between
operators and their associated matrices precise. For a more thorough background, the
interested reader may consult the following reference
textbooks~\cite{young1988introduction,campbell2009generalized,axler2023linear,Bhatia1997}.   

For a linear subspace $U \subseteq \R^n$, a basis $u_1,\dots,u_d$ of $U$ is
any maximal subset of linearly independent vectors in $U$. The dimension
$\dim(U) \coloneqq d$ is the number of vectors in any basis. For any two
linear subspaces $V,W \subseteq \R^n$, we have that $\dim(V) + \dim(W) =
\dim(V \cap W) + \dim(V+W)$, that is, dimension is a modular function
over subspaces.

\begin{definition}[Linear Operator] 
$T: U \rightarrow V$ is a linear operator between linear subspaces $U
\subseteq \R^n, V \subseteq \R^m$ if $T(a x + b y) = a T(x) + b T(y)$, for
all $x,y \in U$ and $a,b \in \R$. We will only consider linear operators
defined on linear subspaces of some $\R^n$, on which the standard inner
product is always defined. For $x \in U$, we will often write $Tx \coloneqq
T(x)$ for simplicity of notation. 

For a matrix $\mM \in \R^{m \times n}$, the linear operator $\cal T(\mM):
\R^n \rightarrow \R^m$ induced by $\mM$ is defined by $\cal T(\mM)(x) = \mM
x$, $\forall x \in \R^n$. Similarly, for an operator $T: \R^n \rightarrow
\R^m$, there is a unique matrix $\cal M(T) \in \R^{m \times n}$, defined by
$\cal M(T)_{ij} = (e^i)^\T T(e^j)$, $\forall i \in [m], j \in [n]$,
satisfying $\cal T(\cal M(T)) = T$. More generally, for $T: U \rightarrow V$,
we define $\cal M(T) \in \R^{m \times n}$ to be the unique matrix satisfying
$T(u) = \cal M(T) u$, $\forall u \in U$, and $\cal M(T)^\T e^i \in U$,
$\forall i \in [m]$.
\label{def:operator}
\end{definition}

\begin{remark}
\label{rem:operator-matrix}
Given the correspondence above, in the remainder of the paper, we will
identify an $m \times n$ matrix $\mM$ with its associated operator $\cal T(\mM):
\R^n \rightarrow \R^m$ (noting that $\cal M(\cal T(\mM)) = \mM$). All operator theoretic definitions will transfer directly to matrices via this
identification. Specifically, we will treat the matrix $\mM$ as the operator $\cal T(\mM)$ whenever we need to apply an operator theoretic concept to $\mM$. 

It is useful to note that the input space $U = \R^n$ and output space $V =
\R^m$ in the definition of the $\cal T(\mM)$ are chosen maximally, that is,
they consist of the entire ambient input and output spaces. For a general
operator $T\colon U \rightarrow V$, $U \subseteq \R^n$, $V \subseteq \R^m$,  note that we may have $\cal T(\cal M(T))\neq T$ (e.g., $T$ may
be invertible while $\cal T(\cal M(T))\colon \R^n \rightarrow \R^m$ need not be).      
\end{remark}

We let $\mI_U \colon U \rightarrow U$ denote the identity operator on $U$, where we
use the shorthand $\mI_n$ for the identity on $\R^n$. We let $\ker(T) = \{x \in U: Tx = \0_m\}$ and $\im(T) = \{Tx: x \in U\}$ denote
the kernel and image of $T$ respectively, and let $\rank(T) = \dim(\im(T))$.
A fundamental identity is $\dim(U)=\rank(T)+\dim(\ker(T))$. $T$ is invertible
if there exists a $T^{-1}\colon V \rightarrow U$ satisfying $T^{-1} \circ T =
\mI_U$, $T \circ T^{-1} = \mI_V$. $T$ is
invertible if and only if $\ker(T) = \{\0\}$ and $\im(T) = V$. For a linear
subspace $S \subseteq U$, we define the restricted operator $\restr{T}{S}\colon S
\rightarrow V$ to be the operator $T$ restricted to the subspace $S$. For $X
\supseteq \im(\restr{T}{S})$, we also define 
$\restro{T}{S}{X}: S \rightarrow X$, which modifies both the input and
output space of $T$ (note that the condition on $X$ ensures that
$\restro{T}{S}{X}$ is well-defined).

For two linear subspaces $V,W \subseteq \R^n$, we write $V \perp W$ to
indicate that $V$ and $W$ are orthogonal, that is, $\pr{v}{w} = 0$, $\forall
v \in V, w \in W$. We define $W^\perp \coloneqq  \{x \in \R^n: \pr{x}{y} = 0,
\forall y \in W\}$ as the orthogonal complement of $W$. The orthogonal
complement satisfies $W + W^\perp = \R^n$, $W \cap W^\perp = \{\0_n\}$, and
$(W^\perp)^\perp = W$. We say that $w_1,\dots,w_d$ are orthonormal vectors in $W$, if
$\pr{w_i}{w_j} = 1$ whenever $i=j$ and $0$ otherwise, for $i,j \in [d]$.
Additionally, $w_1,\dots,w_d$ form an orthonormal basis of $W$ if $d =
\dim(W)$. 

The following identities for orthogonal complements will be used
throughout the paper. We state them without proof.

\begin{proposition}\label{prop:sum-complement-identity}
For linear subspaces $V,W \subseteq \R^n$, $(V+W)^\perp = V^\perp \cap
W^\perp$.
\end{proposition}

\begin{proposition}\label{prop:subspace-complement-identity}
For a partition $I \cup J = [n]$ and a linear subspace $W \subseteq \R^n$,
$\pi_I(W)^\perp = \pi_I((W + \R^n_J)^\perp) = \pi_I(W^\perp \cap \R^n_I)$
holds.  
\end{proposition}

For every linear operator, there is a corresponding adjoint operator, whose
properties we will use heavily. 

\begin{definition}[Adjoint Linear Operator]
\label{def:adjoint}
For a linear operator $T\colon U \rightarrow V$ between linear subspaces $U
\subseteq \R^n, V \subseteq \R^m$, we define the adjoint operator $\adj(T)\colon V
\rightarrow U$, to be the unique linear map satisfying $\pr{v}{T(u)} =
\pr{\adj(T)(v)}{u}$, $\forall u \in U, v \in V$. By uniqueness
of the adjoint, note $T=\adj(\adj(T))$. The inner products we use on $U$ and $V$ are the standard inner product on $\R^n$ and $\R^m$ respectively. 
\end{definition}

\begin{remark}
\label{rem:adjoint}
Letting $\cal M(T) \in \R^{m \times n}$ be the associated matrix as in
\Cref{def:operator} for $T \colon U \rightarrow V$, one has
$\cal M(\adj(T)) = \cal M(T)^\T$. That is, the adjoint operator
corresponds to the transpose of the associated matrix. This equality follows
from $\langle v, T(u) \rangle$ $= \langle v, \cal M(T) u \rangle = \langle \cal M(T)^\T
v, u \rangle$ and $\cal M(T)^\T v \in U$ by our assumptions on $\cal M(T)$ (in
particular, $\im(\cal M(T)^\T) \subseteq U$), and the uniqueness of the adjoint.  
\end{remark}

The following proposition collects relevant properties of the adjoint
that we will need. The proof is given in \Cref{sec:appendix-la}.

\begin{proposition}
\label{prop:adjoint}
Let $T \colon U \rightarrow V$ be a linear operator between $U \subseteq \R^n, V
\subseteq \R^m$. Then, the following holds:
\begin{enumerate}
\item \label{prop:adjoint-decomposition-v} $\ker(\adj(T)) \perp \im(T)$ and $\ker(\adj(T))+\im(T) = V$.
\item \label{prop:adjoint-decomposition-u} $\ker(T) \perp \im(\adj(T))$ and $\ker(T)+\im(\adj(T)) = U$.
\item \label{prop:adjoint-image} $\im(T) = T(\im(\adj(T)))$ and $\im(\adj(T))=\adj(T)(\im(T))$. \newline In particular, $\rank(T) = \rank(\adj(T))$. 
\end{enumerate}
\end{proposition}

\begin{definition}[Orthogonal Projection]
For a linear subspace $W \subseteq \R^n$, we define $\Pi_W\colon \R^n \rightarrow
\R^n$, the orthogonal projection onto $W$, to be unique linear operator
satisfying that $\Pi_W(x) \in W$ and $x-\Pi_W(x) \in W^\perp$, $\forall x \in
\R^n$. \label{def:orthogonal-projection}
\end{definition}

\begin{remark}
\label{rem:orthogonality}
For $x \in \R^n$ and $W \subseteq \R^n$, since $\pr{\Pi_W(x)}{x-\pi_W(x)}=0$, we have that $\norm{x}^2 = \pr{x}{x} = \norm{\Pi_W(x)}^2 + \norm{x-\Pi_W(x)}^2$. In particular, $\norm{x} \geq \norm{\Pi_W(x)}$.
\end{remark}

The following proposition states the important properties of orthogonal
projections that will be used many times throughout the paper. We state it
without proof. For a reference proving some of these properties, see for
example \cite[Theorem 6.57 and Theorem 6.61]{shores2018applied}. 

\begin{proposition}
Let $W \subseteq \R^n$ be a linear subspace. Then, the orthogonal projection
$\Pi_W: \R^n \rightarrow \R^n$ satisfies the following:
\begin{enumerate}
\item $\Pi_W$ is self-adjoint, that is, $\adj(\Pi_W) = \Pi_W$. In particular,
when interpreted as an $n \times n$ matrix, $\Pi_W$ is symmetric, that is $\Pi_W^\T = \Pi_W$.
\item $\mI_n - \Pi_W = \Pi_{W^\perp}$. 
\item $\Pi_W(x) = \argmin_{z \in W} \norm{x-z} = \argmin_{y \in x+W^\perp}
\norm{y}$, $\forall x \in \R^n$. In particular, $\Pi_W(x)$ is the unique
point in $W \cap (W^\perp + x)$.
\end{enumerate}
\label{prop:orth-proj-props}
\end{proposition}

The following proposition gives an explicit formula for associated matrices
of general operators in terms of projections, and provides some of their
basic properties. The proof is given in \Cref{sec:appendix-la}. 

\begin{proposition}
\label{prop:associated-matrix}
Let $T \colon U \rightarrow V$, $U \subseteq \R^n$, $V \subseteq \R^m$ be a linear
operator. Then, $\cal M(T)_{ij} = (e^i)^\T T(\Pi_U e^j)$, $\forall i \in [m],
j \in [n]$, and $\cal T(\cal M(T)) = \restro{T}{U}{\R^m} \circ \restro{\Pi_U}{\R^n}{U}$, where $\Pi_U \colon \R^n \rightarrow \R^n$ is the orthogonal projection
onto $U$. Furthermore, $\im(\cal M(T)) = \im(T)$ and $\im(\cal M(T)^\T) = \im(\adj(T))$.
\end{proposition}

The next proposition shows that restricting the input to any subspace containing the image of the adjoint does not change the associated matrix. 

\begin{proposition} 
\label{prop:ass-mat-eq}
Let $T \colon U \rightarrow V$, $U \subseteq \R^n$, $V \subseteq \R^m$ be a linear
operator. Let $\wh U$ be a linear subspace satisfying $\im(\adj(T)) \subseteq \wh U \subseteq U$ and let $\wh T = \restr{T}{\wh U}$.  Then, $\cal M(T) = \cal M(\wh T)$.
\end{proposition}
\begin{proof}
By definition, $\cal M(T)(u) = T(u) =
\wh T(u)$, $\forall u \in \wh U$. Then by \Cref{prop:associated-matrix}, $\im(\cal M(T)^\T) = \im(\adj(T)) \subseteq \wh U$, and thus
$\cal M(T) = \cal M(\wh T)$ by uniqueness.  
\end{proof}

For the task of computing projection matrices, we will require the pseudoinverse operator.

\begin{definition}[Moore-Penrose Pseudoinverse]
\label{def:pseudoinverse}
Let $T \colon U \rightarrow V$, $U \subseteq \R^n$, $V \subseteq \R^m$ be a linear
operator. Then, the Moore-Penrose pseudoinverse $T^+\colon V \rightarrow U$ of $T$
is the unique linear operator satisfying $T^+ T(u) = u$, $\forall u \in
\im(\adj(T))$ and $\ker(T^+) = \ker(\adj(T))$.  \end{definition}

The following proposition gives the relation between pseudoinverses and
projection operators and the connection to least-squares problems. For a
proof, see \cite[Statements 6.69, 6.70]{axler2023linear}.
 
\begin{proposition}
\label{prop:pseudoinverse-projection}
Let $\mM \in \R^{m \times n}$. Then, the following hold:
\begin{enumerate}
\item \label{eq:pseudo-proj} $\mM^+ \mM = \Pi_{\im(\adj(\mM))}$ and $\mM \mM^+ = \Pi_{\im(\mM)}$.
\item \label{eq:pseudoinverse-regression}
  $\mM^+ v = \argmin_{u \in U : \mM u = \proj_{\im(\mM)}v} \norm{u}_2$.
\end{enumerate}
\end{proposition}

The next proposition shows that pseudoinverses and orthogonal
projections can be computed in strongly polynomial time. For a proof, see for
example \cite[Theorem 1.3.2 and Algorithm 1.3.1]{campbell2009generalized}.

\begin{proposition}
\label{prop:pseudo-compute}
Let $\mM \in \R^{m \times n}$ be a matrix and let  $r = \rank(\mM)$.  Then
the following holds:
\begin{enumerate}
\item If $r = m$, we have $\mM^+ = \mM^\T (\mM \mM^\T)^{-1}$.
\item If $r = n$, we have $\mM^+ = (\mM^\T \mM)^{-1} \mM^ \T$.
\item \label{it:lower-rank-formula-pseudo-inverse} If $r < \min\{m,n\}$, then for any rank factorization 
$\mM = \mA \mB$ satisfying $\mA \in \R^{m \times r}, \mB \in \R^{r \times n}$
with $r=\rank(\mA)=\rank(\mB)$, we have 
\[
\mM^+ = \mB^+ \mA^+ = \mB^\T (\mB \mB^ \T)^{-1} (\mA^\T \mA)^{-1} \mA^\T.
\]
\end{enumerate} 
In particular, on input $\mM$, the linear operators $\mM^+$, $\Pi_{\im(\mM)}
= \mM \mM^+$ and $\Pi_{\im(\mM^\T)} = \mM^+ \mM$ can be computed in strongly
polynomial time. 
\end{proposition}

We will need the notions of singular values and singular value decompositions
defined below. 

\begin{definition}[Singular Value Decomposition] A linear operator $T \colon U
\rightarrow V$, where $U \subseteq \R^n$ and $V \subseteq \R^m$ are linear
subspaces, admits a singular value decomposition (SVD) 
\begin{equation}
\label{eq:svd}
\cal M(T) = \sum_{i=1}^{\rank(T)} \sigma_i(T) v_i u_i^\T 
\end{equation}
where $v_1,\dots,v_{\rank(T)} \in V$ and $u_1,\dots,u_{\rank(T)} \in U$ are
orthonormal vectors in their respective subspaces and $\sigma_1(T) \geq
\cdots \geq \sigma_{\rank(T)}(T) > 0$. We define 
\begin{equation}
\label{eq:sing-vector}
\sigma(T) \coloneqq (\sigma_1(T),\dots,\sigma_{\dim(U)}(T)),
\end{equation}
where $\sigma_i(T) \coloneqq 0$ for
$i \in [\dim(U)] \setminus [\rank(T)]$, the complete vector of singular
values listed in non-increasing order, and 
\begin{equation}
\label{eq:pos-sing-vector}
\sigma^+(T) \coloneqq (\sigma_1(T),\dots,\sigma_{\rank(T)}(T)),
\end{equation}
the subvector of positive singular
values. We use the shorthand $\sigma_{\max}(T) \coloneqq \sigma_1(T)$ and $\sigma_{\min}(T) \coloneqq \sigma_{\dim(U)}(T)$. By convention, we let $\sigma_i(T) = 0$ for $i > \dim(U)$, though we
do no include this as an entry of $\sigma(T)$. 
We will often need to count the number of singular values inside an interval $I \subseteq \R_+$. For this purpose, we use the following notation:
\[
\cnt{T}{I} \coloneqq |\{i \in [\dim(U)]: \sigma_i(T) \in I\}|.
\]
We will also use the shorthand $\cnt{T}{t} \coloneqq \cnt{T}{[0,t]}$ for $t \geq
0$. Note that if $U = \{\0\}$, then $\rank(T) = 0$ and $\cnt{T}{\R_+} = 0$.

\label{def:svd}
\end{definition}

\begin{remark}
\label{rem:svd-uniqueness}
We remark that while the singular value decomposition $\cal M(T) =
\sum_{i=1}^{\rank(T)} \sigma_i(T) v_i u_i^\T$ above need not be unique (\ie,
if some of the non-zero singular values are equal), the vector of
singular values $\sigma(T)$ does not depend on the choice of singular value
decomposition.   
\end{remark}

We will require the standard relation between the adjoint and 
the singular value decomposition, which we state without proof.

\begin{proposition} 
Let $T \colon U \rightarrow V$ be a linear operator with singular value decomposition $\cal M(T) = \sum_{i=1}^{\rank(T)} \sigma_i v_i u_i^\T$ as in \Cref{def:svd}. Then,  $\adj(T) \colon V \rightarrow U$ has
singular value decomposition $\cal M(\adj(T))$ $= \sum_{i=1}^{\rank(T)} \sigma_i u_i v_i^\T$ and
$\sigma^+(T) = \sigma^+(\adj(T))$.  
\label{prop:sing-adjoint}
\end{proposition}

We will often need to relate the singular values of an operator with the
singular values of its restrictions and its associated matrix.
The following direct corollary of \Cref{prop:ass-mat-eq} gives the precise
relations. 

\begin{proposition}
Let $T \colon U \rightarrow V$, $U \subseteq \R^n, V \subseteq \R^m$ a linear operator, and let $\wh U$ be a linear subspace satisfying $\im(\adj(T)) \subseteq \wh U \subseteq U$. Then $\sigma^+(\cal M(T)) = \sigma^+(T) = \sigma^+(\restr{T}{\wh U})$.
\label{prop:restrict-sing}
\end{proposition}
\begin{proof}
By \Cref{prop:ass-mat-eq}, we have that $\cal M(T) = \cal M(\wh T)$, and
hence both operators have the same SVD in \Cref{def:svd}. Therefore, the
positive singular value vector is identical. The same holds for $\sigma^+(\cal M(T)) \coloneqq \sigma^+(\cal T(\cal M(T)))$. \end{proof}

One of the most useful ways to characterize singular values is via the
Courant-Fischer variational characterization. See for example \cite[Corollary
III.1.2]{Bhatia1997}, which gives the variational characterization for the
eigenvalues of $\adj(T) T$ (equal to squared singular values of $T$).

\begin{proposition}[Max-Min Principle for Singular Values]
Let $T \colon U \rightarrow V$ be a linear operator. Then, for $1 \leq i \leq
\dim(U)$, we have that
\begin{align}
  \sigma_i(T) &= \min_{\substack{\dim(S) \geq \dim(U)-i+1 \\ S \subseteq U}}\, \max_{x \in S\setminus\{\0\}} \frac{\norm{Tx}}{\norm{x}} \label{eq:min-max-sing} \\ &= \max_{\substack{\dim(S) \geq i \\ S \subseteq U}}\, \min_{x \in S \setminus \{\0\}} \frac{\norm{Tx}}{\norm{x}}\,. \label{eq:max-min-sing}
\end{align}
where $S$ ranges over the linear subspaces of $U$.
\label{prop:min-max-sing}
\end{proposition}

\begin{remark}
\label{rem:sing-dimension}
In~\Cref{prop:min-max-sing}, one can replace the
conditions $\dim(S) \geq$ \newline $\dim(U)-i+1$ and $\dim(S) \geq i$ with $\dim(S) =
\dim(U)-i+1$ and $\dim(S)=i$ respectively. In~\eqref{eq:min-max-sing}, any
subspace $S'$ of $S$ of dimension exactly $\dim(U)-i+1$ satisfies
\[
\max_{x' \in S' \setminus \{\0\}} \frac{\norm{T(x')}}{\norm{x'}} \leq \max_{x
\in S \setminus \{\0\}} \frac{\norm{T(x)}}{\norm{x}},
\]
and in~\eqref{eq:max-min-sing},
any subpace $S'$ of $S$ of dimension exactly $i$
satisfies 
\[
\min_{x' \in S' \setminus \{\0\}} \frac{\norm{T(x')}}{\norm{x'}} \geq 
\min_{x \in S \setminus \{\0\}} \frac{\norm{T(x)}}{\norm{x}}.
\]
The flexibility to use different dimensions will be useful in the sequel,
however.   
\end{remark}

\begin{remark}
\label{rem:sing-identity}
By \Cref{prop:min-max-sing} above, for any linear subspace $S \subseteq U$,
$1 \leq \dim(S) \leq \dim(U)$, we have the useful identities
\begin{align}
\sigma_{\max}(\restr{T}{S}) &\coloneqq \sigma_1(\restr{T}{S}) = \max_{x \in S \setminus \{\0\}} \frac{\norm{Tx}}{\norm{x}}, \nonumber \\
\sigma_{\min}(\restr{T}{S}) &\coloneqq \sigma_{\dim(S)}(\restr{T}{S}) = \min_{x \in S \setminus \{\0\}} \frac{\norm{Tx}}{\norm{x}}, \label{eq:sing-identity}
\end{align}
where $\restr{T}{S}$ is the operator $T$ restricted to $S$. If $\dim(S) = 0$,
recall that $\sigma_1(\restr{T}{S}) = 0$ by convention. 
\end{remark}

\begin{remark}
\label{rem:canonical-subspaces}
Let $\cal M(T) = \sum_{i=1}^r \sigma_i v_i u_i^\T$, $r = \rank(T)$, be the
SVD of $T$ and let $\mU = [u_1,\dots,u_r]$. For $i \in [\dim(U)]$, a
canonical choice for the subspace attaining the minimum
in~\eqref{eq:min-max-sing} is $\im(\mU_{\geq i})+\ker(T)$ for $i \in
[\rank(T)]$, and $\ker(T)$ for $i > \rank(T)$. For $i \in [\rank(T)]$, using
that $\im(\mU_{\geq i}) \subseteq \im(\adj(T))$ and $\im(\adj(T)) \perp \ker(T)$, this can be
verified as follows:
\begin{align*}
\sigma_1(\restr{T}{\im(\mU_{\geq i})+\ker(T)}) & = \max_{\substack{w \in \im(\mU_{\geq i}), z \in \ker(T) \\ w+z \neq \0_n}}
\frac{\norm{T(w+z)}}{\norm{w+z}} \\ 
&= \max_{\substack{w \in \im(\mU_{\geq i}), z \in \ker(T) \\ w+z \neq \0_n}}
\frac{\norm{Tw}}{\sqrt{\norm{w}^2+\norm{z}^2}} \\
&= \max_{\substack{w \in \im(\mU_{\geq i}) \\ w \neq \0_n}}
\frac{\norm{Tw}}{\norm{w}} 
= \max_{\substack{x \in \R^{r-i+1} \\ x \neq \0_{r-i+1}}}
\frac{\norm{T(\mU_{\geq i} x)}}{\norm{\mU_{\geq i} x}} \\
&= \max_{\substack{x \in \R^{r-i+1} \\ x \neq \0_{r-i+1}}}
\frac{\sqrt{\sum_{j=r-i+1}^r \sigma_j^2 x_j^2}}{\norm{x}} = \sigma_{r-i+1}, 
\end{align*}
where the second to last equality uses the orthonormality of $v_1,\dots,v_r
\in V$ and $u_1,\dots,u_r \in U$, and the last equality uses the
non-increasing order of $\sigma$. Note further that $\dim(\im(\mU_{\geq
i})+\ker(T)) = \dim(\im(\mU_{\geq i})) + \dim(\ker(T)) = (\rank(T)-i+1) +
\dim(U)-\rank(T) = \dim(U)-i+1$, and hence the subspace has the correct
dimension. For $i \in [\dim(U)] \setminus [\rank(T)]$, one trivially has 
$\sigma_1(\restr{T}{\ker(T)}) = 0$ and $\dim(\ker(T)) = \dim(U)-\rank(T) \geq
\dim(U)-i+1$, as needed. 

Similarly, a canonical choice for the subspace attaining the maximum
in~\eqref{eq:max-min-sing} is $\im(\mU_{\leq i})$ if $i \in [\rank(T)]$, and
$U$  if $i > \rank(T)$ (recalling that $\sigma_i(T) = 0$ for $i > \rank(T)$).
\end{remark}

\subsubsection{Approximate Singular Subspaces}
\label{sec:approx-sing-subspace}
In this subsection, we define approximate singular subspaces for general
linear operators and collect their main properties, which will be crucial for
the analysis and implementation of our IPM.  

\begin{definition}[Approximate Singular Subspace]
\label{def:apx-sing-sub}
Let $T \colon \src \rightarrow \targ$ be a linear operator. A linear subspace $S \subseteq
\src$ is a $\SVDA$-approximate singular subspace for $T$, for $\SVDA \geq 1$,
if $\sigma_1(\restr{T}{S}) \leq \SVDA \sigma_{\dim(\src)-\dim(S)+1}(T)$.
\end{definition}

\begin{remark}[Trivial Subspace] 
\label{rem:trivial-subspace}
The trivial subspace $S \coloneqq \{\0\} \subseteq \src$ is always
$1$-approximate singular subspace for $T$ as $\sigma_1(\restr{T}{S}) = 0 =
\sigma_{\dim(\src)+1}(T) = \sigma_{\dim(\src)-\dim(S)+1}(T)$ by convention.
\end{remark}

When computing approximate singular subspaces, we will use the following
lemma to relate approximate singular subspaces of an operator to those of its
associated matrix. The proof is given in \Cref{sec:appendix-apx-ss}.

\begin{lemma} 
Let $T: \src \rightarrow \targ$, $\src \subseteq \R^n, \targ \subseteq \R^m$ be a linear operator and let $\bar{T} = \cal M(T)$ be its associated matrix. Then, for a linear subspace $S$ satisfying $\ker(\bar{T}) \subseteq S \subseteq \R^n$, the following holds:
\begin{enumerate}
\item \label{lem:ass-op-mat-1} $\ker(T) \subseteq S \cap \src = \Pi_\src(S)$ and $\sigma_1(\restr{T}{S \cap \src}) = \sigma_1(\restr{\bar T}{S})$.
\item \label{lem:ass-op-mat-2} $S$ is a $\SVDA$-approximate singular subspace for $\bar{T}$ of
dimension $d \geq 0$ if and only if $S \cap \src$ is a $\SVDA$-approximate
singular subspace for $T$ of dimension $d-\dim(\src^\perp)$. 
\end{enumerate}
Moreover, for any $\tau \geq 0$, $\cnt{T}{\tau} + \dim(X^\perp) = \cnt{\bar{T}}{\tau}$.
\label{lem:ass-op-mat}
\end{lemma}

Our definition of an approximate singular subspace $S$ for $T$ of dimension
$d$ corresponds to $S$ be an approximate minimizer
of~\eqref{eq:min-max-sing}, that is 
\[
\sigma_1(\restr{T}{S}) \leq \SVDA \min_{C \subseteq \src, \dim(C)=d} \sigma_1(\restr{T}{C}) = \SVDA
\sigma_{\dim(\src)-d+1}(T).
\]
Given this, one may ask whether there is a
relationship between $S$ and maximizers for the complementary
program~\eqref{eq:max-min-sing}, that is, 
\[
\max_{D \subseteq
\dim(D)=\dim(\src)-d} \sigma_{\min}(\restr{T}{D}) = \sigma_{\dim(\src)-d}.
\]
The following lemma shows that in fact the orthogonal complement of $S$
inside $\src$ is indeed an approximate maximizer of~\eqref{eq:max-min-sing}, under the condition that
there is a gap between the singular values $\sigma_{\dim(\src)-d}(T)$ and
$\sigma_{\dim(\src)-d+1}(T)$. This will be important for the error analysis
of subspace LLS steps defined in \Cref{sec:slls}. The proof is given in \Cref{sec:appendix-apx-ss}.

\begin{lemma}
\label{lem:approx-complement}
Let $T \colon \src \rightarrow \targ$ be a linear operator. Let $S \subseteq \src$
be a $\SVDA$-approximate subspace with $0 \leq \dim(S) < \dim(\src)$ for $T$. Then, for $\bar{S} = \src
\cap S^\perp$, we have that
\[
\sigma_{\min}(\restr{T}{\bar{S}})^2 \geq \sigma_{\dim(\src)-\dim(S)}(T)^2 - \SVDA^2 \sigma_{\dim(\src)-\dim(S)+1}(T)^2.
\]
\end{lemma}

\subsection{Preliminaries on Interior-Point Methods}
\label{sec:ipm-prelims}
 
In this section, we recall standard properties of the central path and IPM that will be required for
our algorithm.
To ensure that the central path is well-defined, we assume that $\Primal$ and $\Dual$ admit strictly feasible solutions, \ie, the sets
$\Primalp$ and $\Dualp$ are both nonempty. We recall the notation $\pz\cp(\mu) = (\px\cp(\mu),\ps\cp(\mu))$ to denote the central path point at $\mu$ as in definition~\eqref{eq:CP-equations}.

Given $z = (x,s) \in \Primal \times \Dual$, we recall that the normalized duality gap is defined as $\gap(z) \coloneqq \frac{\pr{x}{s}}{n}$. The following identity is useful in  comparing duality gaps.

\begin{proposition}\label{prop:gap-formula} Given $x,x' \in W+d$, $s,s' \in W^\perp+c$, we have that
\[
\pr{x}{s} + \pr{x'}{s'} = \pr{x}{s'} + \pr{x'}{s}.
\] 
In particular, if $\pr{x'}{s'}=0$, then
\[
\pr{x}{s} = \pr{x}{s'} + \pr{x'}{s}.
\]
\end{proposition}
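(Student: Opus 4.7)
The plan is to exploit the orthogonal-complement structure of $W$ and $W^\perp$. The key observation is that although $x, x'$ individually lie in the affine subspace $W + d$ and $s, s'$ in $W^\perp + c$, their differences satisfy $x - x' \in W$ and $s - s' \in W^\perp$. Since $W \perp W^\perp$, this forces
\[
\pr{x - x'}{s - s'} = 0.
\]

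Expanding this inner product by bilinearity yields
\[
\pr{x}{s} - \pr{x}{s'} - \pr{x'}{s} + \pr{x'}{s'} = 0,
\]
which, after rearrangement, is exactly the first claimed identity $\pr{x}{s} + \pr{x'}{s'} = \pr{x}{s'} + \pr{x'}{s}$. The second claim is then immediate: substituting $\pr{x'}{s'} = 0$ into the identity gives $\pr{x}{s} = \pr{x}{s'} + \pr{x'}{s}$.

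There is essentially no obstacle here — the statement is a one-line consequence of orthogonality. The only thing to be careful about is the affine (rather than linear) nature of the feasible slices: one must take differences to move from the affine subspaces $W + d$ and $W^\perp + c$ back to the underlying linear subspaces $W$ and $W^\perp$ before invoking orthogonality. I would present the proof in two short sentences, first stating the orthogonality of $x - x'$ and $s - s'$, then expanding the inner product.
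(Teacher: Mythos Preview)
Your proof is correct and essentially identical to the paper's own proof: both observe that $x - x' \in W$ and $s - s' \in W^\perp$ are orthogonal, expand $\pr{x-x'}{s-s'}=0$ bilinearly, and rearrange.
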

\begin{proof}
Since $x-x' \in W$ and $s-s' \in W^\perp$, we have that
\[
0 = \pr{x-x'}{s-s'} \Leftrightarrow 
\pr{x}{s} + \pr{x'}{s'} = \pr{x}{s'} + \pr{x'}{s}. 
\]
\end{proof}

The next proposition shows that the normalized duality gap is a linear
function for convex combinations of points.

\begin{proposition}[Linearity duality gap] \label{prop:gap-linear}
  For $x^{(1)},\dots,x^{(k)} \in W+d$, $s^{(1)},\dots,$ $s^{(k)} \in W^\perp+c$ forming the sequence $z^{(1)} = (x^{(1)},s^{(1)}), \dots, z^{(k)} = (x^{(k)},s^{(k)})$ and $\lambda \in \R^k$ such that $\sum_{i=1}^k \lambda_i = 1$, we have that
  \begin{equation*}
  \gap\left(\sum_{i=1}^k \lambda_i z^{(i)}\right) = \sum_{i=1}^k \lambda_i \gap(z^{(i)}) \,.
  \end{equation*}
\end{proposition}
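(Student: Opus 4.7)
The plan is to reduce the statement to Proposition~\ref{prop:gap-formula} by direct bilinear expansion. Set $\bar{x} \coloneqq \sum_{i=1}^k \lambda_i x^{(i)}$ and $\bar{s} \coloneqq \sum_{i=1}^k \lambda_i s^{(i)}$; since $\sum_i \lambda_i = 1$, the affine spaces $W + d$ and $W^\perp + c$ are closed under such combinations, so $\bar{x} \in W+d$ and $\bar{s} \in W^\perp + c$, which makes $\gap(\bar{z})$ well-defined. Expanding bilinearly gives
\[
n\, \gap(\bar{z}) \;=\; \pr{\bar{x}}{\bar{s}} \;=\; \sum_{i=1}^k \lambda_i^2 \pr{x^{(i)}}{s^{(i)}} \;+\; \sum_{1 \le i < j \le k} \lambda_i \lambda_j \bigl(\pr{x^{(i)}}{s^{(j)}} + \pr{x^{(j)}}{s^{(i)}}\bigr).
\]

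The key step is to apply Proposition~\ref{prop:gap-formula} to each unordered pair $\{i,j\}$ with $i \neq j$: since $x^{(i)}, x^{(j)} \in W+d$ and $s^{(i)}, s^{(j)} \in W^\perp + c$, that identity yields $\pr{x^{(i)}}{s^{(j)}} + \pr{x^{(j)}}{s^{(i)}} = \pr{x^{(i)}}{s^{(i)}} + \pr{x^{(j)}}{s^{(j)}}$. Substituting, the off-diagonal contribution regroups as $\sum_i \lambda_i \bigl(\sum_{j \neq i} \lambda_j\bigr) \pr{x^{(i)}}{s^{(i)}} = \sum_i \lambda_i (1 - \lambda_i) \pr{x^{(i)}}{s^{(i)}}$, and combining this with the diagonal $\sum_i \lambda_i^2 \pr{x^{(i)}}{s^{(i)}}$ collapses the whole sum to $\sum_i \lambda_i \pr{x^{(i)}}{s^{(i)}}$. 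Dividing by $n$ then gives $\gap(\bar{z}) = \sum_i \lambda_i \gap(z^{(i)})$.

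There is no real obstacle here: Proposition~\ref{prop:gap-formula} already encodes the fact that $\pr{x}{s}$ behaves affinely on $(W+d) \times (W^\perp + c)$, so the apparently quadratic expansion collapses to a linear one precisely because the weights $\lambda_i$ sum to $1$. An alternative presentation would be to prove the $k=2$ case directly from Proposition~\ref{prop:gap-formula} and then induct on $k$ by merging the first $k-1$ points into a single feasible pair, but the symmetric-pair expansion above is shorter and avoids the case $\sum_{i<k}\lambda_i = 0$.
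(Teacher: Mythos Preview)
Your proof is correct. It differs in organization from the paper's: the paper writes $\bar x = \sum_i \lambda_i (x^{(i)}-d) + d$ and $\bar s = \sum_i \lambda_i (s^{(i)}-c) + c$, then uses $W \perp W^\perp$ directly to kill all cross-terms of the form $\pr{x^{(i)}-d}{s^{(j)}-c}$ in one stroke, leaving $\pr{\bar x}{\bar s} = \sum_i \lambda_i\bigl(\pr{d}{c} + \pr{x^{(i)}-d}{c} + \pr{d}{s^{(i)}-c}\bigr) = \sum_i \lambda_i \pr{x^{(i)}}{s^{(i)}}$. Your route instead expands $\pr{\bar x}{\bar s}$ into diagonal and off-diagonal pair sums and invokes Proposition~\ref{prop:gap-formula} on each pair to symmetrize. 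Both arguments are ultimately the same orthogonality fact; the paper's version avoids the pairwise regrouping and the $\sum_{j\neq i}\lambda_j = 1-\lambda_i$ bookkeeping, while yours has the minor advantage of referencing only the already-stated Proposition~\ref{prop:gap-formula} without reopening the $d,c$ decomposition.
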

\begin{proof}
Using that $\sum_{i=1}^k \lambda_i = 1$ and the orthogonality of $x^{(i)}-d \in W$ and $s^{(i)}-c \in W^{\perp}$ for all $i \in [k]$ we first get
\begin{align*}
\pr{\sum_{i=1}^k \lambda_i x^{(i)}}{\sum_{j=1}^k \lambda_i s^{(i)}} &= 
\pr{\sum_{i=1}^k \lambda_i (x^{(i)}-d) + d}{\sum_{j=1}^k \lambda_i (s^{(i)}-c) + c} \\
&= \pr{d}{c} + \sum_{i=1}^k \lambda_i(\pr{x^{(i)}-d}{c} + \pr{d}{s^{(i)}-c}) \\ &= \sum_{i=1}^k \lambda_i(\pr{d}{c} + \pr{x^{(i)}-d}{c} + \pr{d}{s^{(i)}-c}) \\
&= \sum_{i=1}^k \lambda_i\pr{x^{(i)}}{s^{(i)}}.
\end{align*}
Division by $n$ yields the respective normalized duality gap. 
\end{proof}

 A key property of the central path is \emph{`near monotonicity'},
 formulated in the following lemma, see \cite[Lemma 16]{Vavasis1996}.
 \begin{lemma}\label{lem: central_path_bounded_l1_norm}
 For the central path points at $0 < \mu' \leq \mu$, 
 we have
 \[
 \norm{\frac{x\cp(\mu')}{x\cp(\mu)} + \frac{s\cp(\mu')}{s\cp(\mu)}}_\infty \leq n\, .
 \]
 \end{lemma}
 In \Cref{lem:local-norm-monotone}, we also present an $\ell_1$-variant of this bound.

\paragraph{Central Path Neighborhoods}

The neighborhoods $\cal N^2(\beta)$ and $\cal N^{-\infty}(\theta)$ introduced in~\eqref{eq:l2_neighborhood} and~ \eqref{eq:wide_neighborhood} comprise the points $z = (x,s) \in \Primal \times \Dual$ such that the \emph{centrality error}, \ie, the norm of the vector $\frac{x s}{\gap(z)} - \1$, is bounded. They use of the $\ell_2$-norm and the $\ell_\infty$-seminorm $\norm{u}_{{-\infty}} \coloneqq \max_{1 \leq i \leq n}\max(0,-u_i)$, respectively. 

 We will often use the following proposition which is immediate from the definition of~${\cal N}^2(\beta)$.
 \begin{proposition}\label{prop:x_i-s_i}
 Let $z = (x, s) \in
   {\cal N}^2(\beta)$ for $\beta\in (0,1)$, and $\mu=\gap(z)$. Then for each $i \in [n]$
 \[
 (1-\beta)\mu\le x_i s_i \le (1+\beta)\mu\, .
 \]
 \end{proposition}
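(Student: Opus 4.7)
The plan is straightforward, since the proposition is essentially a restatement of the definition of $\cal N^2(\beta)$ pointwise. Recall that
\[
\cal N^2(\beta) = \left\{z = (x,s) \in \Primal_{++}\times\Dual_{++} : \left\|\frac{xs}{\gap(z)} - \1\right\| \le \beta\right\}.
\]
The key observation is that for any vector $v\in\R^n$, each coordinate satisfies $|v_i| \le \|v\|$, since the Euclidean norm dominates the coordinate absolute values.

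First I would apply this to $v = xs/\mu - \1$. The hypothesis $z\in\cal N^2(\beta)$ gives $\|v\|\le \beta$, so for each fixed $i\in [n]$ we have $|x_is_i/\mu - 1| \le \beta$. Rearranging yields
\[
-\beta \le \frac{x_is_i}{\mu} - 1 \le \beta,
\]
and multiplying through by $\mu > 0$ (note $\mu = \gap(z) > 0$ since $x,s > \mathbf 0$) gives the claimed bounds $(1-\beta)\mu \le x_is_i \le (1+\beta)\mu$.

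There is no real obstacle here; the only thing worth verifying is that $\mu > 0$ so multiplication preserves the inequalities, which is immediate from $z \in \Primalp\times\Dualp$. The restriction $\beta \in (0,1/4]$ is not needed for the statement itself (it holds for any $\beta \in [0,1)$), but is consistent with ensuring $x_is_i > 0$ so the neighborhood behaves well.
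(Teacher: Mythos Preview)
Your proof is correct and follows essentially the same approach as the paper: both use that $|x_is_i/\mu - 1| \le \|xs/\mu - \1\| \le \beta$ and then rearrange. Your additional remark that $\mu > 0$ is a minor but valid clarification.
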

\begin{proof}
 By definition of $\cal N(\beta)$ we have for all $i \in [n]$ that
 $|\frac{\px_i\ps_i}{\mu} - 1| \le \norm{\frac{\px \ps}{\mu} - \1} \le \beta$ and so $(1-\beta) \mu \le x_i s_i \le (1+\beta) \mu$.
 \end{proof}

The following proposition gives a bound on the distance between a point $z\in {\cal N}^2(\beta)$ in the $\beta$-neighborhood and the corresponding central path point with the same normalized duality gap $\pz(\mu)$ for $\mu = \gap(z)$. See e.g., \cite[Lemma 5.4]{Gonzaga92} and \cite[Proposition 2.1]{Monteiro2003}.
\begin{proposition}\label{prop:near-central}
  Let $z = (x, s) \in
   {\cal N}^2(\beta)$ for $\beta\in (0,1/4]$ and  $\mu=\gap(z)$, and
   consider the central path point
 $\pz\cp(\mu)=(\px\cp(\mu),\ps\cp(\mu))$. For each $i\in[n]$,
 \[
 \begin{aligned}
 \frac{\px_i}{1+2\beta}\le \frac{1-2\beta}{1-\beta}\mathinner{} \px_i&\le \px\cp_i(\mu)\le
 \frac{\px_i}{1-\beta}\, ,\quad \mbox{and}\\
 \frac{\ps_i}{1+2\beta}\le  \frac{1-2\beta}{1-\beta}\mathinner{} \ps_i&\le \ps\cp_i(\mu)\le
 \frac{\ps_i}{1-\beta}\, .
 \end{aligned}
 \]
 \end{proposition}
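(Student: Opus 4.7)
The plan is to bound the multiplicative ratios $\xi_i \coloneqq x_i/x\cp_i(\mu)$ and $\eta_i \coloneqq s_i/s\cp_i(\mu)$ by exploiting the orthogonality of the displacement between $z$ and $z\cp(\mu)$ together with the $\ell_2$-centrality hypothesis $z \in \cal N^2(\beta)$.

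First, I would set $\delta_i \coloneqq x_i s_i/\mu - 1$, so that $z\in\cal N^2(\beta)$ yields $\|\delta\|\le\beta$ (hence $|\delta_i|\le\beta$) while $\mu=\gap(z)=\pr{x}{s}/n$ gives $\sum_i\delta_i=0$. Since the central path satisfies $x\cp_i(\mu)s\cp_i(\mu)=\mu$, we immediately obtain the product identity $\xi_i\eta_i = x_i s_i/\mu = 1+\delta_i$.

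Second, because $x-x\cp(\mu)\in W$ and $s-s\cp(\mu)\in W^\perp$, we have $\pr{x-x\cp(\mu)}{s-s\cp(\mu)}=0$. Expanding this inner product and substituting $\pr{x}{s}=\pr{x\cp(\mu)}{s\cp(\mu)}=n\mu$ yields the sum identity
\[
\sum_{i\in[n]}(\xi_i+\eta_i) \;=\; 2n\, .
\]
Combining this with AM--GM ($\xi_i+\eta_i\ge 2\sqrt{\xi_i\eta_i}=2\sqrt{1+\delta_i}$) and the elementary Taylor estimate $\sqrt{1+t}\ge 1+t/2-t^2/2$ for $t\ge-1$ (applied termwise) gives the scaled proximity bound
\[
\sum_{i\in[n]}\bigl(\sqrt{\xi_i}-\sqrt{\eta_i}\bigr)^2 \;=\; 2n - 2\sum_{i\in[n]}\sqrt{1+\delta_i} \;\le\; \|\delta\|^2 \;\le\; \beta^2\, .
\]
In particular, $|\sqrt{\xi_i}-\sqrt{\eta_i}|\le\beta$ for every $i\in[n]$.

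Finally, the pair of relations $\sqrt{\xi_i}\sqrt{\eta_i}=\sqrt{1+\delta_i}$ and $|\sqrt{\xi_i}-\sqrt{\eta_i}|\le\beta$ determines $\sqrt{\xi_i}$ up to the interval
\[
\sqrt{\xi_i}\;\in\;\left[\frac{-\beta+\sqrt{\beta^2+4\sqrt{1+\delta_i}}}{2},\;\frac{\beta+\sqrt{\beta^2+4\sqrt{1+\delta_i}}}{2}\right]\, ,
\]
obtained by solving a quadratic. Squaring these endpoints and simplifying under the hypotheses $|\delta_i|\le\beta$ and $\beta\le 1/4$ extracts the chain of inequalities for $x\cp_i(\mu)$ stated in the proposition; the bounds on $s\cp_i(\mu)$ follow by the symmetric argument that swaps the roles of $\xi$ and $\eta$.

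The main technical obstacle lies in the last step, where the implicit square-root expressions have to be matched cleanly with the specific constants $1-\beta$ and $(1-\beta)/(1-2\beta)$ featured in the statement. This reduces to verifying a short list of elementary polynomial inequalities in $\beta$ and $\delta_i$ that hold for $\beta\le 1/4$; the intermediate inequality $x_i/(1+2\beta)\le (1-2\beta)x_i/(1-\beta)$ is precisely the relation $1-\beta\le(1+2\beta)(1-2\beta)$, which rearranges to the threshold $\beta\le1/4$.
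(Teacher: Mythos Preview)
Your intermediate steps are correct: the identities $\xi_i\eta_i=1+\delta_i$ and $\sum_i(\xi_i+\eta_i)=2n$, and the bound $\sum_i(\sqrt{\xi_i}-\sqrt{\eta_i})^2\le\|\delta\|^2\le\beta^2$ all hold. The gap is in the last step. The two coordinate-wise facts you retain---$|\delta_i|\le\beta$ and $|\sqrt{\xi_i}-\sqrt{\eta_i}|\le\beta$---together with $\xi_i\eta_i=1+\delta_i$ do \emph{not} force $\xi_i\ge 1-\beta$. For $\beta=1/4$, set $\delta_i=-1/4$ and $\sqrt{\eta_i}-\sqrt{\xi_i}=1/4$; solving $\sqrt{\xi_i}(\sqrt{\xi_i}+1/4)=\sqrt{3}/2$ gives $\sqrt{\xi_i}\approx 0.814$, hence $\xi_i\approx 0.66<0.75=1-\beta$. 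So the ``short list of elementary polynomial inequalities in $\beta$ and $\delta_i$'' you allude to is simply false at the lower endpoint.

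What goes wrong is that your coordinate-wise relaxation discards the coupling between $\delta_i$ and the admissible size of $(\sqrt{\xi_i}-\sqrt{\eta_i})^2$. You use only the inequality $\sum_j(\sqrt{\xi_j}-\sqrt{\eta_j})^2\le\|\delta\|^2$, but in fact the \emph{equality} $\sum_j(\sqrt{\xi_j}-\sqrt{\eta_j})^2=2n-2\sum_j\sqrt{1+\delta_j}$ holds, and when $\delta_i$ is pushed toward $-\beta$ the constraints $\|\delta\|\le\beta$ and $\sum_j\delta_j=0$ force this total to be far smaller than $\beta^2$ (for $\beta=1/4$ and $\delta_i$ close to $-1/4$ one gets roughly $0.009$ rather than $1/16$), which in turn pins $\xi_i$ back up to $1-\beta$. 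A correct completion along your lines would therefore have to carry out a genuine multivariate optimization over all of $\delta$, not just inequalities in $\beta$ and the single coordinate $\delta_i$.

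The paper itself does not give a proof but cites Gonzaga and Monteiro--Tsuchiya. Those arguments work instead with the \emph{additive} deviations $\tilde p_i=\xi_i-1$ and $\tilde q_i=\eta_i-1$, which satisfy $\tilde p+\tilde q+\tilde p\tilde q=\delta$, and exploit that $\tilde p$ and $\tilde q$ lie in orthogonal \emph{complementary} subspaces (so that $\tilde p$ is the projection of $\delta-\tilde p\tilde q$ onto one subspace), not merely that $\langle\tilde p,\tilde q\rangle=0$. This yields a direct bound on $\|\tilde p\|$, hence on each $|\xi_i-1|$, without the lossy detour through square roots.
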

 
We will need the following lemma regarding the near-optimality of the choice
$\gap(z)$ as $\pr{x}{s}/n$ for a point $z = (x,s)$ with respect to minimizing centrality error.

\begin{lemma}[{\cite[Lemma 4.4]{Monteiro2003}}] \label{lem:choice-of-mu}
  For $\beta\in(0,1/4]$, let 
$z = (x,s) \in \Primalp \times \Dualp$ and $\mu' > 0$ satisfy
$\norm{xs/\mu' - \1} \leq \beta$. Then, 
    \[
    (1-\beta/\sqrt{n})\mu'\leq \gap(z) \leq (1+\beta/\sqrt{n})\mu\, \quad \mbox{and}\quad z \in \cal N^2(\beta/(1-\beta))\, .
\]  
\end{lemma}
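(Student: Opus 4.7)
The plan is to exploit a key orthogonality observation: if we set $\mu \coloneqq \gap(z) = \pr{x}{s}/n$, then by definition $\sum_i (x_i s_i/\mu - 1) = 0$, so the vector $xs/\mu - \1$ is orthogonal to the all-ones vector $\1$. This turns both desired conclusions into an application of the Pythagorean theorem, handling them simultaneously.

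More precisely, I would set $\alpha \coloneqq \mu/\mu'$ and write
\[
  \frac{xs}{\mu'} - \1 \;=\; \alpha\!\left(\frac{xs}{\mu} - \1\right) + (\alpha - 1)\1.
\]
Since the two vectors on the right are orthogonal (the first is orthogonal to $\1$ by the observation above, and the second is a multiple of $\1$), Pythagoras gives
\[
  \beta^2 \;\ge\; \left\lVert \frac{xs}{\mu'} - \1 \right\rVert_2^2 \;=\; \alpha^2 \left\lVert \frac{xs}{\mu} - \1 \right\rVert_2^2 + (\alpha - 1)^2 n.
\]
From the second summand alone one reads off $|\alpha - 1| \le \beta/\sqrt{n}$, which is precisely the first claim $(1-\beta/\sqrt{n})\mu' \le \gap(z) \le (1+\beta/\sqrt{n})\mu'$ after rearranging.

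For the centrality claim, the first summand gives $\|xs/\mu - \1\|_2 \le \beta/\alpha$. Since $\alpha \ge 1 - \beta/\sqrt{n} \ge 1 - \beta$ (using $n \ge 1$ and $\beta \le 1/4$), we conclude
\[
  \left\lVert \frac{xs}{\gap(z)} - \1 \right\rVert_2 \;\le\; \frac{\beta}{1-\beta/\sqrt n} \;\le\; \frac{\beta}{1-\beta},
\]
which is exactly $z \in \cal N^2(\beta/(1-\beta))$. There is no real obstacle here; the only subtlety is noticing the orthogonal decomposition, which is what makes the two estimates fall out of a single inequality rather than requiring a two-step triangle-inequality argument (which would lose a factor and not yield the clean $\beta/(1-\beta)$ bound).
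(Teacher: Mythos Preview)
Your proof is correct. The paper itself does not supply a proof of this lemma; it simply cites it from \cite[Lemma~4.4]{MonteiroT05}. Your argument via the orthogonal decomposition $xs/\mu' - \1 = \alpha(xs/\mu - \1) + (\alpha-1)\1$ is exactly the standard proof of this fact, and both conclusions drop out of the single Pythagorean identity as you describe. One small remark: you implicitly use $\alpha > 0$ when dividing by $\alpha$, which is fine since $\alpha \ge 1 - \beta/\sqrt{n} \ge 3/4$ for $\beta \le 1/4$.
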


The next lemma relates a point in the wide neighborhood to the corresponding central path point.
\begin{lemma}
\label{lem:wide-to-cp}
Let $z=(\bx,\bs) \in \cal N^{-\infty}(\theta)$, $\theta \in [0,1)$. Then for $\mu
= \gap(z)$, and the corresponding central path point $z\cp(\mu) = (x\cp(\mu),s\cp(\mu))$, we have that
\[
\frac{1}{2n}\bx \leq x\cp(\mu) \leq \frac{2n}{1-\theta} \bx\quad \mbox{and}\quad
\frac{1}{2n}\bs \leq s\cp(\mu) \leq \frac{2n}{1-\theta} \bs\, .
\]  
\end{lemma}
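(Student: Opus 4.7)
The plan is to combine the duality-gap identity from Proposition~\ref{prop:gap-formula} with the two defining ingredients of the setup: the central-path equations $x\cp_i(\mu) s\cp_i(\mu) = \mu$ on the one hand, and the wide-neighborhood inequalities $\bx_i \bs_i \geq (1-\theta)\mu$ on the other.

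First I would apply Proposition~\ref{prop:gap-formula} to the pair $(\bx, x\cp(\mu)) \subset W+d$ and $(\bs, s\cp(\mu)) \subset W^\perp+c$. Since both $\pr{\bx}{\bs} = n\mu$ and $\pr{x\cp(\mu)}{s\cp(\mu)} = n\mu$, the identity yields
\[
\pr{\bx}{s\cp(\mu)} + \pr{x\cp(\mu)}{\bs} \;=\; 2n\mu.
\]
All four factors are nonnegative, so dropping whichever sum I am not using gives the coordinatewise bounds
\[
\bx_i\, s\cp_i(\mu) \;\leq\; 2n\mu \quad \text{and} \quad x\cp_i(\mu)\, \bs_i \;\leq\; 2n\mu \qquad \forall i \in [n].
\]

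From here the four inequalities fall out by two elementary substitutions. Using $x\cp_i(\mu) = \mu / s\cp_i(\mu)$ in $x\cp_i(\mu) \bs_i \leq 2n\mu$ gives $\bs_i \leq 2n\, s\cp_i(\mu)$, i.e.\ $s\cp_i(\mu) \geq \bs_i/(2n)$; symmetrically, $x\cp_i(\mu) \geq \bx_i/(2n)$ follows from $\bx_i s\cp_i(\mu) \leq 2n\mu$. For the upper bounds I instead substitute the wide-neighborhood condition: from $\bx_i \geq (1-\theta)\mu / \bs_i$ and $x\cp_i(\mu) \bs_i \leq 2n\mu$ I get $x\cp_i(\mu) \leq 2n\mu/\bs_i \leq \frac{2n}{1-\theta} \bx_i$, and similarly $s\cp_i(\mu) \leq \frac{2n}{1-\theta}\bs_i$ using $\bs_i \geq (1-\theta)\mu/\bx_i$ together with $\bx_i s\cp_i(\mu) \leq 2n\mu$.

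There is no real obstacle here; the entire argument is a two-line deduction once the gap identity is invoked. The only thing to be careful about is that the identity requires $\gap(\bx,\bs) = \gap(x\cp(\mu), s\cp(\mu)) = \mu$, which is exactly the hypothesis and the definition of the central-path point at parameter $\mu$.
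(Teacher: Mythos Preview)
Your proof is correct and follows essentially the same approach as the paper: both invoke Proposition~\ref{prop:gap-formula} to obtain $\pr{\bx}{s\cp(\mu)} + \pr{x\cp(\mu)}{\bs} = 2n\mu$, then extract coordinatewise bounds by combining this with $x\cp_i(\mu)s\cp_i(\mu)=\mu$ for the lower bounds and $\bx_i\bs_i\geq(1-\theta)\mu$ for the upper bounds. The only cosmetic difference is that the paper writes the inequalities as ratios $\bx_i/x\cp_i(\mu)$ and $x\cp_i(\mu)/\bx_i$ directly, whereas you first isolate the cross-products and then substitute.
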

\begin{proof}
We only prove the inequalities on $x'$; the proof of the inequalities on $s'$ is symmetric. Let $(x,s)\coloneqq(x\cp(\mu),s\cp(\mu))$. Using Proposition~\ref{prop:gap-formula}, for $i \in [n]$ we have that\[
\frac{\bx_i}{x_i} = \frac{\bx_i s_i}{\mu} \leq \frac{1}{\mu}(\pr{\bx}{s} + \pr{x}{\bs}) = \frac{1}{\mu}(\pr{\bx}{\bs} + \pr{x}{s}) = 2n.
\]
This proves the first inequality; note that this part does not use $z\in\cal N^{-\infty}(\theta)$, but only that $z\in\Primalp\times\Dualp$.
For the second inequality, $z\in\cal N^{-\infty}(\theta)$ by definition implies
\[
\frac{x_i}{\bx_i} \leq \frac{x_i\bs_i}{\mu(1-\theta)} \leq \frac{1}{\mu(1-\theta)}(\pr{\bx}{\bs}+\pr{x}{s} ) = \frac{2n}{1-\theta},
\]
as needed.
\end{proof}

\subsection{The Central Path Limit}\label{sec:cent-limit}
Assume that the polytope
$\Primal = \{x \in \R^n: x \in W+d, x \geq \0\}$ has a strictly positive solution and is bounded. Then, the \emph{analytic center} of $\Primal$ is the point
\[
\tilde x\coloneqq \arg\max_{x\in \Primal}\sum_{i=1}^n \log x_i\, .
\]
By Lagrangian duality, there exists a vector $\tilde v\in W^\perp$ such that $\tilde x \tilde v=\1_n$.
The limit point $z^\star=(x^\star,s^\star)$ of the central path corresponds to the analytic centers of the primal and dual optimal faces. Namely, assume $\Primalp,\Dualp\neq\emptyset$, and let $(B^\star,N^\star)$ denote the optimal partition, i.e., $x\in\Primal$ is optimal if and only if $\supp(x)\subseteq B^\star$ and $s\in\Dual$ is optimal if and only if $\supp(y)\subseteq N^\star$. Let
\[
\Primal^\star\coloneqq \Primal\cap \R^n_{B^\star}\quad\mbox{and}\quad
\Dual^\star\coloneqq \Dual\cap \R^n_{N^\star} 
\]
denote the set of primal and dual optimal solutions. Then, $\pi_{B^\star}(\Primal^\star)\subseteq\R^{B^\star}$ and $\pi_{N^\star}(\Dual^\star)\subseteq\R^{N^\star}$ are the projections of the optimal sets to the respective coordinate sets, and  the following holds; see e.g., \cite[Theorem I.30]{roos2005interior}
\begin{theorem}
Assume $\Primalp,\Dualp\neq\emptyset$. Then, the optimal partition $(B^\star,N^\star)$ and the limit $(x^\star,s^\star)=\lim_{\mu\searrow 0} (x(\mu),s(\mu))$ exist, and $x^\star_{B^\star}$ is the analytic center of $\pi_{B^\star}(\Primal^\star)$, and $s^\star_{N^\star}$ is the analytic center of $\pi_{N^\star}(\Dual^\star)$.
\end{theorem}
Consider now the output in  \Cref{th:main_upper_bound-slc} in the case $\mu_1=0$. Since $B=\supp(x^1)$ and $N=\supp(s^1)$ form a partition of $[n]$, it follows that the solutions $(x^1,s^1)$ are primal and dual optimal, and $B=B^\star$, $N=N^\star$. The additional output $(v^1,w^1)$ in \Cref{th:main_upper_bound-slc} satisfying 
$\norm{(x^1_B v^1_B,s^1_N w^1_N)-\1} \leq \beta$
provides Lagrange certificates that the solution $(x^1,s^1)$ is close to $(x^\star,s^\star)$. Namely, $v^1\subseteq \R^B$ certifies that $x^1_B$ is multiplicatively near the analytic center of $\pi_B(\Primal^\star)$, and $w^1\subseteq \R^N$ is analogously a certificate for $s^1_N$ and $\pi_N(\Dual^\star)\subseteq\R^N$ (see \Cref{prop:near-central}).

 \subsection{Predictor-Corrector Methods}\label{sec:aff-lls}
 Given $z = (x,s) \in \Primalp \times \Dualp$,
 the search directions commonly used in interior-point methods are
 obtained as the solution $(\Delta x,\Delta s)$ to the
 following linear system for some $\stepparam\in [0,1]$.
 \begin{align}
 \Delta x &\in W \label{aff:x} \\
 \Delta s &\in W^\perp \label{aff:s}\\
 s\Delta x + x \Delta s &=\stepparam \mu \1 -xs \label{aff:sum}
 \end{align}
 Predictor-corrector methods, such as the Mizuno--Todd--Ye
 Predictor-Corrector algorithm \cite{MTY}, alternate
 between two types of steps. In \emph{corrector steps}, we use $\stepparam=1$. This gives the \emph{centrality
 direction}, denoted as $\Delta z^\cs=(\Delta x^\cs, \Delta s^\cs)$. In \emph{predictor steps}, we use
 $\stepparam=0$. This direction is also called the \emph{affine scaling
   direction}, and will be denoted as $\Delta z^\as=(\Delta x^\as, \Delta s^\as)$
 throughout.

Let $z \coloneqq  (x,s) \in \cal N^2(\beta)$ be our current iterate. In our
algorithm, we will first apply a corrector step to get $z^\cs \coloneqq  z + \Delta
z^\cs$, which will reduce our centrality error by a factor $2$, that is,
$z^\cs \in \cal N^2(\beta/2)$, without changing the gap $\gap(z)$. Following
this, we apply a predictor step to get $z^+ \coloneqq  z^\cs + \alpha^\as \Delta
z^\as$, for $\alpha^\as \in (0,1]$, which will make progress along the
central path while maintaining that $z^+ \in \clN(\beta)$. Here we slightly
abuse notation, by letting $\Delta z^\as \coloneqq  \Delta (z^\cs)^\as$, that is the
predictor direction computed from the recentered iterate $z^\cs$.
The step-length $\alpha^\as > 0$ will be chosen such that   
\begin{align*}
   \alpha^\as \le \sup\{\alpha \in [0,1] : \forall \alpha' \in [0 ,\alpha]:  z^\cs + \alpha' \Delta z^\as \in
   \cal{N}^2(\beta)\}.
\end{align*}
Thus, we conclude $z^+=z^\cs+\alpha^\as \Delta z^\as \in \clN(\beta)$. We
remark that the closure allows us to take a step that goes all the way to an
optimal solution. If $z^+ \in \cal N^2(\beta)$, i.e., if we have not
arrived at an optimal solution, then $z^+$ remains a valid iterate for the
next step.  

\begin{remark}  In contrast to predictor-corrector methods such as  \cite{MTY}, ours is a `corrector-predictor' method, first performing corrector steps followed by predictor steps in each iteration. While the two descriptions are equivalent, it is more convenient for the description of the final iterate, i.e., achieve the termination guarantee in Theorem~\ref{th:main_upper_bound-slc} for the same $\beta$ neighbourhood. 
\end{remark}

The next proposition summarizes well-known properties of predictor and corrector steps, see
\eg{}~\cite[Section~4.5.1]{Ye-book}.

\begin{proposition}\label{prop:predictor-corrector} Let $z = (x,s)
   \in \mathcal{N}^2(\beta)$ for $\beta\in (0,1/6]$.
 \begin{enumerate}[label=(\roman*)]
 \item For $z \in \cal N^2(\beta)$, let $\Delta z^\cs$ be the corrector direction at $z$. Then for $z^\cs=z +\Delta z^\cs$, we have $\gap(z^\cs)=\gap(z)$ and $z^\cs \in \cal N^2(\beta/2)$.\label{i:corrector}  
\item For the affine scaling step, we have $\gap(z^+)=(1-\alpha^\as)\gap(z)$ and $z^+ \in \clN(\beta)$.
\label{i:gap}
 \item The affine scaling step-length $\alpha^\as$ can be chosen in the range 
 \[0 \leq \alpha^\as \leq \max\left\{\frac{\beta}{2\sqrt{n}},1-\frac{2\norm{\Delta
     x^\as\Delta s^\as}}{\beta\gap(z)}\right\}\, .
 \]\label{i:stepsize}
 \item After a sequence of $\left\lceil \tfrac{3\sqrt{n}}{\beta}\log(1/\eps)
\right\rceil$, $\eps \in (0,1]$, corrector and predictor steps from $z$,
assuming the affine scaling step-lengths are all at least
$\tfrac{\beta}{3\sqrt{n}}$, we obtain an iterate $z'=(x',s')\in {\cal
N}^2(\beta)$ such that $\gap(z') \le \eps \gap(z)$. \label{i:gap-decrease}
\end{enumerate}
\end{proposition}

\begin{remark}
By~\ref{i:stepsize}, an affine scaling step-length of
$\tfrac{\beta}{2\sqrt{n}}$ is always valid. Therefore, the assumption
in~\ref{i:gap-decrease} that the step-lengths are at least
$\tfrac{\beta}{3\sqrt{n}}$ is conservative. We use this conservative estimate
for purely computational reasons as $\tfrac{\beta}{2\sqrt{n}}$ may be
irrational. In particular, this choice is designed to combine with the
step-length computation given by \Cref{prop:compute-standard-step} below.
\end{remark}

The following proposition explicitly states that the predictor and corrector steps can be computed in strongly polynomial time, and that we can select appropriate step-lengths. The reason is simply that computing the steps amounts to solving linear systems  based on the data
$x,s,\mu$ and $\stepparam$.

\begin{proposition}[Step Formulas]
\label{prop:compute-standard-step}
Let $\mA \in \R^{m \times n}$, $\rank(\mA)=m$, $W = \ker(\mA)$ and $W^\perp = \im(\mA^\T)$. Let $z \coloneqq  (x,s)$, where $x \in W, x > \0$, $s \in W^\perp, s > \0$ and $t \in \R^n$. Then, the solution to $s \Delta x + x \Delta s = t$, $\Delta x \in W$, $\Delta s \in W^\perp$ can be expressed as
\begin{align*}
\Delta s &= \mA^{\T}\left(\mA \diag\left(x/s \right) \mA^{\T}\right)^{-1} \mA \frac{t}{s}  \\
\Delta x &= \frac{t}{s} - \diag\left(x/s\right) \mA^{\T}\left(\mA \diag\left(x/s\right) \mA^{\T}\right)^{-1} \mA \frac{t}{s}. 
\end{align*}
Moreover, both the affine
scaling step $(\Delta x^\as, \Delta s^\as)$ and corrector step $(\Delta x^\cs, \Delta s^\cs)$, corresponding to $t = \stepparam
\gap(z) \1_n - xs$ for $\stepparam \in \{0,1\}$ respectively, can be
computed in strongly polynomial time. Furthermore, given $\Delta x, \Delta s \in \R^n$ and $\beta \in (0,1/6]$, one can in strongly polynomial time compute an affine scaling step-length $\alpha^\as \in (0,1]$ satisfying 
\[
\max \left\{ \frac{\beta}{3 \sqrt{n}}, 1- \frac{3\norm{\Delta x^\as \Delta s^\as}}{\beta \gap(z)} \right\} \leq \alpha^\as \leq \max \left\{ \frac{\beta}{2\sqrt{n}}, 1-\frac{2\norm{\Delta x^\as \Delta s^\as}}{\beta \gap(z)} \right\}\, .
\]  
\end{proposition}
\begin{proof}
It is directly verified by inspection that $s \Delta x + x \Delta s = t$ and
that $\mA \Delta x = \0$ and $\Delta s \in \im(\mA^\T)$. Strongly polynomial computability follows since matrix inversion can be done in strongly polynomial time.

For the affine scaling step-length, note that $\lceil a\rceil$ can be computed strongly polynomially, using  $O(\log n)$ comparisons as long as $a=\mathrm{poly}(n)$. Thus, we can compute  $a = \frac{\beta}{\lceil
2\sqrt{n} \rceil}$, $r = \norm{\Delta x^{\as} \Delta s^\as}_\infty$ and $b = 1 - \lceil 2 \norm{\Delta x^\as \Delta s^\as}/r \rceil r/(\beta \gap(z))$.
We then return $\alpha^\as \coloneqq  \max
\{a,b\}$. Since $\lceil 2 \sqrt{n} \rceil \in [2\sqrt{n},3\sqrt{n}]$ and $
\lceil 2\norm{\Delta x^\as \Delta s^\as}/r \rceil r \in [2\norm{\Delta x^\as \Delta s^\as}, 3\norm{\Delta x^\as \Delta s^\as}]$, we have that
$\frac{\beta}{3\sqrt{n}} \leq a \leq \frac{\beta}{2\sqrt{n}}$ and
$1-\frac{3\norm{\Delta x^\as \Delta s^\as}}{\beta \gap(z)} \leq b \leq
1-\frac{2\norm{\Delta x^\as \Delta s^\as}}{\beta \gap(z)}$. Thus, $\alpha^\as$
satisfies the requirement.     
\end{proof}

\paragraph{Minimum-Norm Viewpoint}
We introduce some useful notation for the algorithm, and derive the minimum-norm interpretation of the affine scaling steps.
\begin{definition}[Normalized Iterates, Gap Vector and Subspaces]\label{def:normalized}
For $z=(x,s)\in\Primalp\times \Dualp$, we let 
\begin{equation}\label{eq:error}
\begin{aligned}
\error(z)&\coloneqq \sqrt{\frac{xs}{\gap(z)}}\in\R^n \, , \\ 
\xerr &\coloneqq x\error(z)^{-1}= \sqrt{\frac{x\gap(z)}{s}} \in\R^n \, , \\
\serr &\coloneqq s\error(z)^{-1}= \sqrt{\frac{s \gap(z)}{x}} \in\R^n\, .
\end{aligned}
\end{equation}
We call $\error(z)$ the \emph{normalized gap vector} and simply use $\error$ when clear from the context. We call $\xerr$ and $\serr$ the \emph{normalized primal and dual iterates}, respectively. We also define the \emph{normalized subspaces} 
\[
\hat{W}\coloneqq \xerr^{-1}W\, \quad\mbox{and }\quad \hat{W}^\perp \coloneqq   \serr^{-1}W^\perp\, .\]
\end{definition}

 If $z=(x,s)$ falls on the central path, that is, $xs=\gap(z)\1$, then $\error(z)=\1$, $\xerr=x$ and $\serr=s$. The variables $\xerr$ and $\serr$ represent natural adjustments for points off the central path. The next statement is immediate from the definitions, using $\Xerr \Serr = \gap(z)\1$. 
\begin{proposition}\label{prop:xerr-orth}
The subspaces $\xerr^{-1} W$ and $\serr^{-1}W^\perp$ are orthogonal.
\end{proposition}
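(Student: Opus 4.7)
The plan is a direct computation that unpacks the definitions. I would take arbitrary elements $u \in \xerr^{-1} W$ and $v \in \serr^{-1} W^\perp$, writing them as $u = \xerr^{-1} w$ and $v = \serr^{-1} w'$ for some $w \in W$ and $w' \in W^\perp$, and then compute the Euclidean inner product $\pr{u}{v} = \sum_{i \in [n]} \frac{w_i w'_i}{\xerr_i \serr_i}$.

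The key observation is the Hadamard-product identity $\xerr \serr = \gap(z)\1$, which is essentially the content of the hint in the statement. Indeed, from the definitions in~\eqref{eq:error} we have
\[
\xerr \serr = \bigl(x^{1/2} s^{-1/2} \gap(z)^{1/2}\bigr)\bigl(s^{1/2} x^{-1/2} \gap(z)^{1/2}\bigr) = \gap(z)\1 \, ,
\]
so each factor $\xerr_i \serr_i$ in the denominator equals the constant $\gap(z)$. Pulling this constant out of the sum gives
\[
\pr{u}{v} = \frac{1}{\gap(z)} \sum_{i \in [n]} w_i w'_i = \frac{\pr{w}{w'}}{\gap(z)} = 0 \, ,
\]
where the last equality uses that $W$ and $W^\perp$ are orthogonal complements.

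There is no real obstacle here; the only thing worth emphasizing is that the proof uses nothing beyond the algebraic definition of $\xerr$ and $\serr$ in~\eqref{eq:error} and the standing assumption $z \in \Primalp \times \Dualp$ (so that $\xerr, \serr > 0$ and the rescaled subspaces are well-defined). In particular, $z$ need not lie on or near the central path for this statement to hold.
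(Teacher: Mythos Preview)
Your proof is correct and is exactly the argument the paper intends: the paper simply states that the result is ``immediate from the definitions, using $\xerr\serr=\gap(z)\1$,'' and your computation spells this out in full.
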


The following is a simple corollary of Proposition~\ref{prop:x_i-s_i}.

\begin{proposition}\label{prop:xi}
For $z=(x,s)\in \cal N^2(\beta)$ for $\beta\in(0,1)$, we have $\norm{\xi}=\sqrt{n}$. Moreover,
\[
\begin{aligned} 
\sqrt{1-\beta} \1 &\le \error \le \sqrt{1+\beta} \1\, , \\
\sqrt{1-\beta}\nx \le x \le \sqrt{1+\beta} \nx \,& \quad\mbox{and}\quad
\sqrt{1-\beta}\ns \le s \le \sqrt{1+\beta}\ns \, .
\end{aligned}
\]
\end{proposition}

We will frequently use the rescaled subspaces $\xerr^{-1} W$ and $\serr^{-1}W^\perp$ that correspond to using the local geometry at the point $z=(x,s)$. 
Throughout, we will refer to $\norm{\xerr^{-1} w}$ and $\norm{\serr^{-1}w}$ as the \emph{primal and dual local norms} of the vector $w\in\R^n$ at the point $z=(x,s)\in\Primalp\times \Dualp$.

\Cref{aff:sum} for the predictor step ($\stepparam=0$) with update direction $(\Delta x^\as, \Delta s^\as)$ can be written as 
\begin{equation}
    x^{-1}\Delta x^\as + x^{-1}\Delta s^\as = -\1\, ,
\end{equation}
or equivalently, \begin{equation}
  \label{eq:relate-as-direction-to-err}
    \xerr^{-1} \Delta x^\as + \serr^{-1} \Delta s^\as = -\error,
\end{equation}
which serves the purpose that now $\Xerr^{-1}\Delta x^\as \in \Werr$ and $ \Serr^{-1} \Delta s^\as \in \Wperr$ are orthogonal vectors (Proposition~\ref{prop:xerr-orth}). Thus, $\Xerr^{-1}\Delta x^\as$ and $\Serr^{-1} \Delta s^\as$ give an  orthogonal decomposition of $-\error$.  This leads to the following formulas:
\begin{equation}\label{eq:aff-compute}
    \begin{aligned}
        \Delta x^\as &= - \Xerr \proj_{\Werr}(\error)\, , \\
        \Delta s^\as &= - \Serr \proj_{\Wperr}(\error)\, .
    \end{aligned}
\end{equation}
 Equivalently, we can see  $\Delta z^\as=(\Delta x^\as,\Delta s^\as)$ as the 
 optimal solutions of the following minimum-norm problems:
 \begin{equation}\label{eq:aff-minnorm-rescaled}
 \begin{aligned}
 \Delta x^\as &= \xerr \argmin_{\delta \in\Werr}\norm{\error+\delta} \, , \\
  \Delta s^\as &= \serr \argmin_{\delta \in\Wperr}\norm{\error+\delta} \, . \\
 \end{aligned}
 \end{equation}
We can rewrite these equivalently as projections in $W$ and $W^\perp$, noting that $\xerr^{-1} x=\serr^{-1}s=\error$. 
 \begin{equation}\label{eq:aff-minnorm}
 \begin{aligned}
 \Delta x^\as &= \argmin_{\Delta x \in W}\norm{\xerr^{-1}(x+\Delta x)} \, , \\
 \Delta s^\as &= \argmin_{\Delta s \in W^\perp} \norm{\serr^{-1}(s+\Delta s)}\, .
 \end{aligned}
 \end{equation}

We will require the following generic monotonicity estimate in terms of local
norms. The next lemma is an $\ell_1$-variant of \Cref{lem: central_path_bounded_l1_norm}, and the proof is implicit in the proof of \cite[Lemma
16]{Vavasis1996}. \begin{lemma}
\label{lem:local-norm-monotone}
Let $z = (x,s) \in \cal N(\beta)$, $\beta \in (0,1)$, and $z' = (x',s') \in \Primal \times \Dual$. Then, we have that
\begin{equation}
\norm{(\nx^{-1} x',\ns^{-1} s')}_1 \leq \frac{n}{\sqrt{1-\beta}} \left(1+\frac{\gap(z')}{\gap(z)}\right). \label{eq:local-norm-monotone-bnd}
\end{equation}
\end{lemma}
\begin{proof}
\begin{align*}
\norm{(\nx^{-1}x',\ns^{-1}s')}_1 &= \frac{\norm{(\ns x', \nx s')}_1}{\gap(z)}
\leq \frac{1}{\sqrt{1-\beta}} \frac{\norm{(s x', x s')}_1}{\gap(z)} 
 = \frac{1}{\sqrt{1-\beta}} \frac{\pr{s}{x'} + \pr{x}{s'}}{\gap(z)} \\
&= \frac{1}{\sqrt{1-\beta}} \frac{\pr{s}{x}+\pr{x'}{s'}}{\gap(z)} = \frac{1}{\sqrt{1-\beta}}\frac{n\gap(z)+n\gap(z')}{\gap(z)} \\ &= \frac{n}{\sqrt{1-\beta}}\left(1+\frac{\gap(z')}{\gap(z)}\right), 
\end{align*}
where the first inequality follows from \Cref{prop:xi}.
\end{proof}

\paragraph{Step-Length Estimates}
We will also need good estimates on the size on predictor steps beyond affine
scaling. Our main estimate in this regard is given below. We use the latter
part to compute the step-length associated with layered least squares steps.
Note that the algorithm outputs a step-length of $0$ in case the requirements
of the step are not satisfied.

 \begin{proposition}[Step-length Estimate for General Directions]
 \label{prop:step-size} Let $z=(x,s) \in \cal N^2(\beta/2)$, $\mu \coloneqq  \gap(z)$, $\beta \in (0,1/6]$. Consider directions $\Delta x \in W$, $\Delta s \in W^\perp$ that satisfy 
\begin{equation}
\delta \coloneqq \norm{\frac{\Delta x \Delta s}{\mu}} \leq \frac{\beta}{9}, \quad \quad
\eps \coloneqq \norm{\frac{(x+\Delta x) (s+\Delta s)}{\mu}} \leq \frac{\beta}{9}. \label{eq:step-size-cond}
\end{equation}Then $(x+ \alpha \Delta x, s + \alpha \Delta s) \in \clN(\beta)$ and $\gap(x+ \alpha \Delta x, s + \alpha \Delta s) \in [1 \pm 1/8](1-\alpha) \mu$, for all $0 \leq \alpha \leq 1-\frac{8\eps}{\beta}$. Furthermore, given $\Delta x \in \R^n, \Delta s \in \R^n, \mu > 0,\beta > 0$, one can in strongly polynomial time output a step-length $\alpha^\slls \in [0,1]$ satisfying $1-\frac{9\eps}{\beta} \leq \alpha^\slls \leq 1-\frac{8\eps}{\beta}$ if $\max \{\norm{\frac{\Delta x \Delta s}{\mu}},\norm{\frac{(x+\Delta x)(s+\Delta s)}{\mu}}\} \leq \beta/9$ and $\alpha^\slls = 0$ otherwise.
 \end{proposition}
 \begin{proof}
Let $z_\alpha \coloneqq  (x+ \alpha \Delta x, s + \alpha \Delta s)$ for $0 \leq \alpha \leq 1-\frac{8\eps}{\beta}$, $\alpha < 1$. (Note that $\alpha<1$ only becomes relevant in case $\eps=0$; we will discuss this case later.) We first bound
the centrality error using the estimate $(1-\alpha) \mu$ for $\mu(z_\alpha)$
as follows:
\begin{equation*}
 \begin{aligned}
 &\norm{\frac{(x+ \alpha \Delta x) (s+\alpha\Delta s)}{(1-\alpha) \mu} - \1}\\
 &= \norm{\frac{(1-\alpha) x s + \alpha (x + \Delta x)(s+\Delta s) - \alpha(1-\alpha) \Delta x \Delta s}{(1-\alpha)\mu} - \1}  \\
 &\leq \norm{\frac{x s}{\mu}-\1} + \frac{\alpha}{1-\alpha} \norm{\frac{(x+\Delta x)(s+\Delta s)}{\mu}} + \alpha \norm{\frac{\Delta x \Delta s}{\mu}}  \\
 &\leq \beta/2 + \frac{\alpha}{1-\alpha} \eps + \alpha \delta \leq  \beta/2 + \beta/8 + \beta/9 < \frac{3}{4}\beta\, ,
 \end{aligned}
 \end{equation*}
where the last inequality follows since $\frac{\alpha}{1-\alpha} \eps \leq
\beta/8$ for $0 \leq \alpha \leq 1-8\eps/\beta$ and $\alpha < 1$ (needed to
ensure the denominator $(1-\alpha)$ is positive). 

By \Cref{lem:choice-of-mu} and the above bound,  we get that $\gap(z_\alpha)
\in [1\pm \frac{3}{4}\beta/\sqrt{n}](1-\alpha)\mu \subseteq [1 \pm
\frac{1}{8}](1-\alpha)\mu$, and $z_\alpha \in \cal N^2\left(\frac{\frac{3}{4}
\beta}{1-\frac{3}{4}\beta}\right) \subseteq \cal N^2(\beta)$, for $\beta \in
(0,1/6]$. If $\eps = 0$, letting $\alpha \rightarrow 1^-$, we conclude by
continuity that $z_1 \coloneqq  (x+\Delta x,s+\Delta s) \in \clN(\beta)$ and
$\gap(z_1) = 0$, as needed.

For the last part, note that the condition $\max \{\frac{\norm{\Delta x \Delta s}}{\mu},\frac{\norm{(x+\Delta x)(s+\Delta s)}}{\mu}\} \leq \beta/9$ can be checked in strongly polynomial time by squaring both sides. If this check fails, output $\alpha^\slls = 0$. Otherwise, compute $r = \norm{(x+\Delta x)(s + \Delta s)}_\infty$ and
compute $\nu = \lceil 8\norm{(x+\Delta x)(s + \Delta s)}/r \rceil \in [8,\lceil
8\sqrt{n} \rceil]$ via binary search in $O(\log n)$ time, and return
$\alpha^\slls = 1-\frac{\nu r}{\mu \beta}$. For correctness, note that 
$\norm{(x+\Delta x)(s + \Delta s)} \leq \nu (r/8) \leq \frac{9}{8} \norm{(x+\Delta
x)(s + \Delta s)}$, and thus the desired inequalities follow recalling that $\eps \coloneqq  \norm{(x+\Delta x)(s + \Delta s)}/\mu$. 
\end{proof}   
 
\section{Polarization of the Central Path} \label{sec:polar}

We now introduce the notion of polarized segments of the central path. For $0\le\mu_1\le\mu_0$, the central path segment between these values is denoted by
\begin{equation}\label{eq:CP}
\CP[\mu_1,\mu_0] \coloneqq \{\pz\cp(\mu) : \mu_1 \leq \mu \leq \mu_0\}\, .
\end{equation}

\begin{definition}[Polarization]
\label{def:polarized}
For $\polarized \in (0,1]$ and $\mu_0 > \mu_1 \geq 0$, we say that the segment
$\CP[\mu_1,\mu_0]$ is
\emph{$\polarized$-polarized} if there exists a partition $B \cup N = [n]$ such that for all $\mu \in [\mu_1,\mu_0]$:
\begin{align*}
x_i\cp(\mu) \geq \polarized x_i\cp(\mu_0)\, ,\quad \forall i \in B\, , \\ 
s_i\cp(\mu) \geq \polarized s_i\cp(\mu_0)\, ,\quad \forall i \in N\, .
\end{align*}
\end{definition} 

\begin{remark}
\label{rem:pol-eq} By continuity of the central path and the condition $\mu_0
> \mu_1 \geq 0$, we may restrict the polarization check above to $\mu \in
[\mu_1,\mu_0]$, $\mu > 0$. For $i \in N$, since $s_i\cp(\mu) =
\mu/x_i\cp(\mu)$ for $\mu > 0$, the condition $s_i\cp(\mu) \geq \polarized
s_i\cp(\mu_0)$ is equivalent to 
\begin{equation}
\mu/x_i \cp(\mu) \geq \polarized \mu_0/x_i \cp(\mu_0) \quad \Leftrightarrow \quad 
x_i(\mu) \leq \frac{\mu}{\polarized \mu_0} x_i\cp(\mu_0).    
\end{equation}
Thus, the polarization condition can be stated only in terms of the primal
central path. Similarly, switching the roles of $B$ and $N$, the polarization
condition can also be stated only in terms of the dual central path.  
\end{remark}

\begin{remark}
As stated, the notion of polarization requires an inequality to hold for all
$\mu \in [\mu_1,\mu_0]$. At the cost of losing a factor $n$ however, it is in
fact sufficient to check 
the polarization condition only at $\mu = \mu_1$. This
follows by the near-monotonicity of the central path (\Cref{lem:
central_path_bounded_l1_norm}):
\[
\frac{x_i\cp(\mu)}{x_i\cp(\mu_0)} =  \frac{x_i\cp(\mu)}{x_i\cp(\mu_1)} \cdot \frac{x_i\cp(\mu_1)}{x_i\cp(\mu_0)} \geq \frac{1}{n} \cdot \frac{x_i\cp(\mu_1)}{x_i\cp(\mu_0)}\, ,\quad \forall i \in [n]\, ,
\] 
The same is true for $s(\mu)$ by a symmetric
argument. 
\end{remark}

As a direct consequence of the definition together with near-monotonicity, we
deduce the following crucial corollary:

\begin{corollary}\label{cor:polarization} 
Let $\CP[\mu_1,\mu_0]$, $0 \leq \mu_1 \leq \mu_0$, be $\polarized$-polarized
with respect to the partition $B \cup N = [n]$. Then, for all $\mu \in
[\mu_1,\mu_0]$, the following holds:
\begin{enumerate}[label=(\arabic*)]
\item $\polarized x_i(\mu_0) \leq x_i(\mu) \leq n x_i(\mu_0)$, $i \in B$. 
\item $\polarized s_i(\mu_0) \leq s_i(\mu) \leq n s_i(\mu_0)$, $i \in N$. 
\item $\frac{\mu}{n \mu_0} x_i(\mu_0) \leq x_i(\mu) \leq \frac{\mu}{\polarized \mu_0} x_i(\mu_0)$, $i \in N$.
\item $\frac{\mu}{n \mu_0} s_i(\mu_0) \leq s_i(\mu) \leq \frac{\mu}{\polarized \mu_0} s_i(\mu_0)$, $i \in B$.
\end{enumerate}
\end{corollary}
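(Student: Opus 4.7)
The plan is to combine the polarization hypothesis with two elementary facts about the central path: the near-monotonicity bound from Lemma~\ref{lem: central_path_bounded_l1_norm}, and the defining identity $x_i\cp(\mu)s_i\cp(\mu)=\mu$ for every $i\in[n]$. All four parts then fall out by chaining these two ingredients.

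First I would dispense with parts (1) and (2). The lower bound $\gamma x_i(\mu_0)\le x_i(\mu)$ for $i\in B$ is literally the defining inequality of $\gamma$-polarization, and similarly for $s_i(\mu)$ on $N$. For the upper bounds $x_i(\mu)\le n x_i(\mu_0)$ and $s_i(\mu)\le n s_i(\mu_0)$, I would invoke Lemma~\ref{lem: central_path_bounded_l1_norm} with $\mu'=\mu\le\mu_0$, which gives $x_i(\mu)/x_i(\mu_0)+s_i(\mu)/s_i(\mu_0)\le n$ coordinatewise; since both summands are nonnegative, each is individually at most $n$. Note that these upper bounds do not use polarization at all.

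Next I would derive (3) and (4) by converting the already-established bounds on one variable into bounds on the conjugate variable via $x_i(\mu)s_i(\mu)=\mu$. For $i\in N$, write $x_i(\mu)=\mu/s_i(\mu)$. The polarization hypothesis gives $s_i(\mu)\ge\gamma s_i(\mu_0)=\gamma\mu_0/x_i(\mu_0)$, which upon inversion yields $x_i(\mu)\le(\mu/(\gamma\mu_0))x_i(\mu_0)$; dually, near-monotonicity gives $s_i(\mu)\le n s_i(\mu_0)=n\mu_0/x_i(\mu_0)$, so $x_i(\mu)\ge(\mu/(n\mu_0))x_i(\mu_0)$. Part (4) follows by the same manipulation, swapping the roles: for $i\in B$, use $s_i(\mu)=\mu/x_i(\mu)$ together with the polarization lower bound $x_i(\mu)\ge\gamma x_i(\mu_0)$ and the near-monotonicity upper bound $x_i(\mu)\le n x_i(\mu_0)$ established in part (1).

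There is no real obstacle here: the result is purely a bookkeeping consequence of polarization, near-monotonicity, and the $xs=\mu\mathbf{1}$ identity, and each of the eight inequalities follows in a single line. The only thing worth stating explicitly at the start of the proof is the observation that $x_i(\mu_0)s_i(\mu_0)=\mu_0$ lets us freely interchange bounds on $x_i(\mu_0)$ with bounds on $s_i(\mu_0)$, which is what makes the conjugate translations in parts (3) and (4) clean.
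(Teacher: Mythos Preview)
Your proposal is correct and essentially identical to the paper's own proof: the lower bounds in (1)--(2) are the polarization definition, the upper bounds are near-monotonicity (Lemma~\ref{lem: central_path_bounded_l1_norm}), and (3)--(4) follow from (1)--(2) via the central-path identities $x(\mu)s(\mu)=\mu\1$ and $x(\mu_0)s(\mu_0)=\mu_0\1$. The paper states this in two sentences; you have simply written out the algebra more explicitly.
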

\begin{proof}
The first inequalities in (1) and (2) are the definition of
$\polarized$-polarization and the second inequalities are from \Cref{lem:
central_path_bounded_l1_norm}.  (3)
and (4) are equivalent to (1) and (2) using the central path relations
$x(\mu_0) s(\mu_0) = \mu_0 \1$ and $x(\mu) s(\mu) = \mu \1$. 
\end{proof}

Section~\ref{sec:slls-alg} introduces the algorithm
~\nameref{alg:subspace_ipm} that can traverse
$\polarized$-polarized segments  in
$O(n^{1.5}$ $\log(n/\polarized))$ iterations. 
Theorem~\ref{th:main_upper_bound-curve} follows by combining this algorithm with the following decomposition result that is the main result of this section; the proof can be found in Section~\ref{sec:running-time-analysis}.
The stronger variant 
Theorem~\ref{th:main_upper_bound-slc} is proved in Section~\ref{sec:amortized} using an additional amortization argument.
\begin{theorem}
\label{thm:wide-polar}
Let $\Gamma: (\mu_1,\mu_0) \rightarrow \clNw(\theta)$, $\theta \in
(0,1)$, $0 \leq \mu_1 < \mu_0 \leq \infty$, be a piecewise linear curve
satisfying $\gap\left(\Gamma(\mu)\right) = \mu$, $\forall \mu \in (\mu_1,\mu_0)$ consisting of $T$ linear segments. Then, $\CP[\mu_1,\mu_0]$ can be
decomposed into $T$ segments that are
$\frac{(1-\theta)^2}{16n^3}$-polarized. 
\end{theorem}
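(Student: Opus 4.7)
The plan is to first reduce to a single linear segment of $\Gamma$, and then prove a single-segment claim via a quantitative wide-neighborhood argument. Let $\mu_0 = \mu^{(0)} > \mu^{(1)} > \cdots > \mu^{(T)} = \mu_1$ be the gap values at the breakpoints of $\Gamma$, and set $\bar z^{(t)} \coloneqq \Gamma(\mu^{(t)}) \in \clNw(\theta)$. By convexity of $\clNw(\theta)$, each linear piece $[\bar z^{(t-1)}, \bar z^{(t)}]$ lies in $\clNw(\theta)$. It therefore suffices to prove the following: if $[\bar z_0, \bar z_1] \subseteq \clNw(\theta)$ with $\gap(\bar z_j) = \mu_j$ for $j \in \{0,1\}$ and $\mu_0 > \mu_1 \geq 0$, then the central path piece $\CP[\mu_1,\mu_0]$ is $\gamma$-polarized with $\gamma = (1-\theta)^2/(16n^3)$.

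For each coordinate $i \in [n]$, set $\bar p_i \coloneqq \bar x_{1,i}/\bar x_{0,i}$, $\bar q_i \coloneqq \bar s_{1,i}/\bar s_{0,i}$, and $c_i \coloneqq \bar x_{0,i}\bar s_{0,i}/\mu_0$. Note that $\sum_i c_i = \pr{\bar x_0}{\bar s_0}/\mu_0 = n$, so $c_i \leq n$. By linearity of $\gap$ on the segment (\Cref{prop:gap-linear}) combined with the wide-neighborhood inequality at every convex combination, for all $\alpha \in [0,1]$:
\[
c_i\bigl[(1-\alpha) + \alpha \bar p_i\bigr]\bigl[(1-\alpha) + \alpha \bar q_i\bigr] \geq (1-\theta)\bigl[(1-\alpha) + \alpha(\mu_1/\mu_0)\bigr].
\]
Set the threshold $\delta \coloneqq (1-\theta)/(4n)$ and define $B \coloneqq \{i : \bar p_i \geq \delta\}$, $N \coloneqq [n]\setminus B$. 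The crucial claim is that $i \in N$ forces $\bar q_i \geq \delta$.

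I would prove this by contradiction: suppose $\bar p_i, \bar q_i < \delta$ and evaluate the above inequality at $\alpha^* \coloneqq 1 - \delta$, so $1 - \alpha^* = \delta$. Then each factor on the left is strictly below $\delta + \alpha^*\delta \leq 2\delta$, so LHS $< c_i(2\delta)^2 \leq 4n\delta^2 = (1-\theta)^2/(4n)$. Meanwhile, RHS $\geq (1-\theta)(1-\alpha^*) = (1-\theta)\delta = (1-\theta)^2/(4n)$, a strict contradiction.

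With the partition established, \Cref{lem:wide-to-cp} translates the bar-variable lower bound to the central path: for $i \in B$,
\[
\frac{x\cp_i(\mu_1)}{x\cp_i(\mu_0)} \geq \frac{\bar x_{1,i}/(2n)}{2n\bar x_{0,i}/(1-\theta)} = \frac{(1-\theta)\bar p_i}{4n^2} \geq \frac{(1-\theta)^2}{16n^3}\, ,
\]
and symmetrically for $s\cp_i(\mu_1)/s\cp_i(\mu_0)$ when $i \in N$. Extension from $\mu_1$ to all $\mu \in [\mu_1,\mu_0]$ uses the near-monotonicity of the central path (\Cref{lem: central_path_bounded_l1_norm}) precisely as in the remark following \Cref{def:polarized}. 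The main obstacle is the balancing act in the contradiction step: the threshold $\delta$, the test point $\alpha^* = 1-\delta$, and the uniform bound $c_i \leq n$ must be tuned in concert so that the wide-neighborhood inequality is strictly violated exactly when both $\bar p_i$ and $\bar q_i$ fall below $\delta$; the remaining reductions, translations, and near-monotonicity steps are bookkeeping.
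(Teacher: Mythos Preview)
Your reduction to a single segment and the contradiction argument at $\alpha^*=1-\delta$ are correct; you recover the same threshold $\delta=(1-\theta)/(4n)$ as the paper. The paper takes a slightly different route at this point: it proves a separate technical lemma (\Cref{lem:ratio}) computing $\min_{\alpha\in[0,1]}\frac{(1-\alpha+\alpha u)(1-\alpha+\alpha v)}{1-\alpha+\alpha uv}\le 2(u+v)$ and deduces $\max\{\bar p_i,\bar q_i\}\ge\delta$ from that. Your single-point evaluation is more elementary and avoids the auxiliary minimization, which is a nice simplification.

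There is one genuine slip in the final step. By appealing to near-monotonicity ``as in the remark following \Cref{def:polarized}'', you lose an extra factor $n$: that remark explicitly says checking only at $\mu_1$ yields $\gamma/n$-polarization, so your argument as written establishes only $(1-\theta)^2/(16n^4)$-polarization, not the claimed $(1-\theta)^2/(16n^3)$. The fix is immediate and is what the paper does: for $i\in B$ you have $\bar p_i\ge\delta$, hence $\bar x_{\alpha,i}/\bar x_{0,i}=(1-\alpha)+\alpha\bar p_i\ge\delta$ for \emph{every} $\alpha\in[0,1]$; since the intermediate point $\bar z_\alpha$ also lies in $\clNw(\theta)$, you may apply \Cref{lem:wide-to-cp} at every $\mu_\alpha$ rather than only at $\mu_1$, giving $x\cp_i(\mu_\alpha)/x\cp_i(\mu_0)\ge(1-\theta)\delta/(4n^2)$ directly for all $\mu_\alpha$, with no recourse to near-monotonicity. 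The dual side is symmetric.
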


\Cref{thm:wide-polar} is a direct consequence of the following key lemma.

\begin{lemma}
For $\theta \in (0,1)$, let $[z^{(0)},z^{(1)}] \subseteq \clNw(\theta)$, 
$\gap(z^{(0)}) > \gap(z^{(1)})$. Then, $\CP[\gap(z^{(1)}),\gap(z^{(0)})]$ is
$\frac{(1-\theta)^2}{16n^3}$-polarized.
\label{lem:wide-polar}
\end{lemma}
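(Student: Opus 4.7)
My plan combines three ingredients: applying \Cref{lem:wide-to-cp} pointwise along the segment, a partition based on the coordinate ratios $\rho_x(i) := x^{(1)}_i/x^{(0)}_i$ and $\rho_s(i) := s^{(1)}_i/s^{(0)}_i$, and a quadratic-in-$\alpha$ analysis of the wide-neighborhood condition.

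First, since $\gap$ is affine along $[z^{(0)},z^{(1)}]$ by \Cref{prop:gap-linear}, the map $\alpha \mapsto \mu_\alpha := (1-\alpha)\mu_0 + \alpha\mu_1$ is a bijection $[0,1]\to[\mu_1,\mu_0]$, and each $z_\alpha := (1-\alpha)z^{(0)} + \alpha z^{(1)}$ lies in $\clNw(\theta)$ with $\gap(z_\alpha) = \mu_\alpha$. Applying \Cref{lem:wide-to-cp} at $z_\alpha$ and at $z^{(0)}$ gives, for every $i$ and $\alpha \in [0,1]$,
\begin{equation*}
  x_i\cp(\mu_\alpha) \,\geq\, \tfrac{1}{2n}\bigl((1-\alpha)x^{(0)}_i + \alpha x^{(1)}_i\bigr), \qquad x_i\cp(\mu_0) \,\leq\, \tfrac{2n}{1-\theta}\, x^{(0)}_i,
\end{equation*}
and symmetric statements for $s$. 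Chaining these two inequalities will control $x_i\cp(\mu)/x_i\cp(\mu_0)$ on the whole interval $[\mu_1,\mu_0]$, bypassing near-monotonicity entirely.

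Now define $B := \{i \in [n] : \rho_x(i) \geq (1-\theta)/(4n)\}$ and $N := [n]\setminus B$. For $i \in B$: since $(1-\alpha) + \alpha\rho_x(i)$ is a convex combination of $1$ and $\rho_x(i)$, it is always at least $\min(1,\rho_x(i)) \geq (1-\theta)/(4n)$, giving $(1-\alpha)x^{(0)}_i + \alpha x^{(1)}_i \geq \tfrac{1-\theta}{4n}\, x^{(0)}_i$ for every $\alpha$. Substituting into the two bounds above yields $x_i\cp(\mu) \geq \tfrac{(1-\theta)^2}{16n^3}\, x_i\cp(\mu_0)$ throughout $[\mu_1,\mu_0]$, which is the required polarization on $B$. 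The symmetric dual argument handles $i \in N$, provided the analogous inequality $\rho_s(i) \geq (1-\theta)/(4n)$ holds there.

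The main obstacle is precisely this dichotomy: $\max(\rho_x(i),\rho_s(i)) \geq t := (1-\theta)/(4n)$ for each $i$. Assume for contradiction that both are $<t$; then
\begin{equation*}
\bigl((1-\alpha)x^{(0)}_i + \alpha x^{(1)}_i\bigr)\bigl((1-\alpha)s^{(0)}_i + \alpha s^{(1)}_i\bigr) \,\leq\, ((1-\alpha)+\alpha t)^2\, x^{(0)}_i s^{(0)}_i \,\leq\, ((1-\alpha)+\alpha t)^2\, n\mu_0,
\end{equation*}
using $x^{(0)}_i s^{(0)}_i \leq n\mu_0$, while the wide-neighborhood on $z_\alpha$ demands that the same product be at least $(1-\theta)((1-\alpha)+\alpha\lambda)\mu_0$ with $\lambda := \mu_1/\mu_0$. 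Hence $n((1-\alpha)+\alpha t)^2 \geq (1-\theta)((1-\alpha)+\alpha\lambda)$ must hold for every $\alpha \in [0,1]$. The difference $g(\alpha) := (1-\theta)((1-\alpha)+\alpha\lambda) - n((1-\alpha)+\alpha t)^2$ is a concave quadratic in $\alpha$, and its maximum on $[0,1]$ is strictly positive: either $g(1) = (1-\theta)\lambda - nt^2 > 0$ when $\lambda > (1-\theta)/(16n)$, or the interior maximum is positive by a discriminant computation that reduces to the manifestly positive expression $a^3/(4n) + a\lambda(4n-3a+a\lambda)$ with $a := 1-\theta$. This furnishes the required contradiction and completes the proof.
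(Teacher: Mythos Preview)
Your proof is correct and follows essentially the same global strategy as the paper: parametrize the segment, apply \Cref{lem:wide-to-cp} at both ends, and establish the key dichotomy $\max(\rho_x(i),\rho_s(i)) \geq (1-\theta)/(4n)$ for every coordinate. The paper also arrives at exactly this threshold and the same $\frac{(1-\theta)^2}{16n^3}$ constant by the same chaining argument.

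The one genuine difference is how the dichotomy is proved. The paper isolates a clean technical lemma (\Cref{lem:ratio}): for $u,v>0$,
\[
\min_{\alpha\in[0,1]} \frac{(1-\alpha+\alpha u)(1-\alpha+\alpha v)}{1-\alpha+\alpha uv} \leq 2(u+v),
\]
and observes that the wide-neighborhood inequality at $z_\alpha$, after dividing numerator and denominator by $x^{(0)}_i s^{(0)}_i$, bounds this minimum from below by $(1-\theta)/n$; hence $u+v \geq (1-\theta)/(2n)$. You instead bound $x^{(0)}_i s^{(0)}_i \leq n\mu_0$ first and then argue by contradiction via the concave quadratic $g(\alpha)$. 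Your approach is self-contained but requires a case split and the verification of the polynomial inequality $a^2 + 4n\lambda(4n-3a+a\lambda)>0$; the paper's lemma is more general (it gives the exact minimum) and makes the dichotomy a one-line consequence. A minor cosmetic difference: the paper takes $B=\{i:\rho_x(i)\geq \rho_s(i)\}$ rather than your threshold-based $B=\{i:\rho_x(i)\geq t\}$, but either partition works once the dichotomy is in hand.
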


\begin{proof}[Proof of \Cref{thm:wide-polar}] 
By assumption, the curve $\Gamma([\mu_1,\mu_0]) = \cup_{i=1}^T
[z^{(i-1)},z^{(i)}] \subseteq \clNw(\theta)$, where $\mu_0 = \gap(z^{(0)}) >
\gap(z^{(1)}) > \cdots > \gap(z^{(T)}) = \mu_1$. By \Cref{lem:wide-polar},
each segment $\CP[\gap(z^{(i)}),\gap(z^{(i-1)})]$, $i \in [T]$, is therefore
$\frac{(1-\theta)^2}{16n^3}$-polarized with respect to some polarization
partition $(B^{(i)},N^{(i)})$. This proves the theorem.   
\end{proof}

It remains to prove \Cref{lem:wide-polar}. The proof requires the following simple technical lemma that
allows us to relate approximate centrality along lines to polarization. 

\begin{lemma}
\label{lem:ratio}
For any $u,v>0$,
\begin{equation}
\label{eq:ratio-test}
\min_{\alpha \in [0,1]} \frac{(1-\alpha + \alpha u)(1-\alpha + \alpha v)}{1-\alpha + \alpha uv} =\min\left\{1,\left(\frac{\sqrt{u} +\sqrt{v}}{1 + \sqrt{uv}}\right)^2\right\}\leq 2(u+v) \, .
\end{equation}
\end{lemma}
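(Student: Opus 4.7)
The plan is to reduce the minimization over $\alpha \in [0,1]$ to a minimization over $\beta \in [0,\infty)$ by setting $\beta = \alpha/(1-\alpha)$. After clearing the common factor $1/(1+\beta)$ from numerator and denominator of the three affine expressions, the objective becomes
\[
f(\beta) \coloneqq \frac{(1+\beta u)(1+\beta v)}{(1+\beta)(1+\beta uv)}\, ,
\]
and the endpoint $\alpha=1$ corresponds to $\beta\to\infty$, at which $f\to 1$ (matching $f(\alpha{=}1)=uv/(uv)=1$). Expanding both products and simplifying, I expect to obtain the key factorization
\[
f(\beta) - 1 \;=\; -\,\frac{\beta(1-u)(1-v)}{(1+\beta)(1+\beta uv)}\, ,
\]
which reveals that the behavior of the minimum is entirely controlled by the sign of $(1-u)(1-v)$.

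From here I would split into two cases. If $(1-u)(1-v)\le 0$, then $f(\beta)\ge 1$ for every $\beta\ge 0$ while $f(0)=1$, so the minimum equals $1$, matching the first term in the claimed $\min\{\cdot,\cdot\}$. If $(1-u)(1-v)>0$, I need to maximize the positive function $h(\beta)\coloneqq \beta/[(1+\beta)(1+\beta uv)]$. A quick differentiation shows that the numerator of $h'(\beta)$ equals $1-\beta^2 uv$, so the unique critical point on $(0,\infty)$ is $\beta^{\star}=1/\sqrt{uv}$, at which $h(\beta^\star)=1/(1+\sqrt{uv})^2$. Substituting back,
\[
\min_{\beta\ge 0} f(\beta) \;=\; \frac{(1+\sqrt{uv})^2-(1-u)(1-v)}{(1+\sqrt{uv})^2}\, .
\]
The crucial algebraic identity $(1+\sqrt{uv})^2-(1-u)(1-v) = (\sqrt{u}+\sqrt{v})^2$, verified by direct expansion of both sides, turns this into exactly $((\sqrt u+\sqrt v)/(1+\sqrt{uv}))^2$.

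To merge the two cases into the stated $\min\{\cdot,\cdot\}$, I would note that $(1-u)(1-v)\le 0$ is equivalent to $(\sqrt u+\sqrt v)^2\ge (1+\sqrt{uv})^2$ (use the same identity with opposite sign), so the branch that ``wins'' in each case is the smaller of the two quantities. The final inequality is then immediate:
\[
\min_\alpha f \;\le\; (\sqrt u+\sqrt v)^2 \;=\; u+v+2\sqrt{uv}\;\le\; 2(u+v)
\]
by AM--GM, and whenever $2(u+v)\ge 1$ we instead use $\min_\alpha f\le 1\le 2(u+v)$.

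There is no real obstacle here — the proof is purely computational. The only nontrivial step is guessing the substitution $\beta = \alpha/(1-\alpha)$, which transforms a ratio of three affine functions of $\alpha$ into a rational function whose minimizer has a clean closed form; without it, a direct differentiation in $\alpha$ produces a cubic whose roots are not transparent. Once the substitution is in place, the factorization of $f(\beta)-1$ and the symmetry $\beta^\star = 1/\sqrt{uv}$ do all the work.
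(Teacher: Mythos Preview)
Your proof is correct and follows essentially the same approach as the paper. Both arguments isolate the sign-carrying factor $-(1-u)(1-v)=u+v-(1+uv)$, split into the two cases accordingly, and in the nontrivial case optimize a one-variable rational function whose critical point is determined by $uv$; your substitution $\beta=\alpha/(1-\alpha)$ (giving $\beta^\star=1/\sqrt{uv}$) is just a reparametrization of the paper's direct computation in $\alpha$ (giving $\alpha^\star=1/(1+\sqrt{uv})$), and your identity $(1+\sqrt{uv})^2-(1-u)(1-v)=(\sqrt u+\sqrt v)^2$ is the same algebraic simplification the paper performs.
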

\begin{proof}
To show the equality, let $\mu \coloneqq  uv$. 
  Note that
\begin{equation}\label{eq:min-ratio}
\begin{aligned}
\min_{\alpha \in [0,1]} \frac{(1-\alpha + \alpha u)(1-\alpha + \alpha v)}{1-\alpha + \alpha uv}
&= \min_{\alpha \in [0,1]} \frac{(1-\alpha)^2 + \alpha^2\mu + \alpha(1-\alpha)(u+v)}{1-\alpha + \alpha \mu} \\
&= 1 +  \min_{\alpha \in [0,1]}\left(u+v-(1+\mu)\right)  \frac{\alpha(1-\alpha)}{1-\alpha + \alpha \mu} \, . 
\end{aligned}
\end{equation}
\noindent{\bf Case I: $\left(\frac{\sqrt{u} +\sqrt{v}}{1 + \sqrt{uv}}\right)^2\geq 1$.} In this case, we need to show that the minimum of the expression is 1. It is easy to see that the condition equivalent to $u+v \geq 1+uv=1+\mu$. Thus, the minimum value of~\eqref{eq:min-ratio} is clearly $1$,
attained at $\alpha \in \{0,1\}$. 

\noindent{\bf Case II: $\left(\frac{\sqrt{u} +\sqrt{v}}{1 + \sqrt{uv}}\right)^2< 1$,} or equivalently, $u+v < 1+\mu$. In this case, the minimizer
of~\eqref{eq:min-ratio} corresponds to the maximizer of
$\frac{\alpha(1-\alpha)}{1-\alpha + \alpha \mu}$. This function takes value
$0$ at $\alpha \in \{0,1\}$ and is strictly positive for $0 < \alpha < 1$.
Furthermore, the unique critical point in the interval $[0,1]$ occurs at
$\alpha^* = \frac{1}{1+\sqrt{\mu}}$, which is thus the maximizer. The minimum
value of~\eqref{eq:min-ratio} is therefore
\[
1 + (u+v-(1+\mu)) \frac{\alpha^*(1-\alpha^*)}{1-\alpha^* + \alpha^* \mu} = \frac{u+v+2\sqrt{\mu}}{(1+\sqrt{\mu})^2}\, ,\]
as required.
The inequality in the statement follows easily as
\[
\left(\frac{\sqrt{u} +\sqrt{v}}{1 + \sqrt{uv}}\right)^2\le \left({\sqrt{u} +\sqrt{v}}\right)^2=2(u+v)-(\sqrt{u}-\sqrt{v})^2\, .
\]
\end{proof}

\begin{proof}[Proof of \Cref{lem:wide-polar}]
For $\alpha \in [0,1]$, let $z^{(\alpha)} \coloneqq  (x^{(\alpha)}, s^{(\alpha)}) \coloneqq  (1-\alpha)
z^{(0)} + \alpha z^{(1)}$. By \Cref{prop:gap-linear}, we first note that the
normalized gap function $\gap(z)$ is in fact linear on $[z^{(0)},z^{(1)}]$. That is,
\[
\mu_\alpha \coloneqq \gap(z^{(\alpha)}) = (1-\alpha) \gap(z^{(0)}) + \alpha \gap(z^{(1)})\, .
\] 

For any $i\in [n]$, $z^{(\alpha)}\in\cal N^{-\infty}(\theta)$ implies \[
\begin{aligned}
\frac{x^{(\alpha)}_i s^{(\alpha)}_i }{(1 - \alpha) x^{(0)}_{i}s^{(0)}_{i} + \alpha x^{(1)}_{i} s^{(1)}_{i}} & \geq \frac{(1-\theta) \mu_\alpha}{(1-\alpha) x^{(0)}_{i}s^{(0)}_{i} + \alpha x^{(1)}_{i} s^{(1)}_{i}} \\
& \geq \frac{(1-\theta) \mu_\alpha}{n((1 - \alpha) \gap(z^{(0)}) + \alpha \gap(z^{(1)}))} = \frac{1-\theta}{n}\, .
\end{aligned}
\]
Note that the above expression is the same as in \Cref{lem:ratio} for $u=x^{(1)}_{i}/x^{(0)}_{i}$,  $v=s^{(1)}_{i}/s^{(0)}_{i}$. 
Since the bound is true for any $\alpha\in[0,1]$, the Lemma implies
\[
\frac{1-\theta}{n}\le 2\left(\frac{x^{(1)}_{i}}{x^{(0)}_{i}}+\frac{s^{(1)}_{i}}{s^{(0)}_{i}}\right)\, . 
\]
Let 
\[
B \coloneqq  \left\{i \in [n]: \frac{x^{(1)}_{i}}{x^{(0)}_{i}} \geq \frac{s^{(1)}_{i}}{s^{(0)}_{i}}\right\}\, ,\quad N \coloneqq  [n]
\setminus B\, .
\]
 Then, $x^{(1)}_{i}/x^{(0)}_{i}\geq \frac{1-\theta}{4n}$,
for all $i \in B$, and $s^{(1)}_{i}/s^{(0)}_{i}  \geq \frac{1-\theta}{4n}$ for all $i\in N$.

For any $\alpha \in [0,1]$ and $i \in B$, 
\[
\frac{x^{(\alpha)}_{i}}{x^{(0)}_{i}} = (1-\alpha) + \alpha \frac{x^{(1)}_{i}}{x^{(0)}_{i}} \geq \min \left\{1, \frac{1-\theta}{4n}\right\} = \frac{1-\theta}{4n}.  
\]
Similarly, for $i \in N$, $s^{(\alpha)}_{i}/s^{(0)}_{i}\geq \frac{1-\theta}{4n}$.

For the central path point $z\cp(\mu_\alpha) =
(x\cp(\mu_\alpha),s\cp(\mu_\alpha))$ at $\mu_\alpha$,
the bounds in \Cref{lem:wide-to-cp} relating points in a neighborhood with central path points give
\[
\frac{x_i\cp(\mu_\alpha)}{x_i\cp(\mu_0)} \geq
\frac{x^{(\alpha)}_i/(2n)}{\frac{2n}{1-\theta} x^{(0)}_i} \geq
\frac{(1-\theta)^2}{16n^3}\, ,\quad\forall i\in B\, .
\]  
By a similar argument, we also have $s_i\cp(\mu_\alpha)/s_i\cp(\mu_0) \geq
\frac{(1-\theta)^2}{16n^3}$, $\forall i \in N$. Thus, $\CP[\mu_1,\mu_0]$ is
$\frac{(1-\theta)^2}{16n^3}$-polarized.
\end{proof}
 
\section{The Max Central Path}\label{sec:max-cp}

In this section, we derive key properties of the max central path and how to
use the max central path to decompose the central path into polarized
segments. In particular, we prove \Cref{lem:cp-max-cp},
\Cref{lem:simplex-main} and \Cref{thm:delta-slc}.  Given $g \geq 0$, we
denote by 
\[
\begin{aligned}
\Primal_g & \coloneqq \{ x \in \R^n\, :\, \mA x = b \, , \; x \geq \0 \, , \;\pr{c}{x} \leq v^\star + g \} \, , \\
\Dual_g & \coloneqq \{ s \in \R^n\, :\, \exists y\in\R^m \; \mA^\top y + s = c \, , \; s \geq \0 \, , \; \pr{b}{y} \geq v^\star - g \}
\end{aligned}\, .
\]
We will also use the subspace formulation \eqref{LP-subspace} with $d\in\R^n$ such $\mA d=b$; in these terms, we can write $\Primal_g =\{x\in\R^n\, :\,x\in W+d\, , \;x\geq 0\,,\; \pr{c}{x} \leq v^\star + g \}$ and $\Dual_g =\{s\in\R^n\, :\,s\in W^\top+c\, , \;s\geq 0\,,\; \pr{d}{c-s} \geq v^\star - g \}$.
the feasible sets of the linear programs in~\eqref{eq:max_cp}. They correspond to the sets of the primal and dual feasible points $(x, s) \in \Primal \times \Dual$ with objective value within $g$ from the optimum $v^\star$, respectively.  By the assumption that \eqref{LP_primal_dual} is feasible and bounded, it follows that $\Primal_g$ and $\Dual_g$ are both non-empty for any $g\ge 0$.

We recall that the duality gap of any pair $(x, (y, s))$ of primal-dual feasible points of~\eqref{LP_primal_dual} fulfills $\pr{c}{x} - \pr{b}{y} = \pr{x}{s}$. In particular, we have $\pr{x}{s^\star} = \pr{c}{x} - v^\star$ and $\pr{x^\star}{s} = v^\star - \pr{b}{y}$. Thus, the two sets $\Primal_g$ and $\Dual_g$ are equivalently given by
\[
\Primal_g = \{x \in \Primal : \pr{x}{s^\star} \leq g\}\, , \quad \Dual_g = \{s \in \Dual : \pr{x^\star}{s} \leq g\} \, .
\]
These expressions are in fact independent of the choice of optimal solutions $(x^\star, s^\star)$. The following claim is immediate by our assumption that $\Primalp$ and $\Dualp$ are non-empty.

\begin{proposition}\label{prop:mcp_well_defined}
For all $g \geq 0$, the sets $\Primal_g$ and $\Dual_g$ are bounded.
\end{proposition}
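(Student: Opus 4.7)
The plan is to use the assumed strict feasibility of both the primal and dual to obtain uniform coordinate bounds on any point in $\Primal_g$ and $\Dual_g$.

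First I would fix a strictly feasible dual slack $s^0 \in \Dualp$, with associated $y^0$ satisfying $\mA^\top y^0 + s^0 = c$. For any $x \in \Primal_g$, the identity
\[
\pr{x}{s^0} = \pr{x}{c - \mA^\top y^0} = \pr{c}{x} - \pr{\mA x}{y^0} = \pr{c}{x} - \pr{b}{y^0} \leq v^\star + g - \pr{b}{y^0}
\]
holds, using $\mA x = b$ and $\pr{c}{x} \leq v^\star + g$. Since $x \geq \0$ and $s^0 > \0$ coordinatewise, each term $x_i s^0_i$ is nonnegative and bounded by $\pr{x}{s^0}$, giving
\[
0 \leq x_i \leq \frac{v^\star + g - \pr{b}{y^0}}{s^0_i}\, ,\qquad \forall i \in [n]\, .
\]
This establishes that $\Primal_g$ is contained in a box and hence bounded. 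Symmetrically, picking $x^0 \in \Primalp$ and using $\pr{x^0}{s} = \pr{c}{x^0} - \pr{b}{y} \leq \pr{c}{x^0} - (v^\star - g)$ for any $s \in \Dual_g$, together with $s \geq \0$ and $x^0 > \0$ coordinatewise, gives $s_i \leq (\pr{c}{x^0} - v^\star + g)/x^0_i$ for every $i$, so $\Dual_g$ is bounded as well.

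There is no real obstacle here: the whole proof rests on the two identities $\pr{x}{s^0} = \pr{c}{x} - \pr{b}{y^0}$ and $\pr{x^0}{s} = \pr{c}{x^0} - \pr{b}{y}$, which are just rewritings of weak duality, combined with the positivity of the chosen interior points $s^0$ and $x^0$. The strict positivity is what upgrades a bound on the inner product to a coordinatewise bound, and without it (e.g.\ if only ordinary feasibility were assumed) the claim could fail along directions of the recession cone where $s^0_i = 0$ or $x^0_i = 0$.
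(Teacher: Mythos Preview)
Your proof is correct and follows essentially the same approach as the paper: pick a strictly feasible point on the opposite side, bound the inner product via the duality-gap identity, and use strict positivity to convert this into coordinatewise bounds. The only cosmetic difference is that the paper invokes \Cref{prop:gap-formula} with the optimal pair $(x^\star,s^\star)$ to obtain $\pr{x}{s^\circ} \le g + \pr{x^\star}{s^\circ}$, whereas you compute the weak-duality identity directly; the resulting bounds are identical since $\pr{x^\star}{s^\circ} = v^\star - \pr{b}{y^0}$.
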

\begin{proof}
We restrict to the proof of the boundedness of $\Primal_g$, since the proof is analogous for $\Dual_g$. Let $s^\circ \in \Dual_{++}$ be a strictly feasible point of the dual, and $x \in \Primal_g$. By \Cref{prop:gap-formula}, we have 
\[
\pr{x}{s^\star} + \pr{x^\star}{s^\circ} = \pr{x}{s^\circ} + \pr{x^\star}{s^\star} \, . 
\]
Since $\pr{x^\star}{s^\star} = 0$, we deduce that $\pr{x}{s^\circ} \leq  g +\pr{x^\star}{s^\circ}$. As $s^\circ > 0$, this implies that $x_i \leq (g +\pr{x^\star}{s^\circ})/s^\circ_i$ for all $i \in [n]$.
\end{proof}

We denote by $\MCP = \{\mz(g) : g \geq 0\}$ the whole max central path. 
The max central path point $\mz (g) = (\mx(g), \ms(g))$ is the entry-wise maximum of the set $\Primal_g \times \Dual_g$. 

While the points of the max central path are not feasible in general, the following theorem shows that the max central path shares important similarities with the central path: 
\begin{theorem}[Centrality of the max central path]
\label{thm:mcp-central}
For all $g \geq 0$, we have that
\[
g \leq \mx_i(g) \ms_i(g) \leq 2g\, \quad \forall i \in [n]\, .
\]
\end{theorem}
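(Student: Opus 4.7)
My plan is to prove the two inequalities separately: the upper bound by a direct computation using \Cref{prop:gap-formula}, and the lower bound by exhibiting a specific feasible pair via a weighted central path construction.

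For the upper bound, I would pick $x \in \Primal_g$ and $s \in \Dual_g$ attaining $\mx_i(g)$ and $\ms_i(g)$ respectively, and invoke \Cref{prop:gap-formula} with $(x', s') = (x^\star, s^\star)$. Since $\pr{x^\star}{s^\star} = 0$ by strong LP duality, this gives $\pr{x}{s} = \pr{s^\star}{x} + \pr{x^\star}{s} \leq g + g = 2g$. As $x, s \geq \0$, I conclude $\mx_i(g) \ms_i(g) = x_i s_i \leq \pr{x}{s} \leq 2g$.

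For the lower bound, the case $g = 0$ is trivial since both factors are nonnegative, so I assume $g > 0$. My plan is to construct, for each sufficiently small $\delta > 0$, a pair $(x, s) \in \Primal_g \times \Dual_g$ with $x_i s_i = g - (n-1)\delta$ by taking a weighted central path point. Set $\omega_i \coloneqq g - (n-1)\delta$ and $\omega_j \coloneqq \delta$ for $j \in [n] \setminus \{i\}$; then $\omega > \0$ and $\sum_j \omega_j = g$. Under the standing assumption $\Primalp, \Dualp \neq \emptyset$, there exists a unique pair $(x, s) \in \Primalp \times \Dualp$ with $xs = \omega$ (the weighted central path point). Then $\pr{x}{s} = g$, and \Cref{prop:gap-formula} yields $\pr{s^\star}{x} + \pr{x^\star}{s} = g$. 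Since both summands are nonnegative, each lies in $[0, g]$, so $x \in \Primal_g$ and $s \in \Dual_g$. Therefore $\mx_i(g) \ms_i(g) \geq x_i s_i = \omega_i = g - (n-1)\delta$, and letting $\delta \to 0^+$ completes the lower bound.

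The main obstacle is that the paper does not explicitly introduce the weighted central path in the preliminaries. Its existence is classical, since a pair $(x, s)$ with $xs = \omega$, $x \in \Primalp$, $s \in \Dualp$ arises from the KKT optimality conditions of the strictly convex log-barrier problem $\min\{\pr{c}{x} - \sum_j \omega_j \log x_j : Ax = b,\, x > \0\}$ under the strict feasibility assumption. A brief justification or a citation to standard interior-point references should suffice. Alternatively, one could use a continuity argument on $\mx_i$ and $\ms_i$, which are concave non-decreasing functions on $[0, \infty)$ and hence continuous on $(0, \infty)$, combined with a perturbation of the unweighted central path of a suitably modified LP, but this route is less transparent.
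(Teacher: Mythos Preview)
Your upper bound argument is identical to the paper's.

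For the lower bound, your approach is correct but genuinely different from the paper's. The paper works via LP duality: it writes the dual programs of $\max\{x_i : x \in \Primal_g\}$ and $\max\{s_i : s \in \Dual_g\}$, picks optimal dual solutions $(\alpha,u)$ and $(\beta,v)$, shows $\pr{u}{x^\star}, \pr{v}{s^\star} \geq 0$ and $\alpha\pr{v}{s^\star} + \beta\pr{u}{x^\star} \geq 1$ from the constraints, and then expands the product of the dual objective values to obtain $\mx_i(g)\ms_i(g) \geq g$. Your route is instead constructive and primal: you exhibit, for each $\delta>0$, an explicit pair $(x,s)\in\Primal_g\times\Dual_g$ with $x_i s_i = g-(n-1)\delta$ by taking the weighted central path point for the weight vector $\omega$ with $\omega_i=g-(n-1)\delta$ and $\omega_j=\delta$ otherwise, and then let $\delta\to 0^+$.

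Your argument is arguably more transparent and avoids writing down the dual programs, at the cost of invoking the existence of weighted central path points, which (as you note) is classical under the strict feasibility assumption but is not stated in the paper's preliminaries. The paper's duality argument stays entirely within elementary LP and does not appeal to any barrier or central-path machinery; it also yields the exact bound $g$ directly rather than via a limit. Either proof is perfectly acceptable.
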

\begin{proof}
We first prove the upper bound. For $i \in [n]$, let $x^{(i)} \in
\argmax \{x_i: x \in \Primal_g\}$ and $s^{(i)} \in
\argmax \{s_i: s \in \Dual_g\}$. Note
that $x^{(i)},s^{(i)}$ exist by \Cref{prop:mcp_well_defined}.
Then, 
\[
\mx_i(g)\ms_i(g) = x^{(i)}_i s^{(i)}_i \leq \pr{x^{(i)}}{s^{(i)}} = \pr{x^{(i)}}{s^{\star}} + \pr{x^{\star}}{s^{(i)}} \leq 2g\, , 
\]
where the last equality follows from \Cref{prop:gap-formula}.
We now prove the lower bound. We assume $g > 0$, since the statement is
trivial otherwise. 

Note that the dual program of $\max
\{x_i: x \in W+d, x \geq 0, \pr{x}{s^{\star}} \leq g\}$ can be expressed as
\[
\min \left\{\alpha g + \pr{u}{x^\star}: \alpha s^{\star} + u \geq e^i, u \in W^\perp, \alpha \geq 0\right\}\, ,
\]
using that $\pr{u}{x^\star}=\pr{u}{d}$ since $d-x^\star\in W$, $u\in W^\perp$.
Similarly, the dual program of  $\max
\{s_i: s \in W^\perp+c, s \geq 0, \pr{s}{x^{\star}} \leq g\}$ can be expressed as
\[
\min \left\{\beta g + \pr{v}{s^\star}: \beta x^{\star} + v \geq e^i, v \in W, \beta \geq 0\right\}\, .
\]
Let us pick optimal $(\alpha,u)$ and $(\beta,v)$ to these two programs. The product of the objective values is thus equal to $\mx_i(g)\ms_i(g)$; the proof is complete by showing a lower bound $g$.

We first claim that
\begin{equation}\label{eq:z-u}
\pr{u}{x^\star}\ge0\quad\mbox{ and }\quad \pr{v}{s^\star}\ge 0\, .
\end{equation}
By symmetry, it suffices to prove the first claim. Recall $x^\star\ge 0$; we show that whenever $x^\star_j>0$ then we must have $u_j\ge 0$. This follows since by complementary, $s^\star_j=0$, and that we have the constraint $\alpha s^\star_j+u_j\ge e^i_j$.

Next, note that the constraints in the two programs imply
\begin{equation}\label{eq:e-i-e-i}
1=\pr{e^i}{e^i}\le \pr{\alpha s^\star+u}{\beta x^\star +v}=\alpha\pr{v}{s^\star}+\beta\pr{u}{x^\star}\, .
\end{equation}
Now, the product of the objective values can be written as 
\[
\begin{aligned}
\mx_i(g)\ms_i(g)&=(\alpha g + \pr{u}{x^\star})(\beta g + \pr{v}{s^\star})\\
&=\alpha\beta g^2 + g \left(\alpha\pr{v}{s^\star}+\beta\pr{u}{x^\star}\right)+\pr{u}{x^\star}\cdot\pr{v}{s^\star}\ge g\, ,
\end{aligned}
\]
using~\eqref{eq:z-u} and~\eqref{eq:e-i-e-i}. This concludes the proof.
\end{proof}

Given the above, we are ready to show

\maxvscentralpath*

\begin{proof}
Recall that $z\cp(\mu) = (x\cp(\mu),s\cp(\mu))$ with 
\[\pr{x\cp(\mu)}{s\cp(\mu)} =
\pr{x\cp(\mu)}{s^\star} + \pr{x^\star}{s\cp(\mu)} = n\mu\]
using \Cref{prop:gap-formula}. Therefore,  $x\cp(\mu) \in
\Primal_{n\mu}$ and $s\cp(\mu) \in \Dual_{n\mu}$. By definition of the max central path, $z\cp(\mu) =
(x\cp(\mu),s\cp(\mu)) \leq (\mx(n\mu),\ms(n\mu)) = \mz(n\mu)$. For the second inequality,
note that
\begin{align*}
x\cp(\mu) &= \frac{\mu}{s\cp(\mu)} \geq \frac{\mu}{\ms(n \mu)} = \frac{\mu}{\mx(n\mu)
\ms(n \mu)} \mx(n\mu) \\
&\stackrel{\text{Thm. }\ref{thm:mcp-central}}{\geq} \frac{\mu}{2n\mu} \1 \mx(n\mu) = \frac{\mx(n\mu)}{2n}\, .
\end{align*}
By a symmetric argument, $s\cp(\mu) \geq \ms(n\mu)/2n$.
\end{proof}

\subsection{The Shadow Vertex Simplex Rule} 

Given a pointed polyhedron $P \subseteq \R^n$ (that means that it has at least one vertex) and two objectives $c^{(1)},c^{(2)} \in
\R^n$ such that $\max_{x \in P} \pr{c^{(1)}}{x},\ \max_{x \in P} \pr{c^{(2)}}{x}$ $<
\infty$, we recall that the shadow vertex simplex rule consists in pivoting over vertices of $P$ maximizing the objectives $(1-\lambda) c^{(1)} + \lambda c^{(2)}$ as $\lambda$ goes from $0$ to $1$. More formally, a sequence of vertices $v^{(1)},\dots,v^{(k)}
\in P$ is a \emph{$(c^{(1)},c^{(2)})$-shadow vertex path on $P$} if 
\begin{itemize}
\item $[v^{(i)},v^{({i+1})}]$ is an edge
of $P$, $\forall i \in [k-1]$, 
\item $\pr{c^{(2)}}{v^{(i)}} < \pr{c^{(2)}}{v^{({i+1})}}$, $\forall
i \in [k-1]$, and 
\item there exists $0=\lambda_0 < \lambda_1 \leq \cdots \leq
\lambda_{k-1} < \lambda_k=1$ such that $\forall i \in [k]$,
$\pr{v^{(i)}}{(1-\alpha)c^{(1)} + \alpha c^{(2)}} = \max_{x \in P} \pr{x}{(1-\alpha)c^{(1)} +
\alpha c^{(2)}}$, $\forall \alpha \in [\lambda_{i-1},\lambda_i]$. 
\end{itemize}  

To analyze shadow vertex paths further, we define the two-dimensional projection
\[
P[c^{(1)},c^{(2)}] \coloneqq \left\{\left(\pr{c^{(1)}}{x},\pr{c^{(2)}}{x}\right) : x \in P \right\} = \left(c^{(1)},c^{(2)}\right)^{\top}\cdot P \, .
\]

The vertices of $P[c^{(1)},c^{(2)}]$ maximizing an open interval of objectives in $(1-\lambda) e^{1} + \lambda e^{2}$, $\lambda \in [0,1]$ are
precisely the projections of vertices $v^{(i)}$, $i \in [k]$, on the shadow path
such that $\lambda_{i-1} < \lambda_i$.

We define $S_{P}(c^{(1)},c^{(2)})$ as the
number of vertices of $P[c^{(1)},c^{(2)}]$ maximizing an open interval of objectives
in $(1-\lambda) e^{1} + \lambda e^{2}$, $\lambda \in [0,1]$. By the preceding
observations, we have that $S_P(c^{(1)},c^{(2)})$ is a lower bound on the number of
vertices of any $(c^{(1)},c^{(2)})$-shadow vertex path.

In the above, we restricted both starting and ending objectives $c^{(1)},c^{(2)}$ to
have finite objective value on $P$. It will be useful in the sequel to extend
to the case where $P$ might be unbounded in direction $c^{(2)}$. 
In this case, we define the
shadow vertex path as above, with the only modification being that we let
$\lambda_k \coloneqq \max \{\lambda \in [0,1]: \max_{x \in P} \pr{x}{(1-\lambda)c^{(1)}
+ \lambda c^{(2)}} < \infty\}$, that is, the simplex path stops just before
reaching an unbounded ray for $c^{(2)}$. In this setting, note that
$S_P(c^{(1)},c^{(2)})$ is still well-defined and continues to be a lower bound on the
number of vertices on any $c^{(1)},c^{(2)}$ shadow vertex path.     

We are ready to prove Lemma~\ref{lem:simplex-main}. 
\descriptionpiecesmaxcentral*

\begin{proof}
We only prove part {\em (i)}; part {\em (ii)} follows analogously. 
For $i \in [n]$, let $Q_i =
P[s^{\star},e^{i}]$. We note that $\mx_i(g) = \max \{v_2: (v_1,v_2) \in Q_i,
v_1 \leq g\}$. 
In particular, the map $\mx_i(g)$ is a non-decreasing function of $g$. Moreover, it is easy to verify that $\mx_i(g)$ is concave. 

Again by definition, $S_\Primal(-s^{\star},e^{i})$ equals the number of vertices of
$Q_i$ maximizing an open interval of objectives in ${\mathcal O} \coloneqq \{-(1-\lambda) e^{1} + \lambda e^{2}: \lambda \in [0,1]\} \subset \R^2$.

Define $\bar{u}_i(g) \coloneqq \sup \{v_2: (g,v_2) \in Q_i\}$, which is defined to
equal $-\infty$ if $\{(g,v_2) \in Q_i\} = \emptyset$. By \Cref{prop:mcp_well_defined}, note that $\bar{u}_i(g) < \infty$ for all $g \geq 0$. By
convexity of $Q_i$, $\bar{u}_i$ is concave function on $\R_+$. Let $h = \sup
\{\bar{u}_i(g): g \geq 0\}$. 

Assume $h = \infty$. By concavity, $\bar{u}_i$ must be a
strictly increasing function on $\R_+$. In particular, $\bar{u}_i(g) =
\mx_i(g)$. Given this, we see that the linear pieces of $\mx_i(g)$ are in one to
one correspondence with the edges of $Q_i$ on the upper convex hull whose
projection onto the $e_1$-axis have positive length (i.e., excluding the
potential edge $\{(0,v_2) \in Q_i\}$). Since $Q_i \subseteq \R^2_+$, every
such edge can be uniquely associated with its left endpoint (which is always
a vertex of $Q_i$). It is now easy to check geometrically that the set of
such endpoints exactly corresponds to the set of vertices that are maximizers
of the objectives in an open interval of ${\mathcal O}$.    

Assume $h < \infty$. Let $g_h \coloneqq \min \{g \geq 0 : \mx_i(g) = h\}$. It is direct
to see that $\mx_i(g) = \bar{u}_i(g)$ if $g \leq g_h$ and that $\mx_i(g) = h$ for
$g \geq g_h$. Furthermore, $\mx_i(g)$ is strictly increasing on $[0,g_h]$.
From this, it is easy to see geometrically that the number of linear pieces
of $\mx_i$ is one plus the number of edges of $Q_i$ on the upper convex hull
lying in the band $\{(v_1,v_2) : 0 \leq v_1 \leq g\}$, where the extra linear
segment corresponds to constant segment between $g_h$ and $\infty$. As in the
previous case, these linear segments can be uniquely identified with their
left endpoints, which correspond to vertices of $Q_i$. Furthermore, it is
easy to check that these correspond to vertices of $Q_i$ maximizing an open
interval of objectives in $\mathcal O$. 
\end{proof}

\subsection{Decomposition of Straight Line Segments into Polarized Segments}\label{sec:decompose}
In this section we prove \Cref{thm:delta-slc}. To begin, we provide some
simple consequences of the concavity of the max central path coordinates. 

\begin{lemma}\label{lemma:mcp_subhomogeneous}
Let $g \geq 0$ and $\alpha \geq 1$. Then for all $i \in [n]$ we have $\mx_i(\alpha g) \leq \alpha \mx_i(g)$ and $\mx_i(\frac{1}{\alpha}g) \ge \frac{1}{\alpha}\mx_i(g)$.
\end{lemma}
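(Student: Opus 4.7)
The plan is to prove both inequalities by a direct convex combination argument with the optimal solution $x^\star$, exploiting that $x^\star \in \Primal_0 \subseteq \Primal_g$ for every $g \geq 0$.

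For the second inequality, fix $i \in [n]$ and let $x \in \Primal_g$ attain $x_i = \mx_i(g)$ (such $x$ exists by \Cref{prop:mcp_well_defined}). Set $y \coloneqq \tfrac{1}{\alpha} x + (1 - \tfrac{1}{\alpha}) x^\star$. Since $\alpha \geq 1$, both $\tfrac{1}{\alpha}$ and $1 - \tfrac{1}{\alpha}$ lie in $[0,1]$, so $y \geq \0$; moreover $\mA y = \tfrac{1}{\alpha} b + (1 - \tfrac{1}{\alpha}) b = b$, and
\[
\pr{y}{s^\star} = \tfrac{1}{\alpha} \pr{x}{s^\star} + (1 - \tfrac{1}{\alpha}) \pr{x^\star}{s^\star} \leq \tfrac{g}{\alpha} + 0 = \tfrac{g}{\alpha},
\]
so $y \in \Primal_{g/\alpha}$. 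Using $x^\star_i \geq 0$, we conclude
\[
\mx_i(\tfrac{g}{\alpha}) \geq y_i = \tfrac{1}{\alpha} \mx_i(g) + (1 - \tfrac{1}{\alpha}) x^\star_i \geq \tfrac{1}{\alpha} \mx_i(g).
\]

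For the first inequality, let $x \in \Primal_{\alpha g}$ attain $x_i = \mx_i(\alpha g)$, and define $y \coloneqq \tfrac{1}{\alpha} x + (1 - \tfrac{1}{\alpha}) x^\star$ as above. The same verification yields $y \in \Primal_g$ (using $\pr{y}{s^\star} \leq \tfrac{\alpha g}{\alpha} = g$), hence
\[
\mx_i(g) \geq y_i = \tfrac{1}{\alpha} \mx_i(\alpha g) + (1 - \tfrac{1}{\alpha}) x^\star_i \geq \tfrac{1}{\alpha} \mx_i(\alpha g),
\]
which rearranges to $\mx_i(\alpha g) \leq \alpha \mx_i(g)$.

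No step here is an obstacle: the only fact beyond the definitions is the feasibility of $x^\star$ in $\Primal_0$ (with $x^\star \geq \0$), and the rest is a one-line convex combination. Note also that the lemma is consistent with (and in fact implied by) the concavity of $g \mapsto \mx_i(g)$ established in Lemma~\ref{lem:simplex-main} together with $\mx_i(0) \geq x^\star_i \geq 0$, so one could alternatively invoke the standard fact that $f(g)/g$ is non-increasing for nonnegative concave $f$ with $f(0) \geq 0$; however the direct argument above is self-contained and avoids relying on the piecewise linearity structure.
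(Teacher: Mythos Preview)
Your proof is correct. The paper's own proof invokes the concavity of $g\mapsto\mx_i(g)$ from Lemma~\ref{lem:simplex-main} and combines it with $\mx_i(0)\ge 0$: from $(1-\tfrac{1}{\alpha})\,\mx_i(0)+\tfrac{1}{\alpha}\,\mx_i(\alpha g)\le \mx_i(g)$ one obtains the first inequality, and the second follows by substituting $\tilde g=g/\alpha$. Your argument is essentially the same convex-combination step carried out directly at the level of feasible points (using $x^\star\in\Primal_0$ and $x^\star_i\ge 0$) rather than citing concavity as a black box. This makes your version self-contained---it does not rely on Lemma~\ref{lem:simplex-main}---at the minor cost of writing out the combination explicitly twice. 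You already identify this equivalence in your closing remark.
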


\begin{proof}
We use the concavity of $g \mapsto \mx_i(g)$ (\Cref{lem:simplex-main}) and deduce that $(1 - \frac{1}{\alpha}) \mx_i(0) + \frac{1}{\alpha} \mx_i(\alpha g) \leq \mx_i(g)$. The first inequality of the lemma follows from $\mx_i(0) \geq 0$. The second inequality follows from the first applied to $\tilde g = \frac{g}{\alpha}$.
\end{proof}

The next lemma shows that the definition of the primal straight-line complexity (Definition~\ref{def:slc}) can be restricted to breakpoints $(g_k,z_k)=(g_k,\mx_i(g_k))$. By symmetry, the analogous statement for the dual straight-line complexity also holds.

\begin{lemma}\label{lemma:slc2}
Let $0 \leq \underline g < \overline g$. We have
\begin{equation}\label{eq:slc2}
\SLC^{\mathrm{p}}_{\theta, i}(\underline g, \overline g) = 
\min 
\begin{multlined}[t]
\big\{ p \geq 1 \colon \exists (g_k)_{ k \in [p+1]} \in \R \, , \enspace \overline g = g_1 > g_2 > \dots > g_{p+1} = \underline g \, , \\
\forall k \in [p] \, , \; \forall \lambda \in [0,1] \, , (1 - \lambda) \mx_i(g_k) + \lambda \mx_i(g_{k+1}) \\ \geq (1-\theta) \mx_i((1-\lambda) g_k + \lambda g_{k+1}) 
\big\}
\end{multlined}
\end{equation}
\end{lemma}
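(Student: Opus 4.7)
My plan is to prove the equality in~\eqref{eq:slc2} by establishing the two inequalities separately.

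For the easier inequality, that $\SLC^{\mathrm{p}}_{\theta, i}(\underline g, \overline g)$ is at most the right-hand side of~\eqref{eq:slc2}, I would take any sequence $\overline g = g_1 > \dots > g_{p+1} = \underline g$ satisfying the chord condition there, set $z_k = \mx_i(g_k)$, and verify that the resulting sequence is feasible for~\eqref{eq:slc-def} with the same number of pieces. Along each segment the upper bound $z \leq \mx_i(g)$ is concavity of $\mx_i$ from Lemma~\ref{lem:simplex-main}, while the lower bound $z \geq (1-\theta)\mx_i(g)$ is the assumed chord condition itself.

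The reverse inequality is the substantive one. Starting from any sequence $(g_k, z_k)_{k \in [p+1]}$ feasible for~\eqref{eq:slc-def} with $p$ pieces, I would apply two transformations. \emph{Stage 1 (lift to the max-central path):} replace each $z_k$ by $\mx_i(g_k)$. Since $z_k \le \mx_i(g_k)$, the new chord value at parameter $\lambda$ pointwise dominates the old one, so the lower bound $(1-\theta)\mx_i(g)$ is inherited from the original segment, while the upper bound is immediate from concavity. \emph{Stage 2 (monotonize):} iteratively drop any interior index $j$ at which either $g_{j-1} = g_j$, or $g_j$ is a local maximum ($g_{j-1}, g_{j+1} \le g_j$), or $g_j$ is a local minimum ($g_{j-1}, g_{j+1} \ge g_j$); dropping $g_j$ merges the two adjacent chords into a single chord between $(g_{j-1}, \mx_i(g_{j-1}))$ and $(g_{j+1}, \mx_i(g_{j+1}))$. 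Each drop reduces the piece count by one while keeping the endpoints $g_1 = \overline g$ and $g_{p+1} = \underline g$ fixed, so the procedure terminates at a sequence with no interior extrema and no adjacent ties; this forces strict monotonicity in $g$, which since $g_1 > g_{p+1}$ must be strictly decreasing, exactly as required by~\eqref{eq:slc2}.

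The main obstacle is showing that each drop preserves membership of the merged chord in $\mnbp_i(\theta)$. I plan to handle this by a direct concavity argument. In the local-maximum case, assume without loss of generality $g_{j-1} \le g_{j+1} \le g_j$ (the reverse ordering is symmetric). Then the $g$-range $[g_{j-1}, g_{j+1}]$ of the merged chord is contained in the range $[g_{j-1}, g_j]$ of the old chord joining $(g_{j-1}, \mx_i(g_{j-1}))$ and $(g_j, \mx_i(g_j))$; at $g_{j-1}$ the two chords coincide, while at $g_{j+1}$ the merged chord equals $\mx_i(g_{j+1})$, which by concavity of $\mx_i$ at the intermediate point $g_{j+1} \in [g_{j-1}, g_j]$ dominates the interpolated value of the old chord. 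As both chords are linear on $[g_{j-1}, g_{j+1}]$, the merged chord dominates the old one throughout this interval, and so the old chord's lower bound $(1-\theta)\mx_i(g)$ transfers to the merged chord; the upper bound is again immediate from concavity. The local-minimum case is handled symmetrically, using instead the adjacent old chord on the side of the larger of $g_{j-1}, g_{j+1}$, and the tie case is trivial since the merged chord then coincides with one of the two old chords. Combining the two stages yields a strictly decreasing sequence of at most $p$ pieces on the max-central path satisfying the chord condition, which completes the proof.
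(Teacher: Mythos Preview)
Your proposal is correct and follows essentially the same approach as the paper: lift the breakpoints to the graph of $\mx_i$ using $z_k\le\mx_i(g_k)$, then monotonize the $g$-sequence by removing redundant indices, with concavity of $\mx_i$ guaranteeing that each merged chord dominates an old chord and hence stays in $\mnbp_i(\theta)$. The only cosmetic difference is that the paper first trims the $g_k$ into $(\underline g,\overline g]$ and then removes entire non-monotone stretches at once, whereas you drop local extrema one at a time; both reductions rest on the same concavity comparison.
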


\begin{proof}
Let $\overline g = g_1 > g_2 > \dots > g_{p+1} = \underline g$. By the concavity of $g \mapsto \mx_i(g)$, the condition $(1 - \lambda) \mx_i(g_k) + \lambda \mx_i(g_{k+1}) \leq \mx_i((1-\lambda) g_k + \lambda g_{k+1})$ is satisfied for every $k \in [p]$ and $\lambda \in [0,1]$. We deduce that $\SLC_{\theta, i}(\underline g, \overline g)$ is less than or equal to the right-hand side of~\eqref{eq:slc2}.

We now prove the converse inequality. Let $(g_1, z_1), \dots, (g_{p+1}, z_{p+1})$ as in the right-hand side of~\eqref{eq:slc-def}. Up to removing or shortening segments in the sequence $\big([(g_k, z_k), (g_{k+1}, z_{k+1})]\big)_{k \in [p]}$, we can assume that $\overline g = g_1 \geq g_k > g_{p+1} = \underline g$ for every $k \in [p]$. As $z_k \leq \mx_i(g_k)$ for all $k \in [p+1]$, we have $(1 - \lambda) \mx_i(g_k) + \lambda \mx_i(g_{k+1}) \geq (1 - \lambda) z_k + \lambda z_{k+1} \geq (1-\theta) \mx_i((1-\lambda) g_k + \lambda g_{k+1})$ for all $k \in [p]$. It remains us to show that, up to removing some $g_k$, we can ensure $g_1 > g_2 > \dots > g_{p+1}$. Suppose that there is $k \in [p]$ such that $g_1 > \dots > g_k$ and  $g_{k+1} \geq g_k$. We introduce the smallest integer $l \geq k$ such that $g_{l+1} < g_k$ ($l$ is well-defined as $g_{p+1} = \underline{g} < g_k$). By concavity of $\mx_i$, we have $\mx_i(g_k) \geq (1 - \bar \lambda) \mx_i(g_l) + \bar \lambda \mx_i(g_{l+1})$ for $\bar \lambda = \frac{g_k - g_l}{g_{l+1} - g_l} \in [0,1]$. Then, for all $\lambda \in [0,1]$, \begin{align*}
(1 - \lambda) \mx_i(g_k) + \lambda \mx_i(g_{l+1}) & \geq (1 - \lambda') \mx_i(g_l) + \lambda' \mx_i(g_{l+1}) \\
& \geq (1-\theta) \mx_i((1 - \lambda') g_l + \lambda' g_{l+1}) \\
& = (1-\theta) \mx_i((1 - \lambda) g_k + \lambda g_{l+1}) 
\end{align*}
where $\lambda' = \lambda + \bar \lambda (1 - \lambda) \in [0,1]$. By removing the $g_{k+1}, \dots, g_l$ and recursively applying the same argument on the remaining sequence, we end up with a sequence $\overline g = g_1 > \dots > g_{q+1} = \underline g$ as in the right-hand side of~\eqref{eq:slc2}, with $q \leq p$. This shows that $\SLC^{\mathrm{p}}_{\theta, i}(\underline g, \overline g)$ is greater than or equal to the right-hand side of~\eqref{eq:slc2}.
\end{proof}

In the next lemma, we show that the primal and dual straight line complexity
for a single component of the maximum central path are related up to a factor
$2$, modulo a slight widening of the neighborhood. This shows that  that
primal and dual straight line complexities are essentially equivalent. The
proof proceeds by decomposing straight line segments into polarized segments.  
\begin{lemma}
Let $L \coloneqq \SLC^{\mathrm{p}}_{1-\alpha,i}(\underline g, \overline g)$, $\alpha \in (0,1]$, $i \in [n]$, and let
$\overline g = g_1 > g_2 > \dots > g_{L+1} = \underline g$ be the minimizer
of~\eqref{eq:slc2}. Define $\phi_i: [\underline g, \overline g] \rightarrow
\R_+$ so that $\phi_i(g) = \max \{\mx_i(g_{j+1}), \frac{g}{g_{j}}
\mx_i(g_{j})\}$ for $g \in [g_j,g_{j+1}]$, $j \in [L]$. Then, $\phi_i(g)$ and $\frac{g}{\phi_i(g)}$ are continuous piecewise linear functions on $(\underline g, \overline g]$ with at most $2L$ pieces, and
\begin{align}
\mx_i(g) &\geq \phi_i(g) \geq \frac{\alpha}{2} \mx_i(g), \label{eq:primal-polar} \\
\ms_i(g) &\geq \frac{\alpha g}{2\phi_i(g)} \geq \frac{\alpha}{4} \ms_i(g), \quad \forall g \in (\underline{g}, \overline{g}]. \label{eq:dual-polar}
\end{align}
Moreover, $\SLC^{\mathrm{d}}_{1-\alpha/4,i}(\underline g, \overline g) \leq 2 \SLC^{\mathrm{p}}_{1-\alpha,i}(\underline g, \overline g)$.
\label{lem:dual-slc}
\end{lemma}

\begin{remark}
For $\alpha \in (0,1/2]$, it can be shown that straight line complexity satisfies 
$\SLC^{\mathrm{d}}_{1-\alpha,i}(\underline g, \overline g) \leq O(1)
\SLC^{\mathrm{d}}_{1-\alpha/4,i}(\underline g, \overline g)$. That is,
shrinking the neighborhood size by a constant factor can only change the
straight line complexity of a single coordinate of the max central path by a
constant factor. In particular, this implies that
$\SLC^{\mathrm{d}}_{1-\alpha,i}(\underline g, \overline g) \leq O(1)
\SLC^{\mathrm{p}}_{1-\alpha,i}(\underline g, \overline g)$ for $\alpha \in
(0,1/2]$. As this does not have a significant effect on our iteration bounds,
we do not prove this here. 
\end{remark}

\begin{proof}[Proof of \Cref{lem:dual-slc}]
Noting that $\mx_i(g_{j}) \geq \tfrac{g_{j}}{g_{j-1}} \mx_i(g_{j-1})$, $j \in
\{2,\dots,L+1\}$, by \Cref{lemma:mcp_subhomogeneous} and $\mx_i(g_j) \geq
\mx_i(g_{j+1})$, $j \in [L]$, by monotonicity, we have that $\phi(g_j) =
\mx_i(g_j)$, $\forall j \in [L+1]$. In particular, $\phi_i$ is
well-defined. Since each function $\max \{\mx_i(g_{j+1}), \frac{g}{g_{j}} \mx_i(g_j)\}$, $i \in [L]$, is continuous piecewise linear
with at most $2$ pieces, $\phi_i$ is also continuous with at most $2L$ pieces.
By our assumption that $\Primal$ is strictly feasible, we have that
$\mx_i(g_j) > 0$ for $j \in [L]$ since $g_j > g_{L+1} \geq 0$.  In
particular, for $g \in (g_{j+1},g_j]$, $i \in [L]$, we have that
$\tfrac{g}{\phi_i(g)} = \min \{ \tfrac{g}{\mx_i(g_{j+1})},
\tfrac{g_{j}}{\mx_i(g_{j})} \}$ if $\mx_i(g_{j+1}) > 0$ and
$\tfrac{g}{\phi_i(g)} = \tfrac{g_j}{\mx_i(g_j)}$ otherwise. Thus,
$\frac{g}{\phi_i(g)}$ is also continous piecewise linear with at most $2L$
pieces on $(\underline g, \overline g]$.

We now prove~\eqref{eq:primal-polar},~\eqref{eq:dual-polar}. For $j \in [L]$,
$g \in [\underline g, \overline g]$, let 
\[
l^j_i(g) \coloneqq \left(\mx_i(g_{j+1})-g_{j+1}\frac{\mx_i(g_j)-\mx_i(g_{j+1})}{g_j-g_{j+1}}\right) +
\frac{\mx_i(g_j)-\mx_i(g_{j+1})}{g_j-g_{j+1}}g := a_j + b_j g,
\]
denote the linear interpolation of $\mx_i$ through $g_j$ and $g_{j+1}$. By
concavity, monotonicity of $\mx_i$ and $0 \leq g_{j+1} < g_j$, the
intercept $a_j$ and slope $b_j$ of $l^j_i$ satisfy
\begin{align}
0 \leq \mx_i(0) \leq l^j_i(0) &= a_j = \mx_i(g_{j+1})-g_{j+1}\frac{\mx_i(g_j)-\mx_i(g_{j+1})}{g_j-g_{j+1}} \leq \mx_i(g_{j+1}), \label{eq:a-ineq} \\
0 \leq \frac{\mx_i(g_j)-\mx_i(g_{j+1})}{g_j-g_{j+1}} &= b_j \leq \frac{\mx_i(g_j)-\mx_i(0)}{g_j-0} \leq \frac{\mx_i(g_j)}{g_j}\, . \label{eq:b-ineq}
\end{align}
For $g \in (g_{j+1},g_j]$, then by~\eqref{eq:a-ineq} and~\eqref{eq:b-ineq} we
get that
\begin{align}
\phi_i(g)  
&= \max \left\{l^j_i(g_{j+1}), g\frac{l^j_i(g_j)}{g_j} \right\} \\
&\leq \max \left\{l^j_i(g_{j+1})+b_j(g_j-g_{j+1}), \left(1-\frac{g}{g_j}\right) l^j_i(0) + \frac{g}{g_j} l^j_i(g_j)\right\} = l^j_i(g) \nonumber \\
&= a_j + g b_j \leq \mx_i(g_{j+1}) + g \frac{\mx_i(g_j)}{g_j}
\leq 2 \phi_i(g). \label{eq:st-pol}
\end{align}
Since $[(g_j, \mx_i(g_j)), (g_{j+1}, \mx_i(g_{j+1}))] \subset
{\mnbp_{i}}(1-\alpha)$ by construction, for $g \in
(g_{j+1},g_j]$ we have that  
\[
\mx_i(g) \geq l^j_i(g) \geq \phi_i(g) \geq \frac{1}{2} l^j_i(g) \geq \frac{\alpha}{2} \mx_i(g).
\]
Since the above holds for all $j \in [L]$, this
proves~\eqref{eq:primal-polar}. To prove~\eqref{eq:dual-polar}, we
combine~\eqref{eq:primal-polar} with \Cref{thm:mcp-central} as follows: for
$g \in (\underline g, \overline g]$,
\[
\ms_i(g) = \frac{\mx_i(g) \ms_i(g)}{\mx_i(g)} \geq \frac{g}{\mx_i(g)} \geq \frac{\alpha g}{2 \phi_i(g)} \geq \frac{\alpha g}{2 \mx_i(g)} = \frac{\alpha \ms_i(g)}{2} \frac{g}{\mx_i(g) \ms_i(g)} \geq \frac{\alpha \ms_i(g)}{4}.
\]
From here, the moreover $\SLC^{\mathrm{d}}_{1-\alpha/4,i}(\underline g,
\overline g) \leq 2 \SLC^{\mathrm{p}}_{1-\alpha,i}(\underline g, \overline
g)$ follows immediately from~\eqref{eq:dual-polar} and the fact that
$\frac{\alpha g}{2 \phi_i(g)}$ is continuous piecewise linear with a number of pieces bounded by
$2\SLC^{\mathrm{p}}_{1-\alpha,i}(\underline g, \overline g)$ on
$(\underline g, \overline g]$ (note that we can extend $\frac{\alpha g}{2
\phi_g(g)}$ to $[\underline g, \overline g]$ by continuity).
\end{proof}

We are now ready to prove \Cref{thm:delta-slc}; we restate it here for convenience.
\deltaslc*

\begin{proof}
We only prove the statement for the primal straight-line complexity bound;
the proof for the dual bound follows analogously.

Let $\underline g\coloneqq n\mu_1$ and $\overline g\coloneqq n\mu_0$ denote
the gaps at $\mu_1$ and $\mu_0$. For $i \in [n]$, apply \Cref{lem:dual-slc}
on $\mx_i$ on $[\overline g, \underline g]$ to get $\overline g = g_{i,1}
> \dots > g_{i,L_i+1} = \underline g$, $L_i = \SLC^{\mathrm{p}}_{\theta,
i}(\underline g, \overline g)$, and the corresponding piecewise linear
function $\phi_i(\cdot)$ satisfying $\mx_i(g) \geq \phi_i(g) \geq
\tfrac{1-\theta}{2} \mx_i(g)$, $\forall g \in [\underline g, \overline g]$.
For $j \in [L_i]$, define $g_{i,j}^+ \coloneqq \tfrac{g_{i,j}
\mx_i(g_{i,j+1})}{\mx_i(g_{i,j})} \in [g_{i,j+1},g_{i,j}]$, which we note satisfies $\phi_i(g_{i,j}^+) = \max \left\{\mx_i(g_{i,j+1}), \frac{g_{i,j}^+
\mx_i(g_{i,j})}{g_{i,j}}\right\} = \frac{g_{i,j}^+ \mx_i(g_{i,j})}{g_{i,j}} = \mx_i(g_{i,j+1})$. From here, let $G_i \coloneqq \{g_{i,j}: i \in [L_i+1]\} \cup \{g_{i,j}^+: i \in [L_i]\}$
denote the breakpoints of the function $\phi_i$. For a closed interval $[a,b]
\subseteq [\underline g, \overline g]$ with $a <
b$, we say that $[a,b]$ is type (B) for $\phi_i$ if $\phi_i$ is constant on
$[a,b]$ and is type (N) for $\phi_i$ if $\phi_i(g') = \frac{g'}{g}
\phi_i(g)$, $\forall ~ g',g \in [a,b], g' \leq g$ and $g > 0$.  By construction of $\phi_i$
and $G_i$, if $G_i \cap (a,b) = \emptyset$, then $[a,b]$ must be either type
(B) or type (N) for $\phi_i$ (note that these options are mutually exclusive).

Define $G = \cup_{i=1}^n G_i$ to be the union of the breakpoints of the
functions $\phi_i, i \in [n]$. Let $\underline g = g_T < \dots < g_0 =
\overline g$ satisfy $G = \{g_i: i \in \{0,\dots,T\}\}$. Since $\underline g
\in G_i$, $\forall i \in [n]$, we have that 
\begin{equation}
T = |G|-1 \leq \sum_{i=1}^n (|G_i|-1) \leq 2 \sum_{i=1}^n L_i
\coloneqq 2 \sum_{i=1}^n  \SLC^{\mathrm{p}}_{\theta, i}(\underline g, \overline g). \label{eq:it-amort-bnd-1}
\end{equation}

Define $(B^{(0)},N^{(0)}) \coloneqq ([n],\emptyset)$, and for
$k \in [T]$, define $(B^{(k)},N^{(k)})$ by 
\begin{align*}
B^{(k)} &= \{i \in [n]: [g_k,g_{k-1}] \text{ is type (B) for } \phi_i \}, \\
N^{(k)} &= \{i \in [n]: [g_k,g_{k-1}] \text{ is type (N) for } \phi_i \}. 
\end{align*}
By construction of $G$, we have $(g_k,g_{k-1}) \cap G_i = \emptyset$, $i \in
[n]$, and hence $(B^{(k)},N^{(k)})$ partitions $[n]$. The sum of partition differences then satisfies the bound
\begin{align}
\sum_{k=1}^T |N^{(k)} \Delta N^{(k-1)}| &= \sum_{i=1}^n \sum_{k=1}^T |1[i
\in N^{(k)}]-1[i \in N^{(k-1)}]| \leq \sum_{i=1}^n \sum_{k=1}^T 1[g_{k-1}
\in G_i] \nonumber \\
&= \sum_{i=1}^n (|G_i|-1) \leq 2 \sum_{i=1}^n  \SLC^{\mathrm{p}}_{\theta, i}(\underline g, \overline g). \label{eq:it-amort-bnd-2}
\end{align}

Define $h_i \coloneqq g_i/n$, $i \in \{0,\dots,T\}$, where we have $h_0
= g_0/n = \overline g/n = \mu_0$ and $h_T = g_T/n = \underline g/n = \mu_1$.
For $k \in [T]$, we claim that the central path segment
$\CP[h_k,h_{k-1}]$ is $\gamma = \frac{1-\theta}{4n}$-polarized with
polarization partition $(B^{(k)},N^{(k)})$. Together with the claim, the
bounds~\eqref{eq:it-amort-bnd-1} and~\eqref{eq:it-amort-bnd-2} directly imply theorem. 

We now prove the claim for $k \in [T]$. For $\mu \in [h_k,h_{k-1}] =
[g_k/n,g_{k-1}/n]$ and $i \in B^{(k)}$, by \Cref{lem:cp-max-cp} and
the guarantees on $\phi_i$, we have that  
\begin{equation}
x\cp_i(\mu) \geq \frac{1}{2n} \mx_i(n \mu)
\geq \frac{1}{2n} \phi_i(n\mu) = \frac{1}{2n} \phi_i(g_{k-1}) \geq \frac{1-\theta}{4n} \mx_i(g_{k-1}) \geq \frac{1-\theta}{4n} x_i\cp(h_{k-1}),
\label{eq:primal-polarization}
\end{equation}
where in the first equality we used that $[g_k,g_{k-1}]$ is type (B) for
$\phi_i$. Similarly, for $i \in N^{(k)}$, we have that
\begin{align}
x\cp_i(\mu) &\leq  \mx_i(n \mu) \leq \frac{2}{1-\theta} \phi_i(n \mu) = \frac{2}{1-\theta} \frac{\mu}{h_{k-1}} \phi_i(n h_{k-1}) \nonumber \\ & \leq \frac{2}{1-\theta} \frac{\mu}{h_{k-1}} \mx_i(n h_{k-1}) \leq \frac{4 n}{1-\theta} \frac{\mu}{h_{k-1}} x_i\cp(h_{k-1}), \label{eq:dual-polarization}
\end{align}
where in the first equality we used that $[g_k,g_{k-1}]$ is type (N) for
$\phi_i$. By \Cref{rem:pol-eq}, the
inequalities~\eqref{eq:primal-polarization} and~\eqref{eq:dual-polarization}
imply that $\CP[h_k,h_{k-1}]$ is $\frac{1-\theta}{4n}$-polarized with
polarization partition $(B^{(k)},N^{(k)})$ as desired. 
\end{proof}
  \section{The Trust Region Step}
\label{sec:trust-region}

In this section, we prove important properties for the trust-region step of
Lan, Monteiro and Tsuchiya~\cite{LMT09}, as defined in the
program~\eqref{eq:trust-region} in the introduction. The estimates proved
here will play a crucial role in the analysis of the IPM, and correspond to
refinements of existing estimates in~\cite{LMT09} and~\cite{MonteiroT05} that
are more adapted to our IPM.  

Our first estimate, given in \Cref{sec:step-tr} is a general relationship
between the parameters of trust-region directions and the achievable
step-length guarantees one can derive from them. The second main result,
given in \Cref{sec:ass-part-as}, regards the computation of the optimal
partition $(B,N)$ to use when computing trust-region directions. We show that
if a suitably good trust-region direction exists with respect to some
partition $(B,N)$, then this partition can be read off from the coordinates
of the affine scaling direction (which implies that it is unique). We prove
this by showing that the affine scaling direction must be close to any good
enough trust region direction. This is analoguous to a result
of~\cite{MonteiroT05}, who showed that the same statement holds if we replace
the trust-region direction by the layered least squares direction. 

We dub the partition induced by the affine scaling direction the
\emph{associated partition} (in~\cite{LMT09} this was called the AS
bipartition). With respect to our IPM, the associated partition will be used
to identify the current polarization partition. 

\subsection{Step-Length Estimates for Trust Region Directions}
\label{sec:step-tr}

We now formally link the properties of trust-region program solutions to the
achievable step-lengths in \Cref{prop:step-size}. A similar estimate is
proved in~\cite[Lemma 2.7]{LMT09}. 

\begin{proposition}
\label{prop:trust-to-step-size}
Let $x,s \in \R^n_{++}$ satisfy $\norm{x s/\mu-\1_n} \leq 1/6$, where $\mu
\coloneqq \gap(x,s)$, and let $B \cup N = [n]$ be a partition. For $\Delta
x, \Delta s \in \R^n$, assume that
\begin{equation}
\norm{(\ndx_B,\nds_N)} \leq \delta \leq 1/6\, , \quad \quad
\norm{(\error_B+\nds_B,\error_N+\ndx_N)} \leq \eps \leq 1/6, \label{eq:trust-bnd-ass}
\end{equation}
where $\error \coloneqq \sqrt{xs/\mu}$, $(\ndx,\nds) \coloneqq (\nx^{-1} \Delta x, \ns^{-1} \Delta s)$ and $(\nx,\ns) \coloneqq (x \error^{-1}, s \error^{-1})$. Then,
\begin{align}
\norm{\frac{\Delta x \Delta s}{\mu}} &\leq \sqrt{2} \delta, \label{eq:dx-ds-bnd} \\  
\norm{\frac{(x+\Delta x)(s+\Delta s)}{\mu}} & \leq \sqrt{2} \eps. \label{eq:rx-rs-bnd}    
\end{align}
\end{proposition}
\begin{proof}
To derive~\eqref{eq:dx-ds-bnd} and~\eqref{eq:rx-rs-bnd}, we prove the following more general statements:
\ifdefined\SIAMversion
\begin{align}
\norm{\frac{\Delta x \Delta s}{\mu}}^2 \leq &\norm{\ndx_B}^2\left(1+\tfrac{1}{12}+\norm{\error_B+\nds_B}\right)^2 + \nonumber \\ & \norm{\nds_N}^2\left(1+\tfrac{1}{12}+\norm{\error_N+\ndx_N}\right)^2, \label{eq:dx-ds-gen-bnd} \\  
\norm{\frac{(x+\Delta x)(s+\Delta s)}{\mu}}^2 \leq &\norm{\error_B+\nds_B}^2\left(1+\tfrac{1}{12}+\norm{\ndx_B}\right)^2 + \nonumber \\ &\norm{\error_N+\ndx_N}^2\left(1+\tfrac{1}{12}+\norm{\nds_N}\right)^2. \label{eq:rx-rs-gen-bnd}    
\end{align}
\else
\begin{align}
\norm{\frac{\Delta x \Delta s}{\mu}}^2 \leq &\norm{\ndx_B}^2\left(1+\tfrac{1}{12}+\norm{\error_B+\nds_B}\right)^2 + \norm{\nds_N}^2\left(1+\tfrac{1}{12}+\norm{\error_N+\ndx_N}\right)^2, \label{eq:dx-ds-gen-bnd} \\  
\norm{\frac{(x+\Delta x)(s+\Delta s)}{\mu}}^2 \leq &\norm{\error_B+\nds_B}^2\left(1+\tfrac{1}{12}+\norm{\ndx_B}\right)^2 + \norm{\error_N+\ndx_N}^2\left(1+\tfrac{1}{12}+\norm{\nds_N}\right)^2. \label{eq:rx-rs-gen-bnd}    
\end{align}

\fi
To derive~\eqref{eq:dx-ds-bnd}, we combine~\eqref{eq:dx-ds-gen-bnd} and the assumed bounds~\eqref{eq:trust-bnd-ass} as follows:
\begin{align*}
\norm{\frac{\Delta x \Delta s}{\mu}}^2 &\leq
\norm{\ndx_B}^2\left(1+\tfrac{1}{12}+\norm{\error_B +\nds_B}\right)^2 +
\norm{\nds_N}^2\left(1+\tfrac{1}{12}+\norm{\error_N + \ndx_N}\right)^2 \\
&\leq \norm{(\ndx_B,\nds_N)}^2(1+\tfrac{1}{12}+\eps)^2 
\leq \delta^2(1+\tfrac{1}{12}+\tfrac{1}{6})^2 \leq 
\delta^2(\tfrac{5}{4})^2 \leq 2 \delta^2. 
\end{align*}
Similarly,~\eqref{eq:rx-rs-bnd} is derived by combining~\eqref{eq:rx-rs-gen-bnd} and~\eqref{eq:trust-bnd-ass} as 
\ifdefined\SIAMversion
\begin{align*}
\norm{\frac{(x+\Delta x)(s+\Delta s)}{\mu}}^2 &\leq \norm{\error_B+\nds_B
}^2(1+\tfrac{1}{12}+\norm{\ndx_B})^2 + \\ &\quad \, \norm{\error_N+
\ndx_N}^2(1+\tfrac{1}{12}+\norm{\nds_N})^2 \\ 
&\leq \eps^2(1+\tfrac{1}{12}+\delta)^2 \leq 2 \eps^2.
\end{align*}
\else
\begin{align*}
\norm{\frac{(x+\Delta x)(s+\Delta s)}{\mu}}^2 &\leq \norm{\error_B+\nds_B
}^2(1+\tfrac{1}{12}+\norm{\ndx_B})^2 + \norm{\error_N+
\ndx_N}^2(1+\tfrac{1}{12}+\norm{\nds_N})^2 \\ 
&\leq \eps^2(1+\tfrac{1}{12}+\delta)^2 \leq 2 \eps^2.
\end{align*}
\fi

We now focus on the proofs of~\eqref{eq:dx-ds-gen-bnd}
and~\eqref{eq:rx-rs-gen-bnd}.

\paragraph{Proof of inequality~\eqref{eq:dx-ds-gen-bnd}} To begin, note that
\[
\norm{\frac{\Delta x \Delta s}{\mu}}^2 = \norm{\ndx \nds}^2 = \norm{\ndx_B
\nds_B}^2 + \norm{\ndx_N \nds_N}^2.
\]
For the $B$ term on the right hand side, we have that
\begin{align*}
\norm{\ndx_B \nds_B}^2 &\leq \norm{\ndx_B}^2 \norm{\nds_B}^2_\infty \\
&\leq \norm{\ndx_B}^2 \left(\norm{\error_B}_\infty+\norm{\error_B+\nds_B}_\infty\right)^2 \\
&\leq \norm{\ndx_B}^2\left(1 + \tfrac{1}{12} + \norm{\error_B+\nds_B}\right)^2,
\end{align*}
where the last inequality follows since $\norm{\error_B+ \nds_B}_\infty \leq
\norm{\error_B+ \nds_B}$ and 
\[
\norm{\error_B}_\infty \leq \norm{\error}_\infty = \max_{i \in [n]}
\sqrt{\tfrac{x_i s_i}{\mu}} \leq \sqrt{1+\tfrac{1}{6}} \leq 1+\tfrac{1}{12}.
\]
By a symmetric argument, swapping the roles of ($x$,$s$) and ($B$,$N$), we
also get that
\[
\norm{\ndx_N \nds_N}^2 \leq \norm{\nds_N}^2\left(1 + \tfrac{1}{12} + \norm{\error_N+ \ndx_N}\right)^2.
\]
Inequality~\eqref{eq:dx-ds-gen-bnd} now follows by combining for the $B$ and
$N$ bounds above.

\paragraph{Proof of inequality~\eqref{eq:rx-rs-gen-bnd}}
Similar to the above, we have that  
\begin{align*}
\norm{\frac{(x+\Delta x)(s+\Delta s)}{\mu}}^2 &= \norm{(\error+\ndx)(\error+\nds)}^2 \\ &=
\norm{(\error_B + \ndx_B)(\error_B+\nds_B)}^2 + \norm{(\error_N + \ndx_N)(\error_N+\nds_N)}^2.
\end{align*}
As before, we bound the $B$ and $N$ parts separately. For the $B$ part, we have that 
\begin{align*}
\norm{(\error_B+\ndx_B)(\error_B+\nds_B)}^2 &\leq \norm{\error_B+\nds_B}^2 \norm{\error_B+\ndx_B}^2_\infty \\
&\leq \norm{\error_B+\nds_B}^2 \left(\norm{\error_B}_\infty+\norm{\ndx_B}_\infty\right)^2 \\
&\leq \norm{\error_B+\nds_B}^2(1 + \tfrac{1}{12} + \norm{\ndx_B})^2,
\end{align*}
where the last inequality from identical to the first part. Again, by a
symmetric argument,
\[
\norm{(\error_N+\ndx_N)(\error_N+\nds_N)}^2 \leq \norm{\error_N+\ndx_N}^2(1 + \tfrac{1}{12} + \norm{\nds_N})^2.
\] 
Inequality~\eqref{eq:rx-rs-gen-bnd} now follows by combining for the $B$ and
$N$ bounds above.
\end{proof}

\subsection{The Associated Partition}
\label{sec:ass-part-as}

The trust-region step is applicable for any non-trivial partition $B\cup
N=[n]$ and $z \in\cal N^2(\beta)$. Following \cite{LMT09}, we choose a
natural partition derived from the size of normalized coordinates of the
affine scaling direction:

\begin{definition}[Associated partition] \label{def:associated_partition}
For $z=(x,s)\in\cal N^2(\beta)$, let $(\Delta x^\as,\Delta s^\as)$ be the affine scaling step as in \eqref{eq:aff-compute}. Let us define the associated partition $\wt B_z\cup \wt N_z=[n]$ as
\[
\wt B_z\coloneqq \left\{i : \left|\frac{\Delta x^\as_i}{x_i}\right| < \left|\frac{\Delta s^\as_i}{s_i}\right|\right\}\, , \quad  \wt N_z \coloneqq  [n] \setminus \wt B_z\, .
\]
\end{definition}
The affine scaling step is the canonical candidate for an improving direction. Namely, for each $i\in \wt B_z$ the variable $s_i$ decreases at a faster rate than $x_i$, and vice versa for $i\in \wt N_z$. 

As we show below, as long as the trust-region program admits a sufficiently
good solution with respect to the polarization partition $(B,N)$, then the
affine scaling step is close to this trust-region direction and has
associated partition $(B,N)$. This implies that the optimal choice of
partition is in fact unique under the assumption that a long steps exists.

\begin{lemma}
Let $z = (x,s) \in \cal N^2(\beta)$, $\beta \in (0,1/6]$, $\mu \coloneqq \gap(z)$, $B \cup N = [n]$ be a partition. Assume that there exists $(\Delta x,\Delta s) \in W \times W^\perp$ satisfying
\begin{equation}
\norm{(\ndx_B,\nds_N)} \leq \delta \leq 1/30,
\quad \quad \norm{(\error_N+\ndx_N,\error_B+\nds_B)} \leq \epsilon \leq 1/30,
\label{eq:trust-ass-bnd-2}
\end{equation}
where $\error \coloneqq \sqrt{xs/\mu}$, $(\ndx,\nds) \coloneqq (\nx^{-1}
\Delta x, \ns^{-1} \Delta s)$, $(\nx,\ns) \coloneqq (x \error^{-1}, s
\error^{-1})$.
Then, the affine scale direction $(\Delta x^\as, \Delta s^\as)$ at $z$ satisfies
\begin{equation}
\norm{\frac{\Delta x^\as \Delta s^\as}{\mu}} = \norm{\frac{(x+\Delta x^\as)(s+\Delta s^\as)}{\mu}} \leq 3.5(\delta + \eps), \label{eq:as-bnd}
\end{equation}
and the associated partition at $z$ satisfies $(\Balg_z,\Nalg_z) = (B,N)$.
\label{lem:as-vs-tr}
\end{lemma}
\begin{proof}
Recall that the affine scaling step $(\Delta x^\as,\Delta s^\as)$ is defined by
\[
\ndx^\as + \nds^\as = -\error,
\]
where $(\ndx^\as,\nds^\as) \coloneqq (\nx^{-1} \Delta x^\as, \ns^{-1} \Delta
s^\as) \in \nx^{-1} W \times \ns^{-1} W^\perp$ form an orthogonal
decomposition of $-\error$. By orthogonality, we therefore have that
\[
\norm{\ndx^\as-\ndx}^2 + \norm{\nds^\as-\nds}^2 = \norm{\error+\ndx+\nds}^2.
\]
By the triangle inequality,
\begin{align*}
\norm{\error_B+\ndx_B+\nds_B}^2 &\leq 2\norm{\ndx_B}^2+2\norm{\error_B+\nds_B}^2\, , \\
\norm{\error_N+\ndx_N+\nds_N}^2 &\leq 2\norm{\nds_N}^2+2\norm{\error_N+\ndx_N}^2\, .
\end{align*}
Therefore, by~\eqref{eq:trust-ass-bnd-2},
\begin{align*}
\norm{\error+\ndx+\nds}^2 &= \norm{\error_B+\ndx_B+\nds_B}^2+\norm{\error_N+\ndx_N+\nds_N}^2 \\ &\leq 2(\norm{\ndx_B}^2+\norm{\nds_N}^2+\norm{\error_N+\ndx_N}^2+\norm{\error_B+\nds_B}^2) \\ &\leq 2(\delta^2+\eps^2).  
\end{align*}
In particular,
\begin{equation}
\begin{aligned}
\norm{(\ndx^\as_B,\nds^\as_N)} &\leq \sqrt{2(\delta^2+\eps^2)} + \norm{(\ndx_B,\nds_N)} \leq \sqrt{2}(\delta+\eps)+\delta \nonumber \\ & \leq (\sqrt{2}+1)(\delta+\eps) \leq \frac16\, , \label{eq:transfer-dx-ds} \\
\norm{(\error_N+\ndx^\as_N,\error_B+\nds^\as_B)} &\leq \sqrt{2(\delta^2+\eps^2)} + \norm{(\error_N+\ndx_N,\error_B+\nds_B)} \nonumber \\ &\leq (\sqrt{2}+1)(\delta+\eps) \leq \frac16\,.
\end{aligned}
\end{equation}
Therefore by \Cref{prop:trust-to-step-size}, since affine scaling satisfies $\Delta x^\as \Delta s^\as = (x+\Delta x^\as)(s+\Delta s^\as)$, we get that
\[
\norm{\frac{\Delta x^\as \Delta s^\as}{\mu}} = \norm{\frac{(x+\Delta x^\as)(s+\Delta s^\as)}{\mu}} \leq \sqrt{2}(\sqrt{2}+1)(\delta + \eps) \leq 3.5(\delta+\eps).
\]
For the last part, notice that $|\Delta x^\as_i/x_i| < |\Delta s^\as_i/s_i|
\Leftrightarrow |\ndx^\as_i| < |\nds^\as_i|$, $\forall i \in [n]$. By
\Cref{prop:xi}, since $\beta \in (0,1/6]$, we have that $\error
\geq \sqrt{1-\beta}\1_n \geq (1-1/6)\1_n$. For $i \in B$, using that
$(\sqrt{2}+1)(\delta+\eps) \leq 3(\delta+\eps) \leq 1/3$ together with~\eqref{eq:transfer-dx-ds} we see that
\[
|\ndx^\as_i| \leq 3(\delta+\eps) < (1-1/6)-3(\delta+\eps) \leq \error_i-|\ndx^\as_i| \leq |\error_i+\ndx^\as_i| = |\nds^\as_i|.
\]
By a symmetric argument, we also have $|\nds^\as_i| < |\ndx^\as_i|$, for $i
\in N$. Therefore, $(\Balg_z,\Nalg_z) = (B,N)$.
\end{proof}

\begin{remark}
\label{rem:as-vs-tr}
Given that the affine scaling step is always close to a trust-region direction, as long as the direction yields a sufficiently long step,
one may wonder why the trust-region
direction, or its approximate version the subspace LLS direction (defined in the next section), is even
needed. At a quantitative level, this comes from the fact that the
trust-region direction assumed to exist in~\Cref{lem:as-vs-tr} would in fact
allow us to decrease the gap by a factor $\Theta(\eps)$ (see
\Cref{prop:step-size}), whereas the ``nearby'' affine scaling would only
achieve $\Theta(\delta+\eps)$, which can be arbitrarily worse. Indeed, recall
that in~\eqref{eq:trust-region} we only enforce $\delta = O(\beta)$, that is,
proportional to the neighborhood size, while trying to minimize $\eps$. 
\end{remark}

\section{The Subspace LLS Direction and Cheap Lift Subspaces}~\label{sec:lifting-slls}

In this section, we formally define the subspace layered least squares (SLLS)
steps and show how to compute the cheap lift subspaces as described in the
Introduction. In \Cref{sec:slls}, we formally define the SLLS step and overview
its basic computational aspects. Then, in \Cref{sec:lifting_maps}, we introduce the
lifting map and operator and overview their basic properties and give an
algorithm to compute them. In \Cref{sec:cl}, we define cheap lift subspaces,
that will be used for taking subspace LLS steps, as well as the lifting
operators, and show how to construct cheap lift subspaces from approximate
singular subspaces (as defined in \Cref{def:apx-sing-sub}). In
\Cref{sec:comp-cl}, we show how to compute cheap lift subspaces of
approximately maximum dimension using an approximate singular value
decomposition (see the $\SVDA$-\text{SVD} problem). 

\subsection{The Subspace Layered Least Squares Direction}
\label{sec:slls}

We now introduce a new update direction, called the \emph{subspace layered
least squares} (SLLS) update direction, which will allow us to accelerate our
IPM over long straight parts of the central path. As discussed in the
previous sections, such straight segments of the central path are in
fact \emph{polarized} according to a partition $B \cup N = [n]$
(\Cref{def:polarized}). Within this segment, the primal variables indexed
by $N$ scale down linearly with respect to the parameter $\mu$, while the primal
variables in $B$ will remain roughly constant. For the dual, the situation is
reversed, the variables in $B$ scale down while the variables in $N$ remain
roughly constant.

As discussed, in the Introduction, the ``optimal'' step direction that mimics
the polarization pattern is the \emph{trust region step} of Lan, Monteiro and
Tsuchiya~\eqref{eq:trust-region}. To be able to suitably approximate this
step in strongly polynomial time, we require the SLLS update direction, which
we formally define below.

\begin{definition}[Subspace Layered Least Squares direction]\label{def:subspace_lls}
Let $z \coloneqq (x,s) \in \cal N^2(\beta)$, $\mu = \gap(z)$, $B \cup N = [n]$ be a non-trivial partition. Let $V \subseteq W$, $U \subseteq W^\perp$ be linear subspaces satisfying 
$\dim(\pi_N(V)) = \dim(V)$, and $\dim(\pi_B(U)) = \dim(U)$. The \emph{Subspace LLS (SLLS)} update direction $(\Delta x^\slls, \Delta s^\slls) \in W \times W^\perp$ at $z$ with respect to $(B,N,V,U)$ is defined as
\begin{align}
\Delta x^\slls \coloneqq \argmin_{\delta \in V} \norm{\frac{x_N+\delta_N}{\Xerr_N}}^2, \label{eq:primal-slls} \\
\Delta s^\slls \coloneqq \argmin_{\delta \in U} \norm{\frac{s_B+\delta_B}{\Serr_B}}^2, \label{eq:dual-slls} 
\end{align}
where $\Xerr \coloneqq \sqrt{x\mu/s}$ and $\Serr \coloneqq \sqrt{s\mu/x}$. 
\end{definition}
\begin{remark}
\label{rem:subspace_lls}
Note that $\Delta x^\slls, \Delta s^\slls$ are indeed well-defined, as our
assumption that $\dim(\pi_N(V)) = \dim(V)$ and $\dim(\pi_B(U)) = \dim(U)$
allows  to uniquely determine $\Delta x^\slls, \Delta s^\slls$ from their
coordinates in $N$ and $B$ respectively. In particular, if $N = \emptyset$,
then $V = \{\0_n\}$ and hence $\Delta x^\slls = \0_n$. Similarly, if $B =
\emptyset$, then $U = \{\0_n\}$ and $\Delta s^\slls = \0_n$. 

As with affine scaling, one can interpret the SLLS step directions in
terms of orthogonal projections. Using that $\error \coloneqq \sqrt{xs/\mu} =
x/\Xerr = s/\Serr$, in direct analogy with~\eqref{eq:aff-compute} in
\Cref{sec:aff-lls}, $(\nx^{-1} \Delta x)_N$ and $(\ns^{-1} \Delta s)_B$ are
respectively the orthogonal projections of $-\error_N$ and $-\error_B$ onto
$\pi_N(\nx^{-1} V)$ and $\pi_B(\ns^{-1} U)$. In contrast, whereas the affine
scaling direction minimizes the norm of the primal-dual residual vector
$(\error+\ndx,\error+\nds)$, the subspace LLS direction instead tries to
minimize the norm of $(\error_N+\ndx_N,\error_B+\nds_B)$. 
\end{remark}

Similarly to the affine scaling and corrector direction, the SLLS direction can be computed in
strongly polynomial time, given an appropriate representation of the
subspaces. The formulas for the subspace LLS step directions are given in the next
proposition. These are computed by solving the linear system which sets the
gradient of the corresponding quadratic optimization problems to zero.

\begin{proposition}[Subspace LLS Step Formulas]
\label{prop:lls-step-formula}
Let $z=(x,s) \in \R^{2n}_{++}$ be an iterate, $B \cup N = [n]$ be a non-trivial partition, and $\MP \in \R^{n \times m_p},
\MD \in \R^{n \times m_d}$ be matrices where $V \coloneqq \im(\MP)$, $U
\coloneqq \im(\MD)$ satisfy $\dim(V) = \dim(\pi_N(V))$ and $\dim(U)=\dim(\pi_B(U)) $. Then, the SLLS direction $(\Delta x^\slls, \Delta s^\slls)$ at $z$
with respect to $(B,N,U,V)$ can be computed in strongly polynomial time as
follows: 
\begin{equation*}
\begin{aligned}
\Delta x^\slls &= -\MP\left((\MP_{N,\mdot})^\T \diag(s_N/x_N) \MP_{N,\mdot}\right)^+ \left(\MP_{N,\mdot}\right)^\T s_N, \\ 
\Delta s^\slls &= -\MD \left((\MD_{B,\mdot})^\T \diag(x_B/s_B) \MD_{B,\mdot}\right)^+ \left(\MD_{B,\mdot}\right)^\T x_B.
\end{aligned}
\end{equation*}
In the above, by convention, $\Delta x^\slls = \0_n$ if $N = \emptyset$ and
$\Delta s^\slls = \0_n$ if $B = \emptyset$.
\end{proposition}
\begin{proof}
The strong polynomiality follows directly from the fact that computing
pseudoinverses is strongly polynomial. We thus focus on correctness. We
prove correctness of the formula for $\Delta x^\slls$ as the analysis for
$\Delta s^\slls$ is symmetric. The formula is clearly correct if $N = \emptyset$ by convention, so assume $N \neq \emptyset$. Let 
\[
z_x \coloneqq -\left((\MP_{N,\mdot})^\T \diag(s_N/x_N) \MP_{N,\mdot}\right)^+ \left(\MP_{N,\mdot}\right)^\T s_N \in \R^{m_p}.
\]
Recalling that $V \coloneqq \im(\MP)$, we have that $\Delta x^\slls = \MP z_x \in V$. Thus, letting $g_x(z) = \norm{\frac{x_N + \MP_{N,\mdot} z}{\nx_N}}^2$, it suffices to check that
\begin{align}
z_x \in \argmin_{z \in \R^{m_p}} g_x(z) &\Leftrightarrow \nabla g_x(z_x) = \0_{m_p}
\Leftrightarrow 2 \left(\MP_{N,\mdot}\right)^\T\left(\frac{x_N + \MP_{N,\mdot} z_x}{\nx_N^2}\right) = \0_{m_p} \nonumber \\ & \Leftrightarrow \left(\MP_{N,\mdot}\right)^\T\left(s_N + \diag(s_N/x_N) \MP_{N,\mdot} z_x\right) = \0_{m_p}, \label{eq:optimality-condition}
\end{align}
where we have used convexity of $g_x$ and $\nx \coloneqq \sqrt{x\mu/s}$ where
$\mu \coloneqq \gap(x,s) > 0$. Note that while $g_x$ need not have a unique
minimizer (\ie, the columns of $\MP$  may be linearly dependent), the
condition $\dim(V) = \dim(\pi_N(V)) = \rank(\MP_{N,\mdot})$ indeed ensures
that $\Delta x^\slls = \MP z_x$ is the unique minimizer
to~\eqref{eq:primal-slls} assuming $z_x$ minimizes $g_x$. From here, it suffices to check that $z_x$ satisfies~\eqref{eq:optimality-condition}, and hence minimizes $g_x$. For this purpose, letting $\mP = \left(\MP_{N,\mdot}\right)^\T\diag(s_N/x_N)\MP_{N,\mdot}$, by \Cref{prop:pseudoinverse-projection} part~\eqref{eq:pseudo-proj} and $x,s > \0_n$
\begin{align*}
\left(\MP_{N,\mdot}\right)^\T s_N &= \Pi_{\im\left(\left(\MP_{N,\mdot}\right)^\T \right)} \left(\MP_{N,\mdot}\right)^\T s_N = \Pi_{\im\left(\mP\right)} \left(\MP_{N,\mdot}\right)^\T s_N \\
&= \mP \mP^+ \left(\MP_{N,\mdot}\right)^\T s_N,
\end{align*}
where we have used that $\im\left(\left(\MP_{N,\mdot}\right)^\T \right) = \im\left(\left(\MP_{N,\mdot}\right)^\T\diag(s_N/x_N)\MP_{N,\mdot}\right)$.
\end{proof}

\subsection{Lifting Maps and Operators}\label{sec:lifting_maps}
The algorithm in \Cref{sec:slls-alg} and the analysis in \Cref{sec:running-time-analysis} will rely crucially
on the properties of \emph{lifting maps and operators}. These will be used to compute cheap
lift subspaces for the purpose of computing subspace layered least squares
directions. These maps have appeared in many prior works on layered least
squares algorithms~\cite{Vavasis1996,Monteiro2003,DHNV-23-MP}. In this
section, we give a self-contained overview of all their main properties.
Importantly, we show their duality properties and that they can be computed
in strongly polynomial time. 

\begin{definition}\label{def:lifting-map}
Given a non-trivial partition $I \cup J = [n]$ and a subspace $W \subseteq
\R^n$, we define the lifting map $L_I^W \colon \R^I \rightarrow \R^n$ as
follows:
\begin{equation}
\label{eq:lift1}
L^W_I(x) \coloneqq \argmin \{\norm{w}: w \in W, w_I = \proj_{\pi_I(W)}(x)\}.
\end{equation}
We further define the lifting operator $\lift^W_I \colon \pi_I(W) \rightarrow \pi_J(W^\perp)$ by 
\begin{equation}
\label{eq:lift2}
\lift_I^W(x) \coloneqq (L^W_I(x))_J = \pi_J\left(L^W_I(x)\right), \forall x \in \pi_I(W) \,.
\end{equation}
Note that if $x\in \pi_I(W)$, then $w=L_I^W(x) = (\lift_I^W(x),x)$ is the minimum-norm point in $W$ with $w_I=x$.  

By convention, if $(I,J)$ is a trivial partition, we define $L_I^W := \Pi_W$
if $I=[n]$, and define $L_I^W$ to be the linear operator from
$\R^\emptyset \coloneqq \{0\}$ to $\R^n$ if $I=\emptyset$. We furthermore 
define $\lift_I^W$ to be the linear operator from $W$ to
$\R^\emptyset$ if $I=[n]$ and from $\R^\emptyset$ to $W^\perp$ if
$I=\emptyset$. 
\end{definition}

The computation of lifting operators will be crucial to our IPM. The
following proposition, whose proof is deferred to later in the section,
gives an explicit formula for the associated matrices of the lifting map and
operator and shows how to compute them in strongly polynomial time.

\begin{proposition}[Computing Lifting Maps]
Let $\mA \in \R^{m \times n}$, $\rank(\mA) = m$, and $I \cup J = [n]$ be a
partition. Then, for $W = \ker(\mA)$, one can in strongly
polynomial time compute the associated matrices (as in \Cref{def:operator}) for the lifting map
and operator as follows:
\[
\cal M(L_I^W) = \mP_{\mdot, I} (\mP_{I,I})^+, \quad \quad \cal M(\lift_I^W) = \mP_{J,I} (\mP_{I,I})^+,
\]
where $\mP \coloneqq\mI_n - \mA^\T(\mA \mA^\T)^{-1}\mA$ is the orthogonal projection
onto $W$, i.e., the matrix associated with $\Pi_W$. \label{prop:lift-compute}
\end{proposition}

Towards proving the above, we first show that $L_I^W$ and $\lift_I^W$ are
both well-defined linear operators, which is not directly obvious from the
definitions. Linearity of these operators, as well as other key properties,
is proven in \Cref{lem:lift-operator} and \Cref{lem:lift-map} below.

\begin{lemma}
\label{lem:lift-operator}
For a linear subspace $W \subseteq \R^n$, and a partition $I \cup
J = [n]$, then the function $L_I^W \colon \R^I \to \R^n$ as defined in \Cref{def:lifting-map} is a
linear operator. In particular, for $x \in \R^I$, $w = L_I^W(x)$ is the
unique solution to the following linear system:
\begin{equation}
\begin{aligned}
w &\in \Pi_W(\R^n_I) \, ,\\
\pi_I(w) &= \Pi_{\pi_I(W)}(x).
\end{aligned}
\end{equation}
Furthermore, $\pi_I(W) =
\pi_I(\Pi_W(\R^n_I))$. \end{lemma}
\begin{proof}
If $(I,J)$ is a trivial partition, the characterization follows directly by our convention, so we may assume that $(I,J)$ is non-trivial. 

By construction, the solution set of $L_I^W(x)$ is non-empty.  Furthermore,
the minimum norm solution exists and is unique by strict convexity of the
squared Euclidean norm. Thus, $w = L_I^W(x)$ is well-defined. The definition requires $w_I = \pi_I(w) = \Pi_{\pi_I(W)}(x)$; we show that subject to this, 
 $w=L_I^W(x)$ if and only if 
$w \in \Pi_W(\R^n_I)$.  By Lagrangian duality, using that the gradient of the squared norm objective is
$\nabla(\norm{w}^2)=2w$, it follows that $w$ is optimal to the program in the definition \eqref{eq:lift1} if and only if 
\begin{equation}\label{eq:lifting-system}
\begin{aligned}
w_I &= \Pi_{\pi_I(W)}(x)\, ,\\
w&\in W\, , \quad\mbox{and}\\
w &\perp (W \cap \R_J^n)\, ,
\end{aligned}
\end{equation}
where we have used that the difference of any two feasible solutions to~\eqref{eq:lift1} lives in $W \cap \R_J^n$. The last requirement can be written as  $w\in (W\cap \R_J^n)^\perp= W^\perp + \R^n_I$, where the equality follows by \Cref{prop:sum-complement-identity}.
By orthogonal decomposition, we note that $\Pi_W(\R^n_I) = W \cap (W^\perp + \R^n_I)$.

Thus, the last two requirements are equivalent to $w\in \Pi_W(\R^n_I)$, completing the proof. Further, since \eqref{eq:lifting-system} can be equivalently written as a linear system of equations, it follows that $L_I^W$ is a linear map.

We now prove the furthermore: $\pi_I(W) = \pi_I(\Pi_W(\R^n_I))$. Since
$\Pi_W(\R^n_I) \subseteq W$, we have $\pi_I (\Pi_W(\R^n_I)) \subseteq
\pi_I(W)$. To show the reverse containment $\pi_I(W)\subseteq \pi_I(\Pi_W(\R^n_I))$, take any
$x \in \pi_I(W)$. We have that $L_I^W(x) \in \Pi_W(\R^n_I)$ 
by the first part, and $\pi_I(L_I^W(x)) = x$. Therefore, $\pi_I(W) \subseteq
\pi_I(\Pi_W(\R^n_I))$, as needed.
\end{proof}

\begin{lemma}
\label{lem:lift-map}
For a linear subspace $W \subseteq \R^n$, a partition $I \cup J =
[n]$, $\lift^W_I \colon \pi_I(W) \rightarrow \pi_J(W^\perp)$ as in
\Cref{def:lifting-map} is a well-defined linear operator. Moreover,
$\im(\lift^W_I) = \pi_J(W) \cap \pi_J(W^\perp)$, $\ker(\lift^W_I) = \pi_I(W
\cap \R^n_I)$ and $\im(\adj(\lift^W_I)) = \pi_I(W) \cap \pi_I(W^\perp)$.
\end{lemma}

\begin{proof}
If $(I,J)$ is a trivial partition, the lemma follows trivially by the definition of $L_I^W$, so we may assume that $(I,J)$ is non-trivial. We first prove the moreover statement $\im(\lift^W_I) = \pi_J(W) \cap
\pi_J(W^\perp)$. Assuming this, the well-definedness of $\lift^W_I \colon \pi_I(W) \rightarrow
\pi_J(W^\perp)$ follows immediately from the inclusion $\im(\lift^W_I) \subseteq
\pi_J(W^\perp)$. Furthermore, linearity of $\lift^W_I$ follows from linearity of $L_I^W$, proved in \Cref{lem:lift-operator}, and the definition $\lift^W_I(x) = \pi_J(L_I^W(x))$, $\forall x \in \pi_I(W)$. 

We start by showing the inclusion $\im(\lift^W_I) \subseteq
\pi_J(W) \cap \pi_J(W^\perp)$.  For $x \in \pi_I(W)$, by \Cref{lem:lift-operator}
we have that $L_I^W(x) \in \Pi_W(\R^n_I)$ and $\lift^W_I(x)=\pi_J(L_I^W(x))$. Since $\Pi_W(\R^n_I) = W \cap (W^\perp + \R^n_I)$, we see that
$\lift^W_I(x) \in \pi_J(\Pi_W(\R^n_I)) \subseteq \pi_J(W) \cap
\pi_J(W^\perp + \R^n_I) = \pi_J(W) \cap \pi_J(W^\perp)$, as needed. We now
show the reverse inclusion $\pi_J(W) \cap \pi_J(W^\perp) \subseteq
\im(\lift^W_I)$. Take $y \in \pi_J(W) \cap \pi_J(W^\perp)$. By definition,
there exists $w \in W$ such that $w_J = y$.  It now suffices to show that
$L^W_I(w_I) = w$, since then $\lift^W_I(w_I) = y$. Clearly $\pi_I(L^W_I(w_I))
= w_I$ since $w \in W$. By \Cref{lem:lift-operator}, it suffices to show that $w
\in \Pi_W(\R^n_I) = W \cap (W^\perp+\R^n_I )$. From here, we see that $w \in
W^\perp + \R^n_I \Leftrightarrow \pi_J(w) \subseteq \pi_J(W^\perp+\R^n_I) =
\pi_J(W^\perp)$. The inclusion now follows since $w_J = y \in \pi_J(W^\perp)$
by assumption.

We now characterize the kernel of $\lift^W_I$ by
\[
\ker(\lift^W_I) = \{x \in \pi_I(W): \lift^W_I(x)=\0_J\}
= \{x \in \pi_I(W): (x,\0_J) \in W\} = \pi_I(W \cap \R^n_I),
\]
as needed. Finally, we characterize $\im(\adj(\lift^W_I))$. By
\Cref{prop:adjoint}, we have that
\begin{align*}
\im(\adj(\lift^W_I)) &= \pi_I(W) \cap \ker(\lift^W_I)^\perp = 
\pi_I(W) \cap \pi_I(W \cap \R^n_I)^\perp \\
&= \pi_I(W) \cap \pi_I((W \cap \R^n_I)^\perp \cap \R^n_I) \\
&= \pi_I(W) \cap \pi_I((W^\perp + \R^n_J) \cap \R^n_I) 
= \pi_I(W) \cap \pi_I(W^\perp),
\end{align*}
as needed.
\end{proof}

We now have the tools to prove \Cref{prop:lift-compute}.

\begin{proof}[Proof of \Cref{prop:lift-compute}]
To begin, recall that $\mA^\T(\mA \mA^\T)^{-1} \mA$ is the orthogonal
projection onto $\im(\mA^\T)$ $ = \ker(\mA)^\perp = W^\perp$, and hence $\mP =
\mI_n - \mA^\T(\mA \mA^\T)^{-1} \mA$ is the orthogonal projection onto $W$.
Furthermore, the inverse $(\mA \mA^\T)^{-1}$ is well-defined since $\mA$ has
full row rank. 

To justify the formula for $\cal M(L_I^W)$, it suffices to show that
$L_I^W(x) = \mP_{\mdot, I}(\mP_{I,I})^+ x$ for any $x \in \R^I$ since the
input space of $L_I^W$ is $\R^I$. By \Cref{lem:lift-operator}, we have that
$w = L_I^W(x)$ is the unique solution to $w_I = \Pi_{\pi_I(W)}(x) = x$ and $w
\in \Pi_W(\R^n_I) = \mP(\R^n_I)$. By \Cref{prop:pseudoinverse-projection} part~\eqref{eq:pseudo-proj}, we
have $\mP_{I,I}(\mP_{I,I})^+ x = \Pi_{\im(\mP_{I,I})} x = \Pi_{\pi_I(W)} x$,
where the last equality is $\pi_I(W) = \pi_I(\mP(\R^n_I)) = \im(\mP_{I,I})$ in
\Cref{lem:lift-operator}.  Since by construction $\mP_{\mdot,I}(\mP_{I,I})^+ x
\in \mP(\R^n_I)$, by uniqueness we have that $w = \mP_{\mdot,I}(\mP_{I,I})^+
x$ as needed. 

We now justify the formula for $\cal M(\lift_I^W)$. The identity
$\lift^W_I(x) = \mP_{J,I}(\mP_{I,I})^+ x$, for $x \in \pi_I(W)$, follows
directly from $\lift^W_I(x) \coloneqq \pi_J(L^W_I(x))$. To conclude the
proof, it suffices to show that $\im((\mP_{J,I}(\mP_{I,I})^+)^\T) \subseteq
\im(((\mP_{I,I})^+)^\T) \subseteq \pi_I(W)$, recalling that $\lift_I^W:
\pi_I(W) \rightarrow \pi_J(W^\perp)$. Since $\ker((\mP_{I,I})^+) =
\ker((\mP_{I,I})^\T)$ by definition of the pseudoinverse, we have that $\im(((\mP_{I,I})^+)^\T) =
\im(\mP_{I,I}) = \pi_I(W)$, where the first equality is by \Cref{prop:adjoint} part~\eqref{prop:adjoint-decomposition-u}. This proves the statement.

For the strongly polynomial computability, this follows directly from the
fact that computing matrix products, inverses and pseudoinverses
(\Cref{prop:pseudo-compute}) is strongly polynomial. 
\end{proof}

To conclude this section, we give the fundamental duality relation between
lifting operators, which will be crucial to the analysis of the IPM.

\begin{lemma}
\label{lem:lift-duality}
For a linear subspace $W \subseteq \R^n$, and a partition $I \cup
J = [n]$, $\lift^{W}_I = -\adj\left(\lift^{W^\perp}_J\right)$. In particular,
$\sigma^+(\lift^{W}_I) = \sigma^+(\lift^{W^\perp}_J)$. 
\end{lemma}
\begin{proof}
If $(I,J)$ is a trivial partition, the the statement follows by our definition of $\lift_I^W$, so may assume that $(I,J)$ is non-trivial. To prove the statement, it suffices to show that for all $x \in \pi_I(W)$ that
\[
L_I^{W}(x) = \left(x,\lift_I^W(x)\right) = \left(x, -\adj\left(\lift_J^{W^\perp}\right)(x)\right).
\]
Letting $z \coloneqq \left(x, -\adj\left(\lift_J^{W^\perp}\right)(x)\right)$, by \Cref{lem:lift-operator},
it suffices to show that $z \in \Pi_W(\R^n_I)$ \newline $= W \cap (\R^n_I + W^\perp)$. To show that $z \in W$, we must show that $\pr{v}{z} = 0$, $\forall v \in W^\perp$. For $v \in W^\perp$, we see that
\begin{align*}
\pr{z}{v} &= \pr{x}{v_I} - \pr{\adj\left(\lift_J^{W^\perp}\right)(x)}{v_J}
 \\
          &= \pr{x}{v_I} - \pr{x}{\lift_J^{W^\perp}(v_J)} \quad \left(\text{ since } v_J \in \pi_J(W^\perp)~ \right) \\
          &= \pr{x}{v_I - \lift_J^{W^\perp}(v_J)}  = \pr{L_I^W(x)}{(v_I - \lift_J^{W^\perp}(v_J), 0_J)} \\
          &= \pr{L_I^W(x)}{v-L_J^{W^\perp}(v_J)}=0\, , 
\end{align*}
where the last equality follows since $L_I^W(x) \in W$ and
$v-L_J^{W^\perp}(v_j) \in W^\perp$.

The inclusion $z \in \R^n_I + W^\perp$ follows directly from $z_J =
-\adj\left(\lift_J^{W^\perp}\right)(x) \in \pi_J(W^\perp)$ by definition of
the adjoint. Thus, $\lift^W_I = -\adj\left(\lift^{W^\perp}_J\right)$ as needed. 

The equality of the positive singular values now follows from
\Cref{prop:sing-adjoint}, noting that $\sigma^+(\lift^W_I) =
\sigma^+(-\adj(\lift^W_I)) = \sigma^+(\lift^{W^\perp}_J)$. \end{proof}

\subsection{Cheap Lift Subspaces}
\label{sec:cl}

To argue for the usefulness of the SLLS step direction, and to select
suitable subspaces $V$ and $U$ for a given partition $(B,N)$, we recall the
discussion of the trust region step from the Introduction (\Cref{sec:intro}).
As long as the step primal and dual directions $(\Delta x,\Delta s)$ are
feasible to the systems \eqref{eq:trust-region} for a suitably small
threshold, we are guaranteed to make progress as measured by the primal and
dual objective values as in \eqref{eq:trust-region-progress}.

Simply selecting $V=W$ and $U=W^\perp$ would attain the smallest possible
objective values; however, the constraints bounding the local norms of
$\Delta x^\slls_B$ and $\Delta s^\slls_N$ in  \eqref{eq:trust-region} could
be arbitrarily violated.  We will select the subspaces $V$ and $U$ so that
$\norm{\Xerr^{-1}\Delta x^\slls_B}$ and
$\norm{\Serr^{-1}\Delta s^\slls_N}$  are guaranteed
to be small.

To ensure that the constraints are satisfied, we will restrict the primal and dual movement directions to \emph{cheap lift subspaces} $V$ and $U$. These are formally defined below: 

\begin{definition}[Cheap Lift Subspace]\label{def:cheap-lift-subspace} Let $W \subseteq \R^n$ be a
subspace, $\scale \in \R_{++}^n$, $(B,N)$ be a partition of $[n]$. Then, $V
\subseteq W$ is a cheap lift subspace for $(W,\scale,B,N)$ with lifting cost
$\thresh \geq 0$ if
\[
\norm{\scale_B x_B} \leq \thresh \norm{\scale_N x_N}, \forall x \in V.
\]
\end{definition}
\begin{remark}
\label{rem:dim-req-cl}
Note that for the above inequality to hold, we must have that $\dim(\pi_N(V))
= \dim(V)$, since otherwise there exists a vector $x \in V$ with $x_B \neq
\0_B$ and $x_N = \0_N$. In particular, if $N = \emptyset$, we must have $V =
\{\0_n\}$, which has lifting cost $0$. Furthermore, if $N=[n]$, then $V = W$
is a cheap lift subspace for $(W,\scale,B,N)$ with lifting cost $0$ of maximum dimension (recall
that $(\scale x)_{\emptyset} \coloneqq \pi_{\emptyset}(\scale x) = 0$ by
convention). 
\end{remark}

In the context of solving the primal trust-region program, if $V \subseteq W$
is a cheap lift subspace with respect to $(W,\Xerr^{-1},B,N)$ with lifting
cost $\thresh = \frac{\nu}{\sqrt{n}}$, the primal subspace LLS direction
with respect to $(W,V,N,B)$ 
\[
\Delta x^\slls \coloneqq \argmin_{\Delta x \in V} \norm{\nx_N^{-1}(x_N+\Delta x_N)}
\]
automatically satisfies the trust-region constraint $\norm{\nx_B^{-1} \Delta x^\slls_B} \leq \nu$. This is because $\nx_N^{-1} \Delta x^\slls_N$ is the orthogonal
projection of $-\nx_N^{-1} x_N = -\error_N$ onto the subspace $\nx_N^{-1} \pi_N(V)$, and hence
$\norm{\nx_N^{-1} \Delta x^\slls_N} \leq \norm{\nx_N^{-1} x_N} = \sqrt{\sum_{i \in N} x_i s_i / \mu} \leq \sqrt{n}$. By the lifting
cost condition on $V$, we then have 
$
\norm{\nx_B^{-1} \Delta x^\slls_B} \leq \thresh \norm{\nx_N^{-1} \Delta x_N^\slls} \leq \sqrt{n} \thresh = \nu,
$
as needed. Analogously, any cheap lift subspace $U \subseteq W^\perp$ for
$(W^\perp,\ns^{-1},N,B)$ with lifting cost at most $\thresh$ will also suffice for
the dual subspace LLS direction $\Delta s^\slls$ as in
\Cref{def:subspace_lls} to be feasible.

While low lifting cost subspaces are sufficient to get feasible solutions to
the trust-region program, they do not necessarily yield useful
approximations.  In particular, one can always choose the subspace $V =
\{\0\}$, which is trivially cheap. To make significant progress along a
polarized segment, we will require that the use of cheap lift subspaces of
\emph{maximum dimension} subject to a lifting cost bound $\thresh$. These
will allow us to either quickly increase the dimension of the cheap lift
subspace, or to get past the end of the of the polarized segment. 

We now explain how to find these cheap lift subspaces and what their
achievable dimensions are. For this purpose, we will rely on approximate
singular subspaces of lifting operator, as defined in \Cref{def:lifting-map}.
The following lemma gives the precise relation between cheap lift subspaces
and approximate singular subspaces of the lifting operator, and will be the
main tool underlying the computation of cheap lift subspaces.

\begin{lemma}
Let $W \subseteq \R^n$ be a linear subspace, $\scale \in \R^n_{++}$, $B \cup N =
[n]$ be a partition. Let $L^{\scale W}_N: \R^N \rightarrow
\R^n$ and $\lift^{\scale W}_N: \pi_N(\scale W) \rightarrow \pi_B(\scale^{-1} W^\perp)$  be the lifting operator and map as in \Cref{def:lifting-map}. 

Then, the maximum dimension of a cheap lift subspace $V$ for $(W,\scale,B,N)$
with lifting cost $\thresh \geq 0$ is
\[
\cnt{\lift^{\scale W}_N}{\thresh} \coloneqq |\{i \in [\dim(\pi_N(\scale W))]: \sigma_i(\lift^{\scale W}_N) \leq \thresh\}|,
\]
the number of singular values of $\lift^{\scale W}_N$ of value at most $\thresh$
as in \Cref{def:svd}. Furthermore, if $S \subseteq \pi_N(\scale W)$ is a
$\SVDA$-approximate singular subspace for $\lift^{\scale W}_N$ of dimension
$\cnt{\lift^{\scale W}_N}{\tfrac{\thresh}{\SVDA}}$, then 
\[
V \coloneqq \scale^{-1} L^{\scale W}_N(S)
\]
is a cheap lift subspace for $(W,\scale,B,N)$ of dimension 
$\dim(S) = \cnt{\lift^{\scale W}_N}{\tfrac{\thresh}{\SVDA}}$ with lifting cost 
\[
\sigma_1\left(\restr{\lift^{\scale W}_N}{S}\right) \leq \SVDA \sigma_{\dim(\pi_N(\scale W))-\dim(S)+1}(\lift^{\scale W}_N) \leq \thresh.
\]
\label{lem:cheap-lift-dim}
\end{lemma}

\begin{proof}We first show the upper bound of $\cnt{\lift^{\scale W}_N}{\tau}$ on the
dimension of any cheap lift subspace. Let $V \subseteq W$ be a cheap lift
subpace as above with $\dim(V) = d$, and let $\wh{V} \coloneqq \scale V$ and
$\wh{W} \coloneqq \scale W$. For any $y \in \wh{V} \subseteq \wh{W}$, by
definition of the lifting map and operator, we have that 
\[
L_N^{\wh W}(y) \in \wh W, \, (L_N^{\wh W}(y))_N = y_N, \, \text{ and } \norm{y_B} \geq
\norm{(L_N^{\wh W}(y_N))_B} = \norm{\lift^{\wh W}_N(y_N)},
\]
where we note that this
still holds if $(B,N)$ is trivial as $y_\emptyset := \pi_{\emptyset}(y)=0$ by
convention. Therefore, without loss of generality, we may assume that 
\[
\wh V = L^{\wh W}_N(\pi_N(\wh V)),
\]
since this can only decrease lifting cost while maintaining the dimension,
recalling that $\dim(\pi_N(\wh V))= \dim(\wh V)$ for any cheap lift subspace.
If $d = 0$ then trivially $d \leq \cnt{\lift_N^{\wh W}}{\thresh}$, and there
is nothing left to prove. So assume $d \geq 1$. Then, since $\pi_N(\wh V)
\subseteq \pi_N(\wh W)$ and $\dim(\pi_N(\wh V)) = d$, the lifting cost
of $V$ satisfies  
\begin{align}
\tau &\geq \max_{x \in V, x_N \neq \0_N} \frac{\norm{\scale_B x_B}}{\norm{\scale_N x_N}} = \max_{y \in \wh V, y_N \neq \0_N} \frac{\norm{y_B}}{\norm{y_N}}
= \max_{z \in \pi_N(\wh V) \setminus \{\0_N\}} \frac{\norm{\lift^{\wh W}_N(z)}}{\norm{z}} \nonumber \\ &= \sigma_1(\restr{\lift^{\wh W}_N}{\pi_N(\wh V)}) \geq \sigma_{\dim(\pi_N(\wh W))-d+1}(\lift^{\wh W}_N), \label{eq:cheap-lift-reduction}
\end{align}
where the last inequality follows from~\eqref{eq:min-max-sing}. Since $\tau
\geq \sigma_{\dim(\pi_N(\wh W))-d+1}(\lift^{\wh W}_N)$ and the singular values
are in non-increasing order, we have that 
\begin{align*}
d &= |\{i \in [\dim(\pi_N(\wh W))]:
\dim(\pi_N(\wh W))-d+1 \leq i\}| \\ &\leq |\{i \in [\dim(\pi_N(\wh W))]:
\sigma_i(\lift^{\wh W}_N) \leq \thresh\}| = \cnt{\lift^{\wh W}_N}{\tau},
\end{align*}
as needed. 

For the second part, if $S \subseteq \pi_N(\wh W)$, then tracing the
reduction of the first part backwards, we immediately get
from~\eqref{eq:cheap-lift-reduction} that $\scale^{-1} L^{\wh W}_N(S) \subseteq
W$ is a cheap lift subspace for $(W,\scale,B,N)$ of dimension $\dim(S)$ and
lifting cost $\sigma_1(\restr{\lift^{\wh W}_N}{S})$. If $S$ is a
$\SVDA$-approximate singular subspace of $\lift^{\wh W}_N$ of dimension
$\cnt{\lift^{\wh W}_N}{\tfrac{\tau}{\SVDA}}$, then by definition
$\sigma_1(\restr{\lift^{\wh W}_N}{S}) \leq \SVDA \sigma_{\dim(\pi_N(\wh
W))-\dim(S)+1}(\lift^{\wh W}_N) \leq \SVDA(\tau/\SVDA) = \tau$, as needed. 
\end{proof}

We remark that so far we have been treating the tasks of finding primal and
dual cheap lift subspaces separately. Fortunately, it turns out the singular
values of the corresponding primal and dual lifting operators are identical.
Thus, the corresponding problems of finding cheap lift subspace on both sides
are intimately linked. In particular, we derive the following relation as a
direct corollary of \Cref{lem:lift-duality}.

\begin{corollary}
Let $\nx ,\ns \in \R^n_{++}$, satisfy $\nx \ns = \mu \1_n$ where $\mu = \gap(\nx,\ns)$.
Then, for a subspace $W \subseteq \R^n$ and non-trivial partition $B \cup N = [n]$, $\sigma^+(\lift^{\nx^{-1} W}_N) = \sigma^+(\lift^{\ns^{-1} W^\perp}_B)$.
Furthermore, if $\sum_{i=1}^r \sigma_i u_i v_i^\T$ is the SVD of
$\lift^{\nx^{-1} W}_N$, then $-\sum_{i=1}^r \sigma_i v_i u_i^\T$ is the SVD of
$\lift^{\ns^{-1}W^\perp}_B$.
\label{cor:lift-duality-sing}
\end{corollary}

Given the above, computing cheap lift subspaces for either the primal or dual
side is closely tied to the problem of computing an \emph{approximate} SVD of
the lifting operator. 

\subsection{Computing Cheap Lift Subspaces}
\label{sec:comp-cl}

In this section, we explain how to compute cheap lift subspaces. 

\subsubsection{Approximating the Singular Subspaces}
\label{sec:apx-svd}

To compute cheap lift subspaces, we will rely on a solver for the
$\SVDA$-\text{SVD} problem defined below, which provides the requisite
approximate singular value decomposition.

\begin{definition}[$\SVDA$-\textsc{SVD} problem]
\label{def:alpha-svd-problem}
For $\SVDA \geq 1$, the input to the $\SVDA$-\textsc{SVD} problem is a matrix
$\mM \in \R^{m \times n}$, and the output is an \emph{orthogonal} basis $\mB
\in \R^{n \times n}$ and $s \in \R^n_+$, $s$ sorted in non-increasing
order, such that
\begin{align}
\norm{\mM x} &\leq s_i \norm{x}, \forall x \in \im(\mB_{\geq i}), \forall i \in [n]\, ,\quad\mbox{and} \label{eq:svd-upper} \\
\SVDA^{-1} s &\leq \sigma(\mM) \leq s\,. \label{eq:ss-approx}
\end{align}
\end{definition}

\begin{remark}
The requirement that $s \geq \sigma(\mM)$ in~\eqref{eq:ss-approx} is
automatically satisfied given~\eqref{eq:svd-upper}
and~\eqref{eq:min-max-sing}. Furthermore, the requirement that $s$ be sorted
in non-increasing order is without loss of generality: if $s$ satisfies the
requirements~\eqref{eq:svd-upper} and~\eqref{eq:ss-approx}, then $\bar{s}$ defined by $\bar{s}_i = \min_{j \in [i]} s_j$, $\forall
i \in [n]$, also satisfies the requirements and is non-increasing. 
\end{remark}

\begin{remark}
The guarantees of $\SVDA$-\textsc{SVD} imply that for each $i \in [m]$, the
subspace $\im(\mB_{\geq i})$ is a $\SVDA$-approximate singular subspace of
$\mM$ (\ie, of the operator $\cal T(\mM)$) of dimension $n-i+1$. In
particular, given a threshold $\tau \geq 0$, letting $c_\tau = |\{i \in [n]:
s_i \leq \tau\}|$, the subspace $S_\tau = \im(\mB_{\geq n-c_\tau+1})$ is
the subspace of largest dimension in this collection that satisfies
$\sigma_1(\restr{\mM}{S_\tau}) \leq \tau$ under the guarantee of~\eqref{eq:svd-upper}.
\end{remark}

For the sake of being able to execute each iteration of our IPM in strongly
polynomial time, we will need a $\SVDA$-\textsc{SVD} solver that uses a
strongly polynomial number of basic arithmetic operations (i.e., depending
only on the dimension of the underlying matrix), and that is numerically
stable, i.e., that ensures iterates with polynomial bit complexity. In
particular, the algorithm cannot rely on computing square root computations.

In \Cref{sec:sing-val}, we show that for any fixed $\SVDA > 1$,
$\SVDA$-\textsc{SVD} can be solved in deterministic strongly polynomial time.
Our main theorem is as follows.

\begin{restatable}{theorem}{thmapproxsvd}
For $\SVDA > 1$, there is a deterministic algorithm for solving
$\SVDA$-\textsc{SVD} that on an input $m \times n$ matrix requires $O(n^2\max(m,n)^{3} \log(n + \frac{1}{\SVDA - 1}))$ operations and space polynomial in
$n,m,\max \{1,\frac{1}{\SVDA - 1}\}$ and the bit-encoding length of the input matrix.
\label{thm:approx-svd}
\end{restatable}

We defer the discussion of how this theorem is proved and how it relates to
earlier work on approximate SVDs to \Cref{sec:sing-val}. 

Our IPM can be implemented using any $\SVDA$-\textsc{SVD} solver (modulo
strongly polynomial considerations). While the iteration complexity of the IPM does
depend on the approximation factor $\SVDA$, this dependency is very mild: as
we show in \Cref{sec:running-time-analysis}, the iteration bound of our IPM will depend
only logarithmically on the approximation factor. The guarantees of our IPM in \Cref{th:main_upper_bound-slc} will be derived
by using the above for $\SVDA = 2$. 

\subsubsection{The Cheap Lift Subspace Algorithm}

\ifdefined\SIAMversion
\begin{algorithm2e}[h!]
\else
\begin{algorithm}[h!]
\fi
    \caption{\textsc{Cheap-Lift-Subspaces}}
    \label{alg:cheap_lift}
    \SetKwInOut{Input}{Input}
    \SetKwInOut{Output}{Output}
    \SetKw{And}{\textbf{and}}
    \Input{Matrix $\mA \in \R^{m \times n}$, $\rank(\mA) = m$, $\scale \in \R^n_{++}$, partition $B \cup N = [n]$ and SVD approximation factor $\SVDA \geq 1$.}
    \Output{Matrices $\MP \in \R^{n \times m_p}, \MD \in \R^{n \times m_d}$, $m_p, m_d \in [n]$. Letting $W=\ker(\mA)$ and $\lift=\lift^{\scale W}_N$, $\lift^\perp=\lift^{\scale^{-1} W^\perp}_B$, the subspaces $V=\im(\MP)$, $U=\im(\MD)$ satisfy: 
     \begin{enumerate}[rightmargin=1cm]
  \item \label{eq:cl-kernel} $W \cap \R^n_N \subseteq V$ and $W^\perp \cap \R^n_B \subseteq U$.
  \item \label{eq:cl-lc} $V$ is a cheap lift subspace for $(W,\scale,B,N)$ with lifting cost $\SVDA \sigma_{\dim(\pi_N(W))-\dim(V)+1}(\lift)$, and \newline
 $U$ is a cheap lift subspace for $(W^\perp,\scale^{-1},N,B)$ with lifting cost
$\SVDA \sigma_{\dim(\pi_B(W^\perp))-\dim(U)+1}(\lift^\perp)$.
     \item \label{eq:cl-dim} $\cnt{\lift}{\tfrac{1}{\SVDA}} \leq \dim(V) \leq \cnt{\lift}{1}$ and $\cnt{\lift^\perp}{\tfrac{1}{\SVDA}} \leq \dim(U) \leq \cnt{\lift^\perp}{1}$. 
    \end{enumerate}
    }
\tcp{Compute Primal and Dual Projection Maps}
$\mP^\perp \gets \diag(\scale^{-1}) \mA^\T (\mA \diag(\scale^{-2}) \mA^\T)^{-1} \mA \diag(\scale^{-1})$; $\mP \gets \mI_n-\mP^\perp$\label{line:proj-cl}\; 

\If{$B=\emptyset$}{ $(\MP,\MD) \gets (\mP,[\0_n])$\; }
\ElseIf{$N=\emptyset$}{ $(\MP,\MD) \gets ([\0_n],\mP^\perp)$\; } 
\Else{
    \tcp{Compute Primal and Dual Lifting Maps and Operators}
$(\bar{L},\bar{L}^\perp) \gets (\mP_{\mdot,N} (\mP_{N,N})^+,\mP^\perp_{\mdot,B} (\mP^\perp_{B,B})^+)$, associated matrices of lifting maps $L^{\scale W}_N$, $L^{\scale^{-1} W^\perp}_B$\label{line:liftmap-ass-cl}\; 
$(\bar \lift, \bar \lift^\perp) \gets
(\bar{L}_{B,\mdot},\bar{L}^\perp_{N,\mdot})$, associated matrices of
lifting operators $\lift^{\scale W}_N$, $\lift^{\scale^{-1} W^\perp}_B$\label{line:liftop-ass-cl}\;

    \tcp{Approximate SVDs of $\bar{\lift}$ and $\bar{\lift}^\perp$} 
    $(\bar \mC,\bar s)$ $\gets$ $\SVDA$-\textsc{SVD} on matrix $\bar{\lift} \in \R^{B \times N}$; $(\bar \mC^\perp,\bar s^\perp)$ $\gets$ $\SVDA$-\textsc{SVD} on matrix $\bar{\lift}^\perp \in \R^{N \times B}$\label{line:svd-cl}\;

    \tcp{Number of Small Primal and Dual Singular Values}
    $\bar c_p \gets |\{i \in [|N|]: \bar s_i \leq 1\}|$; $\bar c_d \gets |\{i \in [|B|]: \bar s_i^\perp \leq 1\}|$\label{line:small-sing-cl}\; 
    
    \tcp{Generators of Primal and Dual Cheap Lift Subspaces}
    $(\MP,\MD) \leftarrow (\scale^{-1} \bar{L}(\bar \mC_{\geq |N|-\bar c_p+1}), \scale \bar{L}^\perp(\bar \mC^\perp_{\geq |B|-\bar c_d+1}))$\label{line:gen-cl}\; 
}

    \Return ($\MP$, $\MD$)\label{line:cl-return}\; 
\ifdefined\SIAMversion
\end{algorithm2e}
\else
\end{algorithm}
\fi

We now overview the \nameref{alg:cheap_lift} algorithm. At a high level, the
algorithm is a direct algorithmic implementation of \Cref{lem:cheap-lift-dim}
applied to both the primal and dual separately. Specifically, it computes the
primal and dual lifting operators, followed by approximate singular subspaces
associated with the singular values smaller than $1$, and then lifts them to
build the cheap lift subspaces. One complication however is that the
approximate $\SVDA$-\textsc{SVD} solver directly computes approximate
singular subspaces for the associated matrices of the lifting operators and
not the lifting operators themselves. The difference here being that the
computed subspaces do not necessarily reside in the input spaces of the
lifting operators. Fortunately, the lifting maps automatically correct for
this discrepancy as they first project onto the input space. In
the proof of correctness, we justify that these projections yield the desired approximate singular subspaces using \Cref{lem:ass-op-mat}.  

\begin{remark}
\label{rem:cls-alg-out-of-bounds}
On line~\ref{line:gen-cl} of \nameref{alg:cheap_lift}, recall that by
convention, if $\bar c_p = 0$ then $\bar \mC_{\geq |N|-c_p+1} = \bar
\mC_{\emptyset} = [\0_B]$. In particular, $\MP = \scale^{-1} \bar{L}(\bar
\mC_{\geq |N|-\bar c_p+1}) = [\0_n]$. Similarly, if $\bar c_d = 0$, then $\MD
= \scale \bar{L}^\perp(\bar \mC^\perp_{\geq |B|-\bar c_d+1}) = [\0_n]$. In a similar vein, the checks on whether $B$ or $N$ are empty are not strictly speaking necessary. It can be verified that the output of the algorithm is the same without these checks, however we have decided to include them for the sake of clarity.    
\end{remark}

\begin{remark}
\label{rem:cls-cutoff}
One may wonder why \nameref{alg:cheap_lift} puts the singular value
cutoff at $1$ instead of $\nu/\sqrt{n}$, $\nu = O(\beta)$, as described in
\Cref{sec:slls}. As the analysis will show, we will only need the
cheap lift subspaces when there is a large multiplicative gap between the
singular values less than $1$ and greater than $1$. In particular, all the
singular values less than $1$ in this case will be significantly smaller than
the threshold $\nu/\sqrt{n}$. \end{remark}

\begin{remark}
\label{rem:cls-duality}
Given the duality lifting operators it is natural to wonder whether the cheap
lift subspace $V$ for $(W,\scale,B,N)$ outputted by
\nameref{alg:cheap_lift} can be be transformed into a cheap
lift subspace $U$ for $(W^\perp,\scale^{-1},N,B)$ satisfying the output
conditions (and vice versa). Assuming $\scale = \1_n$ for simplicity, one can
in fact show that setting $U = L_B^{W^\perp}((\lift_B^{W^\perp})^+(\pi_N(V)))+
(W^\perp \cap \R^n_B)$ indeed satisfies the desired conditions. For the sake of
keeping the analysis and description of the algorithm as simple as possible however, we have opted to avoid using and proving this relation here.  \end{remark}

We now prove correctness of \nameref{alg:cheap_lift} and give its running
time guarantee.

\begin{lemma} 
Algorithm \nameref{alg:cheap_lift} is correct. Furthermore, assuming a
strongly polynomial solver for $\SVDA$-\textsc{SVD}, \nameref{alg:cheap_lift}
runs in time strongly polynomial $n$.  
\label{lem:cheap_lift_correct}
\end{lemma}

\begin{proof}
The strong polynomiality claim is immediate from the assumption on the
$\SVDA$-\textsc{SVD} solver and the strong polynomiality of
computing pseudoinverses (\Cref{prop:pseudo-compute}). We thus focus on
correctness.

For correctness, we must show that the outputted matrices $\MP,\MD$ and
correspondings subspaces $V=\im(\MP)$, $U=\im(\MD)$ satisfy the output
conditions~\eqref{eq:cl-kernel},~\eqref{eq:cl-lc}, \eqref{eq:cl-dim} and that $m_p,m_d \in [n]$.

By \Cref{prop:pseudo-compute}, on line~\ref{line:proj-cl} the algorithm
computes the orthogonal projection matrices $\mP^\perp,\mP = \mI_n -
\mP^\perp$ onto $\scale^{-1} \im(\mA^\T) = \scale^{-1} W^\perp$ and
$(\scale^{-1} W^\perp)^\perp = \scale W = \scale \ker(\mA)$, respectively. 

After this line, the algorithm is completely symmetric for the primal and
dual, so we restrict to
proving~\eqref{eq:cl-kernel},~\eqref{eq:cl-lc},~\eqref{eq:cl-dim} for the
primal subspace $V$ and proving $m_p \in [n]$, recalling the shorthand $\lift
\coloneqq \lift_N^{\scale W}$. 

Let us first assume that $(B,N)$ is a trivial partition. If $B=\emptyset$,
then $N = [n]$, $V = \im(\mP) = W = W \cap \R^n_N$ and $m_p=\dim(W) \in [n]$, which
verifies~\eqref{eq:cl-kernel} and the requirement on $m_p$. Furthermore,
since $\lift \coloneqq \lift_{[n]}^W: W \rightarrow \R^\emptyset$, we have
$\dim(V) = \cnt{\lift}{0} = \cnt{\lift}{1/\rho} = \cnt{\lift}{1}$, which
verifies~\eqref{eq:cl-dim}. The lifting cost condition~\eqref{eq:cl-lc} also
holds trivially. If $N=\emptyset$, then $V = \im([\0_n]) = \{\0_n\} = W \cap
\R^n_\emptyset$ and $m_p=1$, which verifies~\eqref{eq:cl-kernel} and the
requirement on $m_p$. From here, since $\lift \coloneqq \lift_\emptyset^W:
\R^\emptyset \rightarrow W^\perp$, we have $\dim(V) = 0 = \cnt{\lift}{\R^+} =
\cnt{\lift}{1/\rho} = \cnt{\lift}{1}$, which verifies~\eqref{eq:cl-dim}.
Again, the lifting cost condition~\eqref{eq:cl-lc} is trivial.

Now assume that $(B,N)$ is non-trivial. Then by \Cref{prop:lift-compute},
$\bar L$ computed on line~\ref{line:liftmap-ass-cl} equals $\cal
M(L_N^{\scale W})$ and $\bar \lift$ computed on
line~\ref{line:liftop-ass-cl} equals $\cal M(\lift_N^{\scale W})$, the
associated matrix of the primal lifting map and operator.   

Let $(\bar \mC, \bar s)$ be the $\SVDA$-SVD of $\bar \lift \in \R^{B \times N}$ as
computed on line~\ref{line:svd-cl} and let $\bar c_p = |\{i \in [N]: \bar s_i \leq
1\}|$ be as computed on line~\ref{line:small-sing-cl}. By the guarantees of $\SVDA$-\textsc{SVD}, we have that $\sigma(\bar \lift) \leq \sqrt{ \bar s} \leq \SVDA \sigma(\bar \lift)$, and therefore 
\begin{equation}
\cnt{\bar \lift}{1/\SVDA} \leq \bar c_p \leq \cnt{\bar \lift}{1} \label{eq:cl-dim-fs}.
\end{equation}
Similarly, letting $\bar k_p \coloneqq \dim(\ker(\bar \lift)) = \cnt{\bar \lift}{0}$,
we have that 
\[
\sigma(\bar \lift)_{[|N|] \setminus [|N|- \bar k_p]} = \bar s_{[|N|] \setminus [|N|- \bar k_p]} = \0_{[|N|] \setminus [|N|- \bar k_p]}.
\]
By the guarantees of $\SVDA$-\textsc{SVD}, this implies that 
\begin{equation}
\im(\bar \mC_{\geq |N|-\bar k_p+1}) = \ker(\bar \lift), \label{eq:cl-svd-ker}
\end{equation}
since $\sigma_1(\restr{\bar \lift}{\im(\bar \mC_{\geq |N|-\bar k_p+1})})
= 0$ and $\dim(\im(\bar \mC_{\geq |N|-\bar k_p+1}) ) = \bar k_p =
\dim(\ker(\bar \lift))$. 

Since $\bar c_p \geq \bar k_p$ we get that $\bar{V}_N \coloneqq \im(\bar
\mC_{\geq |N|-\bar c_p+1}) \supseteq \im(\bar \mC_{\geq |N|-\bar k_p+1}) =
\ker(\bar \lift)$ is a $\SVDA$-approximate singular subspace for $\bar \lift$
of dimension $\bar c_p$ containing $\ker(\bar \lift)$. Let us use the shorthand $\src \coloneqq \pi_N(\scale W)$ and $\targ \coloneqq \pi_B(\scale^{-1} W^\perp)$, recalling that $\lift \colon \src \rightarrow \targ$. Defining $\wh V_N \coloneqq
\Pi_{\src}(\bar{V}_N)$, by \Cref{lem:ass-op-mat} we have
that $\wh V_N$ is a $\SVDA$-approximate
singular subspace for $\lift$ containing $\ker(\lift)$ of dimension $\bar c_p - \dim(\src^\perp)$. Again
by \Cref{lem:ass-op-mat}, for $\tau \geq 0$ we have $\cnt{\lift}{\tau} =
\cnt{\bar \lift}{\tau}-\dim(\src^\perp)$, and hence we conclude
that    
\begin{align}
\cnt{\lift}{1/\SVDA} &= \cnt{\bar \lift}{1/\SVDA} - \dim(\src^\perp) \leq  \bar c_p - \dim(\src^\perp) = \dim(\wh V_N) \nonumber \\ &\leq  \cnt{\bar \lift}{1} - \dim(\src^\perp) = \cnt{\lift}{1} \label{eq:cl-dim-cond}.
\end{align}

By line~\ref{line:gen-cl}, recall that $\MP = \scale^{-1} \bar L(\bar
\mC_{\geq n-\bar c_p+1})$. The requirement $m_p \in [n]$, then follows from
$m_p = \max \{1, \bar c_p\} \in [|N|] \subseteq [n]$. Recalling that
$L_N^{\scale W}(\Pi_{\src}(\cdot)) = L_N^{\scale W}(\cdot)$ by definition and
that $\bar L = \cal M(L_N^{\scale W})$, we have that
\begin{align}
V &\coloneqq \im(\MD) = \scale^{-1} L_N^{\scale W}(\im(\bar \mC_{\geq n-\bar c_p+1})) = \scale^{-1} L_N^{\scale W}(\Pi_{\src}\im(\bar \mC_{\geq n-\bar c_p+1})) \nonumber \\ &= \scale^{-1} L_N^{\scale W}(\wh V_N). \label{eq:V-formula}
\end{align}
By \Cref{lem:cheap-lift-dim}, we then get that $V$ is cheap lift subspace for
$(W,\scale,B,N)$ of dimension $\dim(V)=\dim(\wh V_N)$ with lifting cost at
most 
\[
\SVDA \sigma_{\dim(\src)-\dim(V)+1}(\lift) = \SVDA
\sigma_{\dim(\pi_N(W))-\dim(V)+1}(\lift).
\]
Together with~\eqref{eq:cl-dim-cond} this
establishes~\eqref{eq:cl-lc},~\eqref{eq:cl-dim} for $V$. By
\Cref{lem:lift-map}, we have that $\pi_N(\scale W \cap \R^n_N) = \ker(\lift)
\subseteq \wh V_N$ and therefore
\[
W \cap \R^n_N = \scale^{-1} L_N^{\scale W}(\pi_N(\scale W \cap \R^n_N)) \subseteq \scale^{-1} L_N^{\scale W}(\wh V_N) = V,
\]
which verifies~\eqref{eq:cl-kernel}.
\end{proof}
 
\section{The Subspace Layered Least Squares Algorithm}\label{sec:slls-alg}

In this section, we present our SLLS based IPM (\nameref{alg:subspace_ipm})
and give the proof of \Cref{th:main_upper_bound-curve} from the introduction.
The pseudocode for our IPM is provided in \Cref{sec:slls-ipm}, and its correctness is proved in \Cref{sec:ipm-correct}. In
\Cref{sec:running-time-analysis}, we bound the number of iterations needed to
traverse a polarized segment of the central path from which
\Cref{th:main_upper_bound-curve} follows directly. We note that the iteration
bound depends only logarithmically on the SVD approximation factor $\SVDA$. A
more fine-grained amortized iteration bound is deferred to
\Cref{sec:amortized}.

\ifdefined\SIAMversion
\begin{algorithm2e}[h!]
\else
\begin{algorithm}[h!]
\fi
    \caption{\textsc{SLLS-IPM}}
    \label{alg:subspace_ipm}
    \SetKwInOut{Input}{Input}
    \SetKwInOut{Output}{Output}
    \SetKw{And}{\textbf{and}}

    \Input{Instance of \eqref{LP_primal_dual} with primal constraint matrix $\mA \in \R^{m \times n}, \rank(\mA)=m$, and initial iterate $(x^0,s^0) \in \cal N^2(\beta)$, $\beta \in (0,1/6]$, and SVD approximation factor $\SVDA\ge 1$.}
    \Output{$(x^\star, s^\star,v^\star,w^\star)$ satisfying: 
           \begin{enumerate}[rightmargin=1cm]
           \item \label{ipm:optimal} $x^\star \in \Primal, s^\star \in \Dual$, $\pr{x^\star}{s^\star} = 0$.\item \label{ipm:partition} $B \coloneqq \{i \in [n]: x^\star_i > 0\}$, $N \coloneqq \{i \in [n]: s^\star_i > 0\}$ satisfy $B \cup N = [n]$.\item \label{ipm:certificates} $v^\star \in W^\perp \coloneqq \im(\mA^\T), v^\star_B > \0_{B}$, $w^\star \in W \coloneqq \ker(\mA)$, $w^\star_N > \0_{N}$. \item \label{ipm:centrality} $\norm{(x^\star_B v^\star_B, s^\star_N w^\star_N)-\1_n} \leq \beta$.\end{enumerate}
    }
    $(x,s) \gets (x^0,s^0)$\;
    \label{eq:line-gap-check}
    \While{$\gap(x,s) > 0$}{
        Compute corrector direction $(\Delta x^c, \Delta s^{c})$ at $(x,s)$\;
        $(x,s) \gets (x + \Delta x^c, s + \Delta s^c)$\; \label{eq:line-corrector}
        Compute affine scaling direction $(\Delta x^\as, \Delta s^\as)$ at $(x,s)$\;
        Set $\alpha^\as$ for $(\Delta x^\as, \Delta s^\as)$ according to \Cref{prop:compute-standard-step} with parameter $\beta$\;
        $\Balg \gets \left\{i \in [n]: \left|\frac{\Delta x^\as_i}{x_i}\right| < \left|\frac{\Delta s^\as_i}{s_i}\right|\right\}, \Nalg \gets [n] \setminus \Balg$\; 
	$(\MPalg,\MDalg) \gets \operatorname{Cheap-Lift-Subspaces}(\mA,\frac{1}{x},\Balg,\Nalg,\SVDA)$\; \label{line:cls-call} 
           $(\Valg,\Ualg) \gets (\im(\MPalg),\im(\MDalg))$\;
	   Compute the subspace LLS direction $(\Delta x^\slls, \Delta s^\slls)$ at $(x,s)$ with respect to $(\Balg,\Nalg,\Valg,\Ualg)$ using \Cref{prop:lls-step-formula} on input $(x,s,\Balg,\Nalg,\MPalg,\MDalg)$\;\label{line:slls-direction}
	   Set $\alpha^\slls$ for $(\Delta x^\slls,\Delta s^\slls)$ according to \Cref{prop:step-size} with parameters $\mu = \gap(x,s)$, $\beta$\;\label{eq:line-lls-step}
  \If{$\gap((x + \alpha^\as \Delta x^\as, s+\alpha^\as\Delta s^\as)) < \gap((x + \alpha^\slls \Delta x^\slls, s + \alpha^\slls \Delta s^\slls))$} {\label{eq:line-step-size-check}
            $(x, s) \gets (x + \alpha^\as \Delta x^\as, s+\alpha^\as\Delta s^\as)$\;
        } \Else{
            $(x, s) \gets (x + \alpha^\slls \Delta x^\slls, s + \alpha^\slls \Delta s^\slls)$\;
        }
    }

  $(x^\star,s^\star) \leftarrow (x,s)$\; \label{line:optimal}
        $\mu \gets \pr{x^\star-\Delta x^\slls}{s^\star-\Delta s^\slls}/n$\; \label{eq:line-mu-lls}  
        $(v^\star,w^\star) \leftarrow (-\Delta s^\slls/\mu,-\Delta x^\slls/\mu)$\;\label{eq:line-set-vu-lls}
    \Return{$(x^\star,s^\star,v^\star,w^\star)$}\;
\ifdefined\SIAMversion
    \end{algorithm2e}
\else
    \end{algorithm}
\fi

\subsection{Description of the Algorithm}
\label{sec:slls-ipm}

We are ready to describe the predictor-corrector algorithm
\nameref{alg:subspace_ipm}, shown in Algorithm~\ref{alg:subspace_ipm}.  We
are given a starting iterate $z = (x,s) \in \cal N^2(\beta)$.  In each
iteration, we compute first compute a corrector step to move $z$ into the
$\cal N^2(\beta/2)$ neighborhood. We then compute the affine scaling
direction $(\Delta x^\as,\Delta s^\as)$ at $z$ and identify the associated
partition $(\Balg_z,\Nalg_z)$. Using this partition, we compute cheap lift
subspaces $\Valg$ and $\Ualg$ using \nameref{alg:cheap_lift} (Algorithm~\ref{alg:cheap_lift}) from \Cref{sec:comp-cl}. 

We then compute the subspace LLS direction $(\Delta x^\slls,\Delta s^\slls)$
for $(\Balg,\Nalg,\Valg,\Ualg)$. For both directions, we then compute the
feasible step-lengths according to the bounds in
\Cref{prop:predictor-corrector} and \Cref{prop:step-size}, and use the better
of these two possible steps to obtain the next iterate.

Once the algorithm has found optimal solutions $(x^\star,s^\star) \in \clN(\beta)$, that is, satisfying $\gap(x^\star,s^\star) = 0$, it uses the direction of the last segment of the central
path to compute certificates $(v^\star,w^\star)$ that certify that the optimal solutions
are close to the analytic centers of the respective optimal faces.

\begin{remark}[Solving Linear Programs to Target Accuracy $\eps \geq 0$]
One can easily modify \nameref{alg:subspace_ipm} to terminate once it has
found a primal and dual solution $(x,s)$ with gap $\pr{x}{s} \leq \eps$,
where $\eps \geq 0$ is the target accuracy. This is achieved by changing the
while loop check on line~\ref{eq:line-gap-check} from $\gap((x,s)) > 0$ to
$\pr{x}{s}
> \eps$. Once the while loop terminates, if $\eps > 0$, we immediately return
$(x,s) \in \clN(\beta)$ and skip the computation of the additional
certificates $(v^\star,w^\star)$. If $\eps = 0$, the IPM remains unchanged.
We prove approximately optimal iteration bounds for any target gap $\eps \geq
0$ in \Cref{sec:running-time-analysis}, and give more refined amortized
bounds in \Cref{sec:amortized}.
\label{rem:general-accuracy}
\end{remark}

\begin{remark}[Choice of Cheap Lift Subspace Scaling]
\label{rem:cheap-lift-scaling}
One may wonder why we use the scaling $1/x$ instead of the
normalized scaling $1/\nx \coloneqq \sqrt{\frac{s}{x\gap(x,s)}}$ in the call
$\operatorname{Cheap-Lift-Subspaces}(\mA,\frac{1}{x},\Balg,\Nalg,\SVDA)$ on
line~\ref{line:cls-call}, which would have been more adapted to the
definition of the SLLS direction computed on
line~\ref{line:slls-direction}. The main reason is to avoid the need to
compute square roots within the algorithm, which is not a strongly
polynomial operation. Relying on the fact that $x \approx \nx$ (see
\Cref{prop:xi}), this choice of rescaling is safe and has little additional
impact on the analysis or the iteration bound. 
\end{remark}

\subsection{Correctness}
\label{sec:ipm-correct}
In this subsection, we prove that upon termination, the
\nameref{alg:subspace_ipm} satisfies its output requirements, namely it
outputs optimal primal and dual solutions that are close to the analytic
centers of the corresponding optimal faces. This will mainly depend on the
guarantees on the computed step lengths for subspace LLS and affine scaling,
which are given in \Cref{prop:step-size} and
\Cref{prop:compute-standard-step}. 

\begin{lemma} The output of \nameref{alg:subspace_ipm} is correct. Furthermore,
each iteration of the algorithm runs in strongly polynomial time. \label{lem:slls-correct} \end{lemma}

\begin{proof}
We first show that the output $(x^\star,s^\star,v^\star,w^\star)$ satisfies
the
requirements~\eqref{ipm:optimal}, \eqref{ipm:partition},\eqref{ipm:certificates},\eqref{ipm:centrality}
listed in the output description. 

To argue this, we first claim that during the last iteration of the while
loop, either the affine scaling step length $\alpha^\as$ or the subspace LLS step length $\alpha^\slls$ equals $1$. Firstly, by the assumption that
$\gap(x_0,s_0) > 0$, the algorithm enters the while loop. Secondly, for any
iterate $(x,s)$, by \Cref{prop:predictor-corrector} part \ref{i:corrector} and \ref{i:gap}, the corrector
step leaves $\gap(x,s)$ unchanged, whereas the affine scaling step satisfies
$\gap(x+ \alpha^\as \Delta x^\as, s + \alpha^\as \Delta s^\as) =
(1-\alpha^\as) \gap(x,s)$. Similarly, by \Cref{prop:step-size},
$9/8(1-\alpha^\slls) \gap(x,s) \geq \gap(x+\alpha^\slls \Delta x^\slls, s+\alpha^\slls \Delta s^\slls) \geq
7/8(1-\alpha^\slls)\gap(x,s)$. Thus, the only way to exit the loop,
corresponding to the condition $\gap(x,s)=0$, is if $\max \{\alpha^\as,
\alpha^\slls\} = 1$. 

We now further claim that upon exiting the while loop we have $\max \{\alpha^\as, \alpha^\slls\} = \alpha^\slls = 1$. For this purpose, let $(x,s) \in \cal N^2(\beta/2)$
be the iterate computed on line~\ref{eq:line-corrector} during the last
iteration of the while loop. From here, let $(\Delta x^\as, \Delta s^\as)$, $(\Delta x^\slls, \Delta s^\slls)$, be the affine scaling
and subspace LLS directions computed at $(x,s)$, and let $(\Balg,\Nalg)$,
$(\Valg,\Ualg)$ be the corresponding associated partition and cheap lift
subspaces at $(x,s)$. We now claim that if $\alpha^\as = 1$, then the affine
scaling and subspace LLS directions are equal. Assuming $\alpha^\as = 1$, we have that $(x+\Delta x^\as, s+\Delta
s^\as) \in \Primal \times \Dual$ is an optimal primal-dual pair. In
particular, $\0_n = (x+\Delta x^\as)(s+\Delta s^\as) = \Delta x^\as \Delta
s^\as$, where the first equality is by complementary slackness and the second
equality is by the defining equation for the affine scaling
direction~\eqref{aff:sum}.  Recalling that $\Balg = \{i \in [n]: |\Delta x^\as_i|/x_i <
|\Delta s^\as_i|/s_i\}$, $\Nalg = [n] \setminus \Balg$ and that $x,s > \0_n$, we conclude that
$\Delta x^\as = (\0_{\Balg},-x_{\Nalg}) \in W \cap \R^n_{\Nalg}$ and $\Delta
s = (-s_{\Balg}, \0_{\Nalg}) \in W^\perp \cap \R^n_{\Balg}$. Therefore by
the output guarantee~\eqref{eq:cl-kernel} of \nameref{alg:cheap_lift}, we have
that $(\Delta x^\as, \Delta s^\as) \in \Valg \times \Ualg$. Since
$\norm{(x_{\Nalg}+x_{\Nalg}^\as,s_{\Balg}+s^\as_{\Balg})} = 0$, we must have that
$(x^\as,s^\as) \in \Valg \times \Ualg$ are the (unique) optimal solutions to the subspace LLS programs~\eqref{eq:primal-slls} and~\eqref{eq:dual-slls} at
$(x,s)$ with respect to $(\Balg,\Nalg,\Valg,\Ualg)$. In particular, $(\Delta
x^\as, \Delta s^\as) = (\Delta x^\slls, \Delta s^\slls)$, as claimed. By \Cref{prop:step-size}, using that $0=\norm{(x+\Delta x^\slls)(s+\Delta s^\slls)}=\norm{\Delta x^\slls \Delta s^\slls}$, we get $\alpha^\slls=1$, as needed. 

Let $(x^\star,s^\star)$ be the iterate defined right after the while loop on
line~\ref{line:optimal}, and let $B = \supp(x^\star)$ and $N =
\supp(s^\star)$. Since $\alpha^\slls=1$ by the above, we have that
$(x^\star,s^\star) = (x+\Delta x^\slls, s+\Delta^\slls) \in \Primal \times \Dual$ and
$\gap(x^\star,s^\star) = 0$. Therefore, $(x^\star,s^\star)$ is an optimal primal-dual pair, which proves~\eqref{ipm:optimal}. 

Let $\mu$ be as defined on line~\ref{eq:line-mu-lls}. By the above, $\mu =
\gap(x,s)$. By the guarantees of \Cref{prop:step-size} used on
line~\ref{eq:line-lls-step}, we must have that $\norm{\Delta x^\slls \Delta
s^\slls} \leq \beta \mu/9$ and $\norm{(x+\Delta x^\slls)(s+\Delta
s^\slls)} = 0$. For $i \in [n]$, using that $|\Delta x^\slls_i \Delta
s^\slls_i| \leq \beta\mu/9 < (1-\beta/2)\mu \leq x_is_i$
(\Cref{prop:x_i-s_i}) and $(x_i + \Delta x^\slls_i)(s_i + \Delta
s^\slls_i) = 0$, we conclude that either $x^\star_i = x_i + \Delta
x^\slls_i > 0$ and $s^\star_i = s_i + \Delta s^\slls_i = 0$, and thus
$i \in B$, or that $x^\star_i = x_i + \Delta x^\slls_i = 0$ and
$s^\star_i = s_i + \Delta s^\slls_i > 0$, and thus $i \in N$. In
particular, we see that $(B,N)$ partitions $[n]$. This verifies the output
guarantee~\eqref{ipm:partition}. 

Given the above, for $(v^\star,w^\star) \coloneqq (-\Delta s^\slls/\mu,-\Delta x^\slls/\mu) \in W^\perp \times W$ as
set on line~\ref{eq:line-set-vu-lls}, we have that $v^\star_B = s_B/\mu > \0_B$ and
$u^\star_N = x_N/\mu > \0_N$ which verifies~\eqref{ipm:certificates}. We now establish~\eqref{ipm:centrality} as follows:
\begin{align}
\norm{(x^\star_B v^\star_B, s^\star_Nw^\star_N) - \1_n} &= \norm{\frac{((x_B+\Delta x^\slls_B) s_B,(s_N + \Delta s^\slls_N)x_N)}{\mu}-\1_n} \nonumber \\
 &= \norm{\frac{x s}{\mu} - \1_n - \frac{\Delta x^\slls \Delta s^\slls}{\mu}} \leq \norm{\frac{x s}{\mu} - \1_n} + \norm{\frac{\Delta x^\slls \Delta s^\slls}{\mu}} \nonumber \\
&\leq \beta/2 + \beta/9 \leq \beta. \label{eq:cent-guarantee}
\end{align}

For the strongly polynomial bound on each iteration, we must simply show that
all the computations performed within the while loop are strongly polynomial.
In particular, we must check the corrector, affine scaling and subspace LLS
steps can all be computed in strongly polynomial, and that the calls to
\nameref{alg:cheap_lift} run in strongly polynomial time (under the
assumption that the $\SVDA$-\textsc{SVD} solver is strongly polynomial).
These claims are verified in \Cref{prop:compute-standard-step} (corrector and
affine scaling), \Cref{prop:lls-step-formula} (subspace LLS
direction),~\Cref{prop:step-size} (subspace LLS step length) and
\Cref{lem:cheap_lift_correct} (\nameref{alg:cheap_lift}). Thus, each
iteration runs in strongly polynomial time as needed.
\end{proof}

\subsection{Bounding the Number of Iterations to Traverse a Polarized Segment}\label{sec:running-time-analysis}
In this section, we prove the following bound.

\begin{theorem}\label{thm:polar-ipm}
For $\mu_0>\mu_1\ge 0$, let $\CP[\mu_1, \mu_0]$ be $\polarized$-polarized for $\polarized \in (0,1]$. Then,
given an iterate $z \in \cal N^2(\beta)$, $\beta \in (0,1/6]$, with parameter $\gap(z)  \in
(\mu_1,\mu_0)$, the algorithm \nameref{alg:subspace_ipm} (Algorithm
\ref{alg:subspace_ipm}) takes $O\left(\tfrac{n^{1.5}}{\beta}\log(\tfrac{n\SVDA}{\beta \polarized})\right)$ many
iterations to find $z' \in \clN(\beta)$ such that $\gap(z') \le \mu_1$.
\end{theorem}
Together with \Cref{thm:wide-polar}, we obtain the proof of \Cref{th:main_upper_bound-curve}. 
\begin{proof}[Proof of \Cref{th:main_upper_bound-curve}]
As in the statement of the theorem, 
 let $\Gamma \colon (\mu_1,\mu_0) \to \clNw(\theta)$
  be any piecewise linear curve
satisfying $\gap\left(\Gamma(\mu)\right) = \mu$, $\forall \mu \in (\mu_1,\mu_0)$ with $T$ linear segments. According to \Cref{thm:wide-polar}, the segment $\CP[\mu_1,\mu_0]$ of the central path can be
decomposed into $T$ segments that are
$\frac{(1-\theta)^2}{16n^3}$-polarized. 
By \Cref{thm:polar-ipm}, \nameref{alg:subspace_ipm} instantiated with the
$2$-\textsc{SVD} solver (\ie, $\SVDA=2$) from \Cref{thm:approx-svd} traverses these segments in
$O\left(\tfrac{n^{1.5}}{\beta}\log\left(\tfrac{n}{\beta
(1-\theta)}\right)T\right)$ iterations.
\end{proof}
The stronger form \Cref{th:main_upper_bound-slc} will be shown in
Section~\ref{sec:amortized} by amortizing the running time estimates over
subsequent polarized segments.  We now introduce the main potential for the analysis.

Our main focus will be on the analyzing the evolution of the
singular values of the lifting operator in the normalized subspace at the
current iterate. We define $z \coloneqq (x,s)$ to be a \emph{basic iterate}
if it either corresponds to an iterate computed just after the corrector step
on line~\ref{eq:line-corrector} during the course of the while loop of
\nameref{alg:subspace_ipm}, in which case $z \in \cal N^2(\beta/2)$, or if it
corresponds to the final iterate $z \in \clN(\beta)$ after the end of the
while loop.      

For a basic iterate $z \in \cal N^2(\beta/2)$ with $\gap(z)\in [\mu_1,\mu_0]$,  we
use the shorthands
\[
\lift_z \coloneqq  \lift_{N}^{\xerr^{-1}W}\quad\mbox{and} \quad \quad {\lift}_z^\perp \coloneqq  \lift_{B}^{\serr^{-1}W^\perp}\, 
\]
to denote the primal and dual lifting operators with respect to $(B,N)$ in
the respective normalized subspaces, as well as $\sigma(\lift_z)$ to be the vector of
singular values of the lifting operator. If $\mu_1 = 0$ and $z \in \clN(\beta)$ is the basic iterate with $\gap(z) = 0$, let us by convention
additionally define $\lift_z$ to be the zero operator $\pi_N(W)$ to
$\pi_B(W^\perp)$ and $\lift_z^\perp$ to be the zero operator from
$\pi_B(W^\perp)$ to $\pi_N(W)$. Recall that $\sigma^+(\lift_z) =
\sigma^+(\lift_z^\perp)$ by \Cref{lem:lift-duality}, that is, the non-zero
singular values of $\lift_z$ and $\lift_z^\perp$ are identical. Define the
parameter
\[
\tau \coloneqq \frac{\beta}{256n}\, .
\]
The combinatorial potential we use to measure progress on the
polarized segment is 
\begin{equation}
    \label{eq:def-maxexpidx}
    \maxexpidx(z) \coloneqq \cnt{\lift^\perp_z}{(\tfrac{\thresh}{\SVDA},\infty)} = \cnt{\lift_z}{(\tfrac{\thresh}{\SVDA},\infty)}  \coloneqq \left|\left\{i \geq 1: \sigma_i(\lift_z) > \tfrac{\thresh}{\SVDA}\right\}\right|,\, 
\end{equation} 
the number of singular values of $\lift_z$ (and $\lift_z^\perp$) that are
larger than the threshold $\frac{\thresh}{\SVDA}$. Note that if
$\maxexpidx(z) > 0$, then 
\[
\sigma_{\maxexpidx(z)}(\lift_z) = \sigma_{\maxexpidx(z)}(\lift^\perp_z) = \min \left\{\sigma_i(\lift_z): \sigma_i(\lift_z) > \frac{\thresh}{\SVDA}, i \geq 1 \right\}\, .
\]
By convention, recall that if $B = \emptyset$, then $\lift_z$ is the unique
linear operator from $W$ to $\{0\}$, and if $N=\emptyset$, then $\lift_z$
if the unique linear operator from $\{0\}$ to $W^\perp$. In particular, $(B,N)$ is a trivial partition, then $\maxexpidx(z) = 0$. The key property of
\nameref{alg:subspace_ipm} is given by the following lemma. 

\begin{lemma}
Let $z \in \cal N^2(\beta)$, $\beta \in (0,1/6]$, $\gap(z) \in (\mu_1,\mu_0]$, be a basic iterate
on the polarized segment $\CP[\mu_1,\mu_0]$ with a polarization partition $B
\cup N = [n]$. Then, there exists a constant $\Cit \geq 1$, such that for any
basic iterate $z' \in \clN(\beta)$ computed after at least $\lfloor \Cit
\tfrac{\sqrt{n}}{\beta} \log (\tfrac{n \SVDA}{\beta \polarized}) \rfloor$
iterations from $z$, either $\gap(z') \leq \mu_1$ or $\mu_1 < \gap(z') \leq
\gap(z)$ and $\maxexpidx(z') < \maxexpidx(z)$.  
\label{lem:progress}
\end{lemma}

\begin{remark}
In the above lemma, if $\maxexpidx(z) = 0$, then $z'$ must pass the end of
segment $(\mu_1,\mu_0]$, since the potential is $\maxexpidx(z')$ a
non-negative integer. For convenience, if $z^*$ denotes the final optimal iterate, we define the
next basic iterate after $z^*$ to be $z^*$ itself. In this way,
\Cref{lem:progress} still directly applies if we reach $z^*$ before
$\lfloor \Cit \tfrac{\sqrt{n}}{\beta}\log(\tfrac{n \SVDA}{\beta \polarized}) \rfloor$
iterations after $z$.        
\end{remark}

\begin{proof}[Proof of \Cref{thm:polar-ipm}]
The potential $\maxexpidx(z) \leq \rank(\lift_z) \leq n$ at
the start of the segment and decreases by $1$ every
$\lceil \Cit \tfrac{\sqrt{n}}{\beta} \log(\tfrac{n \SVDA}{\beta \polarized})\rceil$
iterations while we remain in the segment by \Cref{lem:progress}. 
\end{proof}

The rest of the section is dedicated to the proof of \Cref{lem:progress}.  We
will require the following technical lemma which describes the evolution of
the singular values of the lifting operator along a polarized segment. The
proof is deferred to \Cref{sec:stab-proofs}.

\begin{restatable}[{Stability of singular values on polarized segments}]{siamlemma}{stabpolar}\label{lem:stability_singular_values}
    Let $\CP[\mu_1,\mu_0]$ be a $\polarized$-polarized segment of the central path with partition $B \cup N = [n]$. Let $z \in \cal N^2(\beta)$, $z' \in \clN(\beta)$, $\beta \in (0,1/6]$, such that $\mu \coloneqq \gap(z)$ and $\mu' \coloneqq \gap(z')$ satisfy $\mu_0 \geq \mu \geq \mu' \geq \mu_1$. Then we have: 
    \begin{equation}
        \frac{\polarized^2}{4n^2} \cdot \frac{\mu'}{\mu}\sigma(\lift_z) \le \sigma(\lift_{z'}) \le \frac{4n^2}{\polarized^2}\cdot \frac{\mu'}{\mu}\sigma(\lift_z).
    \end{equation}
\end{restatable}

\Cref{lem:stability_singular_values} asserts that, up to $\poly(n/\polarized)$ factors, the singular values \emph{scale
down} by a $\frac{\mu'}{\mu}$ factor. Recalling that the normalized
subspaces for $z,z'$ are $\nW = \nx^{-1} W$ and $\nW' = \nx^{'-1} W$, where $\nx \coloneqq \sqrt{x\mu/s}
\approx x$ and $\nx' \coloneqq \sqrt{x'\mu'/s'} \approx x'$, the above relation follows
straightforwardly from the polarization guarantee $(x_B,x_B') \approx
x\cp_B(\mu_0)$ and $(x_N, x'_N) \approx (\frac{\mu}{\mu_0} x\cp_N(\mu_0),
\frac{\mu'}{\mu_0} x\cp_N(\mu_0))$ (up to $\poly(n/\polarized)$
multiplicative factors) together with the variational characterization of
singular values. 

\medskip

The proof of \Cref{lem:progress} relies on the following concepts.
 For a segment $\CP[\mu',\mu]$, $0 \leq \mu' \leq \mu$ of the central
path, we  say that \nameref{alg:subspace_ipm} traverses this segment in
at most $T \geq 0$ iterations, if the number of iterations from the first
iterate $z^{\rm start} \in \cal N^2(\beta)$ with $\gap(z^{\rm start}) \leq
\mu$ to the first iterate $z^{\rm end} \in \cal N^2(\beta)$ with $\gap(z^{\rm
end}) \leq \mu'$ is at most $T$. Important to the analysis is the distinction
between \emph{long} and \emph{short} segments. We say that $\CP[\mu',\mu]$ is
\emph{long} if $\mu/\mu' \geq \poly(\tfrac{n\SVDA}{\beta \polarized})$ and \emph{short} otherwise, where $\polarized \in (0,1]$ is the polarization parameter. We recall from~\ref{i:gap-decrease} in \Cref{prop:predictor-corrector}, the standard predictor-corrector IPM traverses $\CP[\mu',\mu]$ in at most
$\left \lceil \tfrac{3\sqrt{n}}{\beta}\log(\tfrac{\mu}{\mu'}) \right \rceil$ iterations. In particular, short segments can be
traversed in $O(\tfrac{\sqrt{n}}{\beta}\log(\tfrac{n \SVDA}{\beta \polarized}))$ iterations. As
\nameref{alg:subspace_ipm} always takes predictor steps that are at least as
long as the affine scaling steps, the same guarantee holds for our IPM.

\begin{proof}[Proof of \Cref{lem:progress}]
The analysis is divided into two cases.

\paragraph{Case 1: $\mu_1 < \gap(z) \leq \mu_1 \tfrac{72n^{1.5}}{\beta\polarized}$ or $\cnt{\lift_z}{(\thresh/\SVDA,\SVDA/\thresh]} \neq 0$}

If the first condition holds, the segment
$\CP[\mu_1,\gap(z)]$ is short, and hence the IPM passes the end of the
polarized segment in at most $O(\tfrac{\sqrt{n}}{\beta} \log(\tfrac{n \SVDA}{\beta \polarized}))$
iterations, as needed. Let us now assume that
$\cnt{\lift_z}{(\thresh/\SVDA,\SVDA/\thresh]} \neq 0$ (note that $(B,N)$ is non-trivial in this case), or equivalently,
that $\maxexpidx(z) \neq 0$ and
$\sigma_{\maxexpidx(z)}(\lift_z) \in (\thresh/\SVDA,\SVDA/\thresh]$.  

Using the stability of singular values, we will show that the number of large
singular values quickly drops by one as long as we remain in the segment. Let $z'$ be any basic iterate satisfying $\gap(z') \le \frac{\thresh^2}{\SVDA^2}\cdot
\frac{\polarized^2}{4n^2}\cdot \gap(z)$. If $\gap(z') \geq \mu_1$ and $i \geq \maxexpidx(z)$, then by \Cref{lem:stability_singular_values},
\begin{equation}
\label{eq:sing-drop}
\sigma_i(\lift_{z'}) \le\frac{\thresh^2}{\SVDA^2}\cdot \sigma_i(\lift_z) \leq  \frac{\thresh^2}{\SVDA^2}\cdot \sigma_{\maxexpidx(z)}(\lift_{z})
                \leq \frac{\tau^2}{\SVDA^2} \cdot \frac{\SVDA}{\tau} = \frac{\thresh}{\SVDA}\, .
\end{equation}
In particular, if $\gap(z') \geq \mu_1$, then $\maxexpidx(z') < \maxexpidx(z)$.
Since the IPM traverses \ifdefined\SIAMversion \newline \else \fi $\CP\left[\frac{\thresh^2}{\SVDA^2}\cdot
\frac{\polarized^2}{4n^2}\cdot\gap(z),\gap(z)\right]$ in at most
$O(\tfrac{\sqrt{n}}{\beta} \log(\tfrac{n \SVDA}{\beta \polarized}))$
iterations, recalling that $\thresh = \frac{\beta}{256n}$, this
proves the lemma in this case.

\paragraph{Case 2: $\gap(z) > \mu_1\tfrac{72n^{1.5}}{\beta \polarized}$ and
$\cnt{\lift_z}{(\thresh/\SVDA,\SVDA/\thresh]} = 0$}

We first note that the condition on the singular values can be restated
equivalently as $\maxexpidx(z) = 0$ (that is,
$\cnt{\lift_z}{(\tfrac{\tau}{\SVDA},\infty)} = 0$) or $\maxexpidx(z)
> 0$ and $\sigma_{\maxexpidx(z)}(\lift_z) > \SVDA/\tau$. In this case,
there is a large gap in the singular values around the threshold, namely, for
$i > \maxexpidx(z)$, we have $\sigma_i(\lift_z) \leq \tau/\SVDA$, and for
$1 \leq i \leq \maxexpidx(z)$, we have $\sigma_i(\lift_z) >
\SVDA/\tau$.  

In this setting, an identical analysis as in~\eqref{eq:sing-drop} applies as
long as we can \emph{quickly} compute a basic iterate $z'$ such that
\begin{equation}\label{eq:gap-zp-desired}
\gap(z') \leq \hat \mu \coloneqq \begin{cases} \mu_1\, ,&\mbox{if } \maxexpidx(z) = 0\, , \\
\max \left\{\mu_1, \frac{1}{\sigma_{\maxexpidx(z)}(\lift_z)} \cdot \frac{\thresh}{\SVDA}\cdot \frac{\polarized^2}{4n^2}\cdot \gap(z)\right\}\, ,&\mbox{if } \zeta(z) \geq 1\, . \end{cases}
\end{equation}
That is, for any basic iterate $z'$ as above, either $\gap(z') \leq \mu_1$ or
$\mu_1 < \gap(z') \leq \gap(z)$ and $\maxexpidx(z') < \maxexpidx(z)$. Note that
$\gap(z)/\hat \mu$ may be \emph{arbitrarily large} in this setting, and thus, we
may need to take a \emph{huge step} down the
central path. It is precisely for this purpose that we require subspace LLS
steps.

Let $z^+$ denote the next basic iterate after $z$. Then, by
Lemma~\ref{lem:gap-estimate} below, we have that
\begin{align}
\frac{\gap(z^+)}{\hat \mu} &\leq \frac{11}{\beta} \begin{cases} \frac{4n^{1.5}\mu_1}{\polarized \hat \mu}\, ,&\mbox{if } \maxexpidx(z) = 0\, , \\
\frac{4n^{1.5}\mu_1}{\polarized \hat \mu} + \frac{12n}{\sigma_{\zeta(z)}(\lift_z)}\frac{\gap(z)}{\hat \mu}\, , &\mbox{if } \zeta(z) \geq 1\, . \\ \end{cases} \nonumber \\ 
&\leq \, \frac{11}{\beta} \begin{cases} \frac{4n^{1.5}}{\polarized}\, , &\mbox{if } \maxexpidx(z) = 0\, , \\
\frac{4n^{1.5}}{\polarized} + \frac{48n^3 \SVDA}{\tau \polarized^2}\, ,&\mbox{if } \zeta(z) \geq 1  \, .\end{cases} \label{eq:final-short-segment} 
\end{align}  
This guarantees that after the next basic iterate $z^+$ after $z$, 
at most $O(\tfrac{\sqrt{n}}{\beta} \log(\tfrac{n \SVDA}{\beta
\polarized}))$ additional iterations are needed to obtain $z'$ as in
\eqref{eq:gap-zp-desired}. 

Combining the two case analyses, the constant $\Cit \geq 1$ in the lemma
statement can be chosen so that $\Cit \tfrac{\sqrt{n}}{\beta} \log(\tfrac{n
\SVDA}{\beta \polarized})$ iterations  is an upper bound on $1$ plus the
number of affine scaling iterations needed to divide the normalized gap by a
factor 
\begin{equation}
\frac{11}{\beta} \max
\left\{\frac{\SVDA^2}{\thresh^2} \cdot \frac{4n^2}{\polarized^2},
\frac{4n^{1.5}}{\polarized} + \frac{48n^3 \SVDA}{\tau \polarized^2}\right\}
\leq \left(\frac{n \SVDA}{\beta \polarized}\right)^{12}\, , \label{eq:progress-gap-decrease} 
\end{equation}
where we have used $\tau \coloneqq \frac{\beta}{256 n}$ and $\beta \in
(0,1/6]$. Thus, by \ref{i:gap-decrease} in \Cref{prop:predictor-corrector},
the choice $\Cit = 3 \cdot 12+2 = 38$ is sufficient.
\end{proof}
\begin{lemma}
\label{lem:gap-estimate}
Let $\CP[\mu_1,\mu_0]$ be a $\polarized$-polarized segment with polarization partition $B \cup N = [n]$. Let $z = (x,s) \in \cal N(\beta/2)$, $\beta \in (0,1/6]$ be a basic iterate satisfying $\mu \coloneqq \gap(z) \in (\tfrac{72n}{\beta \polarized}\cdot \mu_1,\mu_0]$  and $\cnt{\lift_z}{(\tfrac{\tau}{\SVDA},\tfrac{\SVDA}{\tau}]}=0$. Then, the next basic iterate $z^+$ computed in \nameref{alg:subspace_ipm} after $z$ satisfies 
\begin{equation}
\label{eq:long-step}
\gap(z^+) \leq \frac{11}{\beta}\cdot \begin{cases} \frac{4n^{1.5}\mu_1}{\polarized }\, ,& \mbox{if }\zeta(z)=0\, , \\ \frac{4n^{1.5}\mu_1}{\polarized } + \frac{12n}{\sigma_{\zeta(z)}(\lift_z)}\gap(z)\, ,&\mbox{if } \zeta(z) \geq 1\, . \end{cases}
\end{equation}
\end{lemma}

It remains to prove \Cref{lem:gap-estimate}. We will show the subspace LLS step from
$z=(x,s) \in \cal N^2(\beta/2)$ achieves \eqref{eq:long-step}. We will show this by comparing
these steps to the \emph{ideal direction}, which goes straight to the end of
the polarized segment, as defined below:

\begin{definition}[Ideal Direction]
For an $z \in \cal N(\beta)$, $\beta \in (0,1/6]$, satisfying $\gap(z) >
\mu_1$, we define the \emph{ideal direction} from $z$ towards $\mu_1$ to be    
\[
\Delta z^{\ideal} \coloneqq z\cp(\mu_1)-z \coloneqq (x\cp(\mu_1)-x,s\cp(\mu_1)-s) \eqqcolon (\Delta x^{\ideal}, \Delta s^{\ideal}).
\]
That is, $\Delta z^\ideal$ is the direction from the current iterate to the
central path point $z\cp(\mu_1)$.
\label{def:ideal}
\end{definition}
We now recall the notation used for cheap lift subspaces in the algorithm.
We define
\begin{align}
(\MP,\MD) \gets& \operatorname{Cheap-Lift-Subspace}\left(\mA,\frac{1}{x},B,N,\SVDA \right), \nonumber \\
(V,U) \gets& (\im(\MP),\im(\MD)), \label{def:good-subspaces}
\end{align}
to be the output of the call to algorithm
$\nameref{alg:cheap_lift}$ with respect to the \emph{true
partition} $(B,N)$. By the guarantees of
$\nameref{alg:cheap_lift}$, recall that if $(B,N) =
(\emptyset,[n])$, then $(V,U) = (W,\{\0_n\})$, and if $(B,N) =
([n],\emptyset)$, then $(V,U) = (\{\0_n\},W^\perp)$. 

The crux of the argument will be to show that a suitable projection of
$\Delta x^\ideal$ onto $V$ and $\Delta s^\ideal$ onto $U$ induces a step
satisfying~\eqref{eq:long-step}. We recall the notation $\error
\coloneqq \sqrt{xs/\mu}$, $(\nx,\ns) \coloneqq (x\error^{-1},s\error^{-1})$, $\mu \coloneqq \gap(z)$, $\wh W \coloneqq \hat{x}^{-1}
W$, and $\wh W^\perp \coloneqq \hat{s}^{-1} W^\perp$ from \Cref{def:normalized}. Similarly, we let $\wh V \coloneqq
\hat{x}^{-1} V$, $ \wh U \coloneqq \hat{s}^{-1} U$ and
$(\ndx^{\ideal},\nds^{\ideal}) \coloneqq (\hat{x}^{-1} \Delta x^{\ideal},
\hat{s}^{-1} \Delta s^{\ideal})$.

We now define the projected ideal directions
\begin{equation*}
\Delta x^{\prj} \coloneqq \argmin_{v \in V} \norm{\nx^{-1}_N(v_N-\Delta x_N^\ideal)}, \quad \quad 
\Delta s^{\prj} \coloneqq \argmin_{u \in U} \norm{\ns^{-1}_B(u_B-\Delta s_B^{\ideal})},
\end{equation*}
where we recall that the minimizers are unique since $\dim(\pi_N(V))=\dim(V)$ and $\dim(\pi_B(U)) = \dim(U)$. Define the normalized projections by $(\ndx^\prj,\nds^\prj) \coloneqq (\nx^{-1} \Delta x^\prj, \ns^{-1} \Delta s^\prj)$. By construction of the projected ideal directions, note that
\begin{equation}
(\ndx^\prj_N,\nds^\prj_B) = (\Pi_{\pi_N(\wh V)}(\ndx^{\ideal}_N), \Pi_{\pi_B(\wh U)}(\nds^{\ideal}_B)). \label{eq:ideal-proj-interpret}
\end{equation}
Let us now define 
\begin{equation}
\wh V^\perp_N \coloneqq \pi_N(\wh W) \cap \pi_N(\wh V)^\perp \quad \quad \wh U^\perp_B \coloneqq \pi_B(\wh W^\perp) \cap \pi_B(\wh U)^\perp, \label{eq:cl-orth}
\end{equation}
to be the corresponding orthogonal complements inside $\pi_N(\wh W)$ and
$\pi_B(\wh W^\perp)$, where we note that
\[
\ndx^\ideal_N-\ndx^\prj_N \in \wh V^\perp_N \quad \quad \nds^\ideal_B-\nds^\prj_B \in \wh U^\perp_B.
\]

Our goal will be to show that $(\Delta x^\prj, \Delta s^\prj)$ induces a
trust-region step achieving the guarantees of~\eqref{eq:long-step}. This will
then imply that the subspace LLS step $(\Delta x^\slls, \Delta s^\slls)$
also achieves the guarantee in~\eqref{eq:long-step}. 

The analysis relies on two auxiliarly lemmas. The first one exposes basic
properties of the ideal direction, while the second one gives basic
properties of the cheap lift subspaces. The proofs are deferred to
subsections~\ref{sec:ideal-properties} and~\ref{sec:cheap-lift-properties}.

\begin{restatable}{siamproposition}{idealbounds} \label{prop:ideal} Let $\CP[\mu_1,\mu_0]$ be a $\polarized$-polarized
segment with partition $B \cup N = [n]$. Let $z = (x,s) \in \cal N(\beta)$,
$\beta \in (0,1/6]$, with $\mu \coloneqq \gap(z) \in (\tfrac{72 n^{1.5}}{\beta
\polarized}\cdot\mu_1 , \mu_0]$. Then, the ideal direction $\Delta z^\ideal$ towards $\mu_1$ satisfies
\begin{align}
\norm{(\ndx^{\ideal},\nds^\ideal)} &\leq 4n, \label{eq:ideal-dx-ds} \\
\norm{(\error_N+\ndx^{\ideal}_N,\error_B+\nds^\ideal_B)} &\leq \frac{2n^{1.5}\mu_1}{\polarized \mu} \leq \frac{\beta}{36}. \label{eq:ideal-rx-rs} \end{align}
\end{restatable}

\begin{restatable}{siamlemma}{subspaceprops}\label{lem:subspace-props} Let $\CP[\mu_1,\mu_0]$ be a
$\polarized$-polarized segment with partition $B \cup N = [n]$. Let $z =
(x,s) \in \cal N(\beta)$, $\beta \in (0,1/6]$. Let $V \coloneqq \im(\MP)$, $U \coloneqq \im(\MD)$ be the subspaces returned by
\nameref{alg:cheap_lift} on $(\mA,x^{-1},B,N,\SVDA)$ as
in~\eqref{def:good-subspaces}, and $\lift_z \coloneqq \lift^{\nx^{-1}
W}_N$, $\lift^\perp_z \coloneqq \lift^{\ns^{-1}W^\perp}_B$. Then, if
$\cnt{\lift_z}{(\tau/\SVDA,\SVDA/\tau]} = 0$, we have
\begin{enumerate}
\item \label{eq:sub-dim} $\dim(V) = \dim(\pi_N(W))-\zeta(z)$, $\dim(U) = \dim(\pi_B(W^\perp))-\zeta(z)$.

\item \label{eq:sub-lift-cost} $\norm{\hat{v}_B} \leq 2\SVDA \sigma_{\zeta(z)+1}(\lift_z) \norm{\hat{v}_N} \leq 2\tau \norm{\hat{v}_N}$, $\forall \hat{v} \in \wh V \coloneqq \nx^{-1} V$, and

$\norm{\hat{u}_N} \leq 2\SVDA \sigma_{\zeta(z)+1}(\lift^\perp_z) \norm{\hat{u}_B} \leq 2\tau \norm{\hat{u}_B}$, $\forall \hat{u} \in \wh U \coloneqq \ns^{-1} U$.
\end{enumerate}
\end{restatable}

We are ready to prove \Cref{lem:gap-estimate}.
\begin{proof}[Proof of \Cref{lem:gap-estimate}]
We show that the subspace LLS step $(\Delta x^\slls, \Delta s^\slls)$ with respect to $(B,N)$ is sufficient to achieve the guarantees of~\eqref{eq:long-step}. As we show below, this will correspond to a long trust-region step and hence \Cref{lem:as-vs-tr} will guarantee that $(\Balg_z,\Nalg_z) = (B,N)$. In particular, the subspace LLS direction computed by the algorithm will be the same as $(\Delta x^\slls, \Delta s^\slls)$.    

We now analyze the norms and residuals of the subspace LLS direction $(\Delta x^\slls, \Delta s^\slls)$ with respect to $(B,N)$. We first provide sufficient conditions to ensure that $z^\slls \coloneqq
(x+\alpha^\slls \Delta x^\slls,s+\alpha^\slls \Delta s^\slls)$, corresponding
to the subspace LLS step applied to $z$, satisfies~\eqref{eq:long-step}. Letting $\mu \coloneq \gap(z)$, we will show below that
\begin{align}
\norm{(\ndx^\slls_B,\nds_N^\slls)} &\leq \frac{\beta}{18}\, , \label{eq:ipm-dx-ds} \\
\norm{(\error_N+\ndx^\slls_N,\error_B+\nds^\slls_B)} &\leq \begin{cases} \frac{2n^{1.5}\mu_1}{\polarized \mu}\, , &\mbox{if } \maxexpidx(z) = 0\, , \\ \frac{2n^{1.5}\mu_1}{\polarized \mu}+ \frac{6n}{\sigma_{\maxexpidx(z)}(\lift_z)}\, , &\mbox{if } \zeta(z) \geq 1 \, .\end{cases} \label{eq:ipm-rx-rs}
\end{align}
By our assumption that $\mu >
\tfrac{72 n^{1.5}}{\beta \polarized}\cdot \mu_1 $ and, if $\zeta(z) \geq 1$, that $\sigma_{\zeta(z)}(\lift_z) >
\frac{\SVDA}{\tau}\geq \tfrac{256 n}{\beta}$, the upper bound in \eqref{eq:ipm-rx-rs} is always at most $\beta/18$.
From \Cref{prop:trust-to-step-size}, we now get
\begin{equation*}
\begin{aligned}
\delta \coloneqq \norm{\frac{\Delta x^\slls \Delta s^\slls}{\mu}} &\leq \frac\beta9\quad\mbox{ and }\\
\eps \coloneqq \norm{\frac{(x+\Delta x^\slls)(s+\Delta
s^\slls)}{\mu}} &\leq \begin{cases} \frac{4n^{1.5}\mu_1}{\polarized \mu}\, , &\mbox{if } \maxexpidx(z) = 0\, , \\ \frac{4n^{1.5}\mu_1}{\polarized \mu}+ \frac{12n}{\sigma_{\maxexpidx(z)}(\lift_z)}\, , &\mbox{if } \zeta(z) \geq 1  \, .\end{cases}
\end{aligned}\, ,
\end{equation*}
and $\eps \le \beta/9$.  Consequently, 
\Cref{prop:step-size} is applicable. Choosing  $1-\frac{9\eps}{\beta} \leq \alpha^\slls \leq 1-\frac{8\eps}{\beta}$, we have
\[
\gap(z^\slls) \leq (1+\tfrac{1}{8})\tfrac{9\eps \mu}{\beta} \leq \frac{11}{\beta}\eps \mu \leq \frac{11}{\beta}\cdot \begin{cases} \frac{4n^{1.5}\mu_1}{\polarized }\, ,& \mbox{if }\zeta(z)=0\, , \\ \frac{4n^{1.5}\mu_1}{\polarized } + \frac{12n}{\sigma_{\zeta(z)}(\lift_z)}\mu\, ,&\mbox{if } \zeta(z) \geq 1\, .\end{cases}\, ,
\]
which is precisely the guarantee required for~\eqref{eq:long-step}.
Since~\eqref{eq:ipm-dx-ds} and~\eqref{eq:ipm-rx-rs} are at most $\beta/18 \leq 1/30$, then assuming these bounds hold, \Cref{lem:as-vs-tr}
implies that the associated partition $(\Balg_z,\Nalg_z) = (B,N)$. In particular,
if the bounds hold, \nameref{alg:subspace_ipm} correctly computes the
subspace LLS direction $(\Delta x^\slls, \Delta s^\slls)$. Since $\gap(z^+) \leq \gap(z^\slls)$, as we take the better of the AS and subspace LLS step, this will yield the claimed bound \eqref{eq:long-step}.

We now prove~\eqref{eq:ipm-dx-ds}. By \Cref{lem:subspace-props}
part~\eqref{eq:sub-lift-cost}, we have that the subspace LLS direction
$(\Delta x^\slls,\Delta s^\slls)$ satisfies
\begin{align*}
\norm{(\ndx^\slls_B,\nds^{\slls}_N)}
&\leq 2 \SVDA \sigma_{\zeta(z)+1}(\lift_z) \norm{(\ndx^\slls_N,\nds^\slls_B)}
\leq 2\tau \norm{(\error_N,\error_B)}
= 2 \tau \sqrt{n} \leq \frac{\beta}{18},
\end{align*}
as needed, where the second inequality uses $(\ndx^\slls_N,\nds^\slls_B) = (\Pi_{\pi_N(\wh V)}(\error_N),\Pi_{\pi_B(\wh U)}(\error_B))$ as per \Cref{rem:subspace_lls}.

We now prove~\eqref{eq:ipm-rx-rs}. By definition of subspace LLS (\Cref{def:subspace_lls}) and the
triangle inequality, we have that
\begin{align}
\norm{(\error_N+\ndx^\slls_N,\error_B+\nds^\slls_B)} &\leq \norm{(\error_N + \ndx^\prj_N, \error_B + \nds^\prj_B)} \nonumber \\ &\leq \norm{(\error_N+\ndx^\ideal_N,\error_B+\nds^\ideal_B)} + \nonumber \\ &\quad \,
\norm{(\ndx^\prj_N-\ndx^\ideal_N,\nds^\prj_B-\nds^\ideal_B)} \nonumber \\
&\leq \frac{2n^{1.5}\mu_1}{\polarized \mu} + 
\norm{(\ndx^\prj_N-\ndx^\ideal_N,\nds^\prj_B-\nds^\ideal_B)},
\label{eq:error-decomp}
\end{align}
where the last inequality follows by~\eqref{eq:ideal-rx-rs} in
\Cref{prop:ideal}.

We now bound the second term of~\eqref{eq:error-decomp}. Note that if
$\zeta(z)=0$, then by \Cref{lem:subspace-props} part~\eqref{eq:sub-dim}, we
have that $\pi_N(\wh V)=\pi_N(\wh W)$ and $\pi_B(\wh U)=\pi_B(\wh W^\perp)$
and hence $\wh V^\perp_N=\{\0_n\}$ and $\wh U^\perp_B=\{\0_n\}$. In particular,
\[
\norm{\ndx^\prj_N-\ndx^\ideal_N} = 0 \quad \quad
\norm{\nds^\prj_B-\nds^\ideal_B} = 0. 
\]   
Thus~\eqref{eq:ipm-rx-rs} follows directly from~\eqref{eq:error-decomp}.
Now assume that $\zeta(z) \geq 1$. Note that this implies that $(B,N)$ is a non-trivial partition. By \Cref{lem:subspace-props}, we have that
$\pi_N(\wh V)$ is a $(2\SVDA)$-approximate singular subspace for $\lift_z$ with
dimension $\dim(\pi_N(\wh W))-\zeta(z) < \dim(\pi_N(\wh W))$. By our assumption that $\cnt{\lift_z}{(\tau/\SVDA,\SVDA/\tau])}=0$, we have that
\[
\sigma_{\zeta(z)}(\lift_z) > \frac{\SVDA}{\tau} = \frac{\SVDA^2}{\tau^2}
\cdot\frac{\tau}{\SVDA} \geq \frac{\SVDA^2}{\tau^2} 
\sigma_{\zeta(z)+1}(\lift_z) \geq 6\SVDA \sigma_{\zeta(z)+1}(\lift_z).
\]
By \Cref{lem:approx-complement}, recalling that $\wh V^\perp_N \coloneqq \pi_N(\wh W) \cap \pi_N(\wh V)^\perp$, we therefore have that
\begin{equation}
\label{eq:min-sing-lb-primal}
\sigma_{\min}(\restr{\lift_z}{\wh V^\perp_N})^2 \geq \sigma_{\zeta(z)}(\lift_z)^2 - (2\SVDA \sigma_{\zeta(z)+1}(\lift_z))^2 \geq \frac{8}{9} \sigma_{\zeta(z)}(\lift_z)^2.
\end{equation}
By a symmetric argument, we have that $\wh U^\perp_B \coloneqq \pi_B(\wh W^\perp) \cap \pi_B(\wh U_B)^\perp$ is a $\SVDA$-approximate singular subspace for $\lift^\perp$
of dimension $\dim(\pi_B(\wh W^\perp))-\zeta(z)$ and satisfies
\begin{equation}
\label{eq:min-sing-lb-dual}
\sigma_{\min}(\restr{\lift_z^\perp}{\wh U^\perp_B})^2 \geq \frac{8}{9} \sigma_{\zeta(z)}(\lift_z^\perp)^2 = \frac{8}{9} \sigma_{\zeta(z)}(\lift_z)^2.
\end{equation}

We now bound the second term in~\eqref{eq:error-decomp} as follows:
\begin{align}
\norm{\ndx^\prj_N-\ndx^\ideal_N}^2 &\leq \frac{\norm{\lift_z(\ndx^\prj_N-\ndx^\ideal_N)}^2}{\sigma_{\rm min}(\restr{\lift_z}{\wh V^\perp_N})^2} \leq \frac{\norm{\ndx^\prj_B-\ndx^\ideal_B}^2}{\sigma_{\rm min}(\restr{\lift_z}{\wh V^\perp_N})^2} \nonumber \\
&\leq \frac{2\norm{\ndx^\prj_B}^2 + 2\norm{\ndx^\ideal_B}^2}{\sigma_{\rm min}(\restr{\lift_z}{\wh V^\perp_N})^2} \nonumber \\ 
&\leq \frac{2(2\tau)^2\norm{\ndx^\prj_N}^2 + 2\norm{\ndx^\ideal_B}^2}{\sigma_{\rm min}(\restr{\lift_z}{\wh V^\perp_N})^2} \nonumber \\
&\leq \frac{2\norm{\ndx^\ideal_N}^2+2\norm{\ndx^\ideal_B}^2}{\sigma_{\rm min}(\restr{\lift_z}{\wh V^\perp_N})^2} \nonumber \\
&= \frac{2\norm{\ndx^\ideal}^2}{\sigma_{\min}(\restr{\lift_z}{\wh V^\perp_N})^2} \leq \frac{9\cdot 2\norm{\ndx^\ideal}^2}{8\sigma_{\zeta(z)}(\lift_z)^2} = \frac{9\norm{\ndx^\ideal}^2}{4\sigma_{\zeta(z)}(\lift_z)^2}. \label{eq:second-term-part-1}
\end{align}
By a symmetric argument,
\begin{equation}
\norm{\nds^\prj_B-\nds^\ideal_B}^2 \leq \frac{2\norm{\nds^\ideal}^2}{\sigma_{\min}(\restr{\lift^\perp_z}{\wh U^\perp_B})^2} \leq \frac{9\norm{\nds^\ideal}^2}{4\sigma_{\zeta(z)}(\lift_z)^2}. \label{eq:second-term-part-2}
\end{equation}
Combining~\eqref{eq:second-term-part-1} and~\eqref{eq:second-term-part-2} together with~\eqref{eq:ideal-dx-ds} in \Cref{prop:ideal}, we get that
\begin{align}
\norm{(\ndx^\prj_N-\ndx^\ideal_N,\nds^\prj_B-\nds^\ideal_B)} &\leq \frac{3\norm{(\ndx^\ideal,\nds^\ideal)}}{2\sigma_{\zeta(z)}(\lift_z)} \leq \frac{3 \cdot 4n}{2\sigma_{\zeta(z)}(\lift_z)} \nonumber \\ &= \frac{6n}{\sigma_{\zeta(z)}(\lift_z)}. \label{eq:second-term}
\end{align}
The desired bound \eqref{eq:ipm-rx-rs} now follows by
combining~\eqref{eq:error-decomp} and~\eqref{eq:second-term}.
\end{proof}

\subsubsection{Stability of Singular Values on Polarized Segments}\label{sec:stab-proofs}

We now present the proof of \Cref{lem:stability_singular_values} on the evolution of singular values of the map $\lift_z$ on polarized segments of the central path. This will rely on the next lemma that  bounds the change in the singular values under a rescaling of the space. 

\begin{lemma}[{Stability of singular values for multiplicative perturbation}]\label{lem:general_rescaling_of_singular_values}
Let $W \subseteq \R^n$ be a subspace, and let $B\cup N=[n]$ be a non-trivial
partition. Let $y \in \R_{++}^n$, and let $\lift_N^{y^{-1}W} \colon
\pi_N(y^{-1}W) \to  \pi_B(y W^\perp)$ and $\lift_N^{W} \colon \pi_N(W) \to
\pi_B(W^\perp)$ be defined according to \Cref{def:lifting-map}. Let $\sigma
\coloneqq \sigma(\lift_N^{y^{-1}W})$ and $\hat \sigma \coloneqq
\sigma(\lift_N^{W})$ denote their respective singular values. Then, we have that
\begin{equation}
\frac{1}{\norm{y_B^{-1}}_\infty \norm{y_N}_\infty} \sigma \le \hat \sigma \le \norm{y_B}_\infty \norm{y_N^{-1}}_\infty \sigma \, .
\end{equation}
\end{lemma}
\begin{proof}
We only prove the second inequality. The first inequality then follows by
swapping $y$ with $y^{-1}$ and $\sigma$ with $\hat{\sigma}$. We denote $\lift
\coloneqq \lift_N^{y^{-1}W}$ and $\hat \lift \coloneqq \lift_N^{W}$. Since $y
\in \R^n_{++}$, we clearly have that $p \coloneqq \dim(\pi_N(W)) =
\dim(\pi_N(y^{-1}W))$. Therefore $\sigma$ and $\hat{\sigma}$ are both vectors
in $\R^p_+$. By \eqref{eq:min-max-sing}, recall that for $i \in [p]$, we have
that
\[
\sigma_k = \min_{\substack{S \subseteq \pi_N(y^{-1} W) \\ \dim(S) \geq p-k+1}} \sigma_1(\restr{\lift}{S}), \quad \quad
\hat{\sigma}_k = \min_{\substack{\hat{S} \subseteq \pi_N(W) \\ \dim(\hat{S}) \geq p-k+1}} \sigma_1(\restr{\hat{\lift}}{\hat{S}}).
\]
To prove the inequality, we will show that for any $S \subseteq
\pi_N(y^{-1}W)$, there exists $\hat{S} \subseteq \pi_N(W)$ with $\dim(S) =
\dim(\hat{S})$ satisfying $\sigma_1(\restr{\hat{\lift}}{\hat{S}}) \leq
\norm{y_B}_\infty \norm{y_N^{-1}}_\infty \sigma_1(\restr{\lift}{S})$. For this
purpose, define $\hat{S} = y_N S$, where clearly $\dim(\hat{S})=\dim(S)$ and $\hat{S} \subseteq y_N \pi_N(y^{-1} W) = \pi_N(W)$. From here, we have that
\begin{align*}
\sigma_1(\restr{\hat{\lift}}{\hat{S}}) &= 
\max_{\hat{x} \in \hat{S} \setminus \{\0\}} 
\frac{\norm{\hat{\lift}(\hat{x})}}{\norm{\hat{x}}} 
= \max_{x \in S \setminus \{\0\}} \frac{\norm{\hat{\lift}(y_N x)}}{\norm{y_N x}} 
\leq \max_{x \in S \setminus \{\0\}} \frac{\norm{y_B \lift(x)}}{\norm{y_N x}} \\
& \leq \max_{x \in S \setminus \{\0\}} \norm{y_B}_\infty \norm{y_N^{-1}}_\infty \frac{\norm{\lift(x)}}{\norm{x}} = \norm{y_B}_\infty \norm{y_N^{-1}}_\infty \sigma_1(\restr{\lift}{S}),
\end{align*}
where $\norm{\hat{\lift}(y_N x)} \leq \norm{y_B \lift(x)}$ follows since $(y_B \lift(x), y_N x) \in W$ and by the optimality of the lift provided by $\hat{\lift}(y_N x)$. The statement thus follows.
\end{proof}

   \stabpolar*
    \begin{proof}
Let $z \coloneqq (x,s)$ and $z' \coloneqq (x', s')$. Since $z \in \cal
N^2(\beta)$, note that $\mu = \gap(z) > 0$. If $(B,N)$ is a trivial partition, then
$\lift_z, \lift_{z'}$ are both zero operators on input spaces of the same dimension and hence the statement trivially holds. If $\mu' =
0$, then $\lift_{z'}$ is a zero operator on $\pi_N(W)$, and hence again the
statement holds trivially. Thus, we may assume that $\mu' > 0$ and that $(B,N)$ is a non-trivial partition of $[n]$. Consequently,
$z' \in \cal N^2(\beta)$.  

Let $\nx = \sqrt{\frac{x\mu}{s}}$ and $\nx' = \sqrt{\frac{x'\mu}{s'}}$ denote the normalized iterates. To prove the bounds, we apply \Cref{lem:general_rescaling_of_singular_values}
to the subspace $\frac{1}{\nx'} W$ and $\xy \coloneqq \frac{\nx}{\nx'}$, noting that $\xy \nx' = \nx$. Since $z,z' \in \cal N^2(\beta)$, these subspaces are well-defined as $\nx,\nx' \in \R^n_{++}$. By \Cref{prop:xi}, we have that
    \begin{equation}
    \frac{\sqrt{1-\beta}}{\sqrt{1+\beta}} \frac{x}{x'} \le  \frac{\nx}{\nx'} \leq \frac{\sqrt{1+\beta}}{\sqrt{1-\beta}} \frac{x}{x'}.
    \end{equation} 
    Combining the above with \Cref{prop:near-central} and \Cref{cor:polarization}, we get that
    \begin{equation}
    \begin{aligned}
       \norm{\xy_B}_\infty &= \norm{\frac{\nx_B}{\nx'_B}}_\infty 
           \leq \frac{\sqrt{1+\beta}}{\sqrt{1-\beta}} \norm{\frac{x_B}{x'_B}}_\infty 
           \leq \frac{\sqrt{1+\beta}(1+2\beta)}{(1-\beta)^{3/2}} \norm{\frac{x\cp(\mu)_B}{x\cp(\mu')_B}}_\infty  \le \frac{2n}{\polarized}, \\
        \norm{\xy_N^{-1}}_\infty &= \norm{\frac{\nx'_N}{\nx_N}}_\infty \le \frac{\sqrt{1+\beta}}{\sqrt{1-\beta}} \norm{\frac{x'_N}{x_N}}_\infty \leq \frac{\sqrt{1+\beta}(1+2\beta)}{(1-\beta)^{3/2}} \norm{\frac{x\cp(\mu')_N}{x\cp(\mu)_N}}_\infty \le \frac{2n}{\polarized}\cdot\frac{\mu'}{\mu},
    \end{aligned}
    \end{equation}
    and 
    \begin{equation}
    \begin{aligned}
       \norm{\xy_B^{-1}}_\infty &= \norm{\frac{\nx'_B}{\nx_B}}_\infty 
           \leq \frac{\sqrt{1+\beta}}{\sqrt{1-\beta}} \norm{\frac{x'_B}{x_B}}_\infty 
           \leq \frac{\sqrt{1+\beta}(1+2\beta)}{(1-\beta)^{3/2}} \norm{\frac{x\cp(\mu')_B}{x\cp(\mu)_B}}_\infty  \le \frac{2n}{\polarized}, \\
        \norm{\xy_N}_\infty &= \norm{\frac{\nx_N}{\nx'_N}}_\infty \le \frac{\sqrt{1+\beta}}{\sqrt{1-\beta}} \norm{\frac{x_N}{x'_N}}_\infty \leq \frac{\sqrt{1+\beta}(1+2\beta)}{(1-\beta)^{3/2}} \norm{\frac{x\cp(\mu)_N}{x\cp(\mu')_N}}_\infty \le \frac{2n}{\polarized}\cdot\frac{\mu}{\mu'},
    \end{aligned}
    \end{equation}
    using $\frac{(1+2\beta)(1+\beta)^{1/2}}{(1 - \beta)^{3/2}} \leq 2$ for $\beta \in (0,1/6]$, $\norm{\frac{x\cp(\mu)_B}{x\cp(\mu')_B}}_\infty = \max_{i \in B} \frac{x\cp(\mu)_i}{x\cp(\mu_0)_i}\frac{x\cp(\mu_0)_i}{x\cp(\mu')_i} \leq n \cdot \frac{1}{\polarized}$ and $\norm{\frac{x\cp(\mu')_N}{x\cp(\mu)_N}}_\infty$ $= \max_{i \in N} \frac{x\cp(\mu')_i}{x\cp(\mu_0)_i}\frac{x\cp(\mu_0)_i}{x\cp(\mu)_i} \leq \frac{\mu'}{\polarized \mu_0} \cdot \frac{n \mu_0}{\mu}$, together with the the analogous bounds for the remaining inequalities.
    
    Plugging these estimates into \Cref{lem:general_rescaling_of_singular_values} yields the result.
\end{proof}

\subsubsection{Properties of the Ideal Direction}
\label{sec:ideal-properties}

\idealbounds*
\begin{proof}
We first prove~\eqref{eq:ideal-dx-ds}
\begin{align*}
\norm{(\ndx^{\ideal},\nds^{\ideal})} &= \norm{(\nx^{-1}x\cp(\mu_1)-\error,\ns^{-1}s\cp_N(\mu_1)-\error)} \\
&\leq \norm{(\nx^{-1} x\cp(\mu_1),\ns^{-1}s\cp(\mu_1))} + \norm{(\error,\error)} \\
&\le \norm{(\nx^{-1} x \cp(\mu_1), \ns^{-1} s\cp(\mu_1))}_1 + \sqrt{2n} \\
&\leq \frac{n}{\sqrt{1-\beta}}(1+\mu_1/\mu) + \sqrt{2n} \quad \left(\text{ by \Cref{lem:local-norm-monotone} }\right) \\
&\leq 4n,  
\end{align*}
where the last inequality follows from $\beta \in(0,1/6]$ and $\mu \geq 6
\mu_1$.

We now continue with~\eqref{eq:ideal-rx-rs},
\ifdefined\SIAMversion
\begin{align*}
\norm{(\error_N+\ndx^{\ideal}_N,\error_B+\nds^\ideal_B)}
&= \norm{(\nx^{-1}_N x\cp_N(\mu_1),\ns^{-1}_B s\cp_B(\mu_1))} \\
&\leq \norm{\left(\frac{x\cp_N(\mu_1)}{x_N},\frac{s\cp_B(\mu_1)}{s_B}\right)}_\infty \norm{(\nx^{-1}_N x_N,\ns^{-1}_B s_B)} \\
&= \norm{\left(\frac{x\cp_N(\mu_1)}{x_N},\frac{s\cp_B(\mu_1)}{s_B}\right)}_\infty \norm{\error} \\
&\leq \frac{1}{1-\beta} \norm{\left(\frac{x\cp_N(\mu_1)}{x_N\cp(\mu)},\frac{s\cp_B(\mu_1)}{s\cp_B(\mu)}\right)}_\infty \sqrt{n} \\ & \hspace{12.8em} \left(\text{ by \Cref{prop:near-central} } \right) \\
&\leq \frac{n\mu_1}{(1-\beta)\mu\polarized} \cdot \sqrt{n} \leq \frac{2n^{1.5}\mu_1}{\polarized \mu}, \quad \left(\text{ by \Cref{cor:polarization} }\right) \\
&\leq \frac{\beta}{36}. \quad \left(\text{ since } \mu \geq \mu_1 \tfrac{72 n^{1.5}}{\beta \polarized}~\right)
\end{align*}
\else
\begin{align*}
\norm{(\error_N+\ndx^{\ideal}_N,\error_B+\nds^\ideal_B)}
&= \norm{(\nx^{-1}_N x\cp_N(\mu_1),\ns^{-1}_B s\cp_B(\mu_1))} \\
&\leq \norm{\left(\frac{x\cp_N(\mu_1)}{x_N},\frac{s\cp_B(\mu_1)}{s_B}\right)}_\infty \norm{(\nx^{-1}_N x_N,\ns^{-1}_B s_B)} \\
&= \norm{\left(\frac{x\cp_N(\mu_1)}{x_N},\frac{s\cp_B(\mu_1)}{s_B}\right)}_\infty \norm{\error} \\
&\leq \frac{1}{1-\beta} \norm{\left(\frac{x\cp_N(\mu_1)}{x_N\cp(\mu)},\frac{s\cp_B(\mu_1)}{s\cp_B(\mu)}\right)}_\infty \sqrt{n} \quad \left(\text{ by \Cref{prop:near-central} } \right) \\
&\leq \frac{n\mu_1}{(1-\beta)\mu\polarized} \cdot \sqrt{n} \leq \frac{2n^{1.5}\mu_1}{\polarized \mu}, \quad \left(\text{ by \Cref{cor:polarization} }\right) \\
&\leq \frac{\beta}{36}. \quad \left(\text{ since } \mu \geq \mu_1 \tfrac{72 n^{1.5}}{\beta \polarized}~\right)
\end{align*}
\fi
\end{proof}

\subsubsection{Dimension and Lifting Cost of Cheap Lift Subspaces}
\label{sec:cheap-lift-properties}

\subspaceprops*
\begin{proof}
Assume first that $(B,N)$ is a trivial partition. Recall that
$\maxexpidx(\lift_z) = \maxexpidx(\lift^\perp_z) = 0$. By the guarantees of
\nameref{alg:cheap_lift}, we have $(V,U) = (W,\{\0_n\})$ if $B = \emptyset$
and $(V,U) = (\{\0_n\},W^\perp)$ if $N = \emptyset$, and
thus~\eqref{eq:sub-dim} and~\eqref{eq:sub-lift-cost} follow directly. 

Now assume that $(B,N)$ is a non-trivial partition. We first prove the dimension guarantees for $V$ and $U$. By \Cref{lem:general_rescaling_of_singular_values}, using that $x^{-1} W = \error^{-1} \nx^{-1} W$, we have that
\[
\left(\norm{\error^{-1}_B}_\infty \norm{\error_N}_\infty\right)^{-1} \sigma(\lift_N^{x^{-1}W}) \leq \sigma(\lift_N^{\nx^{-1} W}) = \sigma(\lift_z) \leq \norm{\error_B}_\infty \norm{\error^{-1}_N}_\infty \sigma(\lift_N^{x^{-1}W}).
\]
By \Cref{prop:xi}, $\sqrt{1-\beta} \1_n \leq \error \leq \sqrt{1+\beta}
\1_n$, and since $\beta \in (0,1/6]$ we have that 
\[
\max \{\norm{\error^{-1}_B}_\infty \norm{\error_N}_\infty, \norm{\error_B}_\infty \norm{\error^{-1}_N}_\infty\} \leq \sqrt{\frac{1+\beta}{1-\beta}} \leq \sqrt{2}.
\]
Applying the same argument to $\lift_B^{x W^\perp}$ and $\lift_B^{\ns^{-1}
W^\perp} = \lift_z^\perp$, using that $x W^\perp = \tfrac{x}{\mu} W^\perp =
\error \ns^{-1} W$, we get that
\begin{equation}
\label{eq:cl-sing-comp}
\frac{1}{\sqrt{2}} \sigma(\lift_N^{x^{-1} W}) \leq \sigma(\lift_z) \leq \sqrt{2} \sigma(\lift_N^{x^{-1} W}), \quad \quad \frac{1}{\sqrt{2}} \sigma(\lift_B^{x W^\perp}) \leq \sigma(\lift_z^\perp) \leq \sqrt{2} \sigma(\lift_B^{x W^\perp}),
\end{equation}
where we recall that $\sigma^+(\lift_z) = \sigma^+(\lift_z^\perp)$ by
\Cref{lem:lift-duality}. By the guarantees of \nameref{alg:cheap_lift} and~\eqref{eq:cl-sing-comp}, we have that
\[
\cnt{\lift_z}{\tfrac{1}{\sqrt{2}\SVDA}} \leq \cnt{\lift_N^{x^{-1}W}}{1/\SVDA} \leq
\dim(V) \leq \cnt{\lift_N^{x^{-1}W}}{1} \leq \cnt{\lift_z}{\sqrt{2}}.
\]
Since $\cnt{\lift_z}{(\tfrac{\tau}{\SVDA},\tfrac{\SVDA}{\tau}]}=0$ and $\tfrac{\tau}{\SVDA} \leq \tfrac{1}{\sqrt{2}\SVDA} \leq \sqrt{2} \leq \tfrac{\SVDA}{\tau}$, we have $\dim(V) = \cnt{\lift_z}{\tfrac{\tau}{\SVDA}} = \cnt{\lift_z}{\tfrac{\SVDA}{\tau}}$. In particular, 
\begin{align*}
\dim(V) &= \cnt{\lift_z}{\tfrac{\tau}{\SVDA}} = \cnt{\lift_z}{[0,\infty)}-\cnt{\lift_z}{(\tfrac{\tau}{\SVDA},\infty)} \coloneqq \dim(\pi_N(\nx^{-1} W)) - \zeta(z) \\ &= \dim(\pi_N(W))-\zeta(z).
\end{align*}
By a completely symmetric argument on the dual, we get that
\[
\dim(U) = \cnt{\lift_z^\perp}{\tfrac{\tau}{\SVDA}} = \dim(\pi_B(W^\perp))-\zeta(z).
\]
We now prove the lifting cost guarantees. For $v \in V$ and the guarantees of \nameref{alg:cheap_lift}, we have that
\begin{align*}
\norm{\nx^{-1}_B v_B} 
&\leq \norm{\error_B}_\infty \norm{x^{-1}_B v_B}
\leq \norm{\error_B}_\infty \SVDA \sigma_{\dim(\pi_N(W))-\dim(V)+1}(\lift^{x^{-1}W}_N) \norm{x^{-1}_N v_N} \\
&\leq \norm{\error_B}_\infty \norm{\error_N^{-1}}_\infty \sqrt{2} \SVDA \sigma_{\zeta(z)+1}(\lift_z) \norm{\nx^{-1}_N v_N} 
\leq 2 \SVDA \sigma_{\zeta(z)+1}(\lift_z) \norm{\nx^{-1}_N v_N} \\
&\leq 2 \tau \norm{\nx^{-1}_N v_N}.
\end{align*}
By a symmetric algorithm for the dual, for $u \in U$, we have that
\[
\norm{\ns^{-1}_N u_N} \leq 2 \SVDA \sigma_{\zeta(z)+1}(\lift_z^\perp) \norm{\ns^{-1}_B u_B} \leq 2 \tau \norm{\ns^{-1}_B u_B},
\]
as needed.
\end{proof}
  
\section{Amortized Iteration Bound for \nameref{alg:subspace_ipm}}\label{sec:amortized}

In this section we prove \Cref{th:main_upper_bound-slc}. For this purpose we
will rely on \Cref{thm:delta-slc}, which decomposes the central path into
polarized segments where the sum of partition changes is bounded by the
straight line complexity. The proof of \Cref{th:main_upper_bound-slc} will
then follow from the following theorem, which shows that the number of
iterations of our IPM can be upper bounded in terms of a sum of partition
changes.

\begin{theorem}\label{thm:amortized_main}
For any $T \in \mathbb{N}$ let $0 \le \mu^{(T)} < \mu^{(T-1)} < \ldots < \mu^{(1)} < \mu^{(0)}$ such that for all $1 \le i \le T$ we have that $\CP[\mu^{(i)}, \mu^{(i-1)}]$ is $\polarized$-polarized with polarizing partition $B^{(i)} \cup N^{(i)}$, where $N^{(0)}\coloneqq\emptyset$. 
Then, \nameref{alg:subspace_ipm} equipped with a $\SVDA$-\textsc{SVD} solver, starting from any point $z^{(0)} \in \mathcal N^2(\beta)$ such that $\gap(z^{(0)}) \le \mu^{(0)}$, finds a point $z \in \clN(\beta)$ such that $\gap(z) \le \mu^{(T)}$ in 
\begin{equation}
    O\left(\tfrac{\sqrt{n}}{\beta} \log(\tfrac{n \SVDA }{\beta \polarized})\sum_{i = 1}^T \left(\left|N^{(i)} \Delta N^{(i-1)}\right|+1\right)\right)
\end{equation}
many iterations.
\end{theorem}

As explained in the introduction, the above theorem strengthens
\Cref{th:main_upper_bound-curve} by reducing the $n^{1.5}$ factor in the
iteration bound for traversing the $i$th polarized segment down to
$\sqrt{n}\left|N^{(i)} \Delta N^{(i-i)}\right|$. This will yield an overall
amortized improvement as long as the polarizing partitions do not change too
rapidly.   

We are now ready to give the proof of \Cref{th:main_upper_bound-slc}.

\begin{proof}[Proof of \Cref{th:main_upper_bound-slc}]
By \Cref{lem:slls-correct}, the output $(x^*,s^*,v^*,w^*)$ of \nameref{alg:subspace_ipm} is correct when $\mu_1 = 0$.
The (trivial) modification to \nameref{alg:subspace_ipm} for $\mu_1 > 0$,
where it only outputs $z^1 = (x^1,s^1)$ with $\gap(z^1) \leq \mu_1$, is
explained in \Cref{rem:general-accuracy}. The desired upper bound on the
number of iterations to traverse the segment $\CP[\mu_1,\mu_0]$ follows
directly by combining \Cref{thm:delta-slc} and \Cref{thm:amortized_main} with
$\SVDA = 2$, where we use \Cref{thm:approx-svd} to instantiate the
$2$-\textsc{SVD} solver.    
\end{proof}

It remains to prove \Cref{thm:amortized_main}. In \Cref{sec:delta}, we show
that the singular values cannot change too quickly under a change in
polarization partition, and finally we prove \Cref{thm:amortized_main} in
\Cref{sec:amortized-proof}.

\subsection{Stability of Singular Values under a Partition
Change}\label{sec:delta}
The crucial statement towards proving \Cref{thm:amortized_main} is the following lemma on lifting operators for different partitions. This will ensure that we do not lose too much progress, in terms gaining large singular values, when moving from one partition to another.

\begin{lemma}\label{lem:singular_values_for_different_partitions}
Given a subspace $W \subseteq \R^n$ and two partitions $B \cup N = [n]$ and $\hat B \cup \hat N = [n]$, the lifting operators $\lift \coloneqq \lift_N^{W}$ and $\hat \lift \coloneqq \lift_{\hat N}^{W}$ satisfy
\begin{equation}
\label{eq:sing-diff-part}
    \sigma_i(\lift) \ge \sigma_{i + |N \Delta \hat N|}(\hat \lift),~ \forall i \geq 1\, .  
\end{equation}
\end{lemma}

\begin{proof}
If either $\hat{B}$ or $\hat{N}$ is empty, $\sigma_i(\hat \lift) = 0$,
$\forall i \geq 1$, by convention, and hence the statement is trivial. If $N \cap \hat N = \emptyset$, then $\sigma_{i+|N \Delta \hat{N}|}(\hat \lift) \leq 
\sigma_{i+|\hat{N}|}(\hat \lift) = 0$ since $\rank(\hat \lift) \leq |\hat
N|$, and hence again the statement is trivial. Thus, we may assume that
$(\hat B, \hat N)$ is non-trivial and that $N \cap \hat N \neq \emptyset$. 

Let $\check N \coloneqq N \cap \hat N \neq \emptyset$, $\check B\coloneqq [n]\setminus \check
N=B\cup \hat B \neq \emptyset$, and define $\check \lift \coloneqq \lift_{\check N}^{W}$. By our
assumptions, note that $(\check B, \check N)$ is a non-trivial partition of
$[n]$.  
  
First, we are going to prove that for all $i \geq 1$ we have $\sigma_i(\lift) \ge \sigma_{i + |N \setminus \check N|}(\check \lift)$. Note that $\dim(\pi_N(W)) \ge \dim(\pi_{\check N}(W))$. 
  For $i > \dim(\pi_N(W))$, we have  $\sigma_i(\lift) = \sigma_{i + |N \setminus \check N|}(\check \lift)=0$.
  Assume now $i \le \dim(\pi_N(W))$. By \eqref{eq:min-max-sing}, there exists a subspace $S^{(i)}\subseteq \pi_N(W)$, $\dim(S) = \dim(\pi_N(W)) - i + 1$ with
  \begin{equation} 
    \sigma_i(\lift) = \sigma_{\max}\left(\restr{\lift}{S^{(i)}}\right)\, .
  \end{equation}
  Consider $T^{(i)} \coloneqq S^{(i)} \cap \R_{\check N}^N$. 
  Note that $\dim(\pi_{\check N}(T^{(i)})) = \dim(T^{(i)})$ as $\pi_{N\setminus \check N}(T^{(i)}) = \{\0_{N \setminus \check N}\}$ by definition. Further, $\pi_{\check N}(T^{(i)}) \subseteq \pi_{\check N}(W)$ by construction. 
  Therefore, we have that 
  \begin{equation}
  \begin{aligned}\label{eq:dim-bound}
    \dim(\pi_{\check N}(T^{(i)})) &= \dim(T^{(i)}) \ge \dim(S^{(i)}) - |N \setminus \check N| \\ 
    &= \dim(\pi_N(W)) - i + 1 - |N \setminus \check N|\, , 
  \end{aligned}
  \end{equation}
  where the inequality follows as $T^{(i)}$ arises from $S^{(i)}$ by adding $N\setminus \check N$ homogeneous linear equations to its defining system. 
  Finally, note that for any $v \in \pi_{\check N}(T^{(i)})$ we have that 
  \begin{equation}
    \norm{\check \lift(v)} = \min_{w \in W, w_{\check N} = v} \norm{w_{[n]\setminus \check N}} \le \min_{\substack{w \in W, w_{\check N} = v, \\ w_{N \setminus \check N} = \0_{N \setminus \check N}}} \norm{w_{[n]\setminus \check N}} = \norm{\lift(\0_{N \setminus \check N}, v)}\, ,
  \end{equation}
  and so in particular
  \begin{equation}\label{eq:dom-lift-bar-lift}
    \sigma_{\max}\left(\restr{\check \lift}{\pi_{\check N}(T^{(i)})}\right)
    \le \sigma_{\max}\left(\restr{\lift}{T^{(i)}}\right) \, .
  \end{equation}
  
  From here, since $\dim(\pi_{\check N}(W)) \le \dim(\pi_{N}(W))$ we get that 
  \begin{equation}\label{eq:main-singular-value-shift-first-part}
    \begin{aligned}
    \sigma_{i + |N \setminus \check N|}(\check\lift) &\leftstackrel{\eqref{eq:dim-bound}}{\le} \sigma_{\dim(\pi_N(W)) - \dim(\pi_{\check N}(T^{(i)})) + 1}(\check \lift) 
    \stackrel{\eqref{eq:min-max-sing}}{\le} \sigma_{\max}\left(\restr{\check \lift}{\pi_{\check N}(T^{(i)})}\right)
    \\
    & \leftstackrel{\eqref{eq:dom-lift-bar-lift}}{\le} \sigma_{\max}\left(\restr{\lift}{T^{(i)}}\right) \le \sigma_{\max}\left(\restr{\lift}{S^{(i)}}\right) = \sigma_i(\lift)\, .  
  \end{aligned}
  \end{equation}

It remains to show that we can use similar shift argumentation between the
operators $\check \lift$ and $\hat \lift$, that is, that $\sigma_i(\check \lift)
\geq \sigma_{i+|\hat N \setminus \check N|}(\hat \lift)$, $\forall i \geq 1$.
Using \Cref{lem:lift-duality}, we may equivalently consider the dual
operator $\lift_{\check B}^{W^\perp} = -\adj(\lift_{\check N}^W)$, where $\check B=
[n] \setminus \check N$, and show that  
\begin{equation}
\label{eq:singular-value-shift-adjoint}
\sigma_i(\lift_{\check B}^{W^\perp}) \ge \sigma_{i + |\check B \setminus \hat B|}(\lift_{\hat B}^{W^\perp})\, ,\, \forall i \geq 1.  
\end{equation}
Since $\emptyset \neq \hat B \subseteq \check{B}$,
the inequality above follows directly from same arguments we used to obtain \eqref{eq:main-singular-value-shift-first-part}. Therefore, for all $i \ge 1$, by \Cref{lem:lift-duality} we get that
\begin{equation}
\label{eq:main-singular-value-shift-second-part}
\sigma_i(\check \lift) = \sigma_i(\lift^{W^\perp}_{\check B}) \ge \sigma_{i + |\check B \setminus \hat B|}(\lift^{W^\perp}_{\hat B})
 = \sigma_{i + |\hat N \setminus \check N|}(\hat \lift)\, 
\end{equation}
holds, where we used that $|\hat N \setminus \check N| = |\check B \setminus \hat B|$.
Combining \eqref{eq:main-singular-value-shift-first-part} and \eqref{eq:main-singular-value-shift-second-part} gives the result as now 
\begin{equation}
    \begin{aligned}
    \sigma_i(\lift) &\ge \sigma_{i + |N \setminus \check N|}(\check \lift) = \sigma_{i + |N \setminus \hat N|}(\check \lift) \ge \sigma_{i + |N \setminus \hat N| + |\hat N \setminus \check N|}(\hat \lift) \\
    &= \sigma_{i + |N \setminus \hat N| + |\hat N \setminus N|}(\hat \lift) = \sigma_{i+ |N \Delta \hat N|}(\hat \lift),~ \forall i \geq 1.  
    \end{aligned}
\end{equation}
\end{proof}

\subsection{Proof of the Amortized Bound}
\label{sec:amortized-proof}

We need few more preparations to prove \Cref{thm:amortized_main}.  First,
consider the basic iterates $z^{(0)}, \ldots, z^{(K)}$ of the algorithm,
where basic is as defined in \Cref{sec:running-time-analysis}, with $z^{(K)}$
being the first basic iterate with $\gap(z^{(K)}) \leq \mu^{(T)}$. Without
loss of generality, we may assume that $\mu^{(1)} < \gap(z^{(0)}) \leq
\mu^{(0)}$. 

For $i \in \{0,\dots,T\}$, define $t_i$ to be the smallest index of an
iterate with $\mu(z^{(t_i)}) \leq \mu^{(i)}$. With this definition, note that
$t_0 = 0$, $t_T = K$, and for $i \in [T]$, that $t_i-t_{i-1} = |\{j \in [K]:
\mu^{(i)} < \gap(z^{(j)}) \leq \mu^{(i-1)}\}|$. Further, define $y^{(i)}
\coloneqq z\cp(\mu^{(i)})$, $i \in \{0,\dots,T\}$, to be the central path
point with parameter $\mu^{(i)}$. 

For $z = (x,s) \in \cal N^2(\beta)$, $\beta \in (0,1/6)$, let us extend the
notation \eqref{eq:def-maxexpidx} to $\lift_{z,N^{(i)}} \coloneqq
\lift^{\nx^{-1} W}_{N^{(i)}}$ and
\begin{equation}
\label{eq:def-maxexpidx-gen}
    \maxexpidx^{(i)}(z) \coloneqq \left|\left\{j \geq 1: \sigma_j(\lift_{z,N^{(i)}}) > \tfrac{\thresh}{\SVDA}\right\}\right| = \cnt{\lift_{z,N^{(i)}}}{(\tfrac{\thresh}{\SVDA},\infty)}\, .
\end{equation} 

\paragraph{Proof Overview} We start by giving an outline of the proof of \Cref{thm:amortized_main}.
Assume for simplicity of presentation that $\gap(z^{(t_i)}))=\mu^{(i)}$
(\ie, $y^{(i)} = z^{(t_i)}$), $i \in \{0,\dots,T\}$, which can be achieved
by adding artificial iterates in each segment. Consider the
behaviour of the algorithm on the polarized segment
$\CP[\mu^{(i)},\mu^{(i-1)}]$ with partition $(B^{(i)},N^{(i)})$, which starts
with $z^{(t_{i-1})}$ and ends with $z^{(t_i)}$. By
\Cref{lem:progress}, we have that after every $D =
\Theta(\tfrac{\sqrt{n}}{\beta} \ln(\tfrac{n \SVDA}{\beta \polarized}))$
iterations, either we decrease the potential $\maxexpidx^{(i)}(\cdot)$ by one
or we pass the end of the segment. Specifically, for $k \geq 1$, either $t_i + k D>t_{i+1}$, or $\maxexpidx^{(i)}(z^{i+k D}) \leq \maxexpidx^{(i)}(z^{(t_i)}) - k$.  

By \Cref{lem:stability_singular_values}, for $z^{(t')},z^{(t)}$ with $t_i
\leq t' < t \leq t_{i-1}$ (\ie, $\mu^{(i)} \leq \gap(z^{(t')}) < \gap(z^{(t)})
\leq \mu^{(i-1)}$), recall that $\sigma(\lift_{z^{(t')},N^{(i)}}) \leq \Gamma
\frac{\gap(z^{(t')})}{\gap(z^{(t)})} \sigma(\lift_{z^{(t)},N^{(i)}})$ where
$\Gamma = \frac{4n^2}{\polarized^2}$. Let us assume for now that the inequality
holds for $\Gamma = 1$ instead of $\frac{4n^2}{\polarized^2}$. Then, under
this assumption, the singular values would be non-increasing over the
segment, and hence the potential $\maxexpidx^{(i)}(z^{(t)})$ would be monotonic in $t$.
Therefore, we would have that 
\begin{equation}
\label{eq:potential-drop}
\maxexpidx^{(i)}(z^{(t_i)}) \leq \maxexpidx^{(i)}(z^{(t_{i-1})})-\lfloor (t_i-t_{i-1})/D \rfloor \Rightarrow t_i-
t_{i-1} \leq D(\maxexpidx^{(i)}(z^{(t_{i-1})})-\maxexpidx^{(i)}(z^{(t_i)})+1).
\end{equation}
That is, the drop in potential from the start to the end of the segment pays
for the number of iterations. From here, the handover to the next segment is
controlled by \Cref{lem:singular_values_for_different_partitions}, which
implies that $\maxexpidx^{(i)}(z^{(t_{i-1})}) \leq |N^{(i)} \Delta N^{(i-1)}|
+ \maxexpidx^{(i-1)}(z^{(t_{i-1})})$ for $i \in [T]$. Putting these bounds
together, we would get
\begin{align}
K &= t_T-t_0 = \sum_{i=1}^T t_i-t_{i-1} \leq \sum_{i=1}^T D(\maxexpidx^{(i)}(z^{(t_{i-1})})-\maxexpidx^{(i)}(z^{(t_i)})+1) \nonumber \\
&\leq \sum_{i=1}^T D(|N^{(i)} \Delta N^{(i-1)}|+1)+D(\maxexpidx^{(i-1)}(z^{
(t_{i-1})}) - \maxexpidx^{(i)}(z^{(t_i)})) \nonumber \\
&= \sum_{i=1}^T D(|N^{(i)} \Delta N^{(i-1)}|+1) + D(\maxexpidx^{(0)}(z^{(0)})-\maxexpidx^{(T)}(z^{(t_T)})) \nonumber \\
&\leq D \sum_{i=1}^T (|N^{(i)} \Delta N^{(i-1)}|+1), \label{eq:it-bd-ideal}
\end{align}
where the last inequality follows since $\maxexpidx^{(0)}(z^{(0)}) = 0$ as 
$N^{(0)} = \emptyset$. This is precisely the desired bound in
\Cref{thm:amortized_main}.

Unfortunately, this argument breaks due to the $(4n^2/\polarized^2)$ factor in \Cref{lem:stability_singular_values}. The singular values may in fact increase on the short term. This would not be a problem on sufficiently `long' segments. However, on `short' segments, this could lead to an increase in singular values, where there may not be sufficiently many---$D$---iterations to compensate. Such increments could even aggregate over a sequence of short segments.

The proof below works with a more robust version of the potential in
\eqref{eq:def-maxexpidx-gen}. The robust potential is defined as a minimum
over parametrized potentials: for $i \geq 0$ and $z \in
\cal N^2(\beta)$,
\begin{equation}
\label{eq:def-maxexpidx-robust}
\genexpidx^{(i)}(z) \coloneqq \min_{q \geq 0} \maxexpidx^{(i)}(z,q), \quad \quad \maxexpidx^{(i)}(z,q) \coloneqq \cnt{\lift_{z,N^{(i)}}}{(\tfrac{\thresh}{\SVDA} \Gamma^q,\infty)} + q,\, \,\mbox{for}\, q \geq 0\, ,
\end{equation} 
where $\Gamma \coloneqq \tfrac{4n^2}{\polarized^2}$ as above. 

Each parametrized potential keep track of the singular values above a
threshold $\Gamma^{q}\tfrac{\tau}{\SVDA}$ for some $q \geq 0$. The additional
factor represents possible increases accumulated on previous short segments.
However, the contributions $|N^{(j)}\Delta N^{(j-1)}|+1$ of these short
segments will `pay' for bringing down large singular values later, which
corresponds to the additive $q$ factor in these potentials. Note that the
original potential in~\eqref{eq:def-maxexpidx-gen} corresponds to the
parametrized potential with $q=0$.

The following technical lemma shows that the drop in robust potential
$\genexpidx^{(i)}(\cdot)$ over a polarized segment indeed pays for the number
of iterations, in analogy to~\eqref{eq:potential-drop} in the ideal setting
(where we assumed monotonicity of the potential). With this lemma in hand,
the iteration bound in \Cref{thm:amortized_main} follows along identical
lines to the analysis in \eqref{eq:it-bd-ideal}.

\begin{lemma}\label{lem:Gamma-D-change}
Define $\Dit \coloneqq 3 \lfloor
\Cit \tfrac{\sqrt{n}}{\beta} \log(\tfrac{n\SVDA}{\beta \polarized}) \rfloor$,
where $\Cit$ is as in \Cref{lem:progress}. Let $t_i,N^{(i)},y^{(i)}$, $i \in
\{0,\dots,T\}$ be as defined above, let $k_i \coloneqq \max \{0, \lfloor
(t_i-t_{i-1}-1)/\Dit \rfloor\}$, $i \in [T]$. Then, for $i \in [T]$, the
following holds  
\begin{enumerate}[label=(\roman*)]
\item \label{it:zeta-next} $\genexpidx^{(i)}(y^{(i-1)}) \leq \genexpidx^{(i-1)}(y^{(i-1)})+|N^{(i)} \Delta N^{(i-1)}|$.
\item \label{it:zeta-dec} $\genexpidx^{(i)}(y^{(i)})+k_i \leq \genexpidx^{(i)}(y^{(i-1)})+1$.
\end{enumerate}
\end{lemma}
\begin{proof} We prove \ref{it:zeta-next} and \ref{it:zeta-dec} separately below. 

\paragraph{Proof of \ref{it:zeta-next}} By
\Cref{lem:singular_values_for_different_partitions}, $\sigma_{i
+ |N^{(i)} \Delta N^{(i-1)}|}(\lift_{y^{(i-1)},N^{(i)}}) \leq
\sigma_i(\lift_{y^{(i-1)},N^{(i-1)}})$, $\forall i \geq 1$. This implies
\[
\cnt{\lift_{y^{(i-1)},N^{(i-1)}}}{(\frac{\thresh}{\SVDA}
\Gamma^q,\infty)}+|N^{(i)} \Delta N^{(i-1)}| \geq
\cnt{\lift_{y^{(i-1)},N^{(i)}}}{(\frac{\thresh}{\SVDA} \Gamma^q,\infty)},
\]
and hence $\maxexpidx^{(i)}(y^{(i-1)},q) \leq \maxexpidx^{(i-1)}(y^{(i-1)},q)
+ |N^{(i)} \Delta N^{(i-1)}|$, for $q \geq 0$. Therefore,
$\genexpidx^{(i)}(y^{(i-1)}) \leq$ \ifdefined\SIAMversion \else \newline \fi $\genexpidx^{(i-1)}(y^{(i-1)}) + |N^{(i)}
\Delta N^{(i-1)}|$. 

\paragraph{Proof of \ref{it:zeta-dec}} For $q \geq 0$, we claim that 
\begin{equation}
\label{eq:gen-drop}
\maxexpidx^{(i)}(y^{(i)},\max \{0,q+1-k_i\})+k_i \leq \maxexpidx^{(i)}(y^{(i-1)},q)+1.  
\end{equation}
Assuming the claim, \ref{it:zeta-dec} follows noting that
\begin{align*}
\genexpidx^{(i)}(y^{(i)})+k_i &\leq \min_{q \geq 0}
\maxexpidx^{(i)}(y^{(i)},\max \{0,q+1-k_i\})+k_i \leq
\min_{q \geq 0} \maxexpidx^{(i)}(y^{(i-1)},q)+1 \\ &= \genexpidx^{(i)}(y^{(i-1)})+1\, .
\end{align*}

To prove \eqref{eq:gen-drop}, we need the following intermediate inequality:
\begin{equation}
\sigma(\lift_{y^{(i)},N^{(i)}}) \leq \Gamma^{1-k_i} \sigma(\lift_{y^{(i-1)},N^{(i)}})\, . \label{eq:sing-drop-1}
\end{equation}

We prove this inequality first. Recall that $\CP[\mu^{(i)},\mu^{(i-1)}]$ is a $\polarized$-polarized segment with partition $(B^{(i)},N^{(i)})$. Therefore, for $z' \in \clN(\beta)$, $z \in
\cal N^2(\beta)$, $\beta \in (0,1/6)$, with $\mu^{(i)} \leq \gap(z') \leq
\gap(z) \leq \mu^{(i-1)}$, by \Cref{lem:stability_singular_values} we have
that
\begin{equation}
\sigma(\lift_{z',N^{(i)}}) \leq \Gamma \frac{\gap(z')}{\gap(z)} \sigma(\lift_{z,N^{(i)}}). \label{eq:sing-stab-robust}
\end{equation}

Assume that $k_i = 0$. Then,~\eqref{eq:sing-drop-1} follows
from~\eqref{eq:sing-stab-robust} with $z'=y^{(i)}$ and $z=y^{(i-1)}$, noting
that $\gap(y^{(i)})=\mu^{(i)} < \mu^{(i-1)} = \gap(y^{(i-1)})$ and $\Gamma =
\Gamma^{1-k_i}$. 

Now assume that $k_i \geq 1$. Let $z^{(i)}$, $i \in \{0,\dots,K\}$, be the
basic iterates as defined in the overview, and let $t = t_{i-1}+\Dit/3$ and
$t' = t_i-1-\Dit/3$. Since $k_i \geq 1$, we have that 
\begin{equation}
t'-t \geq k_i \Dit - 2\Dit/3 \geq k_i \Dit/3. \label{eq:it-dif-lb}
\end{equation}
By the choice of $\Cit$ in~\eqref{eq:progress-gap-decrease}, $\Dit/3 = \lfloor \Cit \tfrac{\sqrt{n}}{\beta} \log(\tfrac{n\SVDA}{\beta \polarized}) \rfloor$
iterations are sufficient to divide the normalized gap by $\Gamma$. That is,
$\gap(z^{(i+\Dit/3)}) \leq \gap(z^{(i)})/\Gamma$, for $i \in
\{0,\dots,K-\Dit/3\}$. In particular, 
\[
\gap(z^{(t')}) \leq \gap(z^{(t)}) /\Gamma^{\lfloor \tfrac{3(t'-t)}{\Dit} \rfloor} \leq \gap(z^{(t)}) / \Gamma^{k_i}.
\]
Similarly, $\mu^{(i)} \leq \gap(z^{(t_i-1)}) \leq \gap(z^{(t')})/\Gamma$ and
$\gap(z^{(t)}) \leq \gap(z^{(t_{i-1})})/\Gamma \leq \mu^{(i-1)}/\Gamma$. Therefore, by~\eqref{eq:sing-stab-robust} we have that 
\begin{equation}
\sigma(\lift_{y^{(i)},N^{(i)}}) \leq \sigma(\lift_{z^{(t')},N^{(i)}}) \leq \sigma(\lift_{z^{(t)},N^{(i)}}) / \Gamma^{k_i-1} \leq \sigma(\lift_{y^{(i-1)},N^{(i)}}) / \Gamma^{k_i-1}\, , \label{eq:robust-sing-rel}
\end{equation}
completing the proof of \eqref{eq:sing-drop-1}.

We now prove claim~\eqref{eq:gen-drop} by dividing the analysis into two
additional cases.

\begin{mycases} 
\item $\boldsymbol{k_i \leq q+1:}$ 
\begin{align*}
\maxexpidx^{(i)}(y^{(i)},q+1-k_i)+k_i &= \cnt{\lift_{y^{(i)},N^{(i)}}}{(\tfrac{\tau}{\SVDA} \Gamma^{q+1-k},\infty)} + (q+1-k_i) + k_i \\ &\stackrel{\eqref{eq:sing-drop-1}}{\leq} \cnt{\lift_{y^{(i-1)},N^{(i)}}}{(\tfrac{\tau}{\SVDA}
\Gamma^{q},\infty)}+q+1 \\ &= \maxexpidx^{(i)}(y^{(i-1)},q)+1, \text{ as needed.}
\end{align*}

\item $\boldsymbol{k_i \geq q+2:}$ Let $\hat{t} = t + (q+1) \Dit/3$. By our assumption that $k_i \geq q+2$, using~\eqref{eq:it-dif-lb} we have that
that $t'-\hat{t} \geq (k_i-q-1) \Dit/3 \geq \Dit/3$. Via an identical
calculation to~\eqref{eq:robust-sing-rel}, we have that
\[
\sigma(\lift_{z^{(\hat{t})},N^{(i)}}) \leq
\sigma(\lift_{z^{(t)},N^{(i)}})/\Gamma^{q} \leq
\sigma(\lift_{y^{(i-1)},N^{(i)}})/\Gamma^{q}.
\]  
Using the above, we get that 
\begin{align}
\maxexpidx^{(i)}(z^{(\hat{t})}) &= \cnt{\lift_{z^{(\hat{t})},N^{(i)}}}{(\tfrac{\tau}{\SVDA},\infty)} \leq \cnt{\lift_{y^{(i-1)},N^{(i)}}}{(\tfrac{\tau}{\SVDA} \Gamma^q,\infty)} \nonumber \\ &= \maxexpidx^{(i)}(y^{(i-1)},q)-q.
\label{eq:zeta-dec-1}
\end{align}
By \Cref{lem:progress}, since $t'-\hat{t} \geq (k_i-q-1) \Dit/3 = (k_i-q-1)
\lfloor \Cit \tfrac{\sqrt{n}}{\beta} \log(\tfrac{n\SVDA}{\beta \polarized})
\rfloor$, we have that
\begin{align}
\maxexpidx^{(i)}(y^{(i)},0) &\coloneqq \maxexpidx^{(i)}(y^{(i)}) \stackrel{\eqref{eq:robust-sing-rel}}{\leq} \maxexpidx^{(i)}(z^{(t')}) \stackrel{\ref{lem:progress}}{\leq}
\maxexpidx^{(i)}(z^{(\hat{t})})+q+1-k_i \nonumber \\
&\stackrel{\eqref{eq:zeta-dec-1}}{\leq} \maxexpidx^{(i)}(y^{(i-1)},q)+1-k_i.
\end{align}
The claimed bound now follows by rearranging.
\end{mycases}

\end{proof}

\begin{proof}[Proof of \Cref{thm:amortized_main}]
Let $t_i,y^{(i)}$,$N^{(i)}$, $i \in \{0,\dots,T\}$ be as defined in the overview. Recall that $N^{(0)}=\emptyset$, $t_0=0$ and $t_T = K$. Let $\Dit$ and $k_i \coloneqq \max \{0,
\lfloor (t_i-t_{i-1}-1)/\Dit \rfloor\}$, $i \in [T]$, be as in \Cref{lem:Gamma-D-change}. Proceeding in a similar manner to~\eqref{eq:it-bd-ideal}, we bound the number of iteration as
follows: 
\ifdefined\SIAMversion
\begin{align*}
K &= t_T-t_0 = \sum_{i=1}^T (t_i-t_{i-1}-1)+1 \leq \sum_{i=1}^T \Dit(k_i+1) \\ &\hspace{22em} \left(~b+1 \leq a(\lfloor \tfrac{b}{a} \rfloor+1), a,b \in \mathbb{N}~\right) \\
&\leq \sum_{i=1}^T \Dit \left(\genexpidx^{(i)}(y^{(i-1)}) - \genexpidx^{(i)}(y^{(i)})+2 \right) \hspace{9.2em} \left(\text{ by \Cref{lem:Gamma-D-change}\ref{it:zeta-dec} }\right) \\
&\leq \sum_{i=1}^T \Dit \left( \genexpidx^{(i-1)}(y^{(i-1)})-\genexpidx^{(i)}(y^{(i)})+|N^{(i)} \Delta N^{(i-1)}|+2\right)  \quad \left(\text{ by \Cref{lem:Gamma-D-change}\ref{it:zeta-next} }\right) \\
&= \left(\sum_{i=1}^T \Dit \left(|N^{(i)} \Delta N^{(i-1)}|+2\right) \right) + \Dit(\genexpidx^{(0)}(y^{(0)}) - \genexpidx^{(T)}(y^{(T)})) \\ &\leq  \sum_{i=1}^T \Dit \left(|N^{(i)} \Delta N^{(i-1)}|+2\right)\, ,
\end{align*}
\else
\begin{align*}
K &= t_T-t_0 = \sum_{i=1}^T (t_i-t_{i-1}-1)+1 \leq \sum_{i=1}^T \Dit(k_i+1) \quad \left(~b+1 \leq a(\lfloor \tfrac{b}{a} \rfloor+1), a,b \in \mathbb{N}~\right) \\
&\leq \sum_{i=1}^T \Dit \left(\genexpidx^{(i)}(y^{(i-1)}) - \genexpidx^{(i)}(y^{(i)})+2 \right) \hspace{9.2em} \left(\text{ by \Cref{lem:Gamma-D-change}\ref{it:zeta-dec} }\right) \\
&\leq \sum_{i=1}^T \Dit \left( \genexpidx^{(i-1)}(y^{(i-1)})-\genexpidx^{(i)}(y^{(i)})+|N^{(i)} \Delta N^{(i-1)}|+2\right)  \quad \left(\text{ by \Cref{lem:Gamma-D-change}\ref{it:zeta-next} }\right) \\
&= \left(\sum_{i=1}^T \Dit \left(|N^{(i)} \Delta N^{(i-1)}|+2\right) \right) + \Dit(\genexpidx^{(0)}(y^{(0)}) - \genexpidx^{(T)}(y^{(T)})) \\ &\leq  \sum_{i=1}^T \Dit \left(|N^{(i)} \Delta N^{(i-1)}|+2\right)\, ,
\end{align*}
\fi
where the last inequality uses that $\genexpidx^{(0)}(y^{(0)}) \leq \maxexpidx^{(0)}(y^{(0)}) = 0$ since $N^{(0)} = \emptyset$.
\end{proof}

 \section{Computing Approximate Singular Value Decompositions}
\label{sec:sing-val}

In this section, we give our algorithm for computing approximate SVDs and prove \Cref{thm:approx-svd}.

We begin by presenting a deterministic strongly polynomial method for obtaining a $(n2^{n})$-approximate SVD for matrices $\mM
    \in \R^{m \times n}$ using QR decomposition with greedy column permutations. In the next
subsection, we demonstrate how to enhance the approximation factor to $(1+\eps)$ by leveraging
the classical power iteration.

Using QR decompositions to approximate the SVD is a well-established technique that dates back to
the mid-1960s \cite{Businger1965,Golub1965}. Many existing approaches focus on finding
rank-revealing QR factorizations instead of a full $\SVDA$-approximate SVD. These methods aim to
identify a number $k \le n$ such that $\sigma_k(\mM) \gg \sigma_{k+1}(\mM)$, along with an
approximate subspace for the eigenvalues $\sigma_1(\mM), \ldots, \sigma_k(\mM)$. This is achieved by
greedily selecting column permutations, followed by a standard QR factorization. The top $k \times
    k$ block of the resulting upper triangular matrix provides an approximation of the subspace
corresponding to the top $k$ singular values.

One of the earliest algorithms that utilizes column pivoting and subsequent QR decomposition was
proposed by Chan \cite{Chan1987}, who attributes the procedure to \cite{Golub1976RankDA}. He
achieves approximation guarantees similar to ours in \Cref{lem:approx-svd}. Algorithms with tighter
bounds (even polynomial instead of exponential) for rank-revealing QR factorizations can be found in
\cite{Chandrasekaran1994,Gu1996,Pan2000}.

However, these algorithms are designed to work only for a fixed $k$, not as the $\SVDA$-approximate
SVD requires for all $k$ simultaneously. Further, the initial algorithms
\cite{Businger1965,Golub1965} critically rely on Householder reflection, which intrinsically make
use of unit vectors in the computations. Unit vectors however, can not in general be used in the
strongly polynomial model as they require the computation of square roots. The mentioned approach by
Chan \cite{Chan1987} requires the computation of the smallest singular vector of the matrix. This
again can not be done in strongly polynomial time. More precisely, given a matrix $\mM \in \R^{m
        \times n}$, the algorithms requires solving $\min_{x \in \R^n, \norm{x} = 1} \norm{\mM x}$. The
algorithm we present in this section, instead solves the problem $\min_{x \in \R^n, \norm{x}_\infty =
        1} \norm{\mM x}$. As we will see below, this problem can be solved in strongly polynomial time. Also
note, that the objective values of the two minimization problems above, differ by at most a factor
of $\sqrt{m}$. This will explain why our algorithm achieves similar approximation guarantees as the
algorithm of Chan \cite{Chan1987}.

In general, most works in the literature have not focused on strong polynomiality. Beyond the use
of square roots, many of these results require the computation of orthonormal matrices, which in
general is not achievable in strongly polynomial space. Additionally, it is important to avoid the
sequential computation of $\Omega(n)$ Gram-Schmidt orthogonalizations (GSO). While a single GSO can
be performed in strongly polynomial time \cite{gls}, the size of the numbers may increase by a
polynomial factor. It remains unclear whether a sequence of super-logarithmically many GSO
computations can be performed in strongly polynomial time.

A good overview of QR-type algorithms with column pivoting can be found in
\cite{Chandrasekaran1994}. They also present new algorithms for rank-revealing QR factorizations
with improved approximation guarantees. Although the authors do not emphasize this fact, some
algorithms in \cite{Chandrasekaran1994}, in particular \textsc{Greedy-I.1}, \textsc{Greedy-I.2} and
\textsc{Greedy-I.3}, can be implemented in strongly polynomial time. Furthermore, they give similar
approximation guarantees as us in \Cref{lem:approx-svd} for the algorithm \textsc{Greedy-I.3}.

Nonetheless, we present a self-contained new algorithm here for the sake of completeness and to
focus on achieving strong polynomiality. While the framework of our algorithm fits into the regime
of performing column pivoting and subsequent QR decomposition, we believe that the exact rule for
the column pivoting is novel.

Very recently, Diakonikolas, Tzamos, and Kane~\cite{Diakonikolas22} also
provided a strongly polynomial $(1+\eps)$-singular value decomposition
algorithm based on randomized power iteration (the randomness corresponds to
a random choice of initial basis of the input space). At a high level, our
algorithm removes the need for randomness by using a suitable greedy QR
decomposition (see the description below). In constrast to our algorithm
however, the approximate singular value decomposition
of~\cite{Diakonikolas22} outputs a decomposition of
a matrix $\widetilde{\mM}$ (as in \Cref{def:svd}) that is $\eps$-``spectrally close'' to $\mM$. This provides a
somewhat stronger guarantee than what is needed for our formalization of the
$(1+\eps)$-\textsc{SVD} problem, which only requires an orthogonal basis inducing a chain of $(1+\eps)$-approximate singular subspaces. While we expect that one can extract such a
decomposition in a blackbox manner from any $(1+\eps')$-\textsc{SVD}
solution, for a suitably chosen $\eps' \ll \eps$, for the sake of simplicity
we do not pursue this direction here.     

\medskip

We proceed by describing our main algorithm on a high level. For a matrix $\mM \in \R^{m \times n}$,
we will use the following procedure (Algorithm~\ref{alg:greedy-svd}): For all columns $j$ of $\mM$,
consider the projection of the column $\mM_{\mdot,j}$ onto the orthogonal complement of the span of
all the other columns $\im(\mM_{\mdot,[n] \setminus \{j\}})$. Then, remove the column $j_{\min}$
from the matrix for which the norm of this projection is the smallest and recurse on the remaining
matrix $\mM_{\mdot, [n]\setminus \{j_{\min}\}}$. When this process finishes, we obtain a permutation
$\mM \mP$ of the columns, given by the order in which they were removed from the matrix. It turns
out, that the norms of the columns of the orthogonal matrix $\mQ$ obtained from a Gram-Schmidt
process on the permuted matrix $\mM \mP$ (the first column removed from $\mM$ is the last column in
its reordering) provide an exponential approximation of the singular values of the original matrix
$\mM$. The main observation for the proof is that the matrix $\mR$ in the Gram-Schmidt process,
uniquely defined by $\mM \mP \mR= \mQ$, has only exponential condition number.

\ifdefined\SIAMversion
\begin{algorithm2e}[h!]
\else
\begin{algorithm}[h!]
\fi
    \caption{\textsc{Greedy-SVD}}
    \label{alg:greedy-svd}
    \SetKwInOut{Input}{Input} \SetKwInOut{Output}{Output} \SetKw{And}{\textbf{and}}

    \Input{Matrix $\mM \in \R^{m \times n}$.} \Output{Matrix $\mB \in \R^{n
                \times n}$ and a vector $s \in \R^n$ such that $(\mB, s)$ is a $(n2^n)$-approximate SVD
        (\Cref{def:alpha-svd-problem}).}

    $J_n \gets [n]$\;
\For{$t = n$ down to $1$ \label{line:for-loop-2} \label{line:outer-for-loop-in-svd}} {
        \For{$i \in J_t$}{
            $w^{(t,i)} \gets \0_n$\; $w^{(t,i)}_{i} \gets 1$\; 
            $w^{(t,i)}_{J_t\setminus \{i\}} \gets - [\mM_{\mdot, J_{t} \setminus \{i\}}]^+ \mM_{\mdot,i}$ \label{line:compute-with-pseudo-inverse}\;
        }
        $K_t \gets \argmin_{j \in J_t} \norm{\mM w^{(t,j)}}$ \label{line:pick-min-norm-set}\;
        $\pi(t) \gets \text{any element in } \argmin_{j \in K_t} \norm{w^{(t,j)}}$ \label{line:pick-min-norm-vector}\;

        $v^{(t)} \gets w^{(t, \pi(t))}$\; $q^{(t)} \gets \mM v^{(t)}$\;
        $J_{t - 1} \gets J_t \setminus \{\pi(t)\}$\;
    }

    $\mP \gets (1_{i = \pi(j)})_{i,j \in [n]}$ \label{line:define_P}\;
    $\mR \gets \mP^\T  \begin{bmatrix} v^{(1)} & \dots & v^{(n)} \end{bmatrix}$\;
    $\mQ \gets \begin{bmatrix} q^{(1)} & \dots & q^{(n)} \end{bmatrix}$\;
    $\mB \gets \mP \mR$\;
    \For{$i = 1, \ldots, n-1$}{
        $\mB_{\mdot, i} \gets \mB_{\mdot,i} - \proj_{\im(\mB_{\mdot, > i})}(\mB_{\mdot,i})$ \label{line:projection-computation-in-svd}
        \tcp*{Reverse Gram-Schmidt Orthogonalization}
    }
    $s \gets \0_n$\; \For{$i = 1, \ldots, n$}{ $s_i \gets 2^{n} \norm{\mQ_{\mdot, i}}_1$\; } \Return{$(\mB, s)$}
\ifdefined\SIAMversion
\end{algorithm2e}
\else
\end{algorithm}
\fi

It is not hard to see that the output of \nameref{alg:greedy-svd} satisfies that $\mQ$ is precisely
the result of Gram Schmidt orthogonalization (GSO) on the matrix $\mM \mP$, i.e., the matrix $\mM$
after its columns have been permuted according to $\mP$.

Before we begin the analysis, we introduce some additional notation and
recall some fundamental matrix inequalities. For a matrix $\mM \in \R^{m
\times n}$, we use $\norm{\mM}_2 \coloneqq \sigma_1(\mM)$ to denote the
operator norm, $\norm{\mM}_{\rm F} \coloneqq \sqrt{\sum_{i \in [m], j \in [n]}
\mM_{ij}^2}$ to denote the Frobenius norm, and $\norm{\mM}_\infty \coloneqq
\max_{i \in [m], j \in [n]} |\mM_{ij}|$ to denote the maximum absolute value
of any entry. We will use the inequality $\norm{\mM}_2 \leq \norm{\mM}_{\rm F}$,
and if $\mM$ is invertible, the relation $\sigma_{\min}(\mM) =
\frac{1}{\norm{\mM^{-1}}_2} \geq \frac{1}{\norm{\mM^{-1}}_{\rm F}}$. 

\begin{lemma}\label{lem:approx-svd-properties} 

 The matrices $\mP$, $\mR$, and $\mQ$
    constructed in \nameref{alg:greedy-svd}(Algorithm~\ref{alg:greedy-svd}) have the following properties:
    \begin{enumerate}[label=(\roman*)]
        \item \label{it:P-permutation} $\mP \in \R^{n \times n}$ is a permutation matrix,
        \item \label{it:R_upper-triangular} $\mR \in \R^{n \times n}$ is upper triangular,
        \item \label{it:Q-orthogonal} $\mQ \in \R^{n \times n}$ has orthogonal columns,
        \item \label{it:mat-product} $\mM \mP \mR = \mQ$,
        \item \label{it:Q-norm-monotonicity} $\norm{\mQ_{\mdot,s}} \ge \norm{\mQ_{\mdot,t}}$ for all $s, t
                  \in [n]$ such that $s \le t$,
        \item \label{it:Q-regression} $\norm{\mQ_{\mdot,t}} = \min_{j \in [t]} \min_{x \in \R^t,
                      x_j=1} \norm{\mM \mP_{\mdot,[t]} x}$,
        \item \label{it:R-properties} $\norm{\mR}_\infty = 1$ and $\mR_{t,t} = 1$, $\forall t \in [n]$ .
    \end{enumerate}

\end{lemma}
\begin{proof}

    It is easy to see that $\pi$ is a
    bijection from $[n]$ to $[n]$. Therefore, by definition of $\mP$ in line~\ref{line:define_P} we
    obtain \ref{it:P-permutation}.

    Let $t \in [n]$ and $i \in J_t$, Then, noting that $w^{(t,i)}_i = 1$, we have that
    \begin{equation}
        \label{eq:projection-expression-for-w-t-i}
        \begin{aligned}
            \mM w^{(t,i)} & = \mM_{\mdot, J_t \setminus \{i\}} w^{(t,i)}_{J_t \setminus \{i\}} + \mM_{\mdot,i}                      \\
                          & =  -\mM_{\mdot, J_t \setminus \{i\}} [\mM_{\mdot, J_t \setminus \{i\}}]^+ \mM_{\mdot,i} + \mM_{\mdot,i} \\
                          & =  -\proj_{\im(\mM_{\mdot,J_t \setminus \{i\}})}\mM_{\mdot,i} + \mM_{\mdot,i}                           \\
                          & = \proj_{\im(\mM_{\mdot,J_t \setminus \{i\}})^\perp}\mM_{\mdot,i}\, ,
        \end{aligned}
    \end{equation}
    where the third equality follows by \Cref{prop:pseudoinverse-projection} part~\eqref{eq:pseudo-proj}.
    This furthermore implies, that
    \begin{align}
        \norm{\mM w^{(t,i)}} &= \norm{\proj_{\im(\mM_{\mdot,J_t \setminus \{i\}})^\perp}\mM_{\mdot,i}} = \min_{z \in \im(\mM_{\mdot,J_t \setminus \{i\}})} \norm{\mM_{\mdot,i} + z} \nonumber \\ &= \min_{x \in \R^{J_t},  x_i = 1} \norm{\mM_{\mdot, J_t} x} \, .
        \label{eq:min-characterization}
    \end{align}
    Now, \ref{it:Q-regression} follows by \eqref{eq:min-characterization} and the definition
    of $\mP$, which gives $\mM_{\mdot, J_t} = \mM \mP_{\mdot, [t]}$ as $\pi([t]) = J_t$.
    From here, we also conclude that for any $j \in J_t \setminus \{i\}$ with $w^{(t,i)}_j \neq 0$, we
    have that $[w^{(t,i)}_j]^{-1}w^{(t,i)}_{J_t}$ is a feasible vector for the minimization problem
    on the right most side of \eqref{eq:min-characterization} for index $j$. Therefore, we have that

    \begin{equation}
        \label{eq:bound-norm-Mv}
        \norm{\mM w^{(t,j)}} \le \frac{1}{|w^{(t,i)}_{j}|} \norm{\mM w^{(t,i)}} \, .
    \end{equation}

    We distinguish two cases for the norm of $\mM w^{(t,\pi(t))}$. If $\norm{\mM
        w^{(t,\pi(t))}} > 0$, then for all $j \in J_t$, the inequality \eqref{eq:bound-norm-Mv} gives $|w_j^{(t,\pi(t))}| \le \norm{\mM
        w^{(t,\pi(t))}}/\norm{\mM w^{(t,i)}} \le 1$ by $\pi(t) \in K_t$ and the definition of $K_t$.

    In the other case we have that $\norm{\mM w^{(t,\pi(t))}} = 0$. Then, notice that for any $j \in
        J_t$ such that $w^{(t,\pi(t))}_j \neq 0$, we have by \eqref{eq:bound-norm-Mv} that $\norm{\mM
        w^{(t,j)}} = 0$ and therefore $j \in K_t$. In particular, we have that $\mM_{\mdot, j} \in
        \im(\mM_{\mdot, J_t \setminus \{j\}})$. Therefore, by \Cref{prop:pseudoinverse-projection} part~\eqref{eq:pseudoinverse-regression}, for $j \in \supp(w^{(t,\pi(t)})$ we obtain

    \begin{align}
        \norm{w^{(t,j)}_{J_t \setminus \{j\}}} & = \norm{(\mM_{\mdot, J_t \setminus \{j\}})^+ \mM_{\mdot,j}}                                                                                          \nonumber \\
                                              & = \min\big\{\norm{u} : u \in \R^{|J_t| - 1}, \mM_{\mdot, J_t \setminus \{j\}}u = \proj_{\im(\mM_{\mdot, J_t \setminus \{j\}})}(\mM_{\mdot, j})\big\} \nonumber \\
                                              & = \min\big\{\norm{u} : u \in \R^{|J_t| - 1}, \mM_{\mdot, J_t \setminus \{j\}}u = \mM_{\mdot, j}\big\} \label{eq:min-program-for-u}                   \\
                                              & \le \frac{\norm{w^{(t,\pi(t))}_{J_t \setminus \{j\}}}}{|w^{(t,\pi(t))}_j|}\, , \nonumber
    \end{align}
    where the inequality follows by noting that the vector $u \in \R^{J_t \setminus \{j\}}$ defined as $u = -[w^{(t,\pi(t))}_j]^{-1} w^{(t,\pi(t))}_{J_t \setminus
                \{j\}}$ is a feasible solution to the minimization problem in \eqref{eq:min-program-for-u} as 
                \[
                    \mM_{\mdot, J_t \setminus \{j\}}u 
                    = -[w^{(t,\pi(t))}_j]^{-1} \mM_{\mdot, J_t \setminus \{j\}}  w^{(t,\pi(t))}_{J_t \setminus \{j\}} 
                    = [w^{(t,\pi(t))}_j]^{-1} \mM_{\mdot, j} w^{(t,\pi(t))}_j 
                    = \mM_{\mdot, j}\, .   
                \]

    This gives for all $j \in \supp(w^{(t,\pi(t))}) \subseteq K_t$ that 
    \[|w_j^{(t,\pi(t))}| \le \frac{\norm{w_{J_t \setminus \{\pi(t)\}}^{(t,\pi(t))}}}{\norm{w_{J_t \setminus \{i\}}^{(t,i)}}}  = \frac{
        \sqrt{\norm{w^{(t,\pi(t))}}^2 - 1}}{\sqrt{\norm{w^{(t,i)}}^2 - 1}}   \le 1 \, ,
        \] 
    where the last inequality follows by definition of $\pi(t)$. We can therefore conclude that $\norm{v^{(t)}}_\infty = \norm{w^{(t,\pi(t))}}_\infty = 1$ for all $t
        \in [n]$, recalling that $v^{(t)}_{\pi(t)}=w^{(t,\pi(t))}_{\pi(t)}=1$. 

        Let us now show the statements for $\mR = \mP^\top
        \begin{bmatrix} v^{(1)} & \dots & v^{(n)} \end{bmatrix}$. As $\mP$ is a permutation matrix by \ref{it:P-permutation}, we have
    shown that $\norm{\mR}_\infty = 1$ and furthermore $\mR_{t,t} = 1$, $\forall t \in [n]$, as $\mR_{t,t} = (\mP^\top)_{t, \mdot} v^{(t)} = v^{(t)}_{\pi(t)} = 1$ for all $t \in [n]$. Therefore, \ref{it:R-properties} holds. To show that $\mR$ is upper triangular,
    recall that $\supp(v_t) \subseteq J_t$ and $\pi([t]) = J_t$. Therefore, letting $\pi^{-1}$ denote the inverse permutation, we get $\supp(\mP^\T v^{(t)})
        \subseteq \pi^{-1}(J_t) = [t]$ as needed for \ref{it:R_upper-triangular}.

    Further, note that by construction $\mM v^{(t)}
        = q^{(t)}$, $t \in [n]$. Therefore,
\[
        \mM\mP\mR = \mM\mP\mP^\T \begin{bmatrix} v^{(1)} & \dots & v^{(n)}\end{bmatrix} = \mM \begin{bmatrix} v^{(1)} & \dots & v^{(n)}\end{bmatrix} = \begin{bmatrix}  q^{(1)} & \dots & q^{(n)} \end{bmatrix} = \mQ,
\]
    as needed for \ref{it:mat-product}.  To see
    that $\mQ$ has orthogonal columns, note that for by definition of $q^{(t)}$ we have with
    \eqref{eq:projection-expression-for-w-t-i} that
    \begin{equation}
        \label{eq:projection-expression-for-q}
        q^{(t)} = \proj_{\im(\mM_{\mdot, J_t \setminus \{\pi(t)\}})^\perp}\mM_{\mdot, \pi(t)}\, ,
    \end{equation}
    and so in particular $q^{(t)} \in \im(\mM_{\mdot, J_t \setminus \{\pi(t)\}})^\perp$.
    However, since for $1 \le s < t$ we have that $q^{(s)} \in \im(\mM_{\mdot, J_t \setminus \{\pi(t)\}})$. We conclude that $q^{(t)}$ is
    orthogonal to $q^{(s)}$, proving \ref{it:Q-orthogonal}.

    To show \ref{it:Q-norm-monotonicity}, note that for $1 \le s < t \le n$, the inclusion 
    $\im(\mM_{\mdot, J_s \setminus \{\pi(s)\}}) \subseteq \im(\mM_{\mdot, J_t \setminus
                \{\pi(t)\}})$ holds, and hence
    \begin{align*}
        \norm{\mQ_{\mdot, s}} & = \norm{q^{(s)}}                                                                                                                                                                                  \\
                             & = \norm{\proj_{\im(\mM_{\mdot, J_s \setminus \{\pi(s)\}})^\perp}\mM_{\mdot, \pi(s)}} \quad \left(\text{by \eqref{eq:projection-expression-for-q}}\right)                                                                           \\
                             & \ge \norm{\proj_{\im(\mM_{\mdot, J_t \setminus \{\pi(t)\}})^\perp}\mM_{\mdot, \pi(s)}} \quad \left(\text{$\im(\mM_{\mdot, J_t \setminus \{\pi(t)\}})^\perp \subseteq \im(\mM_{\mdot, J_s \setminus \{\pi(s)\}})^\perp $}\right) \\
                             & = \norm{\mM w^{(t,\pi(s))}} \quad \left(\text{by \eqref{eq:projection-expression-for-w-t-i}}\right)                                                                                                                                \\
                             & \ge \norm{\mM w^{(t,\pi(t))}} \quad \left(\text{by choice of $\pi(t)$ }\right)                                                                                                                                                          \\
                             & = \norm{\proj_{\im(\mM_{\mdot, J_t \setminus \{\pi(t)\}})^\perp}\mM_{\mdot, \pi(t)}} \quad \left(\text{by \eqref{eq:projection-expression-for-w-t-i}}\right)                                                                     \\
                             & = \norm{q^{(t)}} \quad \left(\text{by \eqref{eq:projection-expression-for-q}}\right)                                                                                                                                             \\
                             & = \norm{\mQ_{\mdot,t}}  \, .
    \end{align*}
    This proves the lemma.
\end{proof}

The following proposition shows that the matrix $\mR$ computed by
\nameref{alg:greedy-svd} is ``well-conditioned''.

\begin{proposition} Let $\mR \in \R^{n \times n}$ be an upper triangular matrix with diagonal $\1_n$
    and with entries of absolute value at most $1$. Then, $\mR^{-1}$ is upper triangular with
    diagonal $\1_n$ and $|\mR^{-1}_{ij}| \leq \max \{1, 2^{j-i-1}\}$, $i,j \in [n]$, $i \leq j$. In
    particular, $\norm{\mR^{-1}}_2 \leq 2^n$.
    \label{prop:well-conditioned}
\end{proposition}
\begin{proof}

    The proof goes by induction on $n$. The base case $n=1$ is trivial, so assume $n > 1$. Then, it
    is directly verifiable that
    \[
        \mR^{-1} = \begin{bmatrix} \mR_{[n-1],[n-1]}^{-1} & -\mR_{[n-1],[n-1]}^{-1} \mR_{[n-1],n} \\ \0_{n-1}^\T & 1 \end{bmatrix}.
    \]
    By the induction hypothesis, it holds that $|\mR^{-1}_{ij}| \le \max\{1, 2^{j-i-1}\}$, $i,j \in
        [n-1]$, $i \leq j$. Note that $\mR^{-1}$ is upper triangular as claimed. We now prove the
    coefficient bound for $\mR^{-1}_{i,n}$, for $i \leq n$. If $i=n$, $\mR^{-1}_{i,i} = 1$, as
    needed. For $i < n$, using that $\mR$ has entries at most $1$ we get that
    \begin{align*}
        |\mR^{-1}_{in}| & = \big|(\mR^{-1}_{[n-1],[n-1]} \mR_{[n-1],n})_i\big| = \big|\sum_{j=i}^{n-1} \mR^{-1}_{i,j} \mR_{j,n}\big| \leq \sum_{j=i}^{n-1} |\mR^{-1}_{i,j}| \\
                        & \leq \sum_{j=i}^{n-1} \max \{1,2^{j-i-1}\} = 1 + \sum_{l=0}^{n-i-2} 2^l = \max \{1, 2^{n-i-1}\},
    \end{align*}
    as needed. For the last statement, by a direct calculation
    \[
        \norm{\mR^{-1}}_2^2 \leq \norm{\mR^{-1}}_{\rm F}^2 \leq \sum_{i=1}^n \sum_{j=i}^n \max \{1, 4^{j-i-1}\} = \frac{2}{3} n - \frac{1}{9} + \frac{4^n}{9} \leq 4^n,
    \]
    as needed.
\end{proof}

We now show how \nameref{alg:greedy-svd} produces a
$(n2^n)$-approximation of the singular values and corresponding subspaces of $\mM$.

\begin{lemma}
    \label{lem:approx-svd}
  Let $\mM \in \R^{m \times n}$ be a matrix and $\mP$, $\mR$ and $\mQ$ be the matrices constructed in
    \nameref{alg:greedy-svd} applied to $\mM$. Let $V_{\geq i} \coloneqq \im(\mP\mR_{\mdot, \ge i})$ and
    similarly $V_{\leq i} \coloneqq \im(\mP\mR_{\mdot, \le i})$ Then, we have that
    \[
        2^{-n}\max_{v \in V_{\geq i} \setminus \{\0_n\}} \frac{\norm{\mM v}}{\norm{v}} \le \norm{\mQ_{\mdot, i}} \le n  \min_{v \in V_{\leq i} \setminus \{\0_n\}} \frac{\norm{\mM v}}{\norm{v}}\,.
    \]
    Furthermore, $\norm{\mQ_{\mdot, i}} \le \sqrt{n} \sigma_{i}(\mM)$.
\end{lemma}
\begin{proof}
    For the lower bound on $\norm{\mQ_{\mdot ,i}}$, we have that
    \begin{align*}
        \max_{v \in V_{\geq i}} \frac{\norm{\mM v}}{\norm{v}} & = \max_{x \in \R^{n-i+1} \setminus \{\0\}} \frac{\norm{\mM \mP \mR_{\mdot,\geq i}x}}{\norm{\mP \mR_{\mdot,\geq i}x}}                                                                                      \\
                                                            & = \max_{x \in \R^{n-i+1} \setminus \{\0\}} \frac{\norm{\mQ_{\mdot,\geq i} x}}{\norm{\mR_{\mdot,\geq i}x}}                                                                                        \\
                                                            & \leq \max_{x \in \R^{n-i+1} \setminus \{\0\}} \norm{\mQ_{\mdot,i}} \frac{\norm{x}}{\norm{\mR_{\mdot, \geq i}x}} \quad \left(\text{\Cref{lem:approx-svd-properties} \ref{it:Q-orthogonal}, \ref{it:Q-norm-monotonicity}}\right) \\
                                                            & \leq \norm{\mQ_{\mdot,i}} \norm{\mR^{-1}}                                                                                                                                                        \\
                                                            & \leq 2^n  \norm{\mQ_{\mdot,i}}\, , \quad \left(\text{\Cref{prop:well-conditioned}}\right)
    \end{align*}
    proving the inequality.

    For the upper bound on $\norm{\mQ_{\mdot, i}}$, note that
     \begin{align*}
         \min_{v \in V_{\leq i}} \frac{\norm{\mM v}}{\norm{v}} & = \min_{x \in \R^i \setminus \{\0\}} \frac{\norm{\mM\mP\mR_{\mdot,\leq i}x}}{\norm{\mP\mR_{\mdot,\leq i}x}}                                                                                    \\
                                                             & = \min_{x \in \R^i \setminus \{\0\}} \frac{\norm{\mQ_{\mdot,\leq i} x}}{\norm{\mR_{\mdot,\leq i}x}}                                                                                      \\
                                                             & \ge \min_{x \in \R^i \setminus \{\0\}} \norm{\mQ_{\mdot,i}} \frac{\norm{x}}{\norm{\mR_{\mdot,\leq i}x}} \quad \left(\text{\Cref{lem:approx-svd-properties} \ref{it:Q-orthogonal}, \ref{it:Q-norm-monotonicity}}\right) \\
                                                             & \ge \norm{\mQ_{\mdot,i}} \norm{\mR}_2^{-1}                                                                                                                                                 \\
                                                             & \geq \frac{\norm{\mQ_{\mdot,i}}}{n}\, . \quad \left(\text{$\norm{\mR} \leq n$ by \Cref{lem:approx-svd-properties} \ref{it:R-properties}}\right)
     \end{align*}
    It remains to prove the furthermore statement. By~\Cref{prop:min-max-sing}, there exists a subspace $U_i \subseteq \R^n$ with $\dim(U_i) =
        n-i+1$ such that $\max_{x \in U_i \setminus \{\0\}} \frac{\norm{\mM \mP x}}{\norm{x}} = \sigma_i(\mM \mP) = \sigma_i(\mM)$. By dimension counting, there exists $\bar{x} \in U_i \setminus \{\0_n\}$ such that $\supp(\bar{x})
        \subseteq [i]$. Therefore,
    \[
        \sigma_i(\mM) \geq \frac{\norm{\mM \mP \bar{x}}}{\norm{\mP \bar{x}}} \geq \frac{1}{\sqrt{n}}
        \frac{\norm{\mM \mP \bar{x}}}{\norm{\bar{x}}_\infty} \ge \frac{1}{\sqrt{n}} \min_{j \in [i]} \min_{x \in \R^{i}, x_j = 1} \norm{\mM \mP_{\mdot,[i]} x} = \frac{\norm{\mQ_{\mdot,i}}}{\sqrt{n}},
    \]
    where the last equality follows form \Cref{lem:approx-svd-properties} \ref{it:Q-regression}.
    This concludes the proof.
\end{proof}

\begin{theorem}
    \label{thm:approx-svd-alg-correct}
    Let $\mM \in \R^{m \times n}$ be a matrix. The output of
    \nameref{alg:greedy-svd} on $\mM$ is a $(n2^n)$-\textsc{SVD} approximation of
    $\mM$ in the sense of \Cref{def:alpha-svd-problem}. Furthermore, the algorithm runs in strongly
    polynomial time $O(n^2 \max(m,n)^3)$.
\end{theorem}
\begin{proof}
    First, note that $\mB$ is an orthogonal basis of $\R^n$. Furthermore, $\im(\mB_{\ge i}) =
        V_{\ge i}$. Using \Cref{lem:approx-svd}, we therefore have that
    \begin{equation}
        \max_{v \in \im(\mB_{\geq i}) \setminus \{\0_n\}} \frac{\norm{\mM v}}{\norm{v}} = \max_{v \in V_{\geq i} \setminus \{\0_n\}} \frac{\norm{\mM v}}{\norm{v}} \le 2^n \norm{\mQ_{\mdot, i}} \le 2^n\sqrt{n} \sigma_i(\mM)\, .
    \end{equation}
    Using the variational characterization of singular
    values~\eqref{eq:min-max-sing} we obtain that
    \begin{equation}
        \sigma_{i}(\mM) \le \max_{v \in V_{\geq i} \setminus \{\0_n\}} \frac{\norm{\mM v}}{\norm{v}} \le 2^n  \norm{\mQ_{\mdot, i}} \\
    \end{equation}
    Together with \Cref{lem:approx-svd}, this proves that
    \begin{equation}
        \frac{1}{\sqrt{n}}\norm{\mQ_{\mdot, i}} \leq \sigma_{i}(\mM) \le 2^n \norm{\mQ_{\mdot, i}}\, .
    \end{equation}
    In particular, by choice of $s_i = 2^{n} \norm{\mQ_{\mdot, i}}_1$ for $i \in [n]$, we get that
    \begin{equation}
        \frac{1}{2^{n}n}s \leq \sigma(\mM) \le s\, ,
    \end{equation}
    as needed for \Cref{def:alpha-svd-problem}. The choice of $\norm{\cdot}_1$ instead of $\norm{\cdot}_2$ in definition of $s$ is necessary to ensure that the output of the algorithm has polynomial bit complexity.

    For the strongly polynomial guarantees, it remains to check that all the intermediate iterates have polynomial bit complexity. To see this, note first that the columns of $\mR$ correspond to solutions of a well-described linear system in the original matrix $\mM$, and thus has bit-complexity polynomially related to that of $\mM$. From here, $\mB$ corresponds to a ``reverse order'' GSO applied to $\mP\mR$ (or again, the columns are solutions to well-described linear systems in $\mP\mR$), and hence $\mB$ also has bit-complexity polynomially related to that of $\mM$ (see, e.g.,~\cite[Section 1.4]{gls} for a thorough overview of the bit-complexity of GSO). 
    
    Let us now prove the statment about the running time. Using the characterization in \Cref{prop:pseudo-compute} and further noting that the required rank factorization can be computed in $O(\max(m,n)^3)$, we are able to compute the pseudoinverse of an $m \times n$ matrix as well as the projection onto the image of an $m \times n$ matrix in time $O(\max(m,n)^3)$.  Therefore, a single execution of both Lines~\ref{line:compute-with-pseudo-inverse} and \ref{line:projection-computation-in-svd} of Algorithm~\ref{alg:greedy-svd} takes $O(\max(m,n)^3)$. The repeated execution of these lines dominates the overall running time of the algorithm, hence the overall running time of the algorithm can be bounded by $O(n \cdot n \cdot \max(m,n)^3) = O(n^2 \max(m,n)^3)$. 
    
    This completes the proof of the theorem. \end{proof}

\subsection{Boosting via the Power Method}
\label{sec:boosting}

In this section, we demonstrate a method to enhance the approximation ratio of any algorithm for the
$\SVDA$-\textsc{SVD} problem (as in \Cref{def:alpha-svd-problem}) to a $(1 + \eps)$-\textsc{SVD}
approximation. This is possible provided $\SVDA= \SVDA(\dim(\mM))$, i.e., the approximation ratio $\SVDA$ is a function of the dimension of the matrix only and does not depend on the
matrix's conditioning. The strategy involves using a low accuracy
$\SVDA$-\textsc{SVD} approximation to calculate an approximate SVD of the matrix $\mS \coloneqq
    (\mM^\top \mM)^p$. Here, a large power $p = O(n/\eps \cdot \log(\SVDA/\eps))$ is used to achieve the
desired error $\eps$. We will then show that the approximate SVD of $\mS$ is in fact a $(1+\eps)$-approximation of the SVD of $\mM$.

\begin{lemma}
\label{lem:boosting-alpha-svd}
Let $0 < \eps \leq 1/2$ and $\SVDA \geq \eps$. Then, there is a reduction
from $(1+\eps)$-\textsc{SVD} on an $m \times n$ matrix to
$\SVDA$-\textsc{SVD} on one $n \times n$ and one $m \times n$ matrix that
runs in time $O(m n^2+ n^3 \log(\log(2 + \SVDA)/\eps))$, and
requires space polynomial in $m,n,\log(2+\SVDA)/\eps$ and the bit-encoding length of
$\eps,\SVDA$ and the input matrix. 
\end{lemma}
\begin{proof}
Let $\mM \in \R^{m \times n}$ be a $m \times n$ matrix. The reduction proceeds as follows. Compute $\mS \coloneqq (\mM^\top \mM)^p$ for $p \coloneqq \lceil
\log_{1+\eps/3}(\SVDA)/2 \rceil$. Let $(\mB,s)$ be
the output of $\SVDA$-\textsc{SVD} on $\mS$ and $(\cdot,s')$ be the output of
$\SVDA$-\textsc{SVD} on $\mM$. For each $i \in [n]$, apply binary search to
compute $\hat{s}_i = s'_i(1+\eps/3)^{-2k_i}$, $-1 \leq k_i \leq 2p$, satisfying $s_i^{1/(2p)} \leq \hat{s}_i \leq
(1+\eps/3) s_i^{1/(2p)}$. Finally, return $(\mB,\hat{s})$.  

We first proof correctness of the reduction. Consider a singular value
decomposition $\mM = \sum_{i = 1}^{r} \sigma_i(\mM)$ $u_i v_i^\top$ of $\mM$,
where $r = \rank(\mM)$. Fix $i \in [n]$. By definition of $\mB$, we have for all $x \in \im(\mB_{\ge i}) \setminus \{\0_n\}$ that $\norm{\mS x}_2/\norm{x}_2 \leq s_i \leq \SVDA \sigma_i(\mS)$, which implies that
    \begin{align}
        \sum_{j = 1}^r \sigma_j(\mM)^{4p}\langle v_j,x \rangle^2 &= 
        \norm{\sum_{j = 1}^r \sigma_j(\mM)^{2p} v_j \langle v_j,x \rangle}^2 =
        \norm{\mS x}^2  \nonumber \\ &\le s_i^2 \norm{x}^2 \leq \SVDA^2 \sigma_i(\mS)^2 \norm{x}^2 = \SVDA^2 \sigma_i(\mM)^{4p} \norm{x}^2.
        \label{eq:boosting}
    \end{align}
Therefore, $\forall x \in \im(\mB_{\geq i}) \setminus \{\0_n\}$, we have that 
    \begin{align}
        \norm{\mM x}^{4p} & = \left(\sum_{j=1}^r \sigma_j(\mM)^2 \langle v_j, x\rangle^2 \right)^{2p}
        \nonumber                                                                                                                                    \\
                         & = \left(\norm{x}^2 \cdot \sum_{j = 1}^r \sigma_j(\mM)^2 \frac{\langle v_j, x\rangle^2}{\norm{x}^2}\right)^{2p} \nonumber    \\
                         & \le \norm{x}^{4p} \sum_{j = 1}^r \sigma_j(\mM)^{4p} \frac{\langle v_j, x\rangle^{2}}{\norm{x}^2}  \quad \left(\text{Jensen's inequality}\right) \nonumber \\
                         &\leq \norm{x}^{4p} s_i^2 \nonumber \\
                         &\leq \norm{x}^{4p} \cdot \SVDA^2 \sigma_i(\mM)^{4p} \left(\text{by \eqref{eq:boosting}}\right) \, ,
    \end{align}
    and so
    \begin{equation}
        \label{eq:boosting-2}
        \max_{x \in \im(\mB_{\ge i}) \setminus \{\0_n\}} \frac{\norm{\mM x}}{\norm{x}} \le s_i^{1/(2p)} \leq \SVDA^{1/(2p)} \sigma_i(\mM) \le (1+\eps/3) \sigma_i(\mM) \, .
    \end{equation}

From the above, note that $(\mB,s^{1/(2p)})$ is already a solution to
$(1+\eps)$-\textsc{SVD} on $\mM$. Unfortunately, we cannot compute the vector
$s^{1/(2p)}$ in strongly polynomial time, which motivates the binary search
used to construct the approximation $\hat{s}$. By the guarantees of $s$ and
$s'$, we have that 
\[
s'(1+\eps/3)^{-2p} \leq \frac{s'}{\SVDA} \leq \sigma(\mM) \leq s^{1/(2p)} \leq (1+\eps/3) \sigma(\mM) \leq s'(1+\eps/3),  
\]
and thus the claimed range for $k_i$, $i \in [n]$, in the binary search is
correct. From here, by the guarantees on $\hat{s}$, since $\eps \in (0,1/2]$,
we have that 
\[
s^{1/(2p)} \leq \hat{s} \leq (1+\eps/3) s^{1/(2p)} 
\leq (1+\eps/3)^2 \sigma(\mM) \leq (1+\eps) \sigma(\mM).
\]
Therefore, $(\mB,\hat{s})$ is a valid solution to $(1+\eps)$-\textsc{SVD}.

We now justify the running time of the reduction. Since $p =
\Theta(\log(2+\SVDA)/\eps)$, we can compute $\mS = (\mM^\top \mM)^p$ in time
$O(mn^2+ n^{3}\log(p)) = O(m n^2+ n^3 \log(\log(2 + \SVDA)/\eps))$ by repeated
squaring of $\mM^\top \mM$. For each $i \in [n]$, the corresponding binary
search requires $O(\log p)$ comparisons, where each comparison are of the
form $s_i \leq (s')^{2p}(1+\eps/3)^{-2k}$ for $-1 \leq k \leq 2p$.
Lastly, the reduction makes two $\SVDA$-\textsc{SVD} calls, one to $\mM \in
\R^{m \times n}$ and one to $\mS \in \R^{n \times n}$.     

It remains to show the claim about the space needed by the algorithm. To this
end, it suffices to show that the matrix $\mS$ has a bit-encoding length
polynomial in $\log(2+\rho)/\eps^{-1}$ and the bit-encoding of $\mM$. To this end, let the
input matrix $\mM$ be given as $\mM_{i,j} = p_{ij}/q_{ij}$ for $i \in [m], j
\in [n]$ with $p_{ij} \in \mathbb{Z}$ and $q_{ij} \in \mathbb{Z}_{> 0}$.  We
can write $\mM = k^{-1}\bar \mM$, where $k =
\operatorname{lcm}_{i,j}{q_{ij}}$ and $\bar \mM_{ij} = |p_{ij}/q_{ij}| \cdot
\operatorname{lcm}_{i,j} q_{ij} \in \mathbb{Z}$, where $\operatorname{lcm}$
denotes the least common multiple. In particular, $\bar \mM \in \mathbb{Z}^{m
\times n}$ and $\norm{\bar \mM}_{\rm F} \le \sum_{ij} |p_{ij}| \cdot \prod_{ij}
q_{ij}$.  Therefore, we can write $\mS = (\mM^\top \mM)^p  = k^{-{2p}} (\bar
\mM^\top \bar \mM)^p$, where $\norm{(\bar \mM^\top \bar \mM)^p}_{\rm F} \le
\norm{\bar \mM}^{2p}_{\rm F} \le (\sum_{ij} |p_{ij}| \prod_{ij} |q_{ij}|)^{2p}$. In
particular, the entries of $\mS$ can be written in a number of bits that is
polynomial in $p$ and the number of bits of $\mM$. Recalling that $p =
O(\log(2+\SVDA)/\eps)$ finishes the proof.
\end{proof}

\Cref{thm:approx-svd} is a now a simple consequence of \Cref{lem:boosting-alpha-svd} and \Cref{thm:approx-svd-alg-correct}.
\thmapproxsvd*
\begin{proof}
Let $\eps = \min \{\SVDA-1, 1/2\}$, and apply the reduction in
\Cref{lem:boosting-alpha-svd} with parameters $(\eps,n2^n)$, where we use
\Cref{thm:approx-svd-alg-correct} to implement the $(n2^n)$-\textsc{SVD} algorithm. The
space guarantees now follow from the guarantees of
\Cref{thm:approx-svd-alg-correct} and \Cref{lem:boosting-alpha-svd}. For the running time, observe that \Cref{thm:approx-svd-alg-correct} requires $O(n^2 \max(m,n)^3)$ operations, while the application of  \Cref{lem:boosting-alpha-svd} requires $O(mn^2 + n^3 \log(\log (2 + n2^n)/\min(\SVDA - 1, \frac{1}{2}))) = O(mn^2 + n^3\log(n + \frac{1}{\SVDA - 1}))$ operations. The sum of these running times is  $O(n^2\max(m,n)^{3} \log(n + \frac{1}{\SVDA - 1}))$, which completes the proof.
\end{proof}
 \section{Self-Concordant Barrier Central Paths}\label{sec:self-concordant}

The theory of self-concordant barriers, introduced by Nesterov and Nemirovski~\cite{Nesterov1994}, is used to describe interior point methods in a general setting. In this section, we show that the central paths obtained from self-concordant barriers relate to the max central path, as in the case of the log-barrier. From this, we deduce a proof of \Cref{th:coro-lower-bound}.

We follow the presentation of self-concordant barriers made by Renegar~\cite{Renegar01}. We consider a $\cal C^2$ function $f \colon D \to \R$ with open convex domain $D \subseteq E$ where $E = x^0 + L$ is an affine subspace of $\R^n$. We recall that, for $x \in D$, the gradient $\nabla f(x)$ of $f$ at $x$ is the element of $L$ such that for all $h \in L$, $\pr{\nabla f(x)}{h} = \lim_{t \to 0^+} \frac{f(x + t h) - f(x)}{t}$. Moreover, the Hessian $\nabla^2 f(x)$ of $f$ at $x$ is the self-adjoint linear operator from $L$ to itself such that for all $h, k \in L$, $\pr{\nabla^2 f(x) h}{k} = \lim_{t \to 0^+} \frac{\pr{\nabla f(x + t k) - \nabla f(x)}{h}}{t}$. We assume that $\nabla^2 f(x)$ is positive definite, \ie, $\pr{\nabla^2 f(x) h}{h} > 0$ for all $h \in L$, $h \neq 0$. In particular, $f$ is a strictly convex function on $D$. Given $x \in D$, we introduce $\norm{\cdot}_x$, the norm over $L$ associated with the Hessian $\nabla^2 f(x)$, \ie, $\norm{z}_x \coloneqq \big(\pr{\nabla^2 f(x) z}{z}\big)^{1/2}$ for all $z \in L$. We define $\cal B_x(y; r) \coloneqq \{z \in D : \norm{y - z}_x < r\}$, the open ball with center $y \in D$ and radius $r \geq 0$ with respect to the norm $\norm{\cdot}_x$. 

The function $f$ is  \emph{(strongly nondegenerate) self-concordant} if, for all $x \in D$, we have $\cal B_x(x; 1) \subseteq D$, and for all $y \in \cal B_x(x; 1)$ and nonzero vector $v \in L$,
\[
1 - \norm{y - x}_x \leq \frac{\norm{v}_x}{\norm{v}_y} \leq \frac{1}{1 - \norm{y - x}_x} \, .
\]
We define the \emph{complexity value} as
\[
\vartheta_f \coloneqq \sup_{x \in D} \sup_{\norm{h}_x \leq 1} \pr{\nabla f(x)}{h}^2\, .
\]
If $\vartheta_f< \infty$, then $f$ is called a \emph{self-concordant barrier}. 

Let $f$ be self-concordant barrier over the (relative) interior of the polyhedron $\Primal$. The \emph{central path} associated with $f$ is the function that maps $\mu > 0$ to the unique minimizer of the function $x \mapsto \pr{c}{x} + \mu f(x)$. When $f$ is (the restriction to $E$ of) the log-barrier $x \mapsto -\sum_{i = 1}^n \log (x_i)$, the scalar $\mu$ is used as the parameter of the central path. In the case of general barriers, it is more convenient to parametrize the central path by the optimality gap $g$. This is made possible by the fact that, if any two points of the central path have the same optimality gap $g$, they both minimize the (strictly convex) function $f$ over $\{ x \in \Pcal \colon \pr{c}{x} = g \}$, thus they are equal. Following this, we denote by $x\sccp(g)$ the unique point of the central path induced by the barrier $f$ with optimality gap $g > 0$. 

In the next statement, we generalize \Cref{lem:cp-max-cp} to the case of the central path associated with the self-concordant barrier $f$. 
\begin{proposition}\label{prop:mcp-sccp}
For all $g > 0$, we have $\frac{1}{2(2\vartheta_f + 1)} \mx(g) < x\sccp(g) \leq \mx(g)$. 
\end{proposition}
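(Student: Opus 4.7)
For the upper bound $x\sccp(g) \leq \mx(g)$, I would simply note that $x\sccp(g) \in \Pcal$ and $\pr{c}{x\sccp(g)} = v^\star + g$, so $x\sccp(g) \in \Pcal_g$, and hence every coordinate is bounded above by $\mx_i(g)$.

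For the lower bound, the plan is to fix $i \in [n]$, set $x \coloneqq x\sccp(g)$, and establish $\mx_i(g) < 2(2\vartheta_f + 1)\, x_i$ by combining two standard properties of self-concordant barriers at $x$. The first is the Dikin ellipsoid inclusion: the set $\{y \in E : \|y-x\|_x < 1\}$ with local norm $\|u\|_x \coloneqq \sqrt{\pr{u}{\nabla^2 f(x) u}}$ sits inside $\interior(\Pcal)$. The second, coming from the optimality condition $\nabla f(x) + c/\mu \perp L$ defining the central path, is that $x$ is the analytic center of the slice $\Pcal_H \coloneqq \Pcal \cap H_g$, where $H_g \coloneqq \{y : \pr{c}{y} = v^\star + g\}$, with respect to the restricted barrier $f|_{H_g}$. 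Since affine restriction preserves the complexity value, $f|_{H_g}$ is a self-concordant barrier of complexity at most $\vartheta_f$, and I would then invoke the classical bound of Nesterov--Nemirovski (Renegar's Theorem~2.3.4) to conclude $\|y - x\|_x \leq \vartheta_f + 2\sqrt{\vartheta_f}$ for every $y \in \Pcal_H$.

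These two ingredients combine via a reflection argument: given $y \in \Pcal_H$, set $R \coloneqq \|y-x\|_x$ and $z_t \coloneqq x - t(y-x)/R$ for $t \in (0,1)$. The Dikin inclusion puts $z_t \in \interior(\Pcal)$, in particular $z_t \geq \0$. Reading off the $i$-th coordinate gives $x_i \geq t(y_i - x_i)/R$, so $y_i \leq (1 + R/t)\, x_i$, and letting $t \to 1^-$ yields $y_i \leq (1 + \vartheta_f + 2\sqrt{\vartheta_f})\, x_i \leq 2(\vartheta_f+1)\, x_i < 2(2\vartheta_f+1)\, x_i$ using $(1 + \sqrt{\vartheta_f})^2 \leq 2(\vartheta_f + 1)$.

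The hard part will be extending this bound from $y \in \Pcal_H$ to an arbitrary maximizer of $y_i$ over $\Pcal_g$, because such a maximizer need not satisfy $\pr{c}{y} = v^\star + g$. My plan is to exploit the concavity and monotonicity of $g \mapsto \mx_i(g)$ established in \Cref{lem:simplex-main}: if $\mx_i$ is strictly increasing in a right-neighborhood of $g$, the maximum lies on $H_g$ and the slice argument applies; otherwise $\mx_i$ is locally constant and I would apply the slice bound at nearby values $g' > g$ where strict monotonicity resumes, then pass to the limit using the continuity of $g \mapsto x\sccp(g)$ (from the implicit function theorem applied to the central path equation). The factor-of-$2$ slack between the bound $2(\vartheta_f + 1)$ obtained on the slice and the target $2(2\vartheta_f + 1)$ is designed to absorb the loss from this limiting procedure, including the degenerate case when the argmax is an extreme point of $\Pcal$ itself; that case would be handled via the augmented $(\vartheta_f+1)$-self-concordant barrier $\tilde f(y) \coloneqq f(y) - \log(v^\star + g - \pr{c}{y})$ on $\Pcal_g$, whose analytic center lies on the central path at some smaller gap $g' < g$ and admits an analogous Dikin-plus-reflection analysis.
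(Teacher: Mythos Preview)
Your upper bound matches the paper. For the lower bound, your slice-based approach has a genuine gap that the paper avoids with a single stroke. The paper applies Renegar's Theorem~2.3.4 in its full ``half-space'' form: for a self-concordant barrier $f$ on domain $D$ and any $x \in D$, every $y \in \bar D$ with $\pr{\nabla f(x)}{y-x} \geq 0$ satisfies $\|y-x\|_x \leq 4\vartheta_f + 1$. Since at $x = x\sccp(g)$ the gradient $\nabla f(x)$ equals a negative multiple of $c$ modulo $L^\perp$, this condition is precisely $\pr{c}{y} \leq v^\star + g$, so the bound holds for \emph{every} $y \in \Pcal_g$, not just those on the slice $H_g$. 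Combined with the Dikin-ellipsoid reflection you describe (which the paper cites as \cite[Lemma~4]{AGV22}), this immediately gives $\mx(g) < 2(2\vartheta_f+1)\,x\sccp(g)$.

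Your detour through the analytic-center version on the slice $\Pcal \cap H_g$ forces the case split you identify, and neither fallback closes. If $\mx_i$ fails to be strictly increasing in a right-neighborhood of $g$, then by concavity and monotonicity it is constant on all of $[g,\infty)$, so strict monotonicity never ``resumes'' at any $g' > g$ and the limiting argument is vacuous. The augmented barrier $\tilde f(y) = f(y) - \log(v^\star + g - \pr{c}{y})$ does bound $\Pcal_g$ in a local ball, but that ball is centered at the analytic center $\tilde x = x\sccp(g')$ for some $g' < g$; comparing $x\sccp_i(g')$ back to $x\sccp_i(g)$ would require exactly the coordinate-wise control you are trying to establish, so the argument is circular. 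The fix is simply to invoke the stronger (gradient-halfspace) form of Renegar's theorem from the outset, which makes the slice restriction and both fallbacks unnecessary.
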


\begin{proof}
The right hand side inequality follows from $x\sccp(g) \in \Pcal_g$. 
Using~\cite[Theorem~2.3.4]{Renegar01} and the fact that $\mu \nabla f(x\sccp(g)) + c \in L^\perp$ for some $\mu > 0$, we know that $\Pcal_g \subseteq \cal B_{x\sccp(g)} (x\sccp(g); 4 \vartheta_f + 1)$. Since $\Pcal_g \subseteq \R^n_{\geq 0}$, \cite[Lemma~4]{AGV22} shows that $y < 2 (2 \vartheta_f + 1) x\sccp(g)$ for all $y \in \cal B_{x\sccp(g)} (x\sccp(g); 4 \vartheta_f + 1)$. We deduce that $\mx(g) < 2 (2 \vartheta_f + 1) x\sccp(g)$.
\end{proof}

We are now ready to prove \Cref{th:coro-lower-bound}.

\begin{proof}[Proof (\Cref{th:coro-lower-bound})]
Let $x^{(0)}, \dots, x^{(T)}$ be the successive iterates of the IPM. We denote by $g^{(0)}, \dots, g^{(T)}$ their respective optimality gap, where  $g^{(0)} \ge g_0$ and $g^{(T)} \le g_1$.

Let $i \in [n]$. We claim that, $\forall k \in [T]$, the segment $[(g^{(k-1)}, x^{(k-1)}_i), (g^{(k)}, x^{(k)}_i)]$ is included in the neighborhood $\mnbp_{i}(\theta')$ for $\theta' = 1 - \frac{1 - \theta}{2(2\vartheta_f + 1)}$. Indeed, for all $\lambda \in [0,1]$, the point $x = (1-\lambda) x^{(k-1)}+ \lambda x^{(k)}$ has optimality gap $g = (1-\lambda) g^{(k-1)} + \lambda g^{(k)}$. As $x \in \Pcal$, we have $x_i \leq \mx_i(g)$. Moreover, since $x \geq (1-\theta) x\sccp(g)$, \Cref{prop:mcp-sccp} ensures that $x_i > \frac{1-\theta}{2(2\vartheta_f + 1)} \mx_i(g)$. Hence, $(g,x) \in \mnbp_{i}(\theta')$. We deduce by \Cref{lemma:slc2} that $T \geq \SLC^{\mathrm{p}}_{\theta',i}(g_1,g_0)$. 

Consequently, we have $\sum_{i = 1}^n \SLC^{\mathrm{p}}_{\theta',i}(g_1,g_0) \leq n T$. By \Cref{th:main_upper_bound-slc}, the number $T'$ of iterations performed by the algorithm~\nameref{alg:subspace_ipm} is in 
\begin{equation}
O\left(\sqrt{n}\log\left(\frac{n}{1-\theta}\right)\min\left\{\sum_{i=1}^n \SLC^{\mathrm{p}}_{\theta,i}(g_1,g_0),\sum_{i=1}^n \SLC^{\mathrm{d}}_{\theta,i}(g_1,g_0)\right\}\right)\,. 
\end{equation}
This implies that
\[
T' = O\left(n^{1.5} \log \left(\frac{n\vartheta_f}{1-\theta}\right) T \right)\, .
\]
\end{proof}

\Cref{th:coro-lower-bound} relates the iteration complexity of general
barrier IPMs traversing the wide neighborhood $\cal N\sccp(\theta)$ as
defined in~\eqref{eq:self_concordant_neighborhood} to the iteration
complexity of our IPM. Our wide neighborhood definition is however not
entirely standard, and in particular, it does not map directly to the
more standard neighbhorhoods such as those based on the Newton decrement. Furthermore, it does not obviously capture IPMs using primal-dual neighborhoods.

In~\cite[Section~4.1]{AGV22} however, it has been shown that all the neighborhoods of self-concordant central paths used in the literature (including that of the log-barrier central path like the projections of $\cal N^2(\beta)$, $0 < \beta < 1/4$, and $\cal N^{-\infty}(\theta)$, $0 < \theta < 1$, to primal variables) are actually contained in \emph{multiplicative neighborhoods}, \ie, sets of form 
\[
\cal M\sccp(\underline m, \overline m) = \big\{ x \in \Pcal : \exists g\,  \enspace  \underline m x\sccp(g) \leq x \leq \overline m x\sccp(g) \big\} \qquad (0 < \underline m \leq 1 \leq \overline m)  \, .
\]
The latter turn out to be essentially equivalent to our definition of wide
neighborhoods in~\eqref{eq:self_concordant_neighborhood}. This is justified
by the following lemma:
\begin{lemma}
We have $\cal N\sccp(\theta) \subseteq \cal M\sccp\big(1-\theta, 2(2\vartheta_f + 1)\big)$ for all $\theta \in (0,1)$, and $\cal M\sccp(\underline m, \overline m) \subseteq \cal N\sccp\big(\frac{\underline m}{2(2\vartheta_f + 1) \overline m}\big)$ for all $0 < \underline m \leq 1 \leq \overline m$.
\end{lemma}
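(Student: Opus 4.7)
The lemma comprises two inclusions, which I would prove separately. For the first inclusion $\cal N\sccp(\theta) \subset \cal M\sccp(1-\theta, 2(2\vartheta_f+1))$, fix $x \in \cal N\sccp(\theta)$ with gap $g = \pr{c}{x} - v^\star$. The defining inequality of $\cal N\sccp(\theta)$ immediately gives $x \geq (1-\theta) x\sccp(g)$, while $x \in \Pcal_g$ yields $x \leq \mx(g)$. \Cref{prop:mcp-sccp} then bounds $\mx(g) < 2(2\vartheta_f+1) x\sccp(g)$, providing both multiplicative bounds at the witness parameter $g$ itself, so $x \in \cal M\sccp(1-\theta, 2(2\vartheta_f+1))$.

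For the second inclusion, take $x \in \cal M\sccp(\underline m, \overline m)$ with witness $g'$ so that $\underline m x\sccp(g') \leq x \leq \overline m x\sccp(g')$, and let $g$ be its actual gap. My strategy is to bound $x\sccp(g)$ above by a multiple of $x\sccp(g')$ and combine with $x \geq \underline m x\sccp(g')$. \Cref{prop:mcp-sccp} gives $x\sccp(g) \leq \mx(g)$ and $x\sccp(g') > \mx(g')/(2(2\vartheta_f+1))$, reducing the task to comparing $\mx(g)$ with $\mx(g')$. If $g \leq g'$, monotonicity of $\mx$ (\Cref{lem:simplex-main}) yields $\mx(g) \leq \mx(g')$, and the chain closes with the stronger constant $\frac{\underline m}{2(2\vartheta_f+1)}$. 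If $g > g'$, subhomogeneity (\Cref{lemma:mcp_subhomogeneous}) gives $\mx(g) \leq (g/g') \mx(g')$, and once $g \leq \overline m g'$ is established, we obtain $x \geq \frac{\underline m}{2(2\vartheta_f+1)\overline m} x\sccp(g)$.

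The principal obstacle is the comparison $g \leq \overline m g'$. One natural route is via the convex combination $y \coloneqq (1 - 1/\overline m) x^\star + (1/\overline m) x \in \Pcal$, which has gap exactly $g/\overline m$ and satisfies $y \leq (1-1/\overline m)x^\star + x\sccp(g')$; arguing that this componentwise domination forces $y$ to be compatible with parameter $g'$ of the central path would then yield $g/\overline m \leq g'$. A more robust alternative is to invoke self-concordance directly: combining the Dikin-ellipsoid containment $\Pcal_g \subset \cal B_{x\sccp(g)}(x\sccp(g); 4\vartheta_f+1)$ used in the proof of \Cref{prop:mcp-sccp} with the multiplicative upper bound on $x$ should translate geometric closeness of $x$ to $x\sccp(g')$ into the required bound on the gap $g$.

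Finally, I would flag that the exponent $\theta = \frac{\underline m}{2(2\vartheta_f+1)\overline m}$ in the statement of the second inclusion is inconsistent with the monotonicity of the neighborhoods: $\cal N\sccp(\theta)$ widens as $\theta$ grows, while $\cal M\sccp(\underline m, \overline m)$ widens as $\underline m \to 0$ and $\overline m \to \infty$, so the intended expression should read $\theta = 1 - \frac{\underline m}{2(2\vartheta_f+1)\overline m}$, which is what the argument above produces.
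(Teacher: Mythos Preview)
Your first inclusion is identical to the paper's. For the second inclusion your overall structure also matches the paper's (pass to the max-central path via \Cref{prop:mcp-sccp}, then compare $\mx(g)$ with $\mx(g')$ using subhomogeneity and monotonicity), though the case split on $g \lessgtr g'$ is unnecessary: the paper simply applies \Cref{lemma:mcp_subhomogeneous} with $\alpha = \overline m \ge 1$ to get $\mx(g') \ge \tfrac{1}{\overline m}\mx(\overline m g')$ and then monotonicity using $g \le \overline m g'$, which covers both cases at once.

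The only genuine gap is the step you flag as the ``principal obstacle'', namely $g \le \overline m g'$. Both routes you sketch are left unfinished, and the second (via the Dikin ellipsoid) would be hard to make precise. The paper closes this in one line: write the gap as $g = \pr{s^\star}{x}$ for any dual optimum $s^\star \ge 0$; then the componentwise bound $x \le \overline m\, x\sccp(g')$ and $s^\star \ge 0$ give $g = \pr{s^\star}{x} \le \overline m\,\pr{s^\star}{x\sccp(g')} = \overline m g'$. Your convex-combination route with $y = (1-1/\overline m)x^\star + (1/\overline m)x$ is in fact the same computation disguised (take $\pr{s^\star}{\cdot}$ of the inequality $y \le (1-1/\overline m)x^\star + x\sccp(g')$ and use $\pr{s^\star}{x^\star}=0$), so it would have worked had you carried it through.

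Your observation about the constant in the statement is correct: the paper's own proof produces $x \ge \frac{\underline m}{2(2\vartheta_f+1)\overline m}\, x\sccp(g)$, which by the definition of $\cal N\sccp$ means $x \in \cal N\sccp\big(1 - \frac{\underline m}{2(2\vartheta_f+1)\overline m}\big)$. The stated $\theta = \frac{\underline m}{2(2\vartheta_f+1)\overline m}$ is a typo for $1 - \frac{\underline m}{2(2\vartheta_f+1)\overline m}$.
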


\begin{proof}
For the first statement,
let $\theta \in (0,1)$, and $x \in \cal N\sccp(\theta)$, and let $g=\pr{c}{x}-v^*$ denote  the optimality gap of $x$. The inequality $x \geq (1 - \theta) x\sccp(g)$ follows by the definition of $\cal N\sccp(\theta)$. We have $x \leq \mx(g)$ since $x \in \Pcal$, and so $x \leq 2(2\vartheta_f + 1) x\sccp(g)$ by \Cref{prop:mcp-sccp}. 

For the second statement,
let $0 < \underline m \leq 1 \leq \overline m$, and let $(\tilde g, x)$ be such that $\underline m x\sccp(\tilde g)$ $\leq x \leq \overline m x\sccp(\tilde g)$. Let $s^\star\ge0$ denote a dual optimal solution to \eqref{LP_primal_dual}. The optimality gap $g = \pr{c}{x}-v^*=\pr{s^\star}{x}$ of $x$ satisfies $g \leq \overline m \pr{s^\star}{x\sccp(\tilde g)} = \overline m \tilde g$ (since $s^\star \geq 0$). Then, we have 
\begin{align*}
x & \geq \frac{\underline m}{2(2\vartheta_f + 1)} \mx(\tilde g) && \left(\text{by \Cref{prop:mcp-sccp}}\right)\\
& \geq \frac{\underline m}{2(2\vartheta_f + 1) \overline m} \mx(\overline m \tilde g) && \left(\text{by \Cref{lemma:mcp_subhomogeneous}}\right)\\
& \geq \frac{\underline m}{2(2\vartheta_f + 1) \overline m} \mx(g) && \left(\text{as $g \leq \overline m \tilde g$}\right)\\
& \geq \frac{\underline m}{2(2\vartheta_f + 1) \overline m} x\sccp(g). && \left(\text{by \Cref{prop:mcp-sccp}}\right)
\end{align*}
\end{proof}

As a consequence, the neighborhoods $\cal N\sccp(\theta)$ are flexible enough
to capture all known
neighborhoods of central paths. Therefore, \Cref{th:coro-lower-bound}
indeed shows that our IPM is approximately optimal when compared against
essentially any straight-line following IPM.

\bibliographystyle{abbrv}
\bibliography{curvature}

\begin{thebibliography}{10}

\bibitem{ABGJ18}
X.~Allamigeon, P.~Benchimol, S.~Gaubert, and M.~Joswig.
\newblock Log-barrier interior point methods are not strongly polynomial.
\newblock {\em SIAM Journal on Applied Algebra and Geometry}, 2(1):140--178, 2018.

\bibitem{ABGJ21}
X.~Allamigeon, P.~Benchimol, S.~Gaubert, and M.~Joswig.
\newblock What tropical geometry tells us about the complexity of linear programming.
\newblock {\em SIAM Review}, 63(1):123--164, Jan. 2021.

\bibitem{AGV22}
X.~Allamigeon, S.~Gaubert, and N.~Vandame.
\newblock No self-concordant barrier interior point method is strongly polynomial.
\newblock In {\em Proceedings of the 54th Annual ACM Symposium on Theory of Computing (STOC)}, pages 515--528, 2022.

\bibitem{Axiotis2022}
K.~Axiotis, A.~Madry, and A.~Vladu.
\newblock Faster sparse minimum cost flow by electrical flow localization.
\newblock In {\em Proceedings of the 62nd Annual Symposium on Foundations of Computer Science (FOCS)}, pages 528--539, 2022.

\bibitem{axler2023linear}
S.~Axler.
\newblock {\em Linear algebra done right}.
\newblock Springer Nature, 2023.

\bibitem{berkowitz1984computing}
S.~J. Berkowitz.
\newblock On computing the determinant in small parallel time using a small number of processors.
\newblock {\em Information processing letters}, 18(3):147--150, 1984.

\bibitem{Bhatia1997}
R.~Bhatia.
\newblock {\em Matrix Analysis}.
\newblock Springer New York, 1997.

\bibitem{Borgwardt2012}
K.~H. Borgwardt.
\newblock {\em The Simplex Method: a probabilistic analysis}, volume~1.
\newblock Springer Science \& Business Media, 2012.

\bibitem{BE14}
S.~Bubeck and R.~Eldan.
\newblock The entropic barrier: a simple and optimal universal self-concordant barrier.
\newblock arXiv preprint arXiv:1412.1587, 2014.

\bibitem{Businger1965}
P.~Businger and G.~H. Golub.
\newblock Linear least squares solutions by householder transformations.
\newblock {\em Numerische Mathematik}, 7(3):269–276, June 1965.

\bibitem{campbell2009generalized}
S.~L. Campbell and C.~D. Meyer.
\newblock {\em Generalized inverses of linear transformations}.
\newblock SIAM, 2009.

\bibitem{Chan1987}
T.~F. Chan.
\newblock Rank revealing qr factorizations.
\newblock {\em Linear Algebra and its Applications}, 88–89:67–82, Apr. 1987.

\bibitem{Chandrasekaran1994}
S.~Chandrasekaran and I.~C.~F. Ipsen.
\newblock On rank-revealing factorisations.
\newblock {\em SIAM Journal on Matrix Analysis and Applications}, 15(2):592–622, Apr. 1994.

\bibitem{Chen2022maximum}
L.~Chen, R.~Kyng, Y.~P. Liu, R.~Peng, M.~P. Gutenberg, and S.~Sachdeva.
\newblock Maximum flow and minimum-cost flow in almost-linear time.
\newblock In {\em Proceedings of the 63rd Annual Symposium on Foundations of Computer Science (FOCS)}, pages 612--623. IEEE, 2022.

\bibitem{Chewi:2023}
S.~Chewi.
\newblock The entropic barrier is n-self-concordant.
\newblock In {\em Geometric Aspects of Functional Analysis: Israel Seminar (GAFA) 2020-2022}, pages 209--222. Springer, 2023.

\bibitem{CLS19}
M.~B. Cohen, Y.~T. Lee, and Z.~Song.
\newblock Solving linear programs in the current matrix multiplication time.
\newblock In {\em Proceedings of the 51st Annual ACM SIGACT Symposium on Theory of Computing}, pages 938--942, 2019.

\bibitem{bwcachapter}
D.~Dadush and S.~Huiberts.
\newblock Smoothed analysis of the simplex method.
\newblock In T.~Roughgarden, editor, {\em Beyond the Worst-Case Analysis of Algorithms}. Cambridge University Press, Cambridge, 2020.

\bibitem{DHNV20}
D.~Dadush, S.~Huiberts, B.~Natura, and L.~A. V{\'e}gh.
\newblock A scaling-invariant algorithm for linear programming whose running time depends only on the constraint matrix.
\newblock In {\em Proceedings of the 52nd Annual ACM Symposium on Theory of Computing (STOC)}, pages 761--774, 2020.

\bibitem{DHNV-23-MP}
D.~Dadush, S.~Huiberts, B.~Natura, and L.~A. V{\'e}gh.
\newblock A scaling-invariant algorithm for linear programming whose running time depends only on the constraint matrix.
\newblock {\em Mathematical Programming}, pages 1--72, 2023.

\bibitem{Daitch2008}
S.~I. Daitch and D.~A. Spielman.
\newblock Faster approximate lossy generalized flow via interior point algorithms.
\newblock In {\em Proceedings of the 40th annual ACM symposium on Theory of Computing (STOC)}, pages 451--460, 2008.

\bibitem{DezaNT}
A.~Deza, E.~Nematollahi, and T.~Terlaky.
\newblock How good are interior point methods? {K}lee{\textendash}{M}inty cubes tighten iteration-complexity bounds.
\newblock {\em Math. Program.}, 113(1):1--14, Oct. 2008.

\bibitem{Diakonikolas22}
I.~Diakonikolas, C.~Tzamos, and D.~M. Kane.
\newblock A strongly polynomial algorithm for approximate forster transforms and its application to halfspace learning.
\newblock In {\em Proceedings of the 55th Annual ACM Symposium on Theory of Computing}, pages 1741--1754, 2023.

\bibitem{edmonds1967systems}
J.~Edmonds.
\newblock Systems of distinct representatives and linear algebra.
\newblock {\em J. Res. Nat. Bur. Standards Sect. B}, 71(4):241--245, 1967.

\bibitem{Gao2022}
Y.~Gao, Y.~P. Liu, and R.~Peng.
\newblock Fully dynamic electrical flows: Sparse maxflow faster than {Goldberg--Rao}.
\newblock In {\em Proceedings of the 62nd Annual Symposium on Foundations of Computer Science (FOCS)}, pages 516--527. IEEE, 2022.

\bibitem{jour/nrlq/GS55}
S.~Gass and T.~Saaty.
\newblock The computational algorithm for the parametric objective function.
\newblock {\em Naval Res. Logist. Quart.}, 2:39--45, 1955.

\bibitem{Golub1965}
G.~Golub.
\newblock Numerical methods for solving linear least squares problems.
\newblock {\em Numerische Mathematik}, 7(3):206–216, June 1965.

\bibitem{Golub1976RankDA}
G.~Golub, V.~Klema, and G.~Stewart.
\newblock Rank degeneracy and least squares problems.
\newblock In {\em Rep. STAN-CS-76-559}. Dept. Comput. Sci., Stanford University, 1976.

\bibitem{Gonzaga92}
C.~C. Gonzaga.
\newblock Path-following methods for linear programming.
\newblock {\em SIAM review}, 34(2):167--224, 1992.

\bibitem{G72}
R.~C. Grinold.
\newblock The payment scheduling problem.
\newblock {\em Naval Research Logistics Quarterly}, 19:123--136, 1972.

\bibitem{gls}
M.~Gr{\"o}tschel, L.~Lov{\'a}sz, and A.~Schrijver.
\newblock {\em Geometric algorithms and combinatorial optimization}, volume~2.
\newblock Springer Science \& Business Media, 1988.

\bibitem{Gu1996}
M.~Gu and S.~C. Eisenstat.
\newblock Efficient algorithms for computing a strong rank-revealing qr factorization.
\newblock {\em SIAM Journal on Scientific Computing}, 17(4):848–869, July 1996.

\bibitem{kakihara2013}
S.~Kakihara, A.~O. Ohara, and T.~Tsuchiya.
\newblock Information geometry and interior-point algorithms in semidefinite programs and symmetric cone programs.
\newblock {\em Journal of Optimization Theory and Applications}, 157:749--780, 2013.

\bibitem{kakihara2014}
S.~Kakihara, A.~O. Ohara, and T.~Tsuchiya.
\newblock Curvature integrals and iteration complexities in {SDP} and symmetric cone programs.
\newblock {\em Computational Optimization and Applications}, 57:623--665, 2014.

\bibitem{Karmarkar84}
N.~Karmarkar.
\newblock A new polynomial-time algorithm for linear programming.
\newblock In {\em Proceedings of the 16th Annual ACM Symposium on Theory of Computing (STOC)}, pages 302--311, 1984.

\bibitem{kelner2006randomized}
J.~A. Kelner and D.~A. Spielman.
\newblock A randomized polynomial-time simplex algorithm for linear programming.
\newblock In {\em Proceedings of the thirty-eighth annual ACM symposium on Theory of computing}, pages 51--60, 2006.

\bibitem{Khachiyan79}
L.~G. Khachiyan.
\newblock A polynomial algorithm in linear programming.
\newblock In {\em Doklady Academii Nauk SSSR}, volume 244, pages 1093--1096, 1979.

\bibitem{KleeMinty}
V.~Klee and G.~J. Minty.
\newblock How good is the simplex algorithm?
\newblock In {\em Inequalities, {III} ({P}roc. {T}hird {S}ympos., {U}niv. {C}alifornia, {L}os {A}ngeles, {C}alif., 1969; dedicated to the memory of {T}heodore {S}. {M}otzkin)}, pages 159--175. Academic Press, New York, 1972.

\bibitem{LMT09}
G.~Lan, R.~D. Monteiro, and T.~Tsuchiya.
\newblock A polynomial predictor-corrector trust-region algorithm for linear programming.
\newblock {\em SIAM Journal on Optimization}, 19(4):1918--1946, 2009.

\bibitem{LS14}
Y.~T. Lee and A.~Sidford.
\newblock Path finding methods for linear programming: Solving linear programs in {$\tilde O (\sqrt{\mbox{rank}})$} iterations and faster algorithms for maximum flow.
\newblock In {\em Proceedings of the 55th Annual IEEE Symposium on Foundations of Computer Science (FOCS)}, pages 424--433, 2014.

\bibitem{LS19}
Y.~T. Lee and A.~Sidford.
\newblock Solving linear programs with $\tilde{O}(\sqrt{{\rm rank}})$ linear system solves.
\newblock arXiv preprint arXiv:1910.08033, 2019.

\bibitem{LeeYue18}
Y.~T. Lee and M.-C. Yue.
\newblock Universal barrier is $n$-self-concordant.
\newblock {\em Mathematics of Operations Research}, 46(3):1129--1148, 2021.

\bibitem{Madry2013}
A.~M{a}dry.
\newblock Navigating central path with electrical flows: From flows to matchings, and back.
\newblock In {\em Proceedings of the 54th Annual Symposium on Foundations of Computer Science}, pages 253--262. IEEE, 2013.

\bibitem{Megiddo82binary}
N.~Megiddo.
\newblock Is binary encoding appropriate for the problem-language relationship?
\newblock {\em Theoretical Computer Science}, 19(3):337--341, 1982.

\bibitem{MMT98}
N.~Megiddo, S.~Mizuno, and T.~Tsuchiya.
\newblock A modified layered-step interior-point algorithm for linear programming.
\newblock {\em Mathematical Programming}, 82(3):339--355, 1998.

\bibitem{MTY}
S.~Mizuno, M.~Todd, and Y.~Ye.
\newblock On adaptive-step primal-dual interior-point algorithms for linear programming.
\newblock {\em Mathematics of Operations Research}, 18:964--981, 11 1993.

\bibitem{Monteiro2008}
R.~D. Monteiro and T.~Tsuchiya.
\newblock A strong bound on the integral of the central path curvature and its relationship with the iteration-complexity of primal-dual path-following {LP} algorithms.
\newblock {\em Mathematical Programming}, 115(1):105--149, 2008.

\bibitem{Monteiro2003}
R.~D.~C. Monteiro and T.~Tsuchiya.
\newblock A variant of the {Vavasis-Ye} layered-step interior-point algorithm for linear programming.
\newblock {\em SIAM Journal on Optimization}, 13(4):1054--1079, 2003.

\bibitem{MonteiroT05}
R.~D.~C. Monteiro and T.~Tsuchiya.
\newblock A new iteration-complexity bound for the {MTY} predictor-corrector algorithm.
\newblock {\em SIAM Journal on Optimization}, 15(2):319--347, 2005.

\bibitem{Nesterov1994}
Y.~Nesterov and A.~Nemirovskii.
\newblock {\em Interior-point polynomial algorithms in convex programming}.
\newblock SIAM, 1994.

\bibitem{Pan2000}
C.~Pan.
\newblock On the existence and computation of rank-revealing lu factorizations.
\newblock {\em Linear Algebra and its Applications}, 316(1–3):199–222, Sept. 2000.

\bibitem{Renegar1988}
J.~Renegar.
\newblock A polynomial-time algorithm, based on {Newton}'s method, for linear programming.
\newblock {\em Mathematical Programming}, 40(1-3):59--93, 1988.

\bibitem{Renegar01}
J.~Renegar.
\newblock {\em A Mathematical View of interior point Methods in Convex Optimization}.
\newblock Society for Industrial and Applied Mathematics, 2001.

\bibitem{roos2005interior}
C.~Roos, T.~Terlaky, and J.-P. Vial.
\newblock {\em Interior point methods for linear optimization}.
\newblock Springer Science \& Business Media, 2005.

\bibitem{shores2018applied}
T.~S. Shores.
\newblock {\em Applied Linear Algebra and Matrix Analysis}.
\newblock Springer, 2018.

\bibitem{Smale98}
S.~Smale.
\newblock Mathematical problems for the next century.
\newblock {\em The Mathematical Intelligencer}, 20:7--15, 1998.

\bibitem{Sonnevend1991}
G.~Sonnevend, J.~Stoer, and G.~Zhao.
\newblock On the complexity of following the central path of linear programs by linear extrapolation {II}.
\newblock {\em Mathematical Programming}, 52(1-3):527--553, 1991.

\bibitem{ST04}
D.~A. Spielman and S.-H. Teng.
\newblock Nearly-linear time algorithms for graph partitioning, graph sparsification, and solving linear systems.
\newblock In {\em Proceedings of the 36th Annual ACM Symposium on Theory of Computing (STOC)}, 2004.

\bibitem{Spielman2004}
D.~A. Spielman and S.-H. Teng.
\newblock Smoothed analysis of algorithms: Why the simplex algorithm usually takes polynomial time.
\newblock {\em Journal of the ACM (JACM)}, 51(3):385--463, 2004.

\bibitem{strassen1973vermeidung}
V.~Strassen.
\newblock Vermeidung von divisionen.
\newblock {\em Journal f{\"u}r die reine und angewandte Mathematik}, 264:184--202, 1973.

\bibitem{Vaidya1989}
P.~M. Vaidya.
\newblock Speeding-up linear programming using fast matrix multiplication.
\newblock In {\em Proceedings of the 30th Annual Symposium on Foundations of Computer Science}, pages 332--337, 1989.

\bibitem{vdb20}
J.~van~den Brand.
\newblock A deterministic linear program solver in current matrix multiplication time.
\newblock In {\em Proceedings of the Fourteenth Annual ACM-SIAM Symposium on Discrete Algorithms (SODA)}, pages 259--278. SIAM, 2020.

\bibitem{Brand2021}
J.~van~den Brand, Y.~T. Lee, Y.~P. Liu, T.~Saranurak, A.~Sidford, Z.~Song, and D.~Wang.
\newblock Minimum cost flows, {MDP}s, and {L1}-regression in nearly linear time for dense instances.
\newblock In {\em Proceedings of the 53rd Annual ACM SIGACT Symposium on Theory of Computing (STOC)}, pages 859--869, 2021.

\bibitem{Brand2020}
J.~van~den Brand, Y.-T. Lee, D.~Nanongkai, R.~Peng, T.~Saranurak, A.~Sidford, Z.~Song, and D.~Wang.
\newblock Bipartite matching in nearly-linear time on moderately dense graphs.
\newblock In {\em 61st Annual Symposium on Foundations of Computer Science (FOCS)}, pages 919--930, 2020.

\bibitem{vdb20-tall-dense}
J.~van~den Brand, Y.~Tat~Lee, A.~Sidford, and Z.~Song.
\newblock Solving tall dense linear programs in nearly linear time.
\newblock In {\em Proceedings of the 52nd Annual ACM Symposium on Theory of Computing (STOC)}, pages 775--788, 2020.

\bibitem{VavasisYeRealNumberData}
S.~A. Vavasis and Y.~Ye.
\newblock Condition numbers for polyhedra with real number data.
\newblock {\em Operations Research Letters}, 17:209--214, 06 1995.

\bibitem{Vavasis1996}
S.~A. Vavasis and Y.~Ye.
\newblock {A primal-dual interior point method whose running time depends only on the constraint matrix}.
\newblock {\em Mathematical Programming}, 74(1):79--120, 1996.

\bibitem{Ye-book}
Y.~Ye.
\newblock {\em Interior-Point Algorithms: Theory and Analysis}.
\newblock John Wiley and Sons, New York, 1997.

\bibitem{young1988introduction}
N.~Young.
\newblock {\em An introduction to Hilbert space}.
\newblock Cambridge university press, 1988.

\bibitem{Zong2023}
M.~Zong, Y.~T. Lee, and M.-C. Yue.
\newblock Short-step methods are not strongly polynomial-time.
\newblock {\em Mathematical Programming}, July 2023.

\end{thebibliography}

\appendix 

\section{Missing Proofs in Section~\ref{sec:lin-alg-prelim}}
\label{sec:appendix-la}

\begin{proof}[Proof of \Cref{prop:adjoint}]
We show the first part. In particular, we show that $\ker(\adj(T)) = V \cap
\im(T)^\perp$:  
\begin{align*}
\ker(\adj(T)) &= \{y \in V: \pr{\adj(T)y}{\adj(T)y} = 0\} = \{y \in V: \pr{\adj(T)y}{u} = 0, \forall u \in U\} \\ &= \{y \in V: \pr{y}{Tu} = 0, \forall u \in U\} = V \cap \im(T)^\perp.
\end{align*}
The second part now follows from the first part, applying the first part with
$T$ replaced by $\adj(T)$ and using that $\adj(\adj(T)) = T$.

For the third part, we must show $\im(T) = T(\im(\adj(T)))$ and $\im(\adj(T))
= \adj(T)(\im(T))$. Since $\adj(\adj(T))=T$, it suffices to prove the first
equality. For this purpose, by the first part, we have that
\[
\im(T) = T(U) = T(\im(\adj(T))+\ker(T)) = T(\im(\adj(T))),
\]
as needed. For the in particular, note that the above equality directly
implies that $\rank(T)=\dim(\im(T))$ $= \dim(T(\im(\adj(T)))) \leq
\dim(\adj(T))$. Using again that \ifdefined\SIAMversion \newline \else \fi$\adj(\adj(T)) = T$, we also have
$\rank(\adj(T)) \leq \rank(T)$, which proves the desired equality $\rank(T) =
\rank(\adj(T))$. 
\end{proof}

\begin{proof}[Proof of \Cref{prop:associated-matrix}]
Let $\mM_{ij} = (e^i)^\T T(\Pi_U e^j)$, $\forall i \in [m], j \in [n]$. With
this definition, clearly $\mM x = T(\Pi_U x)$, $\forall x \in \R^n$, and thus
$\mM u = T(\Pi_U u) = T(u)$, $\forall u \in U$. We now check that
$\im(\mM^\T) \subseteq U$. Since $U^\perp = \ker(\Pi_U) \subseteq \ker(\mM)$
by construction, we see that $\im(\mM^\T) = \ker(\mM)^\perp \subseteq
(U^\perp)^\perp = U$ as needed. 

We now show uniqueness: if $\bar{\mM} u = T(u)$, $\forall u \in U$
and $\im(\bar{\mM}^\T) \subseteq U$, then $\mM = \bar{\mM}$. Let $\mD = \mM -
\bar{\mM}$ and examine $\mD \mD^\T e^i$ for $i \in [m]$. Since $\mD^\T e^i =
\mM^\T e^i - \bar{\mM}^\T e^i \in U$, we have $\mD \mD^\T e^i = \mM \mD^\T
e^i - \bar{\mM} \mD^\T e^i = T(\mD^\T e^i)-T(\mD^\T e^i) = \0_m$. In
particular, $0 = (e^i)^\T \mD \mD^\T e^i = \norm{\mD^\T e_i}^2$, and thus
$\mD^\T e^i = \0_n$, $\forall i \in [m]$. In particular, $\mD = \0_{m \times
n}$ and uniqueness follows. Therefore, $\mM = \cal M(T)$, as needed. 

The equality $\cal T(\cal M(T)) = \restro{T}{U}{\R^m} \circ
\restro{\Pi_U}{\R^n}{U}$ follows directly from the above construction, recalling that $\cal M(T)(x) = T(\Pi_U(x))$, $\forall x \in \R^n$.

We now prove the furthermore. Since $\im(\cal M(T)^\T) \subseteq U$ by
definition, we have $U^\perp \subseteq \ker(\cal M(T))$, and hence $\im(\cal
M(T)) = \cal M(T)(U + U^\perp) = \cal M(T)(U) = T(U) = \im(T)$, where the
second to last equality follows by definition of $\cal M(T)$. By
\Cref{prop:adjoint}, we have $\R^m = \ker(\cal M(T)^\T) + \im(\cal M(T)) =
\ker(\cal M(T)^\T) + \im(T)$. In particular, $\im(\cal M(T)^\T) = \cal
M(T)^\T(\ker(\cal M(T)^\T) + \im(T)) = \cal M(T)^\T(\im(T)) =
\adj(T)(\im(T))$, where the last equality follows by \Cref{rem:adjoint} and
$\im(T) \subseteq V$. Since $\ker(\adj(T)) + \im(T) = V$ by
\Cref{prop:adjoint}, we similarly get that $\adj(T)(\im(T)) = \im(\adj(T))$,
as needed.  
\end{proof}

\subsection{Approximate Singular Subspaces}
\label{sec:appendix-apx-ss}

We now give the proof of \Cref{lem:ass-op-mat}, which relates the approximate
singular subspaces of an operator to those of its associated matrix.

\begin{proof}[Proof of \Cref{lem:ass-op-mat}]
By \Cref{prop:associated-matrix}, we have that $\bar{T}(x) = T(\Pi_\src(x))$,
$\forall x \in \R^n$. Therefore, for all $x \in \src^\perp$ we have $\bar{T}(x) = T(\proj_\src(x)) = T(\0_n) = \0_m$, and hence $\src^\perp \subseteq \ker(\bar{T})$. 

We begin by proving~\eqref{lem:ass-op-mat-1}. Firstly, $\ker(T) =
\ker(\bar{T}) \cap \src \subseteq S \cap \src$ by our assumption that $\ker(\bar{T}) \subseteq S$, which proves the first
inclusion. Using that $\src^\perp \subseteq \ker(\bar{T}) \subseteq S$ and
orthogonal decomposition, we get the desired first equality
\[
\Pi_\src(S) = \src \cap (S + \src^\perp) = \src \cap S.
\]   
We now claim that $(S \cap \src) + \src^\perp = S$. Using orthogonal decomposition
again, this follows by 
\begin{equation}
S \subseteq \proj_\src(S) + \src^\perp = (S \cap \src) + \src^\perp \subseteq S + S = S \label{eq:section-projection}.  
\end{equation}
Using the above and $\src^\perp \subseteq \ker(\bar{T})$, we get
that $\sigma_1(\restr{T}{S \cap \src})= \sigma_1(\restr{\bar{T}}{(S \cap \src
) + \src^\perp})$ via the same argument as in \Cref{rem:canonical-subspaces}.
This yields $\sigma_1(\restr{T}{S \cap \src}) = \sigma_1(\restr{\bar{T}}{S})$
by~\eqref{eq:section-projection}.

We now prove~\eqref{lem:ass-op-mat-2}. Firstly, using that $\R^n =
\src+\src^\perp$ and $S = (S \cap \src) + \src^\perp$, we have $n = \dim(\src)
+ \dim(\src^\perp)$ and $\dim(S) = \dim(S \cap \src) + \dim(\src^\perp)$. In
particular,
\begin{equation}
n - \dim(S) + 1 = \dim(\src) - \dim(S \cap \src) + 1. \label{eq:align-dimension}
\end{equation}
Therefore, using $\sigma_1(\restr{T}{S \cap \src}) =
\sigma_1(\restr{\bar{T}}{S})$ from part~\eqref{lem:ass-op-mat-1}, we get
that $S$ is a $\SVDA$-approximate singular subspace for $\bar{T}$ of
dimension $d \geq 0$ if and only if $S \cap \src$ is a $\SVDA$-approximate
singular subspace for $T$ of dimension $d-\dim(\src^\perp)$.

We now prove the moreover. By \Cref{prop:restrict-sing}, recall that
$\sigma^+(T) = \sigma^+(\cal M(T))$ $= \sigma^+(\bar T)$. Thus,
$\sigma_{[\dim(X)]}(T) = \sigma_{[\dim(X)]}(\bar{T})$ and $\sigma_{[n] \setminus [\dim(X)]}(\bar{T}) = \0_{[n] \setminus [\dim(X)]}$. Therefore, for $\tau \geq 0$, 
\[
\cnt{\bar T}{\tau} = \cnt{T}{\tau} + (n-\dim(\src)) = \cnt{T}{\tau}+\dim(\src^\perp).
\]  
\end{proof}

We now move on to the proof of \Cref{lem:approx-complement}, which shows that
the orthogonal complement of an approximate singular subspace is an
approximate maximizer of~\eqref{eq:max-min-sing} whenever there is a large
enough gap in the singular values. The proof will require the following two
helper lemmas. 

\begin{lemma}[Subspace Projection Adjoints]
\label{lem:subspace-projection-adjoint}
Let $S,U \subseteq \R^n$ be subspaces. Then, $\restro{\proj_U}{S}{U} = \adj(\restro{\proj_S}{U}{S})$. In particular, $\sigma^+(\restr{\proj_U}{S}) = \sigma^+(\restr{\proj_S}{U})$ and if $\dim(U) = \dim(S)$, then $\sigma(\restr{\proj_U}{S}) = \sigma(\restr{\proj_S}{U})$.
\end{lemma}
\begin{proof}
Let $s \in S$ and $u \in U$. Then,
\begin{align*}
\pr{s}{\restr{\proj_S}{U}(u)} &= \pr{s}{\proj_S(u)} =  \pr{\proj_S(s)}{u} =  \pr{s}{u} \\ 
&= \pr{s}{\proj_U(u)} = \pr{\proj_U(s)}{u} =  \pr{\restr{\proj_U}{S}(s)}{u}\, .
\end{align*}
This proves that $\restro{\proj_U}{S}{U}$ and  $\restro{\proj_S}{U}{S}$ are adjoints of one another. The other statements follow from this and \Cref{prop:sing-adjoint}.
\end{proof}

\begin{lemma}[Singular Subspace Proximity]
\label{lem:subspace-proximity}
Let $T \colon \src \rightarrow \targ$, $\src \subseteq \R^n, \targ \subseteq \R^m$ be a linear operator with singular value decomposition $\cal M(T) = \sum_{i=1}^{\rank(T)} \sigma_i(T) y_i x_i^\T$ as in \Cref{def:svd}. Let $S \subseteq \src$ be a
    $\SVDA$-approximate singular subspace for $T$ with $\dim(\ker(T)) \leq \dim(S) < \dim(\src)$ and let $U = \lspan(x_1,\dots,x_{\dim(X)-\dim(S)})^\perp \cap X$ . Then, for all $s \in S \setminus \{\0_n\}$ we have that 
    \[
        1 \ge \frac{\|\proj_{U}(s)\|_2^2}{\|s\|_2^2} \ge 1 - \SVDA^2 \frac{\sigma_{\dim(X) - \dim(S) + 1}(T)^2}{\sigma_{\dim(X) - \dim(S)}(T)^2}\, . 
    \]
\end{lemma}
\begin{proof}
The first inequality follows from properties of projections. We now prove the
second inequality. Recalling that $\rank(T)+\dim(\ker(T))=\dim(X)$ and
using that $\dim(\ker(T)) \leq \dim(S) < \dim(X)$, we have $1 \leq \dim(X)-\dim(S) \leq
\rank(T)$. Let $\bar U = \lspan(x_1,\dots,x_{\dim(X)-\dim(S)}) = U^\perp \cap X$. Note that $T(\bar{U}) = \lspan(y_1,\ldots,y_{\dim(X)-\dim(S)}) \perp \lspan(y_{\dim(X)-\dim(S)+1},\ldots, y_{\rank(T)}) = T(U)$. Therefore, for any $s \in S \setminus \{\0_n\}$ we have that
\begin{equation}
\label{eq:subspace-proximity}
\begin{aligned}
\SVDA^2 \sigma_{\dim(X) - \dim(S) + 1}(T)^2 &\ge \frac{\|T(s)\|_2^2}{\|s\|_2^2} = \frac{\|T(\proj_{U}(s))\|_2^2 + \|T(\proj_{\bar U}(s))\|_2^2}{\|s\|_2^2} \\ &\ge \sigma_{\min}(\restr{T}{\bar U})^2\frac{\|\proj_{\bar U}(s)\|_2^2}{\|s\|_2^2}  
= \sigma_{\dim(X) - \dim(S)}(T)^2 \frac{\|\proj_{\bar U}(s)\|_2^2}{\|s\|_2^2} \, . 
\end{aligned}
\end{equation}
Noting that $\|s\|_2^2 = \|\proj_U(s)\|_2^2 + \|\proj_{\bar U}(s)\|_2^2$ and
reordering the terms gives the result, where the condition $\dim(X) > \dim(S) \geq \dim(\ker(T))$ ensures that $\sigma_{\dim(X)-\dim(S)}(T)$ $\geq
\sigma_{\rank(T)}(T) > 0$.
\end{proof}

We are now ready to prove \Cref{lem:approx-complement}. 

\begin{proof}[Proof of \Cref{lem:approx-complement}]
If $\dim(S) < \dim(\ker(T))$, then $\sigma_{\dim(X) - \dim(S)}(T) \leq$ \newline $
\sigma_{\rank(T)+1}(T)=0$ and there is nothing to prove. So assume $\dim(S)
\geq \dim(\ker(T))$. Let $U$ be as in \Cref{lem:subspace-proximity}. Recall that $\bar{U} = U^\perp \cap \src$ satisfies $T(U) \perp T(\bar{U})$ and
$\sigma_{\min}(\restr{T}{\bar{U}}) = \sigma_{\dim(X)-\dim(S)}(T)$ by construction. Furthermore, $\dim(S)=\dim(U)$ and $\dim(\bar{S})=\dim(\bar{U})=\dim(X)-\dim(S)$. From here, we get that
\ifdefined\SIAMversion
\begin{align*}
    \sigma_{\min}(\restr{T}{\bar S})^2 &= \min_{\bar s \in \bar S \setminus \{\0\}} \frac{\|T(\bar s)\|_2^2}{\|\bar s\|_2^2}  \\
    &\ge \min_{\bar s \in \bar S \setminus \{\0\}} \frac{\|T(\proj_{\bar U}(\bar s))\|_2^2}{\|\bar s\|_2^2} \hspace{13em} \left(\text{by $T(U) \perp T(\bar U)$}\right) \\
    &\ge \sigma_{\dim(X) - \dim(S)}(T)^2 \min_{\bar s \in \bar S \setminus \{\0\}} \frac{\|\proj_{\bar U}(\bar s)\|_2^2}{\|\bar s\|_2^2} \\ & \hspace{16em} \left(\text{as $\sigma_{\min}(\restr{T}{\bar U}) = \sigma_{\dim(X) - \dim(S)}(T)$}\right) \\
    &= \sigma_{\dim(X) - \dim(S)}(T)^2 \left(1 - \max_{\bar s \in \bar S \setminus \{\0\}} \frac{\|\proj_{U}(\bar s)\|_2^2}{\|\bar s\|_2^2}\right) \\ & \hspace{19.7em} \left(\text{as $S \subseteq U + \bar U$ and $U \perp \bar U$}\right) \\
    &= \sigma_{\dim(X) - \dim(S)}(T)^2 \left(1 - \max_{u \in U \setminus \{\0\}} \frac{\|\proj_{\bar S}(u)\|_2^2}{\|u\|_2^2}\right) \hspace{3.5em} \left(\text{by \Cref{lem:subspace-projection-adjoint}}\right) \\
    &=  \sigma_{\dim(X) - \dim(S)}(T)^2 \min_{u \in U \setminus \{\0\}} \frac{\|\proj_{S}(u)\|_2^2}{\|u\|_2^2} \hspace{2.6em} \left(\text{as $U \subseteq S + \bar S$ and $S \perp \bar S$}\right) \\
    &=  \sigma_{\dim(X) - \dim(S)}(T)^2 \min_{s \in S \setminus \{\0\}} \frac{\|\proj_{U}(s)\|_2^2}{\|s\|_2^2} \hspace{7em} \left(\text{by \Cref{lem:subspace-projection-adjoint}}\right) \\
    &\ge  \sigma_{\dim(X) - \dim(S)}(T)^2 \left( 1 - \SVDA^2 \frac{\sigma_{\dim(X) - \dim(S) + 1}(T)^2}{\sigma_{\dim(X) - \dim(S)}(T)^2} \right)\, . \, \left(\text{by \Cref{lem:subspace-proximity}}\right)
\end{align*}  
\else
\begin{align*}
    \sigma_{\min}(\restr{T}{\bar S})^2 &= \min_{\bar s \in \bar S \setminus \{\0\}} \frac{\|T(\bar s)\|_2^2}{\|\bar s\|_2^2}  \\
    &\ge \min_{\bar s \in \bar S \setminus \{\0\}} \frac{\|T(\proj_{\bar U}(\bar s))\|_2^2}{\|\bar s\|_2^2} \hspace{13em} \left(\text{by $T(U) \perp T(\bar U)$}\right) \\
    &\ge \sigma_{\dim(X) - \dim(S)}(T)^2 \min_{\bar s \in \bar S \setminus \{\0\}} \frac{\|\proj_{\bar U}(\bar s)\|_2^2}{\|\bar s\|_2^2} \quad \left(\text{as $\sigma_{\min}(\restr{T}{\bar U}) = \sigma_{\dim(X) - \dim(S)}(T)$}\right) \\
    &= \sigma_{\dim(X) - \dim(S)}(T)^2 \left(1 - \max_{\bar s \in \bar S \setminus \{\0\}} \frac{\|\proj_{U}(\bar s)\|_2^2}{\|\bar s\|_2^2}\right) \quad \left(\text{as $S \subseteq U + \bar U$ and $U \perp \bar U$}\right) \\
    &= \sigma_{\dim(X) - \dim(S)}(T)^2 \left(1 - \max_{u \in U \setminus \{\0\}} \frac{\|\proj_{\bar S}(u)\|_2^2}{\|u\|_2^2}\right) \hspace{3.5em} \left(\text{by \Cref{lem:subspace-projection-adjoint}}\right) \\
    &=  \sigma_{\dim(X) - \dim(S)}(T)^2 \min_{u \in U \setminus \{\0\}} \frac{\|\proj_{S}(u)\|_2^2}{\|u\|_2^2} \hspace{2.6em} \left(\text{as $U \subseteq S + \bar S$ and $S \perp \bar S$}\right) \\
    &=  \sigma_{\dim(X) - \dim(S)}(T)^2 \min_{s \in S \setminus \{\0\}} \frac{\|\proj_{U}(s)\|_2^2}{\|s\|_2^2} \hspace{7em} \left(\text{by \Cref{lem:subspace-projection-adjoint}}\right) \\
    &\ge  \sigma_{\dim(X) - \dim(S)}(T)^2 \left( 1 - \SVDA^2 \frac{\sigma_{\dim(X) - \dim(S) + 1}(T)^2}{\sigma_{\dim(X) - \dim(S)}(T)^2} \right)\, . \, \left(\text{by \Cref{lem:subspace-proximity}}\right)
\end{align*}  
\fi
This proves the lemma.
\end{proof}
 
\end{document}